\theoremstyle{plain}
\newtheorem{thm}{Theorem}[section]
\newtheorem{lem}[thm]{Lemma}
\newtheorem{defin}[thm]{Definition}
\newtheorem{prop}[thm]{Proposition}
\newtheorem{rem}[thm]{Remark}
\newtheorem{cor}[thm]{Corollary}
\newtheorem{notation}[thm]{Notation}
\theoremstyle{definition}
\newtheorem{claim}{Claim}
\newcommand{\calS}{{\mathcal S}}
\newcommand{\T}{{\mathcal T}}
\newcommand{\HH}{{\mathbb H}}
\newcommand{\R}{{\mathbb R}}
\newcommand{\ZZ}{{\mathbb Z}}
\newcommand{\Teich}{Teich\-m\"uller~}
\newcommand{\PMF}{\mathcal{PMF}}
\newcommand{\MF}{\mathcal{MF}}
\DeclareMathOperator{\Ext}{Ext}
\DeclareMathOperator{\twist}{twist}
\title{Limits in $\PMF$ of Teichm\"uller geodesics}
\author{Jon Chaika}\thanks{Research of JC partially supported by DMS 1004372, 1300550.}
\author{Howard Masur}\thanks{Research of HM partially supported by DMS 0905907; DMS1205016}
\author{Michael Wolf}\thanks{Research of MW partially supported by DMS 1007383 and the Morningside Center (Tsinghua Univ.). HM and MW appreciate the support of the GEAR Network (DMS 1107452, 1107263, 1107367).}
\begin{document}

\maketitle

\centerline{\today}

\begin{abstract} We consider the limit set in Thurston's compactification $\PMF$ of Teichm\"{u}ller space of some Teichm\"{u}ller geodesics defined by quadratic  differentials with minimal but not uniquely ergodic vertical foliations.  
We show that 
a) there are quadratic differentials so that the limit set of the geodesic 
is a unique point,
b) there are quadratic  differentials so that the limit set is a line segment,
c) there are quadratic  differentials so that the vertical foliation is ergodic and there is a line segment as limit set, and
d) there are quadratic  differentials so that the vertical foliation is  ergodic and there is a unique point as its limit set.
These give examples of divergent Teichm\"{u}ller geodesics whose limit sets overlap and Teichm\"{u}ller geodesics that 
stay a bounded distance apart but whose limit sets are not equal. A byproduct  of our methods is a construction of a Teichm\"{u}ller geodesic and a simple closed curve $\gamma$ so that the hyperbolic length of the geodesic in the homotopy class of $\gamma$ varies between increasing and decreasing on an unbounded sequence of time intervals along the geodesic.
\end{abstract}

\section{Introduction}
Let $S=S_{g,n}$ be a surface of genus $g$ with $n$ punctures. We assume $3g-3+n\geq 1$.  
Let $\T(S)$ be the Teichm\"uller space of $S$ with the Teichm\"uller metric, and let $X\in \T(S)$ a Riemann surface.     We denote by $q=\phi(z)dz^2$ a meromorphic quadratic differential with at most simple poles at the punctures of $X$.  Let $\|q\|=\int_X |\phi(z)|dz^2|$ its area. Each   $q$ such that $\|q\|=1$  determines a Teichm\"uller geodesic  ray $X_t; 0\leq t<\infty$.  Namely, for each $t$ take the Teichm\"uller map $f_t:X\to X_t$ which expands along the horizontal trajectories of $q$ by  $e^t$ and contracts along the vertical trajectories by $e^t$.   Consequently the unit sphere of qudratic differentials on $X$ can be thought of as a visual  boundary of $\T(S)$ as seen from $X$, and gives a compactification of $\T(S)$, depending on the base point $X$.

Teichm\"uller  space via the uniformization theorem is also the space of hyperbolic structures. Using hyperbolic geometry, Thurston constructed  a compactification 
$\PMF$  of Teichm\"uller space. (See \cite{F})  This compactification is more natural in that it is basepoint free.  In particular, the action of the mapping class group on $\T(S)$ extends  naturally to $\PMF$ and this is the basis for Thurston's classification of elements of the mapping class group.  An interesting question is to compare these two compactifications.   Ultimately this question involves relating complex structures with hyperbolic structures via the uniformization theorem. 

 It was already known by work of Kerckhoff \cite{Ke} that the two compactifications are not the same. 
He showed that the mapping class group acting on $\T(S)$ which has a continuous  extension to $\PMF$ does not have a continuous extension to the visual sphere. 
For $X_t$ a geodesic ray  in $\T(S)$, an interesting question then is to find the limit points of the ray $X_t$ in $\PMF$.  What makes this problem at times complicated  is that Teichm\"uller geodesics are determined by varying the flat metric of the quadratic differential while the Thurston compactification is defined in terms of the conformally equivalent hyperbolic metric and so the difficulty is in comparing these metrics.   In the case that the vertical foliation $(F,\mu)$ of $q$ is Strebel, which means that the vertical trajectories are closed, Masur \cite{masur boundary} showed that the limit exists and is the {\em barycenter} of the simplex of invariant measures for the vertical foliation.  Namely one assigns equal weights to each closed curve. If the vertical foliation is  uniquely ergodic, he also showed that there is a unique limit which is the projective class $[F,\mu]$ itself. Lenzhen \cite{Le} found the first example where there is not a unique accumulation point for a ray. In her example, the surface consisted of two tori glued along a slit and the slit was in the vertical direction. The flow in each torus is minimal but because the slit is in the vertical direction,  the vertical foliation is not minimal. 

The question then arises of investigating quadratic differentials whose vertical foliation is minimal but not uniquely ergodic.  In that case, there is a simplex of invariant measures whose extreme points are ergodic measures.
C. Leininger, A. Lenzhen, and K. Rafi \cite{LLR} recently found such examples of $1$-parameter families on the five-times punctured sphere such that for any quadratic differential whose vertical foliation is in this family, the limit point of the geodesic in $\PMF$ is the entire one-dimensional simplex.   Their construction is topological in nature and their measured foliations are found by taking limits in $\PMF$ of curves under  Dehn twists.      

 In this paper we consider a class of examples of geodesics exhibiting different and in some sense complementary phenonema from the Leininger-Lenzhen-Rafi examples. These new examples first arose in work of Veech \cite{vskew1} who constructed non-uniquely ergodic minimal dynamical systems in the context of skew products over rotations by a number $\alpha$.   They may be interpreted in terms of translation surfaces  $(X_c,\omega_c)$ of genus $2$, consisting of a pair of tori glued along a horizontal slit. The tori are rectangular  tori to which we have applied a shear or parabolic transformation defined in terms of $\alpha$. In a second description, we have two square tori glued along a slit and we have a flow in a direction with angle $\alpha$.  The slit length is determined by $\alpha$.  These  are described in Figures 1 and 2 below. In the first description, the first return map of the flow in the vertical direction  to a horizontal interval  is the given interval exchange. 
 
The non-unique ergodicity of the system means that the vertical flow fits into  a $1$-parameter family of (oriented) measured foliations denoted by $(F,\mu_{c})$, 
where $F$ is the fixed topological foliation and $\mu_c$ is the transverse measure and $-1\leq c\leq 1$.  Each transverse  measure $\mu_c$ is of the form $\frac{1}{2}(1-c)\mu_-+\frac{1}{2}(1+c)\mu_+$ where $\mu_-= \mu_{-1}$ and $\mu_+=\mu_1$ are ergodic measures. Our notational convention also distinguishes a non-ergodic but {\it symmetric} flow-invariant transverse measure $\mu_0$. 
Thus we have a family of examples parametrized by $\alpha$ and for each of these there is a $1$-parameter family of invariant measures for the corresponding flow.  


Let $\Lambda\subset \PMF$ be the corresponding $1$-dimensional simplex consisting of this family $[F,\mu_\tau]$ of projective measured foliations.  
(We change the parameter from $c$ to $\tau$ since we will be taking the Teichm\"uller geodesic corresponding to one foliation determined by $\omega_c$ and seeing which limit points, described as foliations $\mu_{\tau}$, are achieved).  
Now each $(X_c,\omega_c)$ determines a Teichmuller geodesic $g_t(X_c,\omega_c)$. 
The main results of this paper are
 \begin{thm}\label{thm:barycenter}
For any of the Veech examples  (any $\alpha$) 
 \begin{enumerate}
 \item  If $c=0$,  there is a unique limit point of $g_t(X_0,\omega_0)$  which is the barycenter $[F,\mu_{0}]\in\Lambda$.
 \item If $c \notin \{-1,1\}$ then the ergodic endpoints $[F,\mu_1]$ and $[F,\mu_{-1}]$ are not in the limit set of $g_t(X_c,\omega_c)$. 
 \end{enumerate}
 \end{thm}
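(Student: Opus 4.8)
\emph{Strategy.} The plan is to reduce both statements to locating the points $g_t(X_c,\omega_c)$ in $\PMF$ for large $t$, and to control that location by the flat geometry of the stretched differential $q_t$. Recall that $g_{t_k}(X_c,\omega_c)\to[\nu]$ in $\PMF$ precisely when, for every pair of simple closed curves $\gamma',\gamma''$ with $i(\nu,\gamma'')\neq 0$, the ratio $\ell_{g_{t_k}}(\gamma')/\ell_{g_{t_k}}(\gamma'')$ of hyperbolic lengths tends to $i(\nu,\gamma')/i(\nu,\gamma'')$. Since the vertical foliation $(F,\mu_c)$ is minimal and not uniquely ergodic, Masur's criterion forces the ray $g_t(X_c,\omega_c)$ eventually to leave every compact set of moduli space, so the systole along $g_t$ tends to $0$; for all large $t$ there is thus a short curve $\beta(t)$, and the position of $g_t(X_c,\omega_c)$ in $\PMF$ is essentially determined by $\beta(t)$ (or, when two curves --- one per torus --- are simultaneously short, by the pair together with the ratio of their lengths). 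Granting that every limit point of $g_t(X_c,\omega_c)$ lies in the one--dimensional simplex $\Lambda$ (this should follow from the structure of the horizontal foliation of $\omega_c$), fix once and for all two curves $\gamma',\gamma''$, one essentially carried by each torus, so that $\rho(\tau):=i\big((F,\mu_\tau),\gamma'\big)/i\big((F,\mu_\tau),\gamma''\big)$ is strictly monotone in $\tau\in[-1,1]$ --- such a pair exists because $\mu_-$ and $\mu_+$ are distinct ergodic measures. The whole problem is then to describe the subsequential limits of $r(t):=\ell_{g_t}(\gamma')/\ell_{g_t}(\gamma'')$ as $t\to\infty$.

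\emph{The length estimate.} The technical heart is an estimate for $\ell_{g_t}(\gamma)$, for the fixed curves above, sharp enough to let us pass to the limit of the \emph{ratio} $r(t)$. We estimate $\Ext_{g_t}(\gamma)\emul\ell_{q_t}(\gamma)^2$ up to the usual thick--thin corrections, where $\ell_{q_t}(\gamma)$ is computed explicitly from the two sheared tori and the slit --- whose sizes are governed stage by stage by the continued--fraction expansion of $\alpha$ and by the weights $\tfrac{1-c}{2},\tfrac{1+c}{2}$ appearing in $\mu_c=\tfrac{1-c}{2}\mu_-+\tfrac{1+c}{2}\mu_+$ --- and then convert to hyperbolic length. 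Because our fixed $\gamma$ meets $\beta(t)$ a number of times that grows with $t$, the dominant contribution to $\ell_{g_t}(\gamma)$ comes from the crossings of the collar of $\beta(t)$, so $\ell_{g_t}(\gamma)$ is comparable to $i(\gamma,\beta(t))$ times the collar width; hence $r(t)$ equals $\rho(\tau(t))$ up to a controlled error, where $[F,\mu_{\tau(t)}]$ is the projective class of the short curve $\beta(t)$. In short, $\Lambda_\infty$ is the accumulation set in $\Lambda$ of $\{[\beta(t)]\}_t$, and we must locate the systoles of the geodesic in $\PMF$.

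\emph{Part (1).} Let $\sigma$ be the involution interchanging the two tori; at $c=0$ the construction is symmetric, so $(X_0,\omega_0)$ is $\sigma$--invariant, the geodesic $g_t(X_0,\omega_0)$ is pointwise fixed by $\sigma$, and the limit set $\Lambda_\infty$ is $\sigma$--invariant. Since $\sigma$ acts on $\Lambda$ by $\tau\mapsto-\tau$ with the barycenter $[F,\mu_0]$ as its unique fixed point, $\Lambda_\infty$ is symmetric about $[F,\mu_0]$, and it remains to exclude a nondegenerate symmetric segment. By the previous paragraph the short curves at time $t$ form a $\sigma$--symmetric configuration --- either a single $\sigma$--invariant curve, or a pair $\beta^{(1)}(t),\sigma\beta^{(1)}(t)$, one in each torus, of equal length --- so $g_t(X_0,\omega_0)$ lies within a controlled distance of the $\sigma$--symmetric slice of the associated segment in $\PMF$, namely $[F,\mu_0]$; as $t\to\infty$ the collar widths blow up, this distance tends to $0$, and therefore $r(t)\to\rho(0)$ and $\Lambda_\infty=\{[F,\mu_0]\}$.

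\emph{Part (2).} Suppose, for contradiction, that $[F,\mu_1]\in\Lambda_\infty(X_c,\omega_c)$ for some $c\notin\{-1,1\}$ (the case $[F,\mu_{-1}]$ is identical after replacing $c$ by $-c$). Then along some $t_k\to\infty$ we have $r(t_k)\to\rho(1)$, i.e.\ $\tau(t_k)\to1$, i.e.\ the short curves $\beta(t_k)$ converge in $\PMF$ to the ergodic endpoint $[F,\mu_1]$. But the vertical $\omega_c$--lengths of the candidate short curves are pairings with $\mu_c$, and since $c\neq-1$ the measure $\mu_c$ carries a definite amount $\tfrac{1-c}{2}>0$ of $\mu_-$; feeding this into the stage--by--stage estimate of the length paragraph shows that the projective class of every sufficiently short curve along the geodesic stays in a fixed compact subset of $\Lambda$ disjoint from a neighbourhood of $[F,\mu_1]$ (and symmetrically of $[F,\mu_{-1}]$) --- the $\mu_-$--component forces a uniformly positive proportion of the mass of every such curve "onto the other torus." This contradicts $\tau(t_k)\to1$, proving the theorem. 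The delicate point throughout --- and the expected main obstacle --- is exactly the conversion in the length paragraph: coarse comparability controls only the leading exponential--in--$t$ growth of $\Ext_{g_t}(\gamma)$, which cancels in $r(t)$, whereas the identity of the limit point is carried by the finer data (how many times $\gamma$ crosses and wraps around $\beta(t)$, weighted by the collar modulus), so one must make this quantitative while keeping the multiplicative errors from accumulating over the infinitely many renormalization stages dictated by the expansion of $\alpha$.
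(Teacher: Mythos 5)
Your part (1) rests on the same symmetry the paper uses, but your execution routes through a heuristic ("the position of $g_t$ in $\PMF$ is essentially determined by the short curve $\beta(t)$") that is both unnecessary and, as we will see below, unreliable. The clean argument is: the flat surface $g_tr_\theta(X_0,\omega_0)$ is invariant for \emph{every} $t$ under the involution exchanging the two tori, and that involution swaps $\gamma_1$ and $\gamma_2$, so $\ell_{\rho_t}(\gamma_1)=\ell_{\rho_t}(\gamma_2)$ exactly for all $t$. By Lemma~\ref{lem:continuity} every subsequential limit lies in $\Lambda$, and by the definition of convergence in $\PMF$ together with Proposition~\ref{prop:crit} any such limit $[F,\mu_\tau]$ must satisfy $i((F,\mu_\tau),\gamma_1)=i((F,\mu_\tau),\gamma_2)$, i.e.\ $\tau=0$. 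Note that $\sigma$-invariance of the limit set alone only yields a symmetric subset of $\Lambda$; it is the exact equality of lengths at every time, not the symmetry of the limit set, that excludes a nondegenerate symmetric interval. Your "controlled distance to the symmetric slice" sentence is not an argument for that exclusion.

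Part (2) has a genuine gap: two of its key claims are false. First, $g_{t_k}(X_c,\omega_c)\to[F,\mu_1]$ in $\PMF$ does \emph{not} imply that the short curves $\beta(t_k)$ converge projectively to $[F,\mu_1]$. The $\PMF$-limit is governed by ratios of hyperbolic lengths of fixed test curves, which depend on collar widths and on the lengths of the transverse basis curves in \emph{both} tori, not merely on which curve is shortest; Theorem~\ref{thm:CF3} exhibits a ray whose short curves have the same combinatorial description at all large times yet whose limit set contains both an ergodic endpoint and the barycenter. Second, the assertion that the projective classes of the short curves stay in a compact subset of $\Lambda$ away from the endpoints is exactly backwards: the almost-vertical core curves $\sigma_\pm^k$ of the two tori converge projectively precisely to the ergodic endpoints $[F,\mu_\pm]$ (Lemma~\ref{lem:crossings}(4)), so there is nothing to contradict. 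The correct mechanism is a quantitative comparison. Because $c\notin\{-1,1\}$, the vertical components of corresponding curves in the two tori agree and the horizontal components are uniformly comparable, so there is a piecewise affine exchange map of uniformly bounded dilatation; hence for a symmetric test pair $\beta_-^j,\beta_+^j$ one has $\limsup_{t\to\infty}\ell_{\rho_t}(\beta_-^j)/\ell_{\rho_t}(\beta_+^j)<a$ for some $a=a(c)$. On the other hand Corollary~\ref{cor:smallhor} gives $\mu_-(\beta_-^j)/\mu_+(\beta_-^j)\to\infty$, so for $j$ large $\lim_{k\to\infty} i(\beta_-^j,\sigma_-^k)/i(\beta_+^j,\sigma_-^k)>a$. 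Since $\sigma_-^k\to[F,\mu_-]$ projectively, Lemma~\ref{lem:ergodiclimit} then rules out $[F,\mu_-]$ as a limit point, and symmetrically $[F,\mu_+]$. Your observation that $\mu_c$ carries a definite $\mu_-$-component does keep the areas of the two tori comparable (Proposition~\ref{prop:geom2}), but comparable areas alone do not control the ratio of hyperbolic lengths; the quasiconformal bound and the divergence of the intersection ratio are both needed.
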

 \begin{thm}\label{thm:ex} If $c \neq\{-1,0 ,1\}$ there are examples of $\alpha$ where the limit set contains the barycenter and other points as well.
 \end{thm}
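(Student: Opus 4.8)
The plan is to locate $g_t(X_c,\omega_c)$ in $\PMF$ at a sequence of times governed by the combinatorics of $\alpha$, and then to choose those combinatorics so that the resulting projective foliations accumulate both at the barycenter $[F,\mu_0]$ and at a point $[F,\mu_{\tau^\ast}]$ with $\tau^\ast\neq0$; since the theorem asserts only that the limit set contains the barycenter together with at least one further point, this suffices. The geometric input is that, $(F,\mu_c)$ being minimal but not uniquely ergodic, Masur's criterion forces $g_t(X_c,\omega_c)$ to leave every compact set, and the Veech construction describes the excursions explicitly: there are times $t_k\to\infty$ and simple closed curves $\beta_k$ — the almost-invariant curves associated with the two tori, modified from stage to stage according to the partial quotients $a_1,\dots,a_k$ of $\alpha$ — such that $g_{t_k}(X_c,\omega_c)$ is, up to bounded multiplicative error, a ``two tori joined along a short curve $\beta_k$'' configuration $A_k\cup_{\beta_k}B_k$ in which $A_k$ and $B_k$ have controlled flat geometry. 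The ratio of the flat areas of $A_k$ and $B_k$ is a constant $\rho(c)$ with $\rho(0)=1$ and $\rho(c)\neq1$ for $c\neq0$ (it records the weights in $\mu_c=\tfrac12(1-c)\mu_-+\tfrac12(1+c)\mu_+$), whereas the moduli of $A_k$, $B_k$ and the twisting about $\beta_k$ oscillate with $k$, through continued-fraction-type dynamics driven by the $a_j$, and are explicit functions of $c$ and of $a_1,\dots,a_k$.

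The second step identifies the point of $\PMF$ near $g_{t_k}(X_c,\omega_c)$ by comparing its flat and hyperbolic metrics — precisely the comparison made in proving Theorem \ref{thm:barycenter}. Estimating hyperbolic lengths of test curves from the flat picture (curves crossing $\beta_k$ contribute $\asymp i(\cdot,\beta_k)\log(1/\ell(\beta_k))$; curves supported in $A_k$ or $B_k$ contribute their rescaled flat lengths; the twist about $\beta_k$ enters the crossing curves), normalizing, and passing to a subsequence along which the combinatorics stabilize, one finds $g_{t_k}(X_c,\omega_c)\to[F,\mu_{\tau_k}]$, where $\tau_k=\tau_k(c;a_1,\dots,a_k)\in(-1,1)$ is a monotone function of the ratio of the contributions of the two sides to the hyperbolic metric; that ratio is the area ratio $\rho(c)$ times a ``modular factor'' built from the moduli of $A_k$, $B_k$ and the twist, which the explicit formula shows sweeps out all of $(0,\infty)$ as the data vary. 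When $c=0$ the $\pi$-rotation exchanging the two tori is an affine automorphism of $(X_0,\omega_0)$ commuting with the Teichm\"uller flow, hence an isometry of every $g_t(X_0,\omega_0)$; it fixes $\beta_k$ and swaps $A_k$ with $B_k$, so the two contributions agree and $\tau_k\equiv0$, reproving the first part of Theorem \ref{thm:barycenter}.

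The third step chooses $\alpha$. Along one subsequence of stages, pick the $a_j$ so that the modular factor converges to $1/\rho(c)$; then it cancels the area imbalance and $\tau_k\to0$ — possible because the factor sweeps out $(0,\infty)$. Along a second, interleaved, subsequence, take the $a_j$ ``balanced'' so that the modular factor equals $1$ and $\tau_k=\tau^\ast(c)$, the value forced by the pure area ratio $\rho(c)$; this is nonzero because $c\neq0$ and lies in $(-1,1)$ because $c\notin\{-1,1\}$, consistent with the second part of Theorem \ref{thm:barycenter}. Concatenating long enough blocks of each type into a single continued fraction produces an $\alpha$ for which $g_t(X_c,\omega_c)$ accumulates at both $[F,\mu_0]$ and $[F,\mu_{\tau^\ast}]$, proving the theorem.

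The main obstacle is the second step: making the flat-to-hyperbolic comparison \emph{uniform} over the balanced times, now that the two subsurface shapes vary with $k$ and with $c$ and cannot be normalized away by symmetry, and isolating the modular factor in $\tau_k$ sharply enough to know it can be steered both to $1$ and to $1/\rho(c)$. A secondary issue — that the times between the $t_k$ may contribute further accumulation points — is harmless here, since only two points are required; with more care one expects the limit set in fact to be an entire subsegment of $\Lambda$.
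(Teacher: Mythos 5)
Your strategy is the one the paper actually follows for the precise version of this statement (Theorem~\ref{thm:CF2}, proved in Section~7.1): decompose $g_t(X_c,\omega_c)$ at distinguished times into two tori glued along a short slit, convert the flat picture into hyperbolic lengths of the fixed test curves $\gamma_1,\gamma_2$ via a formula of the shape $\ell(\gamma_i)\sim\sum_{\pm} i(\gamma_i,\sigma_\pm^k)\,\ell(\beta_\pm)$ (Proposition~\ref{prop:augmented}), and use the intersection symmetries $i(\gamma_1,\sigma_-^k)=i(\gamma_2,\sigma_+^k)$, $i(\gamma_1,\sigma_+^k)=i(\gamma_2,\sigma_-^k)$ of Lemma~\ref{lem:crossings} to see that the limit point in $\Lambda$ is the barycenter exactly when $\ell(\beta_-)=\ell(\beta_+)$ in the limit. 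One packaging difference: the paper produces both limit points from a \emph{single} $\alpha$ satisfying (\ref{eq:condA}), (\ref{eq:condB}), (\ref{eq:condC}) by varying the time offset $\log a$ within each excursion --- stretch both tori a lot so that $\ell(\beta_\pm)\sim 4\log(\text{width})$ washes out the fixed flat-width ratio $\tfrac{1+c}{1-c}$ (barycenter), or stretch only moderately (other point) --- rather than by interleaving two species of continued-fraction blocks. Your interleaving is harmless for the existence statement but unnecessary.

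There are, however, two genuine gaps. The first you flag yourself: the uniform flat-to-hyperbolic comparison. Making Proposition~\ref{prop:augmented} applicable requires showing the collar contribution $\ell(\gamma_i\cap U)\asymp q_{n_{k-1}}\log a_{n_k+1}$ is negligible against the crossing contribution $\asymp q_{n_k}$; this is exactly what the Diophantine condition (\ref{eq:condC}) is designed for, and your proposal never isolates any such condition on $\alpha$, so the ``explicit functions of $a_1,\dots,a_k$'' you invoke are not actually under control. The second gap is unflagged and more serious: you assert that the ``balanced'' subsequence yields $\tau^*(c)\neq 0$ ``because $c\neq 0$,'' which silently assumes that unequal flat widths of the two limiting rectangular tori force unequal hyperbolic lengths of their horizontal curves. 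That is not automatic --- hyperbolic length sees only the conformal structure --- and the paper devotes Proposition~\ref{thm:increasing} (a harmonic-maps/Gardiner-formula argument) to proving strict monotonicity of the hyperbolic length of the horizontal class in the width. Without it, your balanced times could a priori also converge to the barycenter and the theorem would fail. Relatedly, the claim that the ``modular factor sweeps out all of $(0,\infty)$'' is unsubstantiated and stronger than what is true or needed: what the paper establishes (Proposition~\ref{lem:largecylinder}) is that long stretching drives the ratio of hyperbolic lengths to $1$, while a moderate stretch keeps it bounded away from $1$.
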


 In the case where the transverse measure is ergodic different phenomena can occur. 
 \begin{thm}\label{thm:ergodic}
 \begin{enumerate}
  \item There are examples of $\alpha$ for which  the limit set of $g_t(X_1,\omega_1)$ contains the  interval 
from $[F,\mu_0]$ to $[F,\mu_1]$; namely, it contains the barycenter and the corresponding ergodic endpoint   of the interval.
 \item There is an example of $\alpha$ for which  $g_t(X_1,\omega_1)$ converges to $[F,\mu_1]$.  

 \end{enumerate}
 \end{thm}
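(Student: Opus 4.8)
\bigskip

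The plan is to locate the limit set of $g_t(X_1,\omega_1)$ in $\PMF$ by tracking the projectivized length functions along the ray and reading off their accumulation points. Fix the flat model of $(X_1,\omega_1)$ described above --- two (sheared) unit tori $T^{(1)}$ and $T^{(2)}$ glued along a horizontal slit whose length is governed by the continued fraction $\alpha=[a_1,a_2,\dots]$ --- and let $\sigma$ be the separating curve around the slit. Continued-fraction (Rauzy--Veech) renormalization of the vertical flow produces a distinguished sequence of times $t_n\to\infty$ and, at each, a controlled description of $g_{t_n}(X_1,\omega_1)$ as a union of two flat tori along a short slit whose moduli are governed by the continued fraction tail of $\alpha$ at stage $n$, together with distinguished simple closed curves $\beta_n^{+},\beta_n^{-}$ essentially carried by the two tori and winding $\sim a_n$ times. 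On $g_t(X_1,\omega_1)$ the flat lengths and cylinder moduli of $\sigma$ and of the $\beta_n^{\pm}$ are computed directly from the slit data; the Maskit--Kerckhoff comparison between extremal and hyperbolic length (with the logarithmic correction that matters precisely where these curves are short) then yields the normalized hyperbolic length spectrum of $g_t(X_1,\omega_1)$. A projective class $[F,\mu_\tau]\in\Lambda$ belongs to the limit set exactly when, along some subsequence, the normalized lengths $\ell_{g_t}(\cdot)$ converge to $i\bigl([F,\mu_\tau],\cdot\bigr)$, and the weight $\tau$ records the asymptotic proportion of the renormalized flat picture sitting on the $T^{(1)}$ ($\mu_+$) side versus the $T^{(2)}$ ($\mu_-$) side; in particular $\tau=1$ (resp.\ $\tau=0$) corresponds to all (resp.\ equal) mass.

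For statement (1) I would first record two structural facts. Since $\Ext_{g_t(X_1,\omega_1)}(F,\mu_1)=e^{-2t}\Ext_{X_1}(F,\mu_1)\to 0$, the inequality $i(\mu,\nu)^2\le\Ext(\mu)\Ext(\nu)$ forces every accumulation point $[\calG]$ of the ray to satisfy $i(\calG,(F,\mu_1))=0$; as $F$ is minimal and filling this means $[\calG]=[F,\mu_\tau]$ for some $\tau$, i.e.\ the limit set is contained in $\Lambda$. Moreover the limit set is the $\omega$--limit set of a continuous ray in the compact metrizable space $\PMF$, hence connected. It therefore suffices to exhibit, for a suitably chosen $\alpha$, one subsequence of times along which the normalized length spectrum converges to $i\bigl([F,\mu_1],\cdot\bigr)$ and another along which it converges to $i\bigl([F,\mu_0],\cdot\bigr)$: connectedness together with containment in the segment $\Lambda$ then forces the entire subinterval from $[F,\mu_0]$ to $[F,\mu_1]$ into the limit set. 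These two subsequences would be engineered by prescribing the continued fraction of $\alpha$ to alternate between very long blocks that let the renormalized surface settle into the ``$\mu_+$--dominated'' configuration (yielding the endpoint $[F,\mu_1]$) and long blocks forcing the ``balanced'' configuration (yielding the barycenter $[F,\mu_0]$). Securing $[F,\mu_1]$ genuinely requires this construction: the vertical foliation of $\omega_1$ being the ergodic measure $\mu_1$ does not by itself push the geodesic near $[F,\mu_1]$, since the vertical flow is not uniquely ergodic and Masur's theorem does not apply --- this is also the source of the oscillation byproduct, as the hyperbolic length of a fixed curve alternately increases and decreases as the renormalized picture swings between the two configurations.

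For statement (2) one would instead choose $\alpha$ at the opposite extreme, with partial quotients $a_n$ growing fast enough --- a summability condition read off from the flat-length estimates of the first paragraph --- that the length ratio $\ell_{g_t}(\beta_n^{-})/\ell_{g_t}(\beta_n^{+})\to 0$ uniformly in $t$, i.e.\ the contributions that would drag the limit toward $[F,\mu_0]$ or $[F,\mu_{-1}]$ are summably small. Then the normalized length spectrum of $g_t(X_1,\omega_1)$ converges (not merely accumulates), and by the containment in $\Lambda$ its only possible limit is the endpoint $[F,\mu_1]$. This is a quantitative ``unique ergodicity along the geodesic'' statement complementing Theorem~\ref{thm:barycenter}: the interval exchange stays non-uniquely ergodic, but $\omega_1$ remains within summably small hyperbolic distance of the ray toward $[F,\mu_1]$.

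The main obstacle in both parts is the passage from the easy flat data to hyperbolic length exactly when $g_t(X_1,\omega_1)$ is deep in the thin part --- the ``comparing the two metrics'' difficulty emphasized in the introduction --- because the curves that determine the $\PMF$ limit, namely $\sigma$ and the $\beta_n^{\pm}$, are precisely the ones becoming short, so the logarithmic correction terms cannot be discarded and must be controlled uniformly in $n$. A secondary difficulty, specific to (1), is the reverse engineering of $\alpha$: one must check that the alternating blocks can be made simultaneously long enough to realize each target configuration arbitrarily well and uniform enough that the intermediate times genuinely interpolate, so that connectedness of the $\omega$--limit set does deliver the whole subsegment once the two endpoints are secured.
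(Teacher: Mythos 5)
Your high-level skeleton agrees with the paper's: containment of the limit set in $\Lambda$ (Lemma~\ref{lem:continuity}), connectedness of the accumulation set of a proper ray, and then, for part (1), the production of two subsequences of times, one converging to $[F,\mu_1]$ and one to the barycenter. The gap is in the mechanism you propose for the barycenter subsequence, and it is not a technicality --- it is the crux of part (1). You write that the barycenter is obtained by choosing continued-fraction blocks that force the renormalized flat surface into a ``balanced configuration,'' and your dictionary identifies the limit parameter $\tau$ with the asymptotic proportion of flat mass on each side of the slit. In the ergodic case $c=1$ this is impossible: Lemma~\ref{lem:erg} gives $\mu_+(T_-^k)\leq \sum_{j\geq k}2/a_{n_j+1}\to 0$, so along \emph{every} sequence of renormalization times the flat area of the small torus is vanishingly small compared to the large one, no matter how the partial quotients are chosen. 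By your own dictionary the limit would then always be the endpoint $\tau=1$, never the barycenter. The paper's resolution (Proposition~\ref{prop:barycenter for erg}) is to decouple the $\PMF$ limit from flat proportions: one flows an \emph{extra} time $k$ past the renormalization time $s_k$, so that both tori become long and thin horizontally, and then invokes Proposition~\ref{lem:largecylinder}, which says the hyperbolic length of the horizontal curve of an $(a,1/a)$ punctured rectangular torus is $4\log a+O(1)$. The flat widths of the two tori differ by a factor $\asymp k^2$, but after stretching by $e^k$ the ratio of hyperbolic lengths is $\frac{4\log(e^k)+D}{4\log(e^k/4k^2)-D}\to 1$: the logarithm washes out any subexponential discrepancy in flat size. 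Feeding this into Proposition~\ref{prop:augmented} gives $\ell_{\rho}(\gamma_1)/\ell_{\rho}(\gamma_2)\to 1$, hence the barycenter. Making this work requires choosing $a_{4k+3},a_{4k+4}$ enormous so that the slit stays short during the extra flow time; without identifying this logarithmic mechanism, no choice of blocks will produce the barycenter, and your argument for part (1) does not close.

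For part (2) your goal --- dominance of the $T_+^k$ contributions to the lengths of $\gamma_1,\gamma_2$ uniformly over all times --- is the right one (it is the hypothesis of Lemma~\ref{lem:ergodiclimit} via equation~\eqref{eqn:ergodiclimit}), but the proposal supplies no mechanism for the uniformity, which is where all the work lies: the paper must partition each interval $[s_k,s_{k+1}]$ into three regimes in which different terms of the length formula in Lemma~\ref{lem:asymptotic} dominate (the twisting term $-i(\gamma_i,\beta)\log\ell(\beta)$ versus the crossing term $i(\gamma_i,\sigma)\ell(\beta)$), and verify the ratio is $O(\log k/k)$ in each. Moreover, the prescription ``partial quotients growing fast enough'' points the wrong way: in light of the discussion above, very large partial quotients at the wrong positions are precisely what create a barycenter limit point, by permitting a long horizontal stretch of both tori while the slit stays short. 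The paper instead takes the controlled geometric growth $a_k=2^k$ and must check that the small torus $T_-^k$ never enters the long-thin regime where the logarithmic collapse of length ratios would drag the geodesic toward the interior of $\Lambda$.
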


\begin{rem}By \cite{LM} the geodesic rays in Theorems \ref{thm:barycenter} and \ref{thm:ergodic} diverge from each other in the Teichm\"{u}ller metric  and yet they can share  the barycenter as a limit point in $\PMF$.  By \cite{ivanov}  the ray  in Theorems \ref{thm:barycenter} corresponding to $c=0$ and in \ref{thm:ex}  corresponding to $c\neq 0$  stay a 
bounded distance apart in the Teichm\"{u}ller metric  
 and yet have different limit sets in $\PMF$. 
These two examples highlight ways in which Thurston's compactification behaves differently than the visual boundary of Teichm\"{u}ller space in contrast to the case when the vertical foliation is uniquely ergodic. 
\end{rem}

\begin{rem}
After the introduction of the Veech construction, in Theorem~\ref{thm:CF}   we will be more specific  in how the examples are built.  The behavior of geodesics will depend on the continued fraction expansion of $\alpha$.  
\end{rem}

\begin{rem}
We do not  know the exact limit set  in the first  part of Theorem~\ref{thm:ergodic} and in Theorem~\ref{thm:ex}.   Our Theorems are complementary to those in \cite{LLR} in that they achieve  the entire simplex in $\PMF$ as a limit set while we do not. In fact the most intricate example we give is (2) in Theorem~\ref{thm:ergodic} where we find a geodesic with a unique limit.   
\end{rem}

\begin{rem} By our methods we show that there exists a pair of a simple closed curves and  Teichm\"{u}ller geodesic so that the hyperbolic lengths of these  curves change  from increasing to decreasing an arbitrarily large amount arbitrarily far out along the geodesic. This will be shown by Corollary \ref{cor:decrease}.
This does not contradict the Theorem in \cite{LR} which says that lengths are quasiconvex along geodesics.

\end{rem}

The arguments for the theorems follow a  pattern.  The Teichm\"uller geodesics we construct, when projected from the Teichm\"uller space to the Moduli space, will at all times   represent approximately a pair of  tori glued along a slit that is short in the flat metric.  
For each of the geodesics we study, we anticipate this decomposition by defining curves on the surface which will eventually become the slit, and the  basis curves for each of the tori which will become moderate or even short along the geodesic.  
Our definitions of these curves will determine their flat geometry, and their relative intersection numbers. Most of our arguments then become a study of the hyperbolic geometry of the resulting tori, especially the hyperbolic lengths of the curves defining the original pair of tori, by a study of the lengths of the slits, and the lengths of the basis curves of the tori.  

In terms of that description of the structure of the argument, we organize the paper as follows. In Section~$2$, we recall Veech's original construction of a non-uniquely ergodic interval exchange transformation and adapt it for our purpose of creating particular Teichm\"uller geodesics with the desired asymptotics. 
In Section~$3$, we recall the characterizations of convergence in $\PMF$ that we need, and we define the slits and the basis curves for each of the tori complementary to the slit;  we record their basic flat geometric and topological invariants. 
Section~$4$ describes the geometry and measure theory of the tori that we encounter along the Teichm\"uller geodesic, and this provides enough background to prove the first of our basic results in Section~$5$. In preparation for the proofs of the remaining results, we collect in Section~$6$ some facts on the hyperbolic geometry of these  tori which are glued along a slit.  In particular, we prove two results which allow us to estimate the hyperbolic lengths of some 'test curves' in terms of their intersection numbers with, and the lengths of the slits, and the  basis curves. All of this applied in the final Section~$7$, where we prove the remaining main results by carefully estimating the hyperbolic lengths of the test curves: this involves estimating all of the terms in the formulae we found in Section~$6$.

\begin{notation}
\emph{Rate Comparison.} Given two functions $f(x),g(x)$ we say $f\sim g$ if $\lim_{x\to\infty}\frac{f(x)}{g(x)}\to 1$.
We say $f\asymp g$ if there exists $C$ such that
$$\frac{1}{C}\leq \frac{f(x)}{g(x)}\leq C$$ as $x\to\infty$.

\end{notation}

\begin{notation}
\emph{Geometric Invariants.} Given a hyperbolic metric $\rho$, and a closed geodesic  or geodesic arc $\gamma$, then $\ell_\rho(\gamma)$ is its hyperbolic length. 

Given a quadratic differential $q$ and a time $t$ along   the Teichm\"uller geodesic defined by $q$, and simple closed curve $\gamma$,  denote by $|\gamma|_t$ the length of $\gamma$ at time $t$;  also, $v_t(\gamma)$ will denote the vertical component and $h_t(\gamma)$ the horizontal component of $\gamma$. Note of course that a geodesic in the flat metric is not necessarily a geodesic in the hyperbolic metric so, in discussing the length of a curve $\gamma$, we will always be referring to the geodesic representative of $[\gamma]$.
\end{notation}

\section{The Veech construction}
Let  $<<x>>:=x-\lfloor x \rfloor$ denote the fractional part of $x$.  
Let $0<\alpha<1$ be a real number with continued fraction expansion $$\alpha=[a_1,a_2\ldots a_n\ldots ].$$ 
Let $p_n/q_n$ be the corresponding convergents.  (We recall the recursive relationships $p_n = a_np_{n-1} + p_{n-2}$ and  $q_n = a_nq_{n-1} + q_{n-2}$.)
For the construction of Veech we assume 
there is a  subsequence $n_k$ such that 
\begin{equation}
\label{eq:condA} \tag{A}
\sum_{k=1}^{\infty} (a_{n_k+1})^{-1}<\infty.
\end{equation} 

We further assume  in  Theorem~\ref{thm:ex} 
that the  following additional pair of conditions hold.
\begin{equation}
\label{eq:condB} \tag{B}
a_{n_k}\to\infty.
\end{equation}

\begin{equation}
\label{eq:condC} \tag{C}
q_{n_{k-1}}\log a_{n_k+1}=o(q_{n_k})
\end{equation}
\begin{rem}
If $\alpha$ has dense orbit under the Gauss map then there exists a sequence of $a_{n_i}$ that have these three properties. 
\end{rem}

\begin{rem}
Condition B will control the geometry of surfaces along the geodesic which will allow us to calculate hyperbolic lengths. Condition C will control length of slits.  
\end{rem}
\subsection{IET construction}

 Let $$b=\sum_{k=1}^{\infty} 2<<q_{n_k}\alpha>>$$ 
 and 
$$J=[0,b].$$

Then  (\cite{vskew1})
the interval exchange transformation (IET) $T:[0,1) \times \mathbb{Z}_2 \to [0,1) \times \mathbb{Z}_2$  given by
$$ T(x,i)=(<<x+\alpha>>, i +\chi_{J}(x))$$ is minimal but not uniquely ergodic. In fact it has two ergodic measures $\mu_-,\mu_+$ and Lebesgue measure $\lambda$ is $\frac 1 2 (\mu_-+\mu_+).$ The subintervals are:
\begin{enumerate} 
\item[] $I_1=J \times \{0\},$ 
\item[] $I_2= [b,1-\alpha) \times \{0\},$
\item[] $I_3=[1-\alpha,1) \times \{0\},$
\item[] $I_4=J \times \{1\},$
\item[] $ I_5=[b,1-\alpha) \times \{1\},$
\item[]$ I_6=[1-\alpha, 1) \times \{1\}$
\end{enumerate} and the permutation is $(342615),$ meaning that the third interval goes to the first position, the fourth to the second position and so forth. 
We normalize so that $|I_1|+|I_2|+|I_3|=|I_4|+|I_5|+|I_6|=1$.

Veech proved the following result  in \cite{vskew1}. 
\begin{thm} \label{thm:Veech}
Assume (\ref{eq:condA}) holds. Then there is a positive  Lebesgue measure set $E$ of points such that the orbit starting at $x\in E$ spends asymptotically more than $1/2$ their time in the first three intervals  and a positive measure set of points that asymptotically spend more than $1/2$ their time in the second three intervals. 
\end{thm}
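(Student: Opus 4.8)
The plan is to recover the non‑unique ergodicity statement of Theorem~\ref{thm:Veech} from a quantitative analysis of the Birkhoff sums of the cocycle $\chi_J$ along the orbit of the base rotation $R_\alpha(x) = \langle\langle x+\alpha\rangle\rangle$. Write $S_N(x) = \sum_{j=0}^{N-1}\bigl(\chi_J(R_\alpha^j x) - b\bigr)$; note that, since $R_\alpha$ is uniquely ergodic, $\frac1N S_N(x)\to 0$ a.e., so the quantity that measures the discrepancy between the two would‑be measures is the slowly growing ``Weyl sum'' $S_N$, and the fraction of time spent in $I_1\cup I_2\cup I_3$ versus $I_4\cup I_5\cup I_6$ after $N$ steps is governed by the sign and size of the $\mathbb Z_2$‑coordinate, i.e.\ by the parity of $\sum_{j<N}\chi_J(R_\alpha^j x)$ — but more usefully by the real‑valued partial sums of $\langle\langle x+j\alpha\rangle\rangle\chi_J$ type expressions. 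The arithmetic input is that $J = [0,b]$ with $b = \sum_k 2\langle\langle q_{n_k}\alpha\rangle\rangle$ has endpoints built precisely from the quantities $\|q_{n_k}\alpha\|$, which by the theory of continued fractions satisfy $\|q_{n_k}\alpha\| \asymp 1/q_{n_k+1} = 1/(a_{n_k+1}q_{n_k} + q_{n_k-1})$.

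First I would set up the renormalization: at scale $q_{n}$ the orbit of $R_\alpha$ on $[0,1)$ is combinatorially a rotation by $p_n/q_n$, and the three‑distance theorem describes the induced partition; the key estimate is the classical bound on the discrepancy of $\{j\alpha\}$ for $N$ in the range $q_{n_k}\le N < q_{n_k+1}$, together with the fact that $\chi_J$ is the indicator of an interval whose length $b$ is itself essentially $\sum_{k}2\|q_{n_k}\alpha\|$. Second, I would isolate the dominant contribution: for $N$ comparable to $q_{n_k+1}$ the orbit wraps around $[0,b]$ roughly $q_{n_k+1}\cdot b$ times, and the ``resonance'' between the return times near multiples of $q_{n_k}$ and the endpoints of $J$ produces a coherent drift in $S_N$ of size comparable to $a_{n_k+1}\|q_{n_k}\alpha\| \asymp a_{n_k+1}/q_{n_k+1} \asymp 1$ — this is where condition~\eqref{eq:condA} enters, guaranteeing $\sum_k 1/a_{n_k+1} < \infty$ so that the non‑resonant scales contribute only a summable error and the drifts at the resonant scales $n_k$ do not get washed out. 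Third, using a Borel–Cantelli / martingale‑type argument on the sequence of increments of $S_N$ across successive scales $n_k$, I would show that on a positive‑measure set $E_+$ the sign of the accumulated drift is eventually $+$ infinitely often with controlled magnitude, forcing $\limsup \frac1N\#\{j<N : R_\alpha^j x \in I_1\cup I_2\cup I_3\} > 1/2$, and symmetrically a positive‑measure set $E_-$ with the reverse inequality.

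Concretely the steps are: (i) translate ``time spent in the first three intervals'' into a statement about $\frac1N\sum_{j<N}(\tfrac12 - \text{second coordinate of }T^j(x,0))$ and relate this to partial sums of $\chi_J\circ R_\alpha^j$; (ii) expand $\chi_J$ in the dynamical Fourier/renormalization basis adapted to the continued fraction, so that the contribution of scale $n$ to the partial sum over a block of length $\asymp q_{n+1}$ is $\asymp q_{n+1}\|q_n\alpha\| \asymp a_{n+1}/(a_{n+1}+q_{n-1}/q_n)$, which is $\Theta(1)$ at the special scales $n = n_k$ and $o(1)$, summably, elsewhere; (iii) observe these block contributions are, up to small errors, independent‑looking in $x$ across the scales $n_k$ (they are governed by which of finitely many ``columns'' of the scale‑$q_{n_k}$ tower $x$ lies in), and run a second Borel–Cantelli argument to get, for a.e.\ $x$ in a positive‑measure set, infinitely many scales at which the drift has a prescribed sign; (iv) conclude the two asymmetric‑orbit sets have positive measure, and that (since $\lambda = \tfrac12(\mu_-+\mu_+)$ would fail if the orbit averages were symmetric a.e.) this already implies non‑unique ergodicity, with $\mu_\pm$ the ergodic components.

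The main obstacle is step (iii): controlling the joint distribution of the scale‑$n_k$ block contributions well enough to extract a positive‑measure set on which the signs line up. The block contribution at scale $n_k$ depends on $x$ only through its position relative to the Ostrowski/continued‑fraction digits, and while different scales are ``morally independent'', making this rigorous requires either a careful coupling with an honest independent model or a direct second‑moment estimate showing the partial sums $\sum_{k\le K}(\text{scale-}n_k\text{ contribution})$ have variance growing like $\sum_{k\le K}(a_{n_k+1}\|q_{n_k}\alpha\|)^2 \asymp K$ while being bounded by an $L^1$‑summable tail from the other scales — so that the law of the iterated logarithm or even a crude Chebyshev bound forces the required sign changes on a set of measure bounded below. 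Getting the error terms from the non‑resonant scales to be genuinely summable \emph{uniformly in $x$} (rather than just in $L^1$) is the delicate point, and is exactly what condition~\eqref{eq:condA} is tailored to deliver.
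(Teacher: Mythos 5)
First, a point of orientation: the paper does not prove this statement at all --- it is quoted from Veech \cite{vskew1} and used as a black box. (The geometric counterpart of Veech's mechanism does appear later, in Lemma~\ref{lem:erg} and the discussion around it: the tori $T_{\pm}^k$ satisfy $\lambda(T_+^j\Delta T_+^{j+1})\leq 2/a_{n_j+1}$, which is summable by (\ref{eq:condA}), so the sets $\liminf_k T_{\pm}^k$ are flow-invariant of positive measure.) So your proposal must be measured against Veech's cocycle argument and against that geometric version.

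Your proposal assembles the right raw ingredients (the cocycle $\chi_J$ over $R_\alpha$, the distinguished scales $q_{n_k}$, condition (\ref{eq:condA}) feeding a Borel--Cantelli argument), but the central mechanism is misidentified, and the step you yourself flag as the main obstacle is aimed at the wrong conclusion. The time the orbit of $(x,0)$ spends in $I_1\cup I_2\cup I_3$ is the frequency with which the \emph{integer} $\sum_{l<j}\chi_J(x+l\alpha)$ is \emph{even}; this is a mod-$2$ statement, and it is not controlled by the sign or approximate size of the centered real-valued sum $S_N=\sum(\chi_J-b)$. Step (i) never performs this reduction, and it cannot be done in the naive way suggested. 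Worse, a variance/LIL argument producing infinitely many sign changes of a drift would, if anything, indicate that the time averages oscillate between the two copies --- the opposite of what the theorem asserts, which is that the averages \emph{converge} to a value exceeding $1/2$ on each of two positive-measure sets. What Veech actually proves is a coherence (locking-in) statement: outside a set of $x$ of measure $O(1/a_{n_k+1})$, the Birkhoff sum $\sum_{l<q_{n_k}}\chi_J(x+l\alpha)$ is exactly even, because the tail $q_{n_k}\sum_{j\geq k}2<<q_{n_j}\alpha>>\lesssim \sum_{j\geq k}a_{n_j+1}^{-1}$ is small; condition (\ref{eq:condA}) makes these exceptional sets summable, Borel--Cantelli applies, and the Rokhlin towers of height $q_{n_k}$ eventually respect the partition into the two copies, yielding the two invariant sets as $\liminf$'s of towers. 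No second-moment estimate, independence coupling, or law of the iterated logarithm enters; the whole point is that transitions between the copies become summably rare rather than recurrently sign-changing. There are also local arithmetic slips ($a_{n_k+1}<<q_{n_k}\alpha>>\asymp 1/q_{n_k}$, not $\asymp 1$; and $a_{n+1}/(a_{n+1}+q_{n-1}/q_n)$ is $\Theta(1)$ for \emph{every} $n$, so it cannot distinguish the resonant scales $n_k$ from the others --- the distinction comes from which scales appear in the series defining $b$, not from the rotation itself). The essential gap, though, is that steps (i) and (iii) as described never connect to the parity structure that decides which copy the orbit occupies.
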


In particular this implies that the ergodic measures assign different lengths to the two rectangles. 
Again following Veech \cite[Section 1]{viet} we can obtain a {1-parameter} family $T_c$ of minimal but not uniquely ergodic IETs.   The length of  the  $j^{th}$ interval $_cI_j$ of the map $T_c$ is $$|_cI_j|=\frac{1}{2}(1-c)\mu_-(I_j)+\frac{1}{2}(1+c)\mu_+(I_j).$$ Moreover these IETs are conjugate to each other.  Recalling that $\mu_c=\frac{1}{2}(1-c)\mu_- +\frac{1}{2}(1+c)\mu_+$,  let $f_c:\cup I_j\to \cup I_j$ given by 
$$f_c(x)=\mu_c([0,x]).$$ It conjugates $T$ to $T_c$.  That is, $$f_c\circ T\circ f_c^{-1}=T_c.$$
For $c\neq 0$, we have  $|_cI_2|\neq |_cI_5|$ and $|_cI_3|\neq |_cI_6|$ but $|_cI_1|=|_cI_4|$.

These IETs arise from first return map of a flow on two tori glued together along a horizontal slit.    Consider the following picture, where the lengths of the intervals are as described by $T$  and the intervals labeled $I_j$ on the top and bottom are identified.

\begin{figure}[h]
\begin{tikzpicture}
\path[-] (0,0) edge (0,-2)
node{$\circ$};
\path[-] (0,0) edge (.4,0)
node[right=.1 cm, above=.1 cm]{$I_1$};
\path[-]  (.5,0) edge (3, 0)
node{*}
node[right=1.5 cm, above=.1cm]{$I_2$};
\path[-] (3.05,0) edge (4.1,0)
node[right=.5cm, above=.1cm]{$I_3$};
\path[-] (0,-2) edge (1.05,-2)
node[right=.5cm, above=.03 cm]{$I_3$};
\path[-] (1.1,-2) edge (1.5,-2)
node{$\circ$}
node[right=.1cm, above=.03cm]{$I_4$};
\path (1.6,-2) edge (4.1,-2)
node{*}
node[right=1.5 cm, above=.03 cm]{$I_2$};
\path (4.4,0) edge (4.8,0)
node{$\circ$}
node[right=.1 cm, above=.1cm]{$I_4$};
\path (4.85,0) edge (7.35,0)
node{*}
node[right =1.5 cm, above=.1 cm]{$I_5$};
\path (7.4,0) edge (8.4,0)
node[right=.5cm, above=.1cm]{$I_6$};
\path (4.35,-2) edge (5.35,-2)
node[right=.5cm,above=.03cm]{$I_6$};
\path (5.4,-2) edge (5.8,-2)
node{$\circ$}
node[right=.1 cm, above=.03cm]{$I_1$};
\path (5.85,-2) edge (8.4,-2)
node{*}
node[right=1.5cm, above=.03 cm]{$I_5$};
\path (8.4,-2) edge (8.4,0);
\path[-] (4.1,-2) edge (4.1,0);
\path[-] (4.35,-2) edge (4.35,0);
\end{tikzpicture}
\caption{}
\end{figure}

\noindent We may also label intervals $J_1,\ldots, J_6$ in that order along the bottom edges where $|J_i|=|I_i|$. 
We set these intervals $J_i$ exactly below the corresponding intervals $I_i$ (with the same subscript) on the top edge.  The first return to the bottom of the (upwards) vertical flow gives the map $T$ on the intervals $J_i$ described above.  For example, a point in the first interval $J_1$ from $[0,b) \times \{0\}$ on the bottom edge travels upward to the top, landing in the interval  $I_1$ which is then identified with a point in the bottom of the second torus in the segment labelled $I_1$ -- this will be in the fifth position. This is in the interval $[\alpha,\alpha+b)\times \{1\}$. 
The interval $J_2$ -- occupying the interval labelled $I_4$ on the bottom edge -- will flow to the top to points in $I_2$ and return to the third position on the bottom. 
The points which are marked $\circ$ are identified with each other and the points that are asterisks are identified with each other.  Each  becomes a singular point of  the translation
surface. 
We can see that the glued surface consists of two closed  tori each cut along  segments $I_1,I_4$ (which are identified in each  torus) and then cross-identifying these cut segments in the different tori. The result is a genus $2$ surface. The union of the glued $I_1,I_4$ is a separating curve also called a {\em slit}.
We can think of these tori in the following way. Each torus is the  square lattice  multiplied by 
$\begin{pmatrix} 1 &-\alpha\\
0 &1
\end{pmatrix}$.   Now, the vertical flow on the original square torus $\R^2/ \ZZ \oplus i\ZZ$ has closed leaves, but the effect of this shearing matrix is to change the vertical flow but preserve the horizontal flow.  It is easy to check that the first return map of the vertical flow on $J_2\cup J_3$ on the first torus is rotation by $\alpha$. The same is true for  $J_5\cup J_6$ on the second torus.
 
Now in the nonsymmetric case of $c\neq 0$ one still has two rectangular tori glued along the slits and where the vertical flow is the given one.  The horizontal lengths however have changed and so therefore has the transverse measure to the vertical flow. 
The area of the tori are no longer equal. The translation surface is denoted by $(X_c,\omega_c)$. 
We denote the corresponding measured foliation by $(F,\mu_c)$.  The underlying leaves of $F$ are the flow lines and the transverse measure is $\mu_c$. 

 Let $\gamma_1,\gamma_2$ be the horizontal closed curves in the two tori, 
and denote by $i((F,\mu_c),\gamma_i)$ the intersection number of  $\gamma_i$ with $(F,\mu_c)$. This is the horizontal length of $\gamma_i$.
We conclude that 
  
\begin{prop}
\label{prop:crit} Suppose $[F,\mu_c]\in\Lambda$.  Then $c$ is determined by $\frac {i((F,\mu_c),\gamma_1)}{i((F,\mu_c),\gamma_2)}$.  In particular $c=0$  if and only if this quotient is $1$.  
\end{prop}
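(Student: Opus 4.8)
The plan is to unwind the definitions so that the quotient $\frac{i((F,\mu_c),\gamma_1)}{i((F,\mu_c),\gamma_2)}$ is computed directly from the transverse measure $\mu_c$, and then observe that this quotient is a strictly monotone function of $c$. The key point is that the intersection number $i((F,\mu_c),\gamma_i)$ is by construction the horizontal length of the closed curve $\gamma_i$ with respect to the transverse measure $\mu_c$, and $\gamma_1$ (resp.\ $\gamma_2$) is the horizontal core curve of the first (resp.\ second) torus. First I would recall that for the flow on the two tori glued along the slit, the first return map to the bottom edge is the IET $T_c$, whose intervals $_cI_j$ have lengths $|{}_cI_j| = \frac12(1-c)\mu_-(I_j) + \frac12(1+c)\mu_+(I_j)$. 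The horizontal cross-section of the first torus consists of the subintervals lying in that torus — namely the return intervals corresponding to $J_2\cup J_3$ (equivalently the intervals $_cI_2$ and $_cI_3$), while the horizontal cross-section of the second torus corresponds to $_cI_5$ and $_cI_6$. Hence
\[
i((F,\mu_c),\gamma_1) = |{}_cI_2| + |{}_cI_3|, \qquad i((F,\mu_c),\gamma_2) = |{}_cI_5| + |{}_cI_6|,
\]
so the quotient equals $\dfrac{|{}_cI_2| + |{}_cI_3|}{|{}_cI_5| + |{}_cI_6|}$.

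Next I would substitute the formula for $|{}_cI_j|$ and abbreviate $A_{\pm} = \mu_{\pm}(I_2) + \mu_{\pm}(I_3)$ and $B_{\pm} = \mu_{\pm}(I_5) + \mu_{\pm}(I_6)$, so the quotient becomes
\[
R(c) = \frac{(1-c)A_- + (1+c)A_+}{(1-c)B_- + (1+c)B_+}.
\]
Now I invoke Theorem~\ref{thm:Veech}: the ergodic measures $\mu_+$ and $\mu_-$ assign different total mass to the two rectangles (the remark just after Theorem~\ref{thm:Veech} states exactly that the ergodic measures assign different lengths to the two rectangles), so the ratio $A_+/B_+$ differs from $A_-/B_-$; moreover by the symmetry built into the construction, $\mu_-$ and $\mu_+$ are interchanged by the involution, so that $A_- = B_+$ and $A_+ = B_-$. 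With this symmetry, $R(c) = \dfrac{(1-c)B_+ + (1+c)B_-}{(1-c)B_- + (1+c)B_+}$, and a direct computation of $R'(c)$ shows the numerator of $R'$ is a nonzero constant multiple of $(B_-^2 - B_+^2)$ times a positive quantity, hence $R$ is strictly monotone in $c$ on $[-1,1]$ (the constant is nonzero precisely because $B_- \neq B_+$). In particular $R$ is injective, so $c$ is determined by $R(c)$, and $R(0) = \dfrac{B_+ + B_-}{B_- + B_+} = 1$, while $R(c) \neq 1$ for $c \neq 0$ by injectivity. This proves both assertions.

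The main obstacle I anticipate is pinning down which subintervals of the IET actually lie in which torus — i.e.\ verifying carefully from Figure~1 and the identifications that the horizontal core curve $\gamma_1$ of the first torus meets the cross-section in exactly the return intervals associated to $J_2 \cup J_3$ (and not, say, portions of $J_1$ or $J_4$, which are the slit pieces). One has to track the combinatorics of the permutation $(342615)$ and the circle/asterisk identifications to confirm that the slit intervals $I_1, I_4$ contribute to neither $\gamma_1$ nor $\gamma_2$, and that the first return map restricted to $J_2 \cup J_3$ is rotation by $\alpha$ on the first torus (as asserted in the text), so that its total length is indeed the horizontal length of $\gamma_1$. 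Once that bookkeeping is in place, the remaining steps — substituting the measure formula and checking monotonicity of the linear-fractional function $R(c)$ — are routine, with the only subtlety being the use of Theorem~\ref{thm:Veech} to guarantee $\mu_+ \neq \mu_-$ on the relevant sets and hence $B_- \neq B_+$.
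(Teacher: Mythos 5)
Your argument is correct and is essentially the one the paper leaves implicit: the statement appears in the paper with no written proof (it follows ``We conclude that''), and the intended justification is exactly what you give --- the intersection number $i((F,\mu_c),\gamma_i)$ is the $\mu_c$-width of the corresponding torus, which is affine in $c$, so the quotient is a M\"obius function of $c$ that is injective precisely because the ergodic measures weight the two tori differently (the remark after Theorem~\ref{thm:Veech}), and the involution symmetry gives $R(0)=1$. One small bookkeeping correction: the horizontal curve $\gamma_1$ crosses \emph{every} vertical leaf of the first torus, including the band of leaves emanating from the slit piece $J_1$, so $i((F,\mu_c),\gamma_1)=|_cI_1|+|_cI_2|+|_cI_3|$ rather than $|_cI_2|+|_cI_3|$ (and similarly $\gamma_2$ picks up $|_cI_4|$). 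This does not damage the proof: since $|_cI_1|=|_cI_4|$ for all $c$ (equivalently $\mu_\pm(I_1)=\mu_\pm(I_4)$), the corrected $A_\pm$, $B_\pm$ still satisfy $A_-=B_+$, $A_+=B_-$ with $A_+\neq A_-$, and your computation $R'(c)=2(B_-^2-B_+^2)/D(c)^2$ goes through verbatim.
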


 Our main Theorems now have the following more precise statements. 


\begin{thm}
\label{thm:CF}
Assume  $\alpha$ satisfies (\ref{eq:condA}).
Then the only limit point of $g_t(X_{0},\omega_{0})$ is the barycenter $[F,\mu_{0}]$ itself.  If $c\notin\{-1,1\}$ then the ergodic endpoints are not in the limit set.   
\end{thm}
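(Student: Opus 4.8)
The plan is to use the criterion of Proposition~\ref{prop:crit} together with the characterization of convergence in $\PMF$ to be recalled in Section~$3$: a sequence $X_{t_j}$ converges to $[F,\mu]$ iff, after normalization, the hyperbolic (equivalently, extremal) lengths of a cofinal family of simple closed curves $\delta$ converge to $i((F,\mu),\delta)$ up to a common scalar. So the whole problem reduces to estimating, along the geodesic $g_t(X_c,\omega_c)$, the hyperbolic lengths of the slit curve $\beta$ (the union of $I_1$ and $I_4$), the two torus basis curves $\gamma_1,\gamma_2$, and their twists. First I would establish the flat picture: condition~(\ref{eq:condA}) guarantees via Theorem~\ref{thm:Veech} that the two ergodic measures are genuinely distinct and that $b$ is a well-defined finite slit length, and the convergents $p_{n_k}/q_{n_k}$ give the approximating Strebel directions. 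The key structural fact, to be set up in Section~$4$, is that at each time $t$ the surface $g_t(X_c,\omega_c)$ is flat-close to a pair of tori glued along a slit whose flat length is controlled (here only~(\ref{eq:condA}) is needed, not~(B),~(C)); one then passes to hyperbolic geometry through the collar lemma and the comparison estimates of Section~$6$.

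The main steps, in order. (1) Fix a time $t$ and locate the relevant short curve: because the vertical foliation is minimal, no curve is flat-short for all $t$, but at the times $t_k$ comparable to $\tfrac12\log q_{n_k}$ the slit curve $\beta$ becomes flat-short, and between consecutive such times one of the $\gamma_i$ (sheared appropriately) is flat-moderate. (2) For the symmetric case $c=0$: by construction $|_0I_1|=|_0I_4|$ and the two tori have equal area, so the shearing and the flat geometry are symmetric under the involution exchanging the tori; hence the horizontal lengths satisfy $i((F,\mu),\gamma_1)=i((F,\mu),\gamma_2)$ for any limit foliation $\mu$, and by Proposition~\ref{prop:crit} every limit point has parameter $\tau=0$, i.e.\ is the barycenter $[F,\mu_0]$. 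Since $\Lambda$ is the full limit set's intersection with a segment and the only point of $\Lambda$ with $\tau=0$ is the barycenter, and (to be shown) the limit set is contained in $\Lambda$, the limit point is unique. (3) For $c\notin\{-1,1\}$: one must rule out the ergodic endpoints $[F,\mu_{\pm1}]$. The idea is that convergence to $[F,\mu_1]$ would force $i((F,\mu),\gamma_1)/i((F,\mu),\gamma_2)$ to equal the ratio $\mu_1(\gamma_1)/\mu_1(\gamma_2)$; but along $g_t(X_c,\omega_c)$ the flat horizontal lengths of $\gamma_1,\gamma_2$ are frozen at the $\mu_c$-values, and the hyperbolic length comparison from Section~$6$ shows that any limit must have horizontal-length ratio equal to that of $\mu_c$, which differs from that of $\mu_{\pm1}$ by Theorem~\ref{thm:Veech}. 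Equivalently: Proposition~\ref{prop:crit} is an invariant of the geodesic (the ratio $i((F,\mu_c),\gamma_1):i((F,\mu_c),\gamma_2)$ is read off the flat metric and is unchanged along the flow), so no limit point can be the endpoint with a different ratio.

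The hard part will be step~(1)–(2) in the hyperbolic category: showing that flat-closeness of $g_t(X_0,\omega_0)$ to a symmetric slit-torus configuration actually forces the \emph{hyperbolic} (Thurston) limit to be the barycenter and nothing else on $\Lambda$ — i.e.\ controlling the hyperbolic lengths of all test curves $\delta$ (not just $\gamma_1,\gamma_2$ but curves that wind around both tori and cross the slit) uniformly well enough that their ratios pin down a single point of $\PMF$. This requires the estimates of Section~$6$ relating $\ell_\rho(\delta)$ to $i(\delta,\beta)$, $\ell_\rho(\beta)$, and the $\gamma_i$-data, and then showing the error terms are $o(1)$ after projective normalization; the symmetry of the $c=0$ configuration is what makes these errors cancel rather than merely bound. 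For the ruling-out in step~(3) the obstacle is milder: it is essentially the observation that the endpoints $[F,\mu_{\pm 1}]$ are exactly the two points of $\Lambda$ whose $\gamma_1:\gamma_2$ ratio is extremal, combined with the fact that this ratio is a flat invariant of $(X_c,\omega_c)$ not equal to the extremal values when $c\notin\{-1,1\}$, so Proposition~\ref{prop:crit} immediately excludes them as limit points.
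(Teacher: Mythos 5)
Your treatment of the symmetric case $c=0$ is essentially the paper's argument: the involution exchanging the two identical tori is an isometry of each hyperbolic metric $\rho_t$ and swaps $\gamma_1$ and $\gamma_2$, so $\ell_{\rho_t}(\gamma_1)=\ell_{\rho_t}(\gamma_2)$ for all $t$; combined with Lemma~\ref{lem:continuity} (every limit lies in $\Lambda$) and Proposition~\ref{prop:crit}, this pins the unique limit to the barycenter. You do not need to control ``all test curves'' as you worry in your last paragraph --- the reduction to $\Lambda$ plus the single ratio for $\gamma_1,\gamma_2$ is exactly what makes this case short.

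Your step (3), however, contains a genuine error. You argue that the ratio $i((F,\mu_c),\gamma_1):i((F,\mu_c),\gamma_2)$ of flat horizontal lengths is invariant along the flow, and that therefore ``any limit must have horizontal-length ratio equal to that of $\mu_c$,'' excluding the endpoints. If that implication were valid it would exclude \emph{every} point of $\Lambda$ other than $[F,\mu_c]$ itself and force $g_t(X_c,\omega_c)\to[F,\mu_c]$ for all $c$ --- but Theorems~\ref{thm:ex} and \ref{thm:CF2} of the paper show the limit set for $c\notin\{-1,0,1\}$ contains the barycenter $[F,\mu_0]\ne[F,\mu_c]$ and is a nondegenerate interval. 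The point of a limit in $\PMF$ is that \emph{hyperbolic} length ratios converge to intersection-number ratios, and hyperbolic length ratios are not asymptotic to flat horizontal length ratios; the drift between the two is precisely the phenomenon the paper is studying. The actual exclusion of the endpoints uses two inputs you are missing: (i) at every time $t$ there is a piecewise affine map of $g_t(X_c,\omega_c)$ to itself exchanging the two tori with \emph{uniformly bounded dilatation} (possible because vertical lengths agree and horizontal lengths are comparable when $c\notin\{-1,1\}$), which gives a uniform bound $a$ on $\limsup_t \ell_t(\alpha_-)/\ell_t(\alpha_+)$ for any symmetric pair of curves; and (ii) Corollary~\ref{cor:smallhor}, which shows $\mu_-(\beta_-^j)/\mu_+(\beta_-^j)\to\infty$, so that for $j$ large the intersection-number ratio $i(\beta_-^j,\sigma_-^k)/i(\beta_+^j,\sigma_-^k)$ that a limit at the ergodic endpoint would have to realize (via Lemma~\ref{lem:ergodiclimit} applied to the test pair $\beta_-^j,\beta_+^j$) exceeds $a$. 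The endpoints are excluded because they assign \emph{unboundedly} disparate masses to suitable symmetric pairs, overwhelming the bounded distortion of the exchange map; interior points of $\Lambda$ cannot be excluded this way, and indeed some of them are achieved.
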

\begin{thm}
\label{thm:CF2}
If $\alpha$ satisfies  (\ref{eq:condA}), (\ref{eq:condB}), and (\ref{eq:condC}) then the accumulation set  of $g_t(X_c,\omega_c)$ for $c\notin\{-1, 0,1\}$ is a nondegenerate  interval in $\Lambda$ that contains the barycenter. 
\end{thm}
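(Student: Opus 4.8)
\emph{Overall plan.} The plan is to use the convergence criteria of Section~3 together with Proposition~\ref{prop:crit} to reduce the theorem to the oscillation of a single scalar function along the geodesic, and then to estimate that function using the flat bookkeeping of Section~2 and the hyperbolic-length formulas of Section~6. Set
\[\rho(t):=\frac{\ell_{X_t}(\gamma_1)}{\ell_{X_t}(\gamma_2)}.\]

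\emph{Reduction.} Since $F$ is minimal, every $\PMF$-accumulation point of $g_t(X_c,\omega_c)$ is a projective transverse measure to $F$, hence a point of $\Lambda$ (this is among the facts recalled in Section~3). Combined with compactness of $\PMF$ and the fact that convergence in $\PMF$ controls ratios of lengths, a point $[F,\mu_\tau]\in\Lambda$ is an accumulation point of the geodesic if and only if $i((F,\mu_\tau),\gamma_1)/i((F,\mu_\tau),\gamma_2)$ is an accumulation value of $t\mapsto\rho(t)$. By Proposition~\ref{prop:crit} the map $\tau\mapsto i((F,\mu_\tau),\gamma_1)/i((F,\mu_\tau),\gamma_2)$ is injective, and since the intersection numbers are affine in $\tau$ it is a monotone homeomorphism onto its image, equal to $1$ exactly at the barycenter $\tau=0$. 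As $\rho$ is continuous, its $\omega$-limit set in $(0,\infty)$ is the interval $[\liminf_{t}\rho(t),\limsup_{t}\rho(t)]$; transporting this through the above homeomorphism, and using Theorem~\ref{thm:CF} to exclude the endpoints $[F,\mu_{\pm1}]$, we conclude that the accumulation set of $g_t(X_c,\omega_c)$ is a closed subinterval of the open arc $\Lambda\setminus\{[F,\mu_{\pm1}]\}$. It therefore suffices to prove
\[\liminf_{t\to\infty}\rho(t)\ <\ 1\ <\ \limsup_{t\to\infty}\rho(t),\]
which simultaneously gives nondegeneracy and (via the intermediate value theorem, producing a sequence $t_j\to\infty$ with $\rho(t_j)=1$) shows that the barycenter lies in the accumulation set.

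\emph{Geometry along the geodesic and the oscillation mechanism.} By the Veech construction of Section~2, at each time $t$ the surface $g_t(X_c,\omega_c)$ is affinely a pair of sheared rectangular tori $E_1(t),E_2(t)$ glued along a slit $\sigma(t)$, with $\gamma_i$ the core of $E_i(t)$; Section~3 records the flat lengths of $\sigma(t)$ and of the basis curves of the $E_i$, and the intersection numbers of $\gamma_1,\gamma_2$ with them. Condition~(B), $a_{n_k}\to\infty$, forces the two tori near the renormalization times into long, ``clean'' positions in which the hyperbolic-length estimates of Section~6 are sharp; condition~(C), $q_{n_{k-1}}\log a_{n_k+1}=o(q_{n_k})$, forces the geodesic representative of the separating curve to be short there, which decouples the two tori so that $\ell_{X_t}(\gamma_i)$ depends essentially only on the geometry of $E_i(t)$. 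The decisive point is that for $c\notin\{-1,0,1\}$ the two tori are \emph{out of phase}: since $c\neq0$ we have $i((F,\mu_c),\gamma_1)\neq i((F,\mu_c),\gamma_2)$, so $E_1$ and $E_2$ have unequal areas and their Teichm\"uller clocks are offset, and $E_1$ enters and leaves its $k$-th cusp excursion at times different from those of $E_2$. Feeding the §3 flat data into the §6 formulas, one sees that $\ell_{X_t}(\gamma_1)$ and $\ell_{X_t}(\gamma_2)$ each pass through a rise and a fall across such an excursion --- this monotonicity reversal is exactly the phenomenon isolated in Corollary~\ref{cor:decrease} --- but not simultaneously; on the part of the window where $E_1$ is ``ahead'' one gets $\rho(t)$ bounded strictly above $1$, and on the part where $E_2$ is ``ahead'' one gets $\rho(t)$ bounded strictly below $1$, and these deviations persist (indeed grow) as the excursions deepen through condition~(A). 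Taking such times along the subsequence $n_k$ yields $t_j,s_j\to\infty$ with $\rho(t_j)\geq 1+\epsilon$ and $\rho(s_j)\leq 1-\epsilon$, which establishes the displayed inequality above.

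\emph{Main obstacle.} The hard part is the middle step: converting the flat data of a slit-glued pair of tori into two-sided hyperbolic-length bounds for $\gamma_1$ and $\gamma_2$ that remain sharp through the degenerating regime --- precisely where the comparison of hyperbolic length with flat or extremal length breaks down and one must work instead with the collar lemma and the explicit estimates of Section~6 --- and then tracking how the phase offset between $E_1$ and $E_2$ is transmitted to the ratio $\rho(t)$, with enough precision to guarantee that $\rho$ genuinely overshoots the barycenter value $1$ on \emph{both} sides. Conditions (B) and (C) are calibrated for exactly this purpose: (B) keeps the torus geometry in the range where the Section~6 formulas apply, and (C) keeps the slit short enough that the two tori may be analyzed independently and the limits forced into $\Lambda$.
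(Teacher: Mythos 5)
Your reduction to the scalar function $\rho(t)=\ell_{X_t}(\gamma_1)/\ell_{X_t}(\gamma_2)$ is essentially sound (accumulation points lie in $\Lambda$ by Lemma~\ref{lem:continuity}, points of $\Lambda$ are determined by this ratio by Proposition~\ref{prop:crit}, and the accumulation set is a subinterval because the trajectory is proper and continuous). The gap is in what you then propose to prove. You reduce the theorem to the two-sided overshoot $\liminf_t\rho(t)<1<\limsup_t\rho(t)$ and plan to obtain the barycenter by an intermediate-value crossing. But the inequality $\liminf_t\rho(t)<1$ is not established by your ``out of phase'' mechanism, and in fact it is not known: the authors explicitly pose as an open question whether any limit point on the other side of the barycenter exists. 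The phase-offset picture is also not how the construction behaves. The two tori $T_-^k,T_+^k$ share the same vertical flow and degenerate at the \emph{same} renormalization times $t_k\asymp\log q_{n_k}$; what distinguishes them for $c\notin\{-1,0,1\}$ is only that their areas (equivalently, horizontal lengths) differ by a factor converging to $\tfrac{1+c}{1-c}$ (Proposition~\ref{prop:geom2}). There are no staggered cusp excursions to exploit, so nothing in the setup produces times with $\rho(t)$ bounded strictly below $1$.

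The paper's route avoids this entirely by making the \emph{stretch past the renormalization time} the oscillating parameter. Flowing to times $e^{t}\asymp a\,q_{n_k}$, the surface converges (using (\ref{eq:condB}) for bounded geometry and (\ref{eq:condC}) to make the collar contribution negligible, so that Proposition~\ref{prop:augmented} applies) to a pair of punctured rectangular tori whose horizontal sides are comparable to $a$ and to $a$ times the fixed factor above. For the barycenter one takes $a\to\infty$: by Proposition~\ref{lem:largecylinder} both horizontal curves have hyperbolic length $4\log(\cdot)+O(1)$, so the ratio of lengths tends to $1$ directly, with no crossing argument needed --- one only needs limit points with ratio arbitrarily close to $1$, since the limit set is closed. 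For a second, distinct limit point one stops at a fixed moderate $a$ and invokes the strict monotonicity of the hyperbolic length of the horizontal curve in the aspect ratio (Proposition~\ref{thm:increasing}) to get $\lim_k\rho(t_k)>1$; together with the interval structure this yields a nondegenerate interval containing the barycenter, entirely on one side of it. To repair your argument you would either need to supply a genuine proof that $\rho$ dips below $1$ infinitely often (which would answer the authors' open question), or replace the IVT step by the direct limiting argument just described.
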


For the next two statements, we recall that the conditions (\ref{eq:condA}), (\ref{eq:condB}), and (\ref{eq:condC}) refer to a given sequence of indices $\{n_k\} \subset \ZZ$; part of the results involve a choice of such a sequence.

\begin{thm}
\label{thm:CF3}
If $$\alpha=[1,1,a_3,a_4,4,4,a_7,a_8,...,(k+1)^2,(k+1)^2,a_{4k+3},a_{4k+4},...],$$ then setting $n_k=4k-3, k\geq 1$,  for some 
 $a_{4k-1}, a_{4k}$,  the accumulation set of $g_t(X_{-1},\omega_{-1})$ of the ergodic foliation is an interval that contains $[F,\mu_-]$ and the  barycenter $[F,\mu_{0}]$. 
\end{thm}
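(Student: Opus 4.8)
The plan is to follow the general strategy outlined in the introduction: realize $(X_{-1},\omega_{-1})$ as two rectangular tori glued along a short horizontal slit, track how the slit length and the basis curves of the two tori evolve along the Teichm\"uller geodesic $g_t$, and then use the hyperbolic length estimates of Section~$6$ to detect which projective foliations in $\Lambda$ are accumulated. First I would set up the time scales: for each index $n_k = 4k-3$ in the prescribed sequence, condition~\eqref{eq:condA} guarantees that the slit, whose length is governed (via Condition~C-type estimates on $\langle\langle q_{n_k}\alpha\rangle\rangle$) by the tail $\sum 2\langle\langle q_{n_j}\alpha\rangle\rangle$, is already short, and I would pick a matching time $t_k$ along the geodesic at which the horizontal expansion $e^{t_k}$ balances against $q_{n_k}$ so that the two tori are, in the flat metric at time $t_k$, nearly square with a very short slit. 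The choice $a_{4k-1} = (k+1)^2$ forces the parabolic shear parameter on the relevant torus to diverge at a controlled rate, which is exactly the mechanism (Condition~B) that makes the geometry of the tori along the geodesic swing between two extremes.

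The key computational input is Proposition~\ref{prop:crit}: a projective foliation $[F,\mu_\tau]\in\Lambda$ is pinned down by the ratio $i((F,\mu_\tau),\gamma_1)/i((F,\mu_\tau),\gamma_2)$ of horizontal lengths of the two core curves. So the heart of the argument is to show that along the sequence $t_k$ this ratio, as measured by the hyperbolic geometry at time $t_k$, oscillates: at some subsequence it records the barycenter value (ratio $1$, i.e.\ $[F,\mu_0]$), and at a complementary subsequence it records the value corresponding to the ergodic endpoint $[F,\mu_{-1}]=[F,\mu_-]$. Concretely I would use the two lemmas promised in Section~$6$ — estimating hyperbolic length of a test curve in terms of its intersection numbers with, and the flat lengths of, the slit and the torus basis curves — applied to the curves $\gamma_1$, $\gamma_2$, and a family of test curves that wind around one torus while crossing the slit. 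Because $c=-1$ the vertical foliation is itself ergodic (it equals $\mu_-$), yet the \emph{flat} geometry at finite times still looks like an unbalanced pair of tori; the point is that the hyperbolic length of $\gamma_i$ at time $t$ is \emph{not} simply $i((F,\mu_c),\gamma_i)$ but is distorted by the thin slit and by the moduli of the two tori, and this distortion is what lets intermediate foliations $\mu_\tau$ with $\tau\in(-1,0]$ appear as limits. Establishing that the accumulation set is actually a whole interval (rather than just the two endpoints) requires a continuity/intermediate-value argument: between consecutive times $t_k$ the ratio varies continuously, so every value between the two extreme subsequential limits is attained along a sequence of times, hence lies in the accumulation set; and $\Lambda$ being one-dimensional, the accumulation set is connected, so it is a genuine subinterval containing $[F,\mu_-]$ and $[F,\mu_0]$.

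The main obstacle I anticipate is the precise bookkeeping of the slit length together with the auxiliary parameters $a_{4k-1}, a_{4k}$. The slit length at time $t$ depends on the full tail $\sum_{j>k} \langle\langle q_{n_j}\alpha\rangle\rangle$ evaluated after the flat metric has been sheared and then expanded by $e^t$; I would need the choice of $a_{4k-1}=a_{4k}=(k+1)^2$ (hence a $q_{n_k}$ growing fast enough) to guarantee, via a Condition~\eqref{eq:condC}-style inequality $q_{n_{k-1}}\log a_{n_k+1}=o(q_{n_k})$, that when the geometry is ``balanced'' the slit is short enough for the Section~$6$ estimates to apply with controlled error, while the still-free parameters $a_{4k-1},a_{4k}$ are chosen (this is the ``for some $a_{4k-1},a_{4k}$'' clause) to force the ratio to actually return to the barycenter value infinitely often rather than drifting monotonically to the ergodic endpoint. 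Pinning down that one can simultaneously satisfy all the inequalities — shortness of slit, divergence of shear, and recurrence of the balanced configuration — is the delicate part; everything else is an application of the hyperbolic-geometry machinery of Sections~$4$ and~$6$ and the convergence criterion of Section~$3$.
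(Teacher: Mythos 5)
Your overall architecture matches the paper's: produce one sequence of times along which the limit is the ergodic endpoint, a second sequence along which it is the barycenter, and then invoke properness of the trajectory in Teichm\"uller space to conclude the accumulation set is a connected interval containing both. The ergodic-endpoint half of your plan is consistent with the paper's Proposition~\ref{prop:sec2 ergpt} (show the contributions to $\ell_{\rho_{t_k}}(\gamma_i)$ from the small torus and from the collar are $o(q_{4k+5})$, so Lemma~\ref{lem:ergodiclimit} applies).

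There is, however, a genuine gap in your mechanism for reaching the barycenter. You describe the barycenter as arising from the ratio ``returning to a balanced configuration'' infinitely often, with the free parameters $a_{4k-1},a_{4k}$ chosen to force this recurrence. But in the ergodic case $c=-1$ the flat picture is \emph{never} balanced: by Lemma~\ref{lem:erg} and the estimate \eqref{tinytorus}, the ratio of the flat areas of $T_-^k$ and $T_+^k$ tends to $0$ (here it is $\asymp 1/k^2$), so there is no time at which the two tori are comparable in the flat metric, and a naive reading of Proposition~\ref{prop:augmented} at the ``square'' times would always favor the ergodic endpoint. The paper's actual device is different: starting from the time $\frac{1}{2}\log(q_{4k+3}^2+p_{4k+3}^2)$ at which the tori are nearly square, one flows an \emph{additional} time $k$, so that both tori become long thin rectangles of widths $\asymp e^k$ and $\gtrsim e^k/(4k^2)$. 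The point is then Proposition~\ref{lem:largecylinder}: the hyperbolic length of the horizontal curve on an $(a,\frac1a)$ punctured rectangular torus is $4\log a + O(1)$, so the \emph{ratio} of the two hyperbolic lengths is bounded between $1$ and $\frac{4k+D}{4k-4\log(4k^2)-D}\to 1$, even though the flat widths differ by a factor $\asymp k^2$. The logarithmic insensitivity of hyperbolic length to flat width is the essential idea that overcomes the unbounded area discrepancy, and the free coefficients $a_{4k+3},a_{4k+4}$ are used not to rebalance the tori but to keep the slit short and the collar contribution negligible after this extra stretch of length $k$. Without this step your plan cannot produce the barycenter, since the quantity you propose to track never equalizes in the flat metric. (A minor further point: the free coefficients in the theorem are $a_{4k-1},a_{4k}$, not the ones set equal to $(k+1)^2$, so your sentence tying ``the choice $a_{4k-1}=(k+1)^2$'' to Condition~(\ref{eq:condB}) misattributes which parameters play which role.)
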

\begin{thm}
\label{thm:CF4}
If $$\alpha=[1,2,4,\ldots ,2^k,\ldots]$$ and $n_k=k$,  then the accumulation set of  $g_t(X_{-1},\omega_-)$  is just the endpoint $[F,\mu_-]$ itself.  
 \end{thm}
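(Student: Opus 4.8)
The plan is to deduce convergence directly from the characterizations of convergence in $\PMF$ recalled in Section~3. It suffices to exhibit a normalization $\lambda_t\to\infty$ and a family of simple closed curves that determines points of $\PMF$---concretely the horizontal core curves $\gamma_1,\gamma_2$ of the two tori, the slit curve $\beta$, and the slope-$p_n/q_n$ curves in the two tori (the curves that become short along $g_t(X_{-1},\omega_-)$)---such that $\lambda_t^{-1}\ell_{X_t}(\gamma)\to i([F,\mu_-],\gamma)$ for every $\gamma$ in the family. Since $[F,\mu_-]=[F,\mu_{-1}]$ is an endpoint of $\Lambda$, these intersection numbers are computed from the ergodic measure $\mu_-$; in particular $i([F,\mu_-],\gamma_1)/i([F,\mu_-],\gamma_2)$ is the value singled out by Proposition~\ref{prop:crit} for $c=-1$, which distinguishes $[F,\mu_-]$ from every other point of $\Lambda$. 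So the substance of the theorem is the quantitative claim that these rescaled hyperbolic lengths converge, with the correct ratios, \emph{uniformly} in $t$.

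I would organize the flat-geometric bookkeeping around the continued-fraction renormalization attached to $\alpha=[1,2,4,\dots]$. At the renormalization times $t_n$ the surface $g_{t_n}(X_{-1},\omega_-)$ is, up to bounded distortion in moduli space, a pair of flat tori glued along a horizontal slit, with the two tori of areas fixed by the $c=-1$ normalization (hence reflecting $\mu_-$), the slope-$p_n/q_n$ curves realizing their minimal flat length near $t_n$, and a short slit whose length is controlled by the tail of the series defining $b$. The arithmetic of $\alpha=[1,2,4,\dots]$ is chosen so that $\log q_n\asymp n^2$ while the excursion lengths $\log a_{n+1}\asymp n$: thus condition (A) holds with $n_k=k$, conditions (B) and (C) hold as well, and---the decisive structural point---consecutive renormalization times differ by an amount comparable to the excursion length, so the excursions essentially tile the parameter axis with no gaps in between. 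Feeding the (pinned) area split, the slit length, and the flat lengths of the test curves into the two hyperbolic-length estimates of Section~6---which bound $\ell_{X_t}(\gamma)$ above and below in terms of $i(\gamma,\beta)$, the intersection numbers with the short slope-$p_n/q_n$ curves, the slit length, and the lengths of the $\gamma_i$---I would then show the rescaled hyperbolic lengths of all test curves converge to their intersection numbers with $[F,\mu_-]$.

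The heart of the matter, and the reason this is the most delicate case in the paper, is that one needs two-sided control at \emph{all} times, not merely at the renormalization times. That the geodesic comes close to $[F,\mu_-]$ near each $t_{n_k}$ is essentially Veech's Theorem~\ref{thm:Veech} together with the measure-theoretic estimates of Section~4; the work is to show it never escapes a neighborhood of $[F,\mu_-]$ that shrinks to a point. Since the area split is already pinned by $c=-1$, this comes down to showing that the slit length and the basis-curve lengths never conspire---as they do in Theorem~\ref{thm:CF3}---to make the hyperbolic metric on $X_t$ favor the symmetric barycenter configuration at intermediate times. Here I expect the rate bounds (A) and (C) to be used in an essential, quantitative way to control the slit length throughout each excursion, and condition (B) to supply the geometric normalization that makes the Section~6 estimates applicable. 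The guiding contrast is with Theorem~\ref{thm:CF3}: there the special indices $n_k=4k-3$ are spaced out, so between excursions the geometry relaxes and the geodesic visits a neighborhood of the barycenter, producing a whole interval of limit points; here the gapless choice $n_k=k$ with geometric growth $a_k\asymp 2^k$ must be shown to forbid any such relaxation, collapsing the limit set to the single point $[F,\mu_-]$. I expect establishing this uniform estimate---rather than any single computation---to be the main obstacle.
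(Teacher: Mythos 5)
You have correctly identified the paper's strategy in outline: reduce to the convergence criterion of Lemma~\ref{lem:ergodiclimit}, use the hyperbolic-length formulae of Section~6 (Proposition~\ref{prop:augmented} and Lemma~\ref{lem:asymptotic}) to express $\ell_{\rho_t}(\gamma_i)$ in terms of intersection numbers with the slit and the basis curves $\sigma_{\pm}^k,\beta_{\pm}^k$, and exploit the fact that with $a_k=2^k$ the renormalization epochs tile the time axis with no gaps, so that --- unlike in Theorem~\ref{thm:CF3} --- there is no interval on which the geometry relaxes toward the symmetric configuration. That is indeed the right picture, and your contrast with Theorem~\ref{thm:CF3} is the correct heuristic.

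However, there is a genuine gap: the uniform-in-$t$ estimate that you yourself flag as ``the main obstacle'' is the entire content of the proof, and your proposal does not carry it out. Concretely, what is needed is the bound $\ell_{\rho_u}(\gamma_i\cap (T_-^k\setminus U))+\ell_{\rho_u}(\gamma_i\cap U)=o\bigl(\ell_{\rho_u}(\gamma_i\cap T_+^k)\bigr)$ for \emph{every} $u$, and establishing it forces one to (i) choose an explicit partition of time, $s_k=\log q_{k-1}+\tfrac{k\log 2}{2}$ and $t_k=\log q_k$, so that the $k$-th torus decomposition is valid on $[s_k,s_{k+1}]$; (ii) track, in each regime, \emph{which} of the two terms of Lemma~\ref{lem:asymptotic} (the collar term $-i(\gamma_i,\beta)\log\ell(\beta)$ versus the traversal term $i(\gamma_i,\sigma)\ell(\beta)$) dominates in each torus --- this changes along $[s_k,t_k]$, which is why the paper must further subdivide that interval at $s_k+z_k$ with $e^{2z_k}=k\log 2-2z_k$; and (iii) input the quantitative decay of the small torus from Lemma~\ref{lem:erg}, namely that $T_-^k$ has area $\asymp 2^{-k}$ so that $\ell(\beta_-^k)$ is exponentially small, yielding ratios $O(k/2^k)$ at times $t_k$ and $O(\tfrac{\log k}{k})$ on $[s_k,t_k]$. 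Your phrase that the areas are ``fixed by the $c=-1$ normalization'' obscures precisely this point: in the ergodic case the area ratio is not pinned but degenerates, and that degeneration is what makes the $T_-^k$ contribution negligible. Moreover the geometry is genuinely non-monotone on these intervals (Corollary~\ref{cor:decrease} shows the length of $\gamma_i$ actually decreases on part of $[s_k,s_k+z_k]$), so no single soft or monotonicity argument substitutes for the case analysis. As written, the proposal is a correct plan but not a proof.
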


\section{Convergence in $\PMF$}
For each Riemann surface $X$ let $\rho_X$ be the hyperbolic metric on $X$. 


\begin{defin}
A projective class of measured foliation $[G,\nu]$ is a limit point of a sequence of metrics $\rho_{t_i}$ if  for any pair of curves $\alpha,\beta$ we have $$\frac{\ell_{\rho_{t_i}}(\alpha)}{\ell_{\rho_{t_i}}(\beta)}\to \frac{i(G,\nu),\alpha)}{i((G,\nu),\beta)}.$$

\end{defin}


We have the following result  which is a slight generalization of the Main  Theorem  in \cite{masur boundary}.
\begin{lem}
\label{lem:continuity}  Let $(X,q)$ be a quadratic differential with vertical foliation $(F,\mu)$.  Assume $(F,\mu)$ is minimal and not uniquely ergodic.  Then any projective limit point $[G,\nu]$ of the sequence of hyperbolic metrics $\rho_t$ corresponding to $g_t(X,q)$ satisfies  $i((F,\mu),(G,\nu))=0$; that is, $G$ is topologically the same as $F$.   
As a corollary, in the case at hand any limit point of $g_tr_{\theta}(X_c,\omega_c)\in \Lambda$. 
\end{lem}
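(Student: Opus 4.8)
The plan is to follow Masur's argument in \cite{masur boundary} and show that any accumulation point $[G,\nu]$ of the hyperbolic metrics $\rho_t$ must have zero intersection number with the vertical foliation $(F,\mu)$; the key analytic input is that the extremal (hence hyperbolic, up to bounded multiplicative error) length of the vertical foliation, and of any curve with definite transverse measure with respect to it, decays along the geodesic. Concretely, first I would recall that for the Teichm\"uller geodesic $g_t(X,q)$ the flat length in the metric $|q_t|$ of the vertical foliation scales like $e^{-t}$: writing $q = \phi\,dz^2$ with vertical foliation $(F,\mu)$, the transverse measure $\mu$ has total mass comparable to $\|q\|$ at time $0$ and to $e^{-t}\|q\|$ at time $t$, since the Teichm\"uller map contracts the vertical direction by $e^{-t}$. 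Consequently the extremal length $\Ext_{X_t}(F,\mu) \lmul e^{-2t}$ (indeed it is $\le$ the $|q_t|$-area divided by the squared $|q_t|$-width, and the area is bounded). By Maskit's comparison of extremal and hyperbolic length, $\ell_{\rho_t}(F,\mu)\to 0$ as well — here one must be slightly careful since $(F,\mu)$ is a foliation, not a curve, so one works with the continuous extension of $\ell_{\rho_t}$ to $\mathcal{MF}$, or equivalently approximates $(F,\mu)$ by weighted simple closed curves.

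Next I would take an arbitrary projective limit point $[G,\nu]$, realized along a subsequence $t_i\to\infty$, and suppose for contradiction that $i((F,\mu),(G,\nu))>0$. Pick a simple closed curve $\beta$ with $i((G,\nu),\beta)>0$, which exists since $\nu$ is a nontrivial measured foliation. The defining property of the limit point gives
$$
\frac{\ell_{\rho_{t_i}}(\alpha)}{\ell_{\rho_{t_i}}(\beta)} \longrightarrow \frac{i((G,\nu),\alpha)}{i((G,\nu),\beta)}
$$
for every curve $\alpha$, and by continuity this extends to $\alpha$ replaced by the measured foliation $(F,\mu)$, giving
$$
\frac{\ell_{\rho_{t_i}}(F,\mu)}{\ell_{\rho_{t_i}}(\beta)} \longrightarrow \frac{i((G,\nu),(F,\mu))}{i((G,\nu),\beta)} > 0 .
$$
But the numerator $\ell_{\rho_{t_i}}(F,\mu)\to 0$ by the previous paragraph, while $\ell_{\rho_{t_i}}(\beta)$ is bounded below: indeed $i((F,\mu),\beta)>0$ would only help, and in any case $\ell_{\rho_{t_i}}(\beta)\gmul 1$ because $\beta$ intersects the horizontal foliation, whose flat length grows, or more simply because the ratio above is finite and positive so $\ell_{\rho_{t_i}}(\beta)$ cannot tend to $0$ faster than $\ell_{\rho_{t_i}}(F,\mu)$. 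This forces the limiting ratio to be $0$, a contradiction. Hence $i((F,\mu),(G,\nu))=0$, and since $(F,\mu)$ is minimal, any measured foliation with zero intersection number with it is supported on the same topological foliation $F$; thus $G=F$ topologically, proving the first assertion.

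For the corollary, note that $g_t r_\theta(X_c,\omega_c)$ is itself the Teichm\"uller geodesic ray associated to the quadratic differential obtained by rotating $\omega_c^2$ by angle $\theta$ — equivalently, to a quadratic differential whose vertical foliation is the foliation of $(X_c,\omega_c)$ in direction $\theta$. For the relevant choices of $\theta$ this vertical foliation is again the minimal, non-uniquely-ergodic foliation $F$ (the same topological foliation underlying the whole simplex $\Lambda$), so the first part of the lemma applies verbatim and every limit point $[G,\nu]$ satisfies $i((F,\mu),(G,\nu))=0$, i.e.\ $[G,\nu]$ is a projective measured foliation carried by $F$. Since $\Lambda$ is precisely the set of projective classes of transverse measures carried by $F$, we conclude $[G,\nu]\in\Lambda$.

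The main obstacle I anticipate is the passage from curves to the foliation $(F,\mu)$ itself in the length ratios: one needs the continuity of $\ell_{\rho}(\cdot)$ and of $i((G,\nu),\cdot)$ on $\mathcal{MF}$ together with the uniform (in $t$) control needed to interchange the limit over $t_i$ with the approximation of $(F,\mu)$ by weighted simple closed curves. This is exactly the technical heart of Masur's original argument in \cite{masur boundary}; the "slight generalization" here is only that $(F,\mu)$ need not be uniquely ergodic, so the limit foliation $G$ is pinned down only topologically rather than projectively, which is all that is claimed.
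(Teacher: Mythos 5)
Your argument is correct in substance, but it takes a genuinely different route from the paper's. The paper never estimates the length of $(F,\mu)$ itself; instead it produces an auxiliary sequence of simple closed curves $\beta_n$ (in the case at hand, the slits $\zeta^k$ or the curves $\sigma^k_j$) with $\ell_{\rho_n}(\beta_n)=O(1)$ and with horizontal holonomy tending to $0$, extracts a projective limit $(H,\nu')$ of the $\beta_n$ satisfying $i((H,\nu'),(F,\mu))=0$, and then uses \eqref{eq:double} in the form $i(r_n\pi(\rho_n),s_n\beta_n)\le r_ns_n\ell_{\rho_n}(\beta_n)\to 0$ to get $i((G,\nu),(H,\nu'))=0$, hence $i((G,\nu),(F,\mu))=0$. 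You instead show directly that $\ell_{\rho_t}(F,\mu)\to 0$, using that the vertical foliation of $q_t$ is $e^t(F,\mu)$ and has extremal length $\|q_t\|=1$, so $\Ext_{X_t}(F,\mu)=e^{-2t}$. This is closer to Masur's original argument in the uniquely ergodic case, and it has the advantage of not depending on curves specific to the Veech construction; what it buys the paper to use the $\beta_n$ is that those curves are already needed elsewhere and the intersection-number pairing with \eqref{eq:double} avoids any discussion of extremal length of foliations. Both proofs rest on the same piece of Thurston theory --- the projection $\pi$, the inequality \eqref{eq:double}, and continuity of $i(\cdot,\cdot)$ on $\MF\times\MF$ --- to pass from convergence on simple closed curves to the statement about $(F,\mu)$, which is exactly the technical point you flag at the end.

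Two small repairs. First, Maskit's comparison of hyperbolic and extremal length is not the right tool for foliations: $\ell$ is homogeneous of degree $1$ on $\MF$ while $\Ext$ is homogeneous of degree $2$, so the small-curve comparison does not pass to weighted limits. The inequality you want is $\ell_\rho(\lambda)\le\bigl(2\pi|\chi(S)|\,\Ext(\lambda)\bigr)^{1/2}$, which comes from the area definition of extremal length and is scale-correct, giving $\ell_{\rho_t}(F,\mu)=O(e^{-t})$. Second, your justification that $\ell_{\rho_{t_i}}(\beta)$ is bounded below is shaky: the ``more simply'' clause is circular, and the foliation $\beta$ must cross is the \emph{vertical} one, not the horizontal one. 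The clean statement is that minimality of $F$ forces $i((F,\mu),\beta)>0$ for every essential simple closed curve, so the horizontal holonomy component of $\beta$ is positive and is expanded by $e^t$; hence $\Ext_{X_t}(\beta)\ge e^{2t}\,i((F,\mu),\beta)^2\to\infty$ and $\ell_{\rho_{t_i}}(\beta)\to\infty$. (Alternatively, normalize by scalars $r_i\to 0$ as the paper does and conclude $i((G,\nu),(F,\mu))=\lim r_i\ell_{\rho_{t_i}}(F,\mu)=0$ outright, avoiding the ratio and the lower bound altogether.)
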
 

\begin{proof}
Let $[G,\nu]\in \PMF $ be any  limit of the hyperbolic metrics $\rho_n$ of $g_{t_n}(X,q)$ for some subsequence  $t_n\to\infty$.   Let $(G,\nu)$ again denote  a representative. There exists a sequence $r_n\to 0$ such that $$r_n\rho_n\to (G,\nu)$$ in $\R^{\calS}$. Fix a pants decomposition $P=\{\kappa_i\}$ of the surface  such that 
$i((G,\nu),\kappa_i)>0$ for all $\kappa_i$.  According to (\cite{F} page 140), in a neighborhood $V_\epsilon$  consisting of metrics $\rho$ for which $\rho(\kappa_i)>\epsilon$ there is a projection map $\pi:V_\epsilon\to \MF$ such that for each simple closed curve $\gamma$, there is a constant $C$ such that for all $\rho\in V_\epsilon$, we have   
\begin{equation}
\label{eq:double}
i(\pi(\rho),\gamma)\leq \ell_\rho(\gamma)\leq i(\pi(\rho),\gamma)+C.
\end{equation}
Now  $r_n\ell_{\rho_n}(\gamma)\to i((G,\nu),\gamma)$ for all $\gamma$.
From (\ref{eq:double}) for each $\gamma$, the fact that $r_n\to 0$ and $C$ is fixed, we   conclude   
$$r_ni(\pi(\rho_n),\gamma)\to i((G,\nu),\gamma)$$ as well so that 
$$r_n\pi(\rho_n)\to (G,\nu).$$

On the other hand there is a sequence  $\beta_n$ of curves that become moderate length  along the sequence.  That is $$\ell_{\rho_n}(\beta_n)=O(1).$$
(In the case at hand it is either the simple closed curve corresponding to a slit or a curve $\sigma_k$).   In the flat metric on the image surface under the Teichm\"uller map, these curves have bounded length  which implies that, on the base surface $q$, they are long and the horizontal component of their holonomy goes to $0$ as $n\to\infty$. That is, $$i((F,\mu),\beta_n)\to 0.$$  It follows that, after passing to subsequences, projectively $\beta_n$ converges to a foliation $[H,\nu']$ topologically equivalent to $(F,\mu)$; that is, there exists $s_n\to 0$ such that $s_n\beta_n\to (H,\nu')$ and $i((H,\nu'),(F,\mu))=0$. 

But now applying the left side of (\ref{eq:double}) we have 
$$\lim_{n\to\infty}i(r_n\pi(\rho_n),s_n\beta_n)\leq r_ns_n\ell_{\rho_n}(\beta_n)\to 0.$$ 
Since $r_n\pi(\rho_n)\to (G,\nu)$ and $s_n\beta_n\to (H,\nu')$, by continuity of intersection number, we conclude that $$i((G,\nu),(H,\nu'))=0,$$ and this implies $(G,\nu)$ topologically equivalent to $(H,\nu')$;  hence to $(F,\mu)$ as well. 

\end{proof}

The following lemma is the criterion we will use  to show we get ergodic endpoints in $\PMF$. 
\begin{lem}
\label{lem:ergodiclimit}
Suppose we are given a sequence of metrics $\rho_n$ converging to a point in $\Lambda$ 
and a  sequence of closed curves  $\sigma_n$ converging in $\PMF$ to the ergodic measure $\mu_-$. Then  $|\frac{\ell_{\rho_n}(\gamma_1)}{\ell_{\rho_n}(\gamma_2)}-\frac{i(\sigma_n,\gamma_1)}{i(\sigma_n,\gamma_2)}|\to 0$ if, and only if, we also have
$\rho_n\to [F,\mu_-]$. 
\end{lem}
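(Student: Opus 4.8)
The statement is an equivalence about a fixed sequence of hyperbolic metrics $\rho_n$ already known to converge to some point $[F,\mu_\tau]\in\Lambda$. The plan is to exploit the fact that, by Proposition~\ref{prop:crit}, a point of $\Lambda$ is completely determined by the single ratio $\frac{i((F,\mu_\tau),\gamma_1)}{i((F,\mu_\tau),\gamma_2)}$ of horizontal lengths of the two torus core curves $\gamma_1,\gamma_2$. Since $\rho_n\to[F,\mu_\tau]$, the defining property of convergence in $\PMF$ (the definition just before Lemma~\ref{lem:continuity}) gives $\frac{\ell_{\rho_n}(\gamma_1)}{\ell_{\rho_n}(\gamma_2)}\to\frac{i((F,\mu_\tau),\gamma_1)}{i((F,\mu_\tau),\gamma_2)}$. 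Likewise, since $\sigma_n\to[F,\mu_-]$ in $\PMF$, continuity of intersection number gives $\frac{i(\sigma_n,\gamma_1)}{i(\sigma_n,\gamma_2)}\to\frac{i((F,\mu_-),\gamma_1)}{i((F,\mu_-),\gamma_2)}$. So both ratios in the displayed quantity converge, and the quantity $\bigl|\tfrac{\ell_{\rho_n}(\gamma_1)}{\ell_{\rho_n}(\gamma_2)}-\tfrac{i(\sigma_n,\gamma_1)}{i(\sigma_n,\gamma_2)}\bigr|$ converges to $\bigl|\tfrac{i((F,\mu_\tau),\gamma_1)}{i((F,\mu_\tau),\gamma_2)}-\tfrac{i((F,\mu_-),\gamma_1)}{i((F,\mu_-),\gamma_2)}\bigr|$.

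Thus the difference tends to $0$ if and only if these two limiting ratios coincide, i.e. if and only if $\frac{i((F,\mu_\tau),\gamma_1)}{i((F,\mu_\tau),\gamma_2)}=\frac{i((F,\mu_-),\gamma_1)}{i((F,\mu_-),\gamma_2)}$. By Proposition~\ref{prop:crit}, this equality of ratios holds precisely when the two foliations have the same parameter, i.e. $[F,\mu_\tau]=[F,\mu_-]$. So the next step is simply to invoke Proposition~\ref{prop:crit} to conclude that the difference tends to $0$ if and only if $\rho_n\to[F,\mu_-]$, which is exactly the claimed equivalence. One should note in passing that all denominators are safely bounded away from $0$: the limiting intersection numbers $i((F,\mu_\tau),\gamma_2)$ and $i((F,\mu_-),\gamma_2)$ are strictly positive (the core curve of the second torus is transverse to the flow), so the ratios are genuinely well-defined in the limit and the arithmetic above is legitimate.

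The only real content, and the one place where care is needed, is justifying that convergence of $\rho_n$ to a point of $\Lambda$ in the sense of the stated definition really does give convergence of the ratio $\frac{\ell_{\rho_n}(\gamma_1)}{\ell_{\rho_n}(\gamma_2)}$ to the corresponding intersection-number ratio for the two specific curves $\gamma_1,\gamma_2$ — this is immediate from the definition applied to the pair $(\alpha,\beta)=(\gamma_1,\gamma_2)$ — and, symmetrically, that $\sigma_n\to[F,\mu_-]$ in $\PMF$ yields convergence of $\frac{i(\sigma_n,\gamma_1)}{i(\sigma_n,\gamma_2)}$, which follows from continuity of geometric intersection number on $\PMF$ together with positivity of $i((F,\mu_-),\gamma_2)$. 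I expect the main (minor) obstacle to be bookkeeping the positivity of the relevant intersection numbers so that all quotients behave continuously; once that is in hand the equivalence is a formal consequence of Proposition~\ref{prop:crit}. No estimate of hyperbolic lengths is needed here — those estimates are what is required to verify the \emph{hypotheses} of the lemma in the applications, not the lemma itself.
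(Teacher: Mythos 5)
Your proposal is correct and is essentially the paper's own argument: the paper's one-line proof invokes exactly the two ingredients you use, namely Proposition~\ref{prop:crit} and the convergence $\frac{i(\sigma_n,\gamma_1)}{i(\sigma_n,\gamma_2)}\to \frac{i((F,\mu_-),\gamma_1)}{i((F,\mu_-),\gamma_2)}$. You have simply written out the bookkeeping (both ratios converge, so the difference vanishes iff the limiting ratios agree, which by Proposition~\ref{prop:crit} happens iff the limit point is $[F,\mu_-]$) that the paper leaves implicit.
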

\begin{proof}
The Lemma follows from Proposition~\ref{prop:crit} and the fact that  $\frac{i(\sigma_n,\gamma_1)}{i(\sigma_n,\gamma_2)}\to \frac{i((F,\mu_-),\gamma_1)}{i((F,\mu_-),\gamma_2)}$. 

\end{proof}

\subsection{Flows on $(X_c, \omega_c)$}

Let $(X,\omega)$ be a general translation surface with distance $d(\cdot,\cdot)$ and 
let $\phi^t_\theta$ the directional flow on $(X,\omega)$ in direction $\theta$ at time $t$. 
The following is Euclidean geometry. 
\begin{lem}
\label{lem:Euclidean} Suppose $x$ is a point on $(X,\omega)$ and  $\theta,\psi$ are a pair of directions.  Then $d(\phi^t_{\theta}(x), \phi^{t\sec(|\theta-\psi|}_{\psi}(x))= t\tan(|\psi-\theta|)$ as  long as no trajectory leaving $x$ in an angle between $\theta$ and $\psi$ hits a singularity within time $t$. 
\end{lem}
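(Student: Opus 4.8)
The statement is pure Euclidean geometry in a single chart of the translation surface, so the plan is to reduce to a computation in $\R^2$ and then justify that the reduction is legitimate under the stated hypothesis. First I would set up coordinates: work in the developing map of $(X,\omega)$, placing the point $x$ at the origin. Since no trajectory leaving $x$ at an angle between $\theta$ and $\psi$ meets a singularity within time $t$, the whole ``fan'' of geodesic segments $\{s\,e^{i\phi} : 0\le s\le t,\ \phi \in [\theta,\psi]\}$ (or $[\psi,\theta]$) develops isometrically into a single copy of $\R^2$; in particular the two endpoints $\phi^t_\theta(x)$ and the point reached by flowing in direction $\psi$ for the appropriate time both lie in this chart, and the straight segment between them stays in it as well. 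Hence the distance $d(\cdot,\cdot)$ between these two points equals the Euclidean distance between the corresponding points of $\R^2$.

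Next I would do the elementary trigonometry. In the chart, $\phi^t_\theta(x)$ is the point at Euclidean distance $t$ from the origin in direction $\theta$. To match the second coordinate of the pair in the statement I flow in direction $\psi$ for time $t\sec(|\theta-\psi|)$, landing at the point at distance $t\sec(|\theta-\psi|)$ from the origin in direction $\psi$. Dropping a perpendicular: the foot of the perpendicular from $\phi^t_\theta(x)$ onto the ray in direction $\psi$ is at distance $t\cos(|\theta-\psi|)$ along that ray — but wait, that is not $t\sec(|\theta-\psi|)$; rather the point $\phi^t_\theta(x)$ is the foot of the perpendicular \emph{from} the second point \emph{onto} the ray in direction $\theta$. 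Indeed, with $\Delta = |\theta-\psi|$, the right triangle with hypotenuse of length $t\sec\Delta$ along direction $\psi$, one leg of length $t$ along direction $\theta$, and the included angle $\Delta$ between them, has its remaining leg (opposite $\Delta$) of length $t\sec\Delta \cdot \sin\Delta = t\tan\Delta$, and this remaining leg is exactly the segment joining $\phi^t_\theta(x)$ to $\phi^{t\sec\Delta}_\psi(x)$. This gives the asserted value $t\tan(|\psi-\theta|)$.

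The only real content beyond bookkeeping is the justification that the fan develops injectively, so that Euclidean distance in the chart genuinely computes the surface distance $d$; I expect this to be the main (though still minor) obstacle, since one must check both that the segments themselves avoid singularities — this is the hypothesis — and that the triangle they bound is embedded, which follows because $|\theta-\psi|$ is an angle strictly less than $\pi$ (in fact one should tacitly assume $|\theta-\psi|<\pi/2$ for $\sec$ and $\tan$ to behave, and this is the regime in which the lemma is applied) so the triangle is nondegenerate and its interior is covered by the fan of directions between $\theta$ and $\psi$. One subtlety worth a remark: $d$ is the infimum of lengths of \emph{all} paths on $X$, so a priori $d(\phi^t_\theta(x),\phi^{t\sec\Delta}_\psi(x))\le t\tan\Delta$ is immediate from the chart, and the reverse inequality needs that any competitor path either stays in the chart (where Euclidean distance is the truth) or leaves it, in which case a length comparison shows it cannot be shorter; since the statement as used only needs the estimate up to the stated constants, I would not belabor this point.
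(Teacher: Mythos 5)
Your argument is correct, and in fact the paper offers no proof at all — it simply asserts the lemma as ``Euclidean geometry'' — so your development-map reduction and the law-of-cosines computation $t^2+t^2\sec^2\Delta-2t^2\cos\Delta\sec\Delta=t^2\tan^2\Delta$ supply exactly the elementary argument the authors left implicit. Your closing remark is also the right one: strictly speaking the chart only yields the upper bound $d\le t\tan\Delta$ (a competitor path could in principle leave the fan), but that inequality is all the lemma is ever used for, so nothing downstream is affected.
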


Before continuing we need some basic facts about rotations. 
Let $T:[0,1)\to [0,1)$ be given by $$T(x)=x+\alpha, \mod 1.$$
\begin{lem}\label{basic dio} $\frac 1 {(a_{n+1}+2)q_n}<|T^{q_n}x-x)|<  \frac 1 {q_{n+1}}<\frac 1 {a_{n+1}q_n}$.
\end{lem}

\begin{proof}
See, for example,  \cite{khinchin}, four lines before equation 34, which says
\begin{equation}
\label{eq:K}
\frac 1 {q_k(q_k+q_{k+1})}<|\alpha-\frac{p_k}{q_k}|<\frac 1 {q_kq_{k+1}}.
\end{equation} By multiplying everything by $q_k$ and recalling that $q_{k+1}=a_{k+1}q_k+q_{k-1}$ and so $q_{k+1}\geq a_{k+1}q_k$ and $q_{k}+q_{k+1}\leq (a_{k+1}+2)q_k$ we obtain the lemma.
\end{proof}

\begin{lem}\label{hit next}
If $I$ is a half open interval such that $|I|\leq \frac{1}{3}<<q_i\alpha>> $  and $T^jx\in I$ then $T^{j+k}x\notin I$ for $|k|<q_{i+1}$.
\end{lem}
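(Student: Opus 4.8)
\textbf{Proof proposal for Lemma~\ref{hit next}.}

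The plan is to reduce the statement to the one-dimensional Diophantine fact in Lemma~\ref{basic dio}, using only elementary properties of the rotation $T(x)=x+\alpha\pmod 1$. First I would observe that $T$ is an isometry of $\R/\ZZ$, so for any $j$ and any $k$ we have $|T^{j+k}x - T^j x| = |T^k 0 - 0| = \|k\alpha\|$, the distance from $k\alpha$ to the nearest integer (I will write $\|\cdot\|$ for this). Thus $T^j x$ and $T^{j+k} x$ both lie in $I$ only if $\|k\alpha\| < |I| \leq \tfrac13 \|q_i\alpha\|$, and it suffices to show that this forces $|k| \geq q_{i+1}$.

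The heart of the matter is the classical best-approximation property of continued fraction convergents: for $0 < |k| < q_{i+1}$ one has $\|k\alpha\| \geq \|q_i\alpha\|$. Since the excerpt has made available Lemma~\ref{basic dio}, I would derive what I need from it directly rather than quoting best approximations as a black box, to keep the argument self-contained. Concretely: for $0 < |k| < q_{i+1}$, write $k$ in terms of the convergent denominators. If $|k| < q_i$ there is nothing subtle; and for $q_i \leq |k| < q_{i+1}$ one uses that $q_{i+1} = a_{i+1} q_i + q_{i-1}$ together with the two-sided bound $\frac{1}{(a_{i+1}+2)q_i} < \|q_i\alpha\| < \frac{1}{q_{i+1}}$ from Lemma~\ref{basic dio}. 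The cleanest route is: any $k$ with $0<|k|<q_{i+1}$ satisfies $\|k\alpha\|\ge \|q_i\alpha\|$ because $p_i/q_i$ is a best approximation of the second kind, and then $\|k\alpha\| \geq \|q_i\alpha\| > 3\cdot\tfrac13\|q_i\alpha\| \ge 3|I| > |I|$ — wait, that over-shoots; more carefully $\|k\alpha\|\ge \|q_i\alpha\| = 3\cdot(\tfrac13\|q_i\alpha\|) \geq 3|I| > |I|$, so indeed $T^jx$ and $T^{j+k}x$ cannot both lie in an interval of length $\le \tfrac13\|q_i\alpha\|$. This contradiction shows $T^{j+k}x \notin I$ whenever $0<|k|<q_{i+1}$, which is the claim (the case $k=0$ being vacuous since then $T^{j+k}x = T^jx$, and the statement is about $|k|<q_{i+1}$ with $k\ne 0$ implicitly, or one simply notes $T^jx\in I$ is consistent with $k=0$).

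The main obstacle — really the only nontrivial input — is justifying the best-approximation inequality $\|k\alpha\|\ge\|q_i\alpha\|$ for $0<|k|<q_{i+1}$. I would handle this by the standard argument: suppose $\|k\alpha\| < \|q_i\alpha\|$ for some such $k$; then the integer vector $(k, \text{nearest integer to }k\alpha)$ and $(q_i, p_i)$ together with $(q_{i-1},p_{i-1})$ give, via Cramer's rule on the unimodular pair $\{(q_{i-1},p_{i-1}),(q_i,p_i)\}$, an expression $k = u q_{i-1} + v q_i$ with integers $u,v$, and examining the signs of $q_{i-1}\alpha - p_{i-1}$ and $q_i\alpha - p_i$ (which alternate) shows $u,v$ are nonzero of opposite sign unless $|k|\ge q_{i+1}$, forcing $|k\alpha - (\text{int})| \ge |q_{i-1}\alpha - p_{i-1}| > \|q_i\alpha\|$, a contradiction. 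Alternatively, and more in keeping with the paper's style, I can cite Khinchin \cite{khinchin} for the best-approximation theorem directly. Either way this is routine and I would not belabor it in the write-up; the rest is the one-line isometry observation plus arithmetic with the bound from Lemma~\ref{basic dio}.
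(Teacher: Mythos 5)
Your proposal is correct and follows essentially the same route as the paper: both arguments reduce to the observation that $T$ is an isometry, invoke the best-approximation property $\|k\alpha\|\ge\|q_i\alpha\|$ for $0<|k|<q_{i+1}$ (which the paper also uses, implicitly, in the inequality $|x-T^kx|>|x-T^{q_i}x|$), and then apply Lemma~\ref{basic dio} to see the displacement exceeds the diameter of $I$. Your version is if anything slightly cleaner, comparing $\|q_i\alpha\|$ to $3|I|$ directly rather than routing through $\tfrac{1}{3q_{i+1}}$, and more honest about where the best-approximation input is needed.
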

\begin{proof}  First we note that if $y,z \in I$ then $|y-z|< |I|\leq \frac{1}{3q_{i+1}}$, the last inequality by (\ref{eq:K}).  Now if  $T^j(x)\in I$, then 
 $$|T^jx-T^{j+k}x)|=|x-T^{k}x|>|x-T^{q_i}x|>\frac{1}{3q_{i+1}}$$ for all $ |k|<q_{i+1}$ with $|k|\notin \{0,q_i\}$. The last inequality follows from  the left hand inequality in Lemma~\ref{basic dio} and the fact that $q_{i+1}=a_{i+1}q_i+q_{i-1}$. 
 
Thus, $|T^jx-T^{j+k}x)|$ exceeds the diameter of $I$, and so both $T^jx$ and $T^{j+k}x$ cannot both be contained in $I$.
\end{proof}

Now we return to the current setting of the vertical flow on the  translation surface $(X_{0},\omega_{0})$. 
It will be convenient to 
describe the flow and translation surface somewhat differently. 
Recall that rotation by $\alpha$ arises by the first return to the horizontal circle under the flow in direction $\text{arctan}(\alpha):=\theta$ (measured from the vertical) on the square (unit) torus. Take two square tori, and glue them together by a pair of horizontal slits of length $b$. The flow in direction $\theta$ on this flat surface $Y$ of genus two gives a second way of viewing the surface.  This topological flow then admits a $1$-parameter family of invariant transverse measures.
  When we change the measures, the components of curves in direction $\theta$ stay the same but the components in directions perpendicular to $\theta$ change.

\begin{figure}[h]
\begin{tikzpicture}
\path[->](0,0) edge (.7,1.2);
\draw (0,.7) arc (90:50:.5);
\path[-] (0,0) edge (2,0) edge (0,2);
\path[-] (1.1,2)
node{$\circ$};
\path (1.4,1.95) node{*};
\path (1.4,-.05) node{*};
\path (1.1,0) node{$\circ$};
\path (1.2,2.3) node{$I_1$};
\path (1.2, .3) node{$I_4$};
\path (.2,.8) node{$\theta$};
\path[-] (2,0) edge (2,2);
\path[-] (2,2) edge (0,2);

\path[-] (3,0) edge (5,0) edge (3,2);
\path (4.1,2) node{$\circ$};
\path (4.4,1.95) node{*};
\path[-] (5,0) edge (5,2);
\path[-] (5,2) edge (3,2);
\path (4.1,0) node{$\circ$};
\path (4.4,-.05) node{*};
\path (4.3,2.3) node{$I_4$};
\path (4.3, .3) node{$I_1$};
\end{tikzpicture}
\caption{}
\end{figure}


In the picture above,  the two pieces from the circle to the asterisk labeled $I_1$ are identified. We do a similar identification for the two pieces labeled $I_4$. Otherwise, within each square, opposite sides are identified. Following the resulting identifications, the circle and asterisk have cone angle $4\pi$. One can consider the first return map to the union of the two bottoms. The resulting IET has permutation $(342615)$. It has third and sixth intervals with length $\tan(\theta)$ comprising the right segments of the horizontal sides of each torus. The first and fourth intervals are segments of length $b$ on the left hand side of the two squares, which flow into the slits. The second and fifth intervals are what is left over.


Now the vertical flow $(F,\mu_c)$ with transverse measure $\mu_c$ by itself does not determine a translation structure. One needs a horizontal flow. One way to specify this in the current situation is to fix the genus two surface $Y$ above.  By, for example, a theorem of Hubbard-Masur \cite{HM}, there is on $Y$ a translation surface structure $(Y,\omega_c)$ whose vertical measured foliation is $(F,\mu_c)$.  
We note the difference with the figure and the description from the previous section. There we had a family of translation surfaces $(X_c,\omega_c)$ on varying Riemann surfaces. Here there is a fixed Riemann surface $Y$ and 
translation surface $(Y,\omega_c)$ on $Y$.  Our main theorem applies equally well to $(Y,\omega_c)$.

We next connect the previous Lemmas~\ref{lem:Euclidean} and \ref{basic dio} by finding closed curves which approximate the flow direction and whose slopes are defined in terms of the convergents of the continued fraction of $\alpha$.

\begin{lem} 
\label{lem:periodic}
Recall the continued expansion $\alpha=[a_1,a_2,\ldots ]$ and that $\tan(\theta)=\alpha$.    There is a constant $C$ with the following property. Let  $p_{n_k}/q_{n_k}$ be the fraction whose continued fraction expansion is $[a_1,...,a_{n_k}]$.   Let $\theta_k$ be the angle with the vertical axis so that $\tan\theta_k=p_{n_k}/q_{n_k}$.  Then for a set of $p$  of measure more than  $\frac 1 2$ of points $p$ we have  $$d(\phi_{\theta_k}^{\sec(|\theta_k-\theta|) t}(p),\phi_{\theta}^t(p))<\frac{C}{q_{n_k}a_{n_k+1}},$$ 
for all $t< q_{n_k}$.
\end{lem}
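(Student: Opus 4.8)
The plan is to compare the $\theta$-flow orbit of a point $p$ with the orbit of $p$ under the nearby rational-slope flow $\phi_{\theta_k}$ by invoking the purely Euclidean estimate of Lemma~\ref{lem:Euclidean}, so that the main content becomes: (i) controlling the angular discrepancy $|\theta_k-\theta|$ in terms of the continued fraction data, and (ii) ensuring that, for a large-measure set of basepoints, no trajectory in the angular sector between $\theta$ and $\theta_k$ hits a singularity during time $t<q_{n_k}$, which is the hypothesis needed to apply Lemma~\ref{lem:Euclidean}.

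For (i): since $\tan\theta=\alpha$ and $\tan\theta_k=p_{n_k}/q_{n_k}$, the mean value theorem gives $|\theta_k-\theta|\asymp|\alpha-p_{n_k}/q_{n_k}|$, and by the classical estimate \eqref{eq:K} (or Lemma~\ref{basic dio}) this is $\asymp \frac{1}{q_{n_k}q_{n_k+1}}\leq\frac{1}{q_{n_k}^2 a_{n_k+1}}$. Hence for $t<q_{n_k}$ we get $t\tan|\theta_k-\theta|\lmul t|\theta_k-\theta|\lmul \frac{q_{n_k}}{q_{n_k}^2 a_{n_k+1}}=\frac{1}{q_{n_k}a_{n_k+1}}$, which is exactly the bound claimed once we absorb the implied constant into $C$. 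Lemma~\ref{lem:Euclidean} then gives $d(\phi_{\theta_k}^{\sec(|\theta_k-\theta|)t}(p),\phi_\theta^t(p))=t\tan|\psi-\theta|$, matching the desired inequality, provided its no-singularity hypothesis holds.

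For (ii), which I expect to be the main obstacle: the singularities of $(Y,\omega_c)$ (equivalently the marked points $\circ$ and $*$, where the slits begin and end) are a finite set, and a basepoint $p$ is \emph{bad} if some trajectory leaving $p$ at an angle in $[\theta,\theta_k]$ meets a singularity within time $t<q_{n_k}$. The key point is that the first-return map of the $\theta$-flow to the horizontal circle is rotation by $\alpha$, and the slit endpoints correspond to a bounded number of points on that circle; a trajectory segment of flow-time $<q_{n_k}$ makes $<q_{n_k}$ (roughly) returns. Thus the set of $p$ whose $\theta$-orbit hits a singularity within time $q_{n_k}$ is a union of $O(q_{n_k})$ intervals on the transversal, each of which — using Lemma~\ref{hit next} together with Lemma~\ref{basic dio} to see that consecutive singular hits along the $\alpha$-orbit are spread out — can be taken of length $O(1/q_{n_k})$, so this bad set has small measure; allowing the angle to vary over the sector $[\theta,\theta_k]$ fattens each such interval by only $O(t|\theta_k-\theta|)=O(1/(q_{n_k}a_{n_k+1}))$, which is negligible. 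Choosing the constant appropriately (and, if necessary, restricting to the sub-torus region where the flow genuinely behaves like the rotation) the bad set has measure $<\tfrac12$, so the asserted estimate holds on a set of measure more than $\tfrac12$. The delicate bookkeeping is the simultaneous control over all angles in the sector and over both square tori glued along the slit; everything else is the Diophantine input already recorded in Lemmas~\ref{basic dio} and \ref{hit next}.
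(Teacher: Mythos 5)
Your proposal follows essentially the same route as the paper: bound $|\tan\theta_k-\tan\theta|<1/(a_{n_k+1}q_{n_k}^2)$ via the Diophantine estimate, apply Lemma~\ref{lem:Euclidean}, and discard the small-measure set of basepoints whose sector of trajectories meets a singularity before time $q_{n_k}$ (the paper phrases this as measure preservation of the flow applied to an $\frac{N}{a_{n_k+1}q_{n_k}}$-neighborhood of the singularities, giving bad measure $\frac{2N}{a_{n_k+1}}<\frac12$ for large $k$). One caveat: your intermediate claim that the bad set is $O(q_{n_k})$ intervals of length $O(1/q_{n_k})$ and ``hence has small measure'' is false as arithmetic (that product is $O(1)$); it is only your subsequent ``fattening'' width $O(1/(q_{n_k}a_{n_k+1}))$ per strip that yields the correct total measure $O(1/a_{n_k+1})$, which is what the paper's measure-preservation argument computes directly.
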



\begin{proof}  By \eqref{eq:K} and the right hand inequality in Lemma \ref{basic dio}, 
$$|\tan(\theta_k)-\tan(\theta)|<\frac1 {a_{n_k+1}q_{n_k}^2}.$$ 
Now the rotation $R_{\tan(\theta_k)}$ is periodic with period   $q_{n_k}$.   So the above inequality gives that $$d_{S^1}(R^n_{\tan(\theta_k)}(x),R^n_{\tan(\theta)}(x))<\frac 1 {a_{n_k+1}q_{n_k}}$$ for all $n<q_{n_k}$. We wish to apply Lemma~\ref{lem:Euclidean}.  Fix $N\geq 2$. To guarantee the hypotheses 
it is enough to know that the flowline $\phi^t_{\theta}$ is not in a $\frac{N}{a_{n_{k}+1}q_{n_k}}$ neighborhood of the singularities  for $t<q_{n_k}$.  Because the flow is measure preserving this is true for all but $$q_{n_k}\frac{2N}{a_{n_k+1}q_{n_k}}=\frac{2N}{a_{n_k+1}}<1/2,$$ of the measure of $\omega$ for $k$ large enough.
\end{proof}

Since, by Theorem~\ref{thm:Veech}, there are flow lines that spend unequal amounts of time in each torus, we have the following.
Define   $\sigma_j^k; j=-,+$ to be the closed orbits in direction $\theta_k$. 
\begin{cor}
\label{cor:unequal} The closed orbits $\sigma_j^k$ spend  unequal amounts of time in the two tori. 
We may also consider these closed curves $\sigma_j^k$ as closed curves on the original translation surface as given in Figure 1.  On that surface they are almost vertical.
\end{cor}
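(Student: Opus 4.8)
\textbf{Proof proposal for Corollary~\ref{cor:unequal}.}

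The plan is to transfer Veech's time-proportion statement (Theorem~\ref{thm:Veech}) from the $\theta$-flow to the periodic $\theta_k$-flow using the closeness estimate of Lemma~\ref{lem:periodic}, and then to reinterpret the resulting closed $\theta_k$-orbits on the model of Figure~1. First I would fix a large $k$ and recall that the rotation $R_{\tan\theta_k}$ is periodic of period $q_{n_k}$, so that the $\theta_k$-directional flow on $Y$ decomposes into closed orbits; by construction these are exactly the curves $\sigma^k_j$, $j=-,+$, and (after the shear identifying $Y$ with the surface of Figure~1) they are nearly vertical there, which gives the last sentence of the statement for free once the main claim is established. The time a $\theta_k$-orbit spends in a given torus is, up to the uniform factor $\sec|\theta_k-\theta|\to 1$, the proportion of the first-return itinerary (under the IET with permutation $(342615)$) landing in intervals $I_1,I_2,I_3$ versus $I_4,I_5,I_6$, so it suffices to show these two proportions differ along $\sigma^k_-$ and $\sigma^k_+$ for infinitely many $k$.

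The key step is the following. By Theorem~\ref{thm:Veech} there are positive-measure sets $E_-,E_+$ whose $\theta$-orbits asymptotically spend more (resp.\ less) than half their time in the first three intervals. Pick $k$ large; by Lemma~\ref{lem:periodic}, for a set of measure $>\tfrac12$ of basepoints $p$ the orbit segments $\phi^{\,\sec(|\theta_k-\theta|)t}_{\theta_k}(p)$ and $\phi^t_\theta(p)$ stay within $C/(q_{n_k}a_{n_k+1})$ of each other for all $t<q_{n_k}$. Intersecting this majority set with $E_-$ (resp.\ $E_+$), which have positive measure, produces for infinitely many $k$ a point $p^-_k$ (resp.\ $p^+_k$) lying on $\sigma^k_-$ (resp.\ $\sigma^k_+$) whose $\theta_k$-orbit shadows a $\theta$-orbit that is biased toward $I_1\cup I_2\cup I_3$ (resp.\ $I_4\cup I_5\cup I_6$). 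The shadowing implies that along the full period the fraction of the itinerary in the first three intervals agrees with that of the shadowed $\theta$-orbit up to an error controlled by the number of near-boundary crossings; since $q_{n_k}\to\infty$, over a full period $q_{n_k}$ the $\theta$-bias (strictly above vs.\ strictly below $1/2$) survives, so $\sigma^k_-$ and $\sigma^k_+$ spend genuinely unequal fractions of time in the two tori, hence unequal absolute amounts of time (the two tori having comparable total area).

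The main obstacle is the bookkeeping that converts the $C^0$-shadowing of Lemma~\ref{lem:periodic} into control of \emph{itinerary} proportions: a trajectory can cross near an endpoint of one of the $I_j$ and there be assigned to a different interval than its shadow. This is handled exactly as in the proof of Lemma~\ref{lem:periodic} — discard the measure-$O(N/a_{n_k+1})$ of basepoints whose orbit enters an $N/(q_{n_k}a_{n_k+1})$-neighborhood of the partition endpoints within time $q_{n_k}$ — so that for the remaining (still positive-measure, indeed $>\tfrac12$) set the itineraries of $\phi_{\theta_k}$ and $\phi_\theta$ coincide symbol-for-symbol on $[0,q_{n_k})$; then the proportions are literally equal, not merely close, and the strict $\theta$-bias from Theorem~\ref{thm:Veech} passes through verbatim. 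A minor additional point is that one must check the positive-measure sets $E_\pm$ can be taken inside the relevant full-measure-minus-$\epsilon$ sets simultaneously for a common subsequence of $k$'s; this follows from a diagonal/Borel--Cantelli argument using condition~(\ref{eq:condA}), exactly the summability already invoked in Veech's construction.
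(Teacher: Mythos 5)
Your proposal is correct and follows exactly the route the paper intends: the paper offers no detailed argument, only the one-line justification that Theorem~\ref{thm:Veech} supplies flow lines with unequal time-proportions, which combined with the shadowing estimate of Lemma~\ref{lem:periodic} transfers the bias to the closed $\theta_k$-orbits. Your write-up simply fills in the itinerary bookkeeping (discarding basepoints near partition endpoints, uniformizing the asymptotic bias on a positive-measure subset of $E_\pm$) that the authors leave implicit.
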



\subsection{Slits and torus decompositions}

Suppose $(X,\omega)$ is a translation surface of genus $2$ with a pair of zeroes.  The hyperelliptic involution $\tau$ interchanges the zeroes and takes $(X,\omega)$ to $(X,-\omega)$. (\cite{FK} page 98)

\begin{defin}
A slit is a saddle connection $\sigma$ joining the pair of zeroes 
 such that $\sigma\cup\tau(\sigma)$ is a separating curve that separates the surface into a pair of tori.  The involution fixes each complementary torus.
\end{defin}

\begin{lem} 
For $c=0$, vectors of the form $(b+2p,2q)$ where $(p,q)$ is an integer vector are slits.
\end{lem}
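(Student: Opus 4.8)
The plan is to produce, for each $(p,q)\in\ZZ^2$, an explicit saddle connection $\sigma$ from one zero $P$ of $\omega_0$ to the other zero $Q$ with holonomy $(b+2p,2q)$, and then to check that $\sigma\cup\tau(\sigma)$ is an embedded, essential, separating simple closed curve cutting off two tori. I would work in the model of Section~2: $(X_0,\omega_0)$ is two copies $\bar T_1,\bar T_2$ of a fixed flat torus $\bar T=\R^2/\Lambda_0$ (with $\Lambda_0$ the unit square lattice), each carrying a horizontal segment of holonomy $w=(b,0)$ from $P$ to $Q$, cross-glued along these segments; here $0<b<1$ and, crucially, $b$ is irrational. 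The first observation is that ``forgetting which torus one is in'' gives a degree-two cover $\pi\colon X_0\to\bar T$ branched exactly over $\bar P$ and $\bar Q$ (the $4\pi$ cone points project to the regular points $\bar 0$ and $(b,0)$); let $R$ be its deck involution. The hyperelliptic involution $\tau$ restricts on each $\bar T_j$ to $z\mapsto w-z$, hence covers $\bar\tau\colon z\mapsto w-z$ on $\bar T$; in particular $\tau$ and $R$ commute and $\tau$ interchanges the two sides of the defining slit.

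Second, I would construct $\sigma$. Put $v=(b+2p,2q)$. Since $v-w=(2p,2q)\in\Lambda_0$, the straight geodesic $\bar\sigma$ in $\bar T$ developing to the segment $[0,v]$ runs from $\bar P$ to $\bar Q$. Because $b$ is irrational, $[0,v]$ is injective modulo $\Lambda_0$ and meets the lattices $\Lambda_0$ and $w+\Lambda_0$ only at its endpoints (for $q\neq 0$ this is immediate: $2sq\in\ZZ$ forces $s$ rational, whence $sb\notin\ZZ$). Thus $\bar\sigma$ is a simple arc whose interior avoids the branch locus, so it lifts to two simple saddle connections $\sigma$ and $R\sigma$ from $P$ to $Q$, whose interiors lie on opposite sheets of $\pi$ and hence are disjoint.

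Third — the heart of the argument — I would show $\tau(\sigma)=R\sigma$ as subsets, so that $\sigma\cup\tau(\sigma)=\sigma\cup R\sigma$ is an embedded circle meeting $\{P,Q\}$ exactly at $P$ and $Q$. Since $\bar\tau$ sends the holonomy $v$ at $\bar Q$ to $-v$, it maps $\bar\sigma$ to itself with reversed orientation, so $\pi(\tau\sigma)=\bar\sigma$ and $\tau(\sigma)$ is one of the two lifts of $\bar\sigma$. It cannot be $\sigma$ with its orientation reversed: that would force the midpoint of $\sigma$ to be $\tau$-fixed, hence a Weierstrass point, whereas $\pi$ sends that midpoint to $\tfrac12v\equiv(b/2,0)$, a point in the interior of the defining slit over which the two sheets of $\pi$ are interchanged by $\tau$, so it carries no Weierstrass point. (This is precisely where the evenness of $2p,2q$ enters, via $\tfrac12 v\equiv(b/2,0)$; for an odd vector $(b+m,n)$ the corresponding midpoint would not lie on the slit and $\sigma$ would genuinely cross $\tau(\sigma)$.) Hence $\tau(\sigma)=R\sigma$, and $\sigma\cap R\sigma=\{P,Q\}$ because away from the two branch points a point and its $R$-image are distinct.

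Finally I would run the topological wrap-up. Set $c=\sigma\cup\tau(\sigma)$. Then $\tau$ preserves $c$ and cyclically permutes its two arcs, hence preserves the orientation of $c$, so $[c]=\tau_*[c]=-[c]$ in the torsion-free group $H_1(X_0;\ZZ)$, giving $[c]=0$. Moreover $c$ is a closed geodesic: at $P$ (and at $Q$) the arcs $\sigma$ and $R\sigma$ leave in the same developed direction but on different sheets of the $4\pi$ cone, so the cone angle is split as $2\pi+2\pi$; in particular $c$ is essential. A null-homologous essential simple closed curve on a genus-two surface separates it into two one-holed tori, and $\tau$ fixes each of them, since it fixes all six Weierstrass points and none of these lies on $c$. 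Therefore $\sigma$ is a slit. The remaining case $q=0$ is degenerate (the horizontal direction is completely periodic, and the only vector of the stated form then realized by a saddle connection is $w$ itself, the defining slit), and I would dispose of it separately at the outset. I expect the third step — pinning down $\tau(\sigma)=R\sigma$, i.e.\ ruling out that $\sigma$ meets its $\tau$-image in an interior point — to be the only genuinely delicate part; existence, simplicity and the homological computation are routine once the branched-cover picture is set up.
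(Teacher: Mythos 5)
Your proof is correct, but it takes a genuinely different route from the paper's. The paper argues by cut-and-paste on the two tori themselves: the segment $w'$ with holonomy $(b+2p,2q)$ and the original slit $w$ share endpoints and (because $2p,2q$ are even) cross an odd number of times, so they cut each other into $2n$ subarcs bounding $n$ parallelograms on each torus, and swapping the corresponding parallelograms between $T_-$ and $T_+$ exhibits the two new tori $T_-',T_+'$ explicitly. You instead pass to the degree-two cover $\pi\colon X_0\to \bar T$ branched over the two zeros, realize the candidate saddle connection as a lift of the straight segment $[0,v]$, identify $\tau(\sigma)$ with the other lift $R\sigma$ by the midpoint/Weierstrass-point argument (the midpoint lies over $(b/2,0)$, where the sheets are interchanged, which is exactly where evenness enters), and then deduce separation from $\tau_*=-\mathrm{id}$ on $H_1$ together with essentialness of the resulting closed flat geodesic. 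Your version is cleaner for the bare existence statement and isolates the parity condition conceptually; the paper's version is less slick but pays off later, since the explicit parallelogram swap is what underlies the area estimates $\lambda(T_+^j\Delta T_+^{j+1})\leq 2/a_{n_j+1}$ in the ergodic-case analysis (Lemma~\ref{lem:erg}), so the paper needs that picture anyway.

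Two small caveats, neither fatal. First, your simplicity argument for $\bar\sigma$ (and the claim that its interior misses the branch locus) uses irrationality of $b$; this is a genericity hypothesis that the paper's sketch also uses implicitly (it needs $w'$ to be an embedded arc meeting $w$ transversally), so you are not worse off, but you should state it as an assumption rather than a fact about $b=\sum_k 2\langle\langle q_{n_k}\alpha\rangle\rangle$, whose irrationality is not verified anywhere. Second, your parenthetical about odd vectors is off: for $v=(b+m,n)$ with $(m,n)$ odd, the midpoint of $\sigma$ projects to a half-lattice translate of $(b/2,0)$ lying off the slit, both of whose preimages are Weierstrass points; hence $\tau(\sigma)=\sigma$ and the union degenerates to a single arc rather than producing a genuine crossing. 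This does not affect the proof of the stated lemma, since that remark concerns only the converse direction.
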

\begin{proof} The proof is in \cite{masur-tabachnikov}. We sketch the proof here.  The fact that $2p,2q$  are even means that the segment $w'$ joining $(0,0)$ to $(b+2p,2q)$ on each torus, when projected to the torus, not only has the same endpoints as the segment $w$ joining $(0,0)$ to $(b,0)$ but intersects it an odd number of times in its interior.  Thus they divide each other into an even number of segments, say $2n$.  There are 
$n$ parallelograms $P_{i,j}; j=1,\ldots, n$ bounded by segments of $w$ and $w'$ on the torus $T_i$ with the following propert: one can enter a parallelogram by crossing any of the $2n$ segments.   We can then form two new tori $T_-',T_+'$ separated by the union of the pair of slits $w'$ as  follows.  The torus $T_-'$ is made up of $(T_-\setminus \cup_{j=1}^n P_{1,j})\cup_{j=1}^n P_{2,j}$ and similarly for $T_+'$. 
\end{proof}

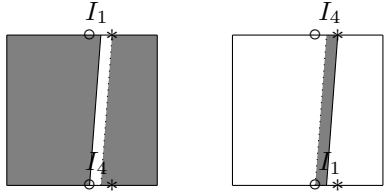
\begin{figure}[h]\caption{The slit is the union of the black line and its involution, the dotted line. $T_-'$ is the shaded region}
\begin{tikzpicture}

\path [fill=gray] 
 (0,0)--(1.1,0)--(1.25,2)--(0,2);
 \path [fill=gray] (1.25,0)--(2,0)--(2,2)--(1.4,2);
 \path [fill=gray] (4.25,0)--(4.4,2)--(4.25,2)--(4.1,0);
\path[-] (0,0) edge (2,0) edge (0,2);
\path[-] (1.1,2)
node{$\circ$};
\path (1.4,1.95) node{*};
\path (1.4,-.05) node{*};
\path (1.1,0) node{$\circ$};
\path (1.1,0) edge (1.25,2);
\path[dotted] (1.25,0) edge (1.4,2);
\path (1.2,2.3) node{$I_1$};
\path (1.2, .3) node{$I_4$};
\path[-] (2,0) edge (2,2);
\path[-] (2,2) edge (0,2);

\path[-] (3,0) edge (5,0) edge (3,2);
\path (4.1,2) node{$\circ$};
\path (4.4,1.95) node{*};
\path[-] (5,0) edge (5,2);
\path[-] (5,2) edge (3,2);
\path (4.1,0) node{$\circ$};
\path[dotted] (4.1,0) edge (4.25,2);
\path (4.25,0) edge (4.4,2);
\path (4.4,-.05) node{*};
\path (4.3,2.3) node{$I_4$};
\path (4.3, .3) node{$I_1$};
\end{tikzpicture}
\end{figure}
\label{Fig:picture1}

We now inductively define a sequence of slits $\zeta^k$ that approximate the minimal foliation $(F,\omega_{0})$  and which will become short for some times under the Teichmuller flow $g_t(X_{0},\omega_{0})$.  They will be disjoint from the $\sigma_j^k$.  Here we return to Figure 1.  
We define the interval $I_1=J\times \{0\}$ to be the first slit $\zeta^1$.   
We have $$|\zeta^1|=|I_1|=\sum_{k=1}^{\infty} 2<<q_{n_k}\alpha>>.$$ 
By the above formula and Lemma \ref{basic dio}, the length $|I_1|$ can be estimated in terms of the coefficients $a_{i}$.  By choosing them sufficiently large  sufficiently large,  
we can assume $$|I_1|<\  \frac{1}{3}<<q_{n_1-1}\alpha>>.$$

Starting at $0$, and flowing in the vertical direction, by Lemma~\ref{hit next} we do not return to $I_1$ until we have returned $q_{n_1}$ times to $I_2\cup I_3\cup I_5 \cup I_6$,  and then we return at  distance $<<q_{n_1}\alpha>>$ from $0$. Then we return again to $I_1$ after another $q_{n_1}$ returns  at distance $2<<q_{n_1}\alpha>>$ from $0$. We now define  the second slit $\zeta^2$ to be the homotopy class of the arc which consists of  the flow line starting at $0$ and ending at $x_2=2<<q_{n_1}\alpha>>$ followed by the segment $J_1\subset I_1$ from $x_2$  to $(b,0)$.  This latter segment has length $$|J_1|=\sum_{k\geq 2} 2<<q_{n_k}\alpha>>.$$
We can assume $$|J_1|< q_{n_1}\alpha\ \text{mod}\ 1.$$    By the above statements $\zeta^2$  has no self-intersections. 
The length $|\zeta^2|$ of $\zeta^2$ is estimated by $$|\zeta^2|\asymp\sum_{k\geq 2} 2<<q_{n_k}\alpha>>+2q_{n_1}.$$
The horizontal component  $$h(\zeta^2)=O(\sum_{k\geq 2}2<<q_{n_k}\alpha>>).$$

Now we start at $x_2$, the  left endpoint of $J_1$, and apply the return map $q_{n_2}$ times arriving at a point in $I_1$.   By Lemma~\ref{hit next} we do not hit $J_1$ before we have accomplished $q_{n_2}$ such returns.  We do this an additional $q_{n_2}$ times  arriving  at a point $x_3$.  Thus we have built the third slit $\zeta^3$  by following the flow line from $0$  a total of $2q_{n_1}+2q_{n_2}$ return times to  $x_3$, and then take the remaining part of $I_1$ from $x_3$ to $(b,0)$.  
Then $$h(\zeta^3)=O(\sum_{k\geq 3}<<2q_{n_k}\alpha>>).$$  We repeat this inductively to build to  build the   $k^{th}$ slit $\zeta^k$ for all $k$. Then $$h(\zeta^k)=O(\sum_{j\geq k}<<2q_{n_j}\alpha>>)=O(\frac{1}{a_{n_k+1}q_{n_k}}).$$
Then for sufficiently large but fixed $N$ depending only on the implied multiplicative bound above, since the  $\sigma^k_i$ do not come within  $\frac{N}{a_{n_k+1}q_{n_k}}$ of the singularities, they miss the slit.
\begin{defin} \label{defn:tori}
This  pair of slits $\zeta^k$ divide the surface into a pair of tori, which we will denote by $T_-^k,T_+^k$.
\end{defin}
In light of the discussion above, the tori $T_-^k,T_+^k$ contain the curves $\sigma_i^k$.  
They are interchanged by the map that exchanges the two tori.

\begin{defin} \label{defn:beta}
We refer to the convergent previous to $\sigma_j^k$ as $\beta_j^k$, also contained in $T_i^k$. 
\end{defin}
The curves $\beta_j^k$ correspond to the vectors $(p_{n_k-1},q_{n_k-1})$.  
 We summarize this discussion with the following lemmas. 

As an explanation of notation, the curves $\sigma_{\pm}^k,\beta_{\pm}^k$
depend on the changing torus decompositions $T_{\pm}^k$ while the curves $\gamma_1,\gamma_2$ whose hyperbolic lengths we want to measure on changing hyperbolic surfaces are fixed.

\begin{lem}
\label{lem:slitsize}
\begin{enumerate}
\item The horizontal  component of the curve $\sigma_i^k$ of slope $p_{n_k}/q_{n_k}$  is 
 $h(\sigma_i^k)\asymp \frac{1}{q_{n_k}a_{n_k+1}}$.
\item   $h(\zeta^k) =\sum_{j\geq k} 2<<q_{n_j},\alpha>>=O(\frac{1}{q_{n_k}a_{n_k+1}})$
\item $v(\zeta^k)=O(2q_{n_{k-1}})$.
\end{enumerate}
\end{lem}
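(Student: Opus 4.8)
The plan is to establish each of the three items of Lemma~\ref{lem:slitsize} by assembling estimates that have already appeared in the construction of the slits $\zeta^k$ and the closed curves $\sigma_i^k$, together with the Diophantine bounds of Lemma~\ref{basic dio}.

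First I would prove item (1). The curve $\sigma_i^k$ has slope $p_{n_k}/q_{n_k}$ and is a closed orbit of the rotation $R_{\tan\theta_k}$, which has period $q_{n_k}$; its horizontal component is the displacement $|p_{n_k} - q_{n_k}\alpha| = q_{n_k}|\alpha - p_{n_k}/q_{n_k}|$. By \eqref{eq:K}, multiplying through by $q_{n_k}$, this displacement lies between $\frac{1}{q_{n_k}+q_{n_k+1}}$ and $\frac{1}{q_{n_k+1}}$, and since $a_{n_k+1}q_{n_k} \le q_{n_k+1} \le (a_{n_k+1}+2)q_{n_k}$ we get $h(\sigma_i^k) \asymp \frac{1}{q_{n_k}a_{n_k+1}}$, which is exactly the two-sided comparison claimed. (One should be slightly careful that $\sigma_i^k$ wraps once around in the appropriate sense so that its total horizontal holonomy really is this single displacement and not a multiple of it; this follows from the description of $\sigma_i^k$ as the closed orbit in direction $\theta_k$ together with Lemma~\ref{hit next}, which guarantees the orbit does not close up prematurely.)

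Next, item (2) is essentially a rewriting of the horizontal-component computation already carried out in the inductive construction of the $\zeta^k$: there it was shown $h(\zeta^k) = O\!\left(\sum_{j\ge k} 2\langle\langle q_{n_j}\alpha\rangle\rangle\right)$, and the stated equality $h(\zeta^k) = \sum_{j\ge k} 2\langle\langle q_{n_j}\alpha\rangle\rangle$ records that the tail of the defining series for $|I_1|$ is precisely the remaining horizontal length along $I_1$ picked up after $\zeta^k$ leaves the flow and runs along $J\subset I_1$. To get the $O(\frac{1}{q_{n_k}a_{n_k+1}})$ bound I would invoke Lemma~\ref{basic dio}, which gives $\langle\langle q_{n_j}\alpha\rangle\rangle < \frac{1}{a_{n_j+1}q_{n_j}} < \frac{1}{q_{n_j+1}}$; since the $q_{n_j}$ grow at least geometrically (indeed $q_{n_{j+1}} \ge q_{n_j+1} \ge 2 q_{n_j}$ for $j$ beyond the first few, once we have arranged the $a_i$ large), the sum $\sum_{j\ge k}\langle\langle q_{n_j}\alpha\rangle\rangle$ is dominated by its first term up to a bounded factor, hence is $O(\frac{1}{a_{n_k+1}q_{n_k}})$.

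Finally, item (3): $v(\zeta^k)$ is the vertical extent of the arc defining $\zeta^k$, which by construction follows the vertical flow from $0$ through exactly $2q_{n_1} + 2q_{n_2} + \cdots + 2q_{n_{k-1}}$ return times to $I_1$ before running horizontally along $I_1$ to $(b,0)$; each return contributes one unit of vertical length (the height of a square torus), so $v(\zeta^k) = 2(q_{n_1}+\cdots+q_{n_{k-1}})$, and again geometric growth of $q_{n_j}$ makes this $O(2q_{n_{k-1}})$. I expect the only real subtlety — and thus the main obstacle — to be the bookkeeping in item (1): confirming that $\sigma_i^k$ is a genuinely primitive closed orbit so that its horizontal holonomy is the single rotation displacement $q_{n_k}|\alpha - p_{n_k}/q_{n_k}|$ with the clean two-sided bound, rather than acquiring an extra combinatorial factor from the passage between the two-tori picture and the single-rotation picture; everything else reduces to the already-established construction together with Lemma~\ref{basic dio} and geometric-series tail estimates.
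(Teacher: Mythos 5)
Your proposal is correct and follows essentially the same route as the paper: the paper states this lemma as a summary of the immediately preceding discussion (the holonomy computation $h(\sigma_i^k)=\langle\langle q_{n_k}\alpha\rangle\rangle$ controlled by Lemma~\ref{basic dio}, the tail-of-series identity for $h(\zeta^k)$ from the inductive slit construction, and the count of $2q_{n_1}+\cdots+2q_{n_{k-1}}$ vertical returns), and your write-up reconstructs exactly those steps, adding only the routine verification that the tail sum is dominated by its first term and that $\sigma_i^k$ is primitive since $\gcd(p_{n_k},q_{n_k})=1$.
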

Now let $\alpha_-,\alpha_+$ be any pair of curves that are images of each other by the map that exchanges the two tori.  This includes the curves $\gamma_i$ whose intersections with the $[F,\mu_c]$ determines $c$.

\begin{lem}
\label{lem:crossings}  We have
\begin{enumerate}
\item  $i(\alpha_-,\sigma_+^k)=i(\alpha_+,\sigma_-^k)$
\item $i(\alpha_-,\sigma_-^k)=i(\alpha_+,\sigma_+^k)$
\item $i(\gamma_i,\sigma_j^k)\asymp q_{n_k}$.
\item For each $i=1,2$,  $\lim_{k\to\infty} \frac{i(\gamma_i,\sigma_-^k)}{i(\gamma_i,\sigma_+^k)}=\frac{\mu_-(\gamma_i)}{\mu_+(\gamma_i)}\neq 1$. 
\item $i(\zeta^k,\gamma_i)\asymp q_{n_{k-1}}$.
\end{enumerate}
\end{lem}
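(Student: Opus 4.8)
The plan is to compute all five intersection numbers by tracking how the relevant curves wind around each torus, using the explicit descriptions set up in Section~2 and in the slit construction. The unifying principle is that each curve $\sigma_j^k$ is (a representative of) the closed orbit in direction $\theta_k$ with $\tan\theta_k = p_{n_k}/q_{n_k}$, so in the square torus it is homologous to the vector $(p_{n_k},q_{n_k})$; similarly $\gamma_i$ is a horizontal curve, i.e. homologous to $(1,0)$ in the corresponding torus, and $\zeta^k$ is the slit built in the construction with holonomy vector whose vertical component is $O(q_{n_{k-1}})$ and horizontal component $O(1/(q_{n_k}a_{n_k+1}))$ (Lemma~\ref{lem:slitsize}). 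For items (1)--(3) and (5), intersection numbers of curves on a flat torus are given by the absolute value of the determinant of their holonomy vectors, up to the contribution from the slit; I would first establish that on each torus $T_\pm^k$ the curves $\gamma_i$ and $\sigma_j^k$, and $\gamma_i$ and $\zeta^k$, can be represented so that all crossings are transverse and counted by these determinants, with any correction from the slit identification being lower order.

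For the individual items: items (1) and (2) are purely formal -- they follow immediately from the fact that the map exchanging the two tori is a homeomorphism of the surface carrying $\alpha_-\mapsto\alpha_+$, $\sigma_-^k\mapsto\sigma_+^k$, and $\gamma_1\leftrightarrow\gamma_2$ (since $\gamma_1,\gamma_2$ are interchanged by this involution as noted after Definition~\ref{defn:tori}), so I would just invoke naturality of $i(\cdot,\cdot)$ under homeomorphism. For item (3), I would compute $i(\gamma_i,\sigma_j^k)$ inside $T_i^k$: since $\gamma_i \sim (1,0)$ and $\sigma_j^k\sim (p_{n_k},q_{n_k})$ (up to the bounded perturbation coming from the slit portion of $\sigma_j^k$ and the precise homology class of $\gamma_i$, which only changes the count by a bounded multiplicative factor), the determinant is $q_{n_k}$, giving $i(\gamma_i,\sigma_j^k)\asymp q_{n_k}$. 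For item (5), the same computation with $\zeta^k$ in place of $\sigma_j^k$: the vertical component of $\zeta^k$ is $\asymp q_{n_{k-1}}$ by Lemma~\ref{lem:slitsize}(3) and its horizontal component is negligible, so crossing it against the horizontal curve $\gamma_i$ gives $i(\zeta^k,\gamma_i)\asymp q_{n_{k-1}}$.

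Item (4) is the substantive one and the main obstacle. Here the point is that $\sigma_j^k$ spends unequal amounts of time in the two tori (Corollary~\ref{cor:unequal}), and the ratio of times converges to the ratio of $\mu_j$-masses of the two rectangles by Veech's Theorem~\ref{thm:Veech}. The number of times $\sigma_j^k$ crosses $\gamma_i$ is essentially the number of returns of the orbit to the horizontal base circle of torus $T_i$ before closing up, i.e. the number of times the periodic orbit of the rotation $R_{p_{n_k}/q_{n_k}}$ visits the base of $T_i$; this is precisely $q_{n_k}$ times the $\mu_j$-length of the base of $T_i$ up to a $o(q_{n_k})$ error. So the hard part is to make rigorous the passage from "time spent in torus $T_i$ under the flow in direction $\theta$ with the measure $\mu_c$-normalization" to "number of crossings of $\gamma_i$ by the approximating periodic orbit $\sigma_j^k$," controlling the discrepancy between the true direction $\theta$ and the rational approximation $\theta_k$ using Lemma~\ref{lem:periodic}, and then invoking Theorem~\ref{thm:Veech} to identify the limiting ratio as $\mu_-(\gamma_i)/\mu_+(\gamma_i)$, which is $\neq 1$ precisely because the ergodic measures assign different lengths to the two rectangles (the remark following Theorem~\ref{thm:Veech}, equivalently Proposition~\ref{prop:crit}). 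Once the limit of $i(\gamma_i,\sigma_-^k)/i(\gamma_i,\sigma_+^k)$ is identified with $\mu_-(\gamma_i)/\mu_+(\gamma_i)$, the inequality from $1$ is exactly the non-unique-ergodicity statement.
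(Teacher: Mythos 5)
Your proposal matches the paper's treatment: the paper states Lemma~\ref{lem:crossings} without a separate proof, as a summary of the preceding discussion, and the intended justifications are exactly the ones you give --- naturality of $i(\cdot,\cdot)$ under the torus-exchanging involution for (1)--(2), counting returns of the (nearly) periodic orbit $\sigma_j^k$ and of the flow segment defining $\zeta^k$ to the horizontal circle for (3) and (5), and equidistribution of $\sigma_\pm^k$ toward $\mu_\pm$ (via Lemma~\ref{lem:periodic} and Theorem~\ref{thm:Veech}) for (4). You also correctly flag (4) as the substantive point and the inequality $\neq 1$ as the non-unique-ergodicity statement, so no gap to report.
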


We similarly have, for the previous convergents $\beta_j^k$, the estimates

\begin{lem}
\label{lem:previous}
\begin{enumerate}
\item $i(\gamma_i, \beta_i^k) \asymp q_{n_k}/a_{n_k}$.
\item  $i(\gamma_1,\beta_+^k)=i(\gamma_2,\beta_-^k)$
\item $i(\gamma_1,\beta_-^k)=i(\gamma_2,\beta_+^k)$
\item for $i=1,2$, $\lim_{k\to\infty} \frac{i(\gamma_i,\beta_-^k)}{i(\gamma_i,\beta_+^k)}=\frac{\mu_-(\gamma_i)}{\mu_+(\gamma_i)}\neq 1$. 
\end{enumerate}
\end{lem}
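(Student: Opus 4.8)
The plan is to repeat the scheme of Lemma~\ref{lem:crossings}, replacing the convergent $p_{n_k}/q_{n_k}$ by the previous convergent $p_{n_k-1}/q_{n_k-1}$ throughout. First I would record the arithmetic comparison that underlies the two formulations of part (1): from $q_{n_k}=a_{n_k}q_{n_k-1}+q_{n_k-2}$ and $q_{n_k-2}\le q_{n_k-1}$ one gets $q_{n_k-1}\le q_{n_k}/a_{n_k}\le 2q_{n_k-1}$, hence $q_{n_k}/a_{n_k}\asymp q_{n_k-1}$; so it suffices to establish $i(\gamma_i,\beta_i^k)\asymp q_{n_k-1}$ together with the limit asserted in part (4).

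By Definition~\ref{defn:beta}, $\beta_j^k$ is the closed orbit of slope $p_{n_k-1}/q_{n_k-1}<1$ lying in $T_j^k$, whose first return to the union of the two bottom edges is the periodic rotation by $p_{n_k-1}/q_{n_k-1}$; hence its period --- the total number of returns, each of which is a single ``up-pass'' through one of the two square tori --- is $\asymp q_{n_k-1}$, exactly as for $\sigma_j^k$ in Lemma~\ref{lem:crossings}(3). Since the slope is $<1$ and $\beta_j^k$ is a genuine closed orbit (so avoids the singularities), each of its $\asymp q_{n_k-1}$ up-passes meets the horizontal core curve of the torus it traverses in exactly one point; thus $i(\gamma_1,\beta_j^k)+i(\gamma_2,\beta_j^k)\asymp q_{n_k-1}$, with $i(\gamma_i,\beta_j^k)$ equal to the number of up-passes through the torus carrying $\gamma_i$. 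Parts (2) and (3) then follow immediately from the involution $\iota$ exchanging the two tori: $\iota$ is a homeomorphism carrying $\gamma_1\leftrightarrow\gamma_2$ and, since it carries $\sigma_-^k\leftrightarrow\sigma_+^k$ and hence their previous convergents, $\beta_-^k\leftrightarrow\beta_+^k$; geometric intersection numbers are $\iota$-invariant.

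For parts (1) and (4) it remains to count the up-passes in each torus, and here I would transfer down to $\beta_j^k$ the ergodic bias already known for $\sigma_j^k$ from Corollary~\ref{cor:unequal}: as $k\to\infty$ the normalized curve $\beta_j^k/|\beta_j^k|$ converges in $\MF$, and because the directions $\theta_{k-1}\to\theta$ its horizontal component tends to $0$, so the limit is flow-invariant and equals $(F,c_j\mu_j)$ for a constant $c_j>0$, with $j\mapsto\mu_j$ the same correspondence of biases as for $\sigma_j^k$. Consequently the proportion of the $\asymp q_{n_k-1}$ up-passes through the torus carrying $\gamma_i$ converges to a fixed positive number proportional to $\mu_j(\gamma_i)$; this gives $i(\gamma_i,\beta_i^k)\asymp q_{n_k-1}\asymp q_{n_k}/a_{n_k}$, which is part (1), and
\[
\frac{i(\gamma_i,\beta_-^k)}{i(\gamma_i,\beta_+^k)}\;\longrightarrow\;\frac{\mu_-(\gamma_i)}{\mu_+(\gamma_i)},
\]
where the normalizing constants and periods cancel because $|\beta_-^k|=|\beta_+^k|$ and $\|\mu_-\|=\|\mu_+\|$, both forced by $\iota$ (which swaps $\mu_-\leftrightarrow\mu_+$ while preserving $\lambda=\tfrac12(\mu_-+\mu_+)$). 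This limit is $\neq 1$ since $\mu_-$ and $\mu_+$ assign different lengths to $\gamma_i$ by Theorem~\ref{thm:Veech}, which proves part (4).

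The main obstacle is the bias-transfer step: one must verify that the short closed orbit $\beta_j^k$, of period only $\asymp q_{n_k}/a_{n_k}$, inherits the $\mu_j$-bias of the scale-$n_k$ orbit $\sigma_j^k$ rather than, say, the symmetric average $\mu_0$. This is where hypothesis~\eqref{eq:condA} (and, for the sharper statements needed in the later theorems, \eqref{eq:condB}--\eqref{eq:condC}) is really used, through the recursion $q_{n_k}=a_{n_k}q_{n_k-1}+q_{n_k-2}$ relating the scale-$n_k$ and scale-$(n_k-1)$ orbit segments; the cleanest implementation is to quote the estimates already assembled for the $\sigma_j^k$ in the proofs of Lemma~\ref{lem:crossings} and Corollary~\ref{cor:unequal} rather than to re-run Veech's renormalization argument from scratch.
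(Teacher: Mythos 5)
Your proposal is correct and follows the route the paper intends: the paper states this lemma with no proof beyond the phrase ``we similarly have,'' deriving it as the direct analogue of Lemma~\ref{lem:crossings} with the convergent $p_{n_k}/q_{n_k}$ replaced by $p_{n_k-1}/q_{n_k-1}$, exactly as you do (the recursion $q_{n_k}=a_{n_k}q_{n_k-1}+q_{n_k-2}$ for part (1), the torus-exchanging involution for parts (2)--(3), and the Veech ergodic bias inherited by the core curves of the cylinders in $T_{\pm}^k$ for part (4)). Your flagged ``bias-transfer'' step is indeed the only substantive point, and your resolution --- that $\beta_j^k$ is by construction trapped in $T_j^k$, where $\mu_j$ is increasingly supported, so its projective limit is $[F,\mu_j]$ --- is the argument the paper relies on (compare Corollary~\ref{cor:smallhor}); only the minor notational slip $\theta_{k-1}$ for the direction of slope $p_{n_k-1}/q_{n_k-1}$ should be corrected.
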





\section{Teichm\"uller flow}

Let
 $$ g_t = \begin{pmatrix} e^{t} & 0 \\ 0 & e^{-t} \end{pmatrix}, \qquad
r_\theta = \begin{pmatrix} \cos \theta & \sin \theta \\ - \sin \theta
  & \cos \theta \end{pmatrix}.$$

We wish to know which curves and slits get short under the flow $g_t$ in the direction of the nonuniquely ergodic foliations $(F,\mu_c)$.
That is we consider $g_t(Y,\omega_c)$. We will need careful estimates of the relative sizes of intersection numbers and the vertical and horizontal components of the important curves (slits and convergents $\sigma_i^k, \beta_i^k$ in the three regimes of the symmetric case $c=0$, the non-ergodic but asymmetric case $0<|c|<1$, and in the ergodic cases $c = \pm1$.

\subsection{The symmetric case.} These estimates are easiest to describe in the symmetric case $c=0$ since 
in that case the tori are square tori with Lebesgue measure.

When we consider the second way of viewing the flow as the vertical flow on the rotated square tori,  the vertical flow is fixed, but since the underlying Riemann surface is different, the horizontal foliation varies so that vertical lengths change by a small multiplicative factor (and similarly for horizontal lengths).  From the point of view of our computations of lengths, this will not matter, for as we will see in the proofs of  Theorems~\ref{thm:CF}, \ref{thm:CF2} and \ref{thm:CF3},  the computations we need are always ones that hold up to fixed multiplicative error. 

After the rotation $r_\theta$, the direction of the flow is vertical.  Then any slit or simple closed curve determines a holonomy vector.  We are interested in its horizontal and vertical components.
The horizontal component expands by a factor of $e^t$ under $g_t$, and the vertical component contracts by the same factor. 
We begin with the following.

\begin{lem}\label{lem:close to square}Let $\theta_k$ be a periodic direction for the square torus with $(\cos(\theta),\sin(\theta))=\frac{1}{\sqrt{p_{n_k}^2+q_{n_k}^2}}{(p_{n_k},q_{n_k}})$ with  $[a_1,...,a_{n_k}]$ the continued fraction expansion of $p_{n_k}/q_{n_k}$ which corresponds to the curve $\sigma^k$.  Define $t_k$ so that  $e^{t_k}=\sqrt{p_{n_k}^2+q_{n_k}^2}$.  Then 
$$d(g_{t_k}r_{\theta_k}T,T)=O(\frac 1 {a_{n_k}}).$$ 
\end{lem}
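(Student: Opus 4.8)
The plan is to compare the marked flat torus $g_{t_k}r_{\theta_k}T$ with the fixed square torus $T$ by exhibiting a short basis of periods that is close to the standard unit basis of $T$, and then invoking the fact that the Teichm\"uller (equivalently, the $\mathrm{SL}(2,\mathbb{R})$-orbit) distance between two flat tori is controlled by how close one such pair of bases is. Concretely, $T$ is the square torus with period lattice generated by $(1,0)$ and $(0,1)$, and the curve $\sigma^k$ of slope $p_{n_k}/q_{n_k}$ has holonomy vector $(q_{n_k},p_{n_k})$ in $T$ (up to the harmless swap of coordinates in the statement), while the ``previous convergent'' curve $\beta^k$ has holonomy vector $(q_{n_k-1},p_{n_k-1})$, and these two vectors form a basis of $\mathbb{Z}^2$ since $p_{n_k}q_{n_k-1}-p_{n_k-1}q_{n_k}=\pm 1$.

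First I would apply $r_{\theta_k}$: this rotates the holonomy of $\sigma^k$ to the purely vertical vector $(0,\sqrt{p_{n_k}^2+q_{n_k}^2})=(0,e^{t_k})$, and rotates the holonomy of $\beta^k$ to some vector $(x_k,y_k)$. Here the key Diophantine input is that the \emph{horizontal} component $x_k$ of the rotated $\beta^k$ equals, up to sign, the ``area spanned by the two convergents over the length of $\sigma^k$'': $|x_k| = \dfrac{|p_{n_k}q_{n_k-1}-p_{n_k-1}q_{n_k}|}{\sqrt{p_{n_k}^2+q_{n_k}^2}} = \dfrac{1}{e^{t_k}}$, while its vertical component $y_k$ is the projection of $(q_{n_k-1},p_{n_k-1})$ onto the $\sigma^k$ direction, which has size $\asymp q_{n_k-1}\asymp q_{n_k}/a_{n_k}$ (using $q_{n_k}=a_{n_k}q_{n_k-1}+q_{n_k-2}$). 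Next I would apply $g_{t_k}$: the vertical vector $(0,e^{t_k})$ becomes $(0,1)$ exactly, and the vector $(x_k,y_k)$ becomes $(e^{t_k}x_k, e^{-t_k}y_k)$, whose first coordinate is $\pm 1$ (from the displayed identity) and whose second coordinate has size $\asymp q_{n_k-1}/e^{t_k}\asymp (q_{n_k}/a_{n_k})/q_{n_k} = 1/a_{n_k}$. So after $g_{t_k}r_{\theta_k}$ the lattice of $T$ is generated by $(0,1)$ and $(\pm 1, O(1/a_{n_k}))$, which differs from the standard basis $\{(1,0),(0,1)\}$ of the square lattice by a shear of size $O(1/a_{n_k})$.

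Finally I would convert this closeness of lattices into a Teichm\"uller-distance bound: the linear map sending the standard square lattice to the lattice generated by $(0,1)$ and $(1,O(1/a_{n_k}))$ is $\bigl(\begin{smallmatrix}1 & 0\\ O(1/a_{n_k}) & 1\end{smallmatrix}\bigr)$ (after a reordering/orientation change of the basis, which is a mapping-class action and hence costs nothing in $\mathcal{T}$), and this matrix lies within distance $O(1/a_{n_k})$ of the identity in $\mathrm{SL}(2,\mathbb{R})/\mathrm{SO}(2)=\mathbb{H}$, hence the two points in $\mathcal{T}(T)$ are within $O(1/a_{n_k})$ of each other. The main obstacle — really the only nonroutine point — is pinning down the signs and the precise bases so that the two generators one produces actually form a \emph{marked} basis matching the standard marking of $T$ up to a mapping class; once the bookkeeping with $p_{n_k},q_{n_k}$ and their previous convergents is set up correctly, the size estimates ($1/a_{n_k}$ for the off-diagonal term) are immediate from the continued-fraction recursions and Lemma~\ref{basic dio}.
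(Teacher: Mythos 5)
Your proposal is correct and is essentially the paper's own argument: the paper likewise takes the basis consisting of $\sigma^k$ and the previous convergent, applies $g_{t_k}r_{\theta_k}$ as a single matrix product, and finds the image lattice is the standard one composed with a unipotent shear whose off-diagonal entry $\frac{p_{n_k}p_{n_k-1}+q_{n_k}q_{n_k-1}}{p_{n_k}^2+q_{n_k}^2}$ is bounded by $\max\{p_{n_k-1}/p_{n_k},\,q_{n_k-1}/q_{n_k}\}<1/a_{n_k}$. Your component-by-component version (determinant over length for the horizontal part, projection onto the $\sigma^k$ direction for the vertical part) is just a geometric rereading of the same matrix computation, and the marking issue you flag is moot since the distance is taken in $\mathrm{SL}_2(\mathbb{R})/\mathrm{SL}_2(\mathbb{Z})$.
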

\begin{proof}
In the sequence $\{p_{n_k}/q_{n_k}\}$ of best approximates to $\alpha$, let $p'/q'$ be the element of the sequence prior to $p_{n_k}/q_{n_k}$ (so that 
$\frac{p'}{q'} = \frac{p_{{n_k}-1}}{q_{{n_k}-1}}$).
Then $p_{n_k}q'-q_{n_k}p'=1$ and 
$$\max \{p'/p_{n_k},q'/q_{n_k}\} <\frac{1}{a_{n_k}}.$$  The fraction $p'/q'$ corresponds to a curve $\sigma'$. Consider the matrix $M_k=g_{t_k}r_{\theta_k}$ as an element of  $SL_2(\mathbb{R})/SL_2(\mathbb{Z})$.  We apply it to the curves $\sigma^k$ defined by  vector  $(q_{n_k},-p_{n_k})$ and $\sigma'$ defined by  the vector $(q',-p')$. 
The result is
\begin{multline*}
M_k \begin{pmatrix} q'&q_{n_k}\\-p'&p_{n_k}\end{pmatrix}=\\
\begin{pmatrix} \sqrt{p_{n_k}^2+q_{n_k}^2} &0 \\
0& \frac 1 {\sqrt{p_{n_k}^2+q_{n_k}^2}}
\end{pmatrix}
\begin{pmatrix} \frac{p_{n_k}}{\sqrt{p_{n_k}^2+q_{n_k}^2}} & \frac{q_{n_k}}{\sqrt{p_{n_k}^2+q_{n_k}^2}}\\
\frac{-q_{n_k}}{\sqrt{p_{n_k}^2+q_{n_k}^2}} & \frac{p_{n_k}}{\sqrt{p_{n_k}^2+q_{n_k}^2}}
\end{pmatrix}
\begin{pmatrix} q'&q_{n_k}\\
-p'&-p_{n_k}
\end{pmatrix}=\\
\begin{pmatrix} 1 &0\\
\frac{-p_{n_k}p'-q_{n_k}q'}{p_{n_k}^2+q_{n_k}^2}& 1
\end{pmatrix}
.
\end{multline*}

\end{proof}

Using the above we wish to describe the geometry of $(Y,\omega_c)$ after flowing time $t_k$ where  
$e^{t_k}=\sqrt{p_{n_k}^2+q_{n_k}^2}$.
\begin{prop}
 \label{prop:geom}
With $e^{t_k}=\sqrt{p_{n_k}^2+q_{n_k}^2}$ and $\theta$ the direction of the minimal foliation $(F,\mu_{0})$, the surface
$g_{t_k}r_{\theta} (Y,\omega_{0})$ consists of two identical tori $T_-^k,T_+^k$ glued along the $k^{th}$ slit such that 
\begin{itemize} 
\item The distance between each $T_j^k$ and a standard square torus is $O(\frac 1 {a_{n_k}}+\frac{1}{a_{n_k+1}}).$
The almost vertical curves are $\sigma_i^k; i=-,+$.  The almost horizontal curves are $\beta_i^k$. 
\item The  $k^{th}$ slit $\zeta^k$ has  vertical component $$v(\zeta^k)=O(\frac{q_{n_{k-1}}}{q_{n_k}}+ \frac{1}{a_{n_k+1}})$$ and  horizontal component $$h(\zeta^k)\asymp  \frac{1}{a_{n_k+1}}.$$
\item
 $i(\gamma_i,\zeta^k)=\sum _{j\leq k-1} 2q_{n_j}=O(2q_{n_{k-1}})$.
\end{itemize}

\end{prop}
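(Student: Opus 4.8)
\textbf{Proof proposal for Proposition~\ref{prop:geom}.}

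The plan is to exploit Lemma~\ref{lem:close to square} as the backbone, upgrading it from a statement about the square torus to a statement about the two-torus surface $g_{t_k}r_\theta(Y,\omega_0)$ and about the auxiliary curves $\zeta^k,\sigma_i^k,\beta_i^k$. First I would address the first bullet. Lemma~\ref{lem:close to square} already gives $d(g_{t_k}r_{\theta_k}T,T)=O(1/a_{n_k})$, where $\theta_k$ is the rational direction with $\tan\theta_k = p_{n_k}/q_{n_k}$. The direction $\theta$ of $(F,\mu_0)$ differs from $\theta_k$ by $|\theta-\theta_k|\lmul |\tan\theta-\tan\theta_k| \lmul \frac{1}{a_{n_k+1}q_{n_k}^2}$ by \eqref{eq:K} and Lemma~\ref{basic dio}. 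Composing the rotations, $g_{t_k}r_\theta = g_{t_k}r_{\theta_k}\, r_{\theta-\theta_k}$, and since $e^{t_k}\asymp q_{n_k}$, the off-diagonal distortion introduced by conjugating the small rotation $r_{\theta-\theta_k}$ through $g_{t_k}$ is of size $e^{2t_k}|\theta-\theta_k| = O(1/a_{n_k+1})$. Adding the two contributions gives the claimed bound $O(\frac{1}{a_{n_k}}+\frac{1}{a_{n_k+1}})$ on the distance of each torus $T_j^k$ from a standard square torus. That the nearly vertical curves are the $\sigma_i^k$ and the nearly horizontal ones are the $\beta_i^k$ follows because $\sigma_i^k$ has slope $p_{n_k}/q_{n_k}$, hence direction $\theta_k$, which $g_{t_k}r_{\theta_k}$ sends to vertical, while $\beta_i^k$ has holonomy $(p_{n_k-1},q_{n_k-1})$, which by the matrix computation at the end of Lemma~\ref{lem:close to square} maps to a vector with the second coordinate $1$ and first coordinate $-\frac{p_{n_k}p'+q_{n_k}q'}{p_{n_k}^2+q_{n_k}^2} = O(1)$, i.e., essentially horizontal.

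For the second bullet I would compute the two components of $\zeta^k$ directly from the data already recorded. By Lemma~\ref{lem:slitsize}, on the base surface $(Y,\omega_0)$ the slit $\zeta^k$ has $h(\zeta^k)\asymp \frac{1}{q_{n_k}a_{n_k+1}}$ and $v(\zeta^k)=O(q_{n_{k-1}})$. After rotating by $\theta$ and flowing by $g_{t_k}$ with $e^{t_k}\asymp q_{n_k}$, the horizontal component scales by $e^{t_k}$ and the vertical by $e^{-t_k}$; but I must be careful that the rotation $r_\theta$ mixes the two components before $g_{t_k}$ acts. Writing the holonomy of $\zeta^k$ (relative to the direction $\theta$) as $(h(\zeta^k),v(\zeta^k))$, after $g_{t_k}r_\theta$ the horizontal component is $\asymp e^{t_k} h(\zeta^k) \asymp q_{n_k}\cdot\frac{1}{q_{n_k}a_{n_k+1}} = \frac{1}{a_{n_k+1}}$, provided the contribution $e^{t_k}\cdot|\sin(\theta-\theta_k)|\cdot v(\zeta^k)$ coming from the small angular discrepancy is no larger — and indeed this term is $\lmul q_{n_k}\cdot\frac{1}{a_{n_k+1}q_{n_k}^2}\cdot q_{n_{k-1}} = \frac{q_{n_{k-1}}}{a_{n_k+1}q_{n_k}} = o(\frac{1}{a_{n_k+1}})$, so it is absorbed. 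The vertical component is $\asymp e^{-t_k}v(\zeta^k) + e^{-t_k}\cdot(\text{angular error})\cdot h(\zeta^k) = O(\frac{q_{n_{k-1}}}{q_{n_k}}) + o(\frac{1}{a_{n_k+1}})$, giving $v(\zeta^k)=O(\frac{q_{n_{k-1}}}{q_{n_k}}+\frac{1}{a_{n_k+1}})$ as claimed. The third bullet is purely topological and is just Lemma~\ref{lem:crossings}(5) together with the inductive construction: $\zeta^k$ was built by following the flowline through $2q_{n_1}+\cdots+2q_{n_{k-1}}$ return times, so $i(\gamma_i,\zeta^k)=\sum_{j\le k-1}2q_{n_j}$, and since the $q_{n_j}$ grow at least geometrically this sum is $O(2q_{n_{k-1}})$.

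The main obstacle I anticipate is bookkeeping the interaction between the rotation $r_\theta$ (which is \emph{not} exactly the periodic direction $r_{\theta_k}$) and the expansion $g_{t_k}$: one must check in each estimate that the cross terms produced by $|\theta-\theta_k|$, amplified by $e^{t_k}\asymp q_{n_k}$ or $e^{2t_k}\asymp q_{n_k}^2$, stay below the main terms. The input making this work is the sharp Diophantine bound $|\tan\theta-\tan\theta_k| \lmul \frac{1}{a_{n_k+1}q_{n_k}^2}$ from \eqref{eq:K} and Lemma~\ref{basic dio}, which provides exactly one extra factor of $q_{n_k}$ and one factor of $a_{n_k+1}$ of room — just enough. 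A secondary, more routine point is to confirm that passing from the square torus $T$ to the true torus $T_j^k$ (which is only $O(\frac{1}{a_{n_k}}+\frac{1}{a_{n_k+1}})$-close to square, not equal to it) does not degrade the component estimates for $\zeta^k,\sigma_i^k,\beta_i^k$ beyond the stated multiplicative/additive errors; this follows since a distance-$\delta$ perturbation of the flat structure changes holonomy vectors of bounded combinatorial type by a factor $1+O(\delta)$, and $\delta = o(1)$.
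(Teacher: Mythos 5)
Your proposal is correct and follows essentially the same route as the paper, whose proof simply cites Lemma~\ref{lem:close to square}, Lemma~\ref{lem:slitsize}, and the construction of the slits; you have filled in exactly the computations those citations stand for (rescaling the components of Lemma~\ref{lem:slitsize} by $e^{\pm t_k}$ and checking that the angular discrepancy $|\theta-\theta_k|$, amplified by $e^{t_k}$ or $e^{2t_k}$, stays below the main terms).
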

\begin{proof}
The first bullet comes directly from the Lemma~\ref{lem:close to square}, the first statement of Lemma~\ref{lem:slitsize}, and the definition $e^{t_k}=\sqrt{p_{n_k}^2+q_{n_k}^2}$.
The second  bullet  is a consequence of the second and third conclusions of  Lemma~\ref{lem:slitsize}. 
The third bullet is from the construction of the slits. 
\end{proof}

As an application of these estimates, we find some geometric limits which will be important for us later.

\begin{lem}
For  every $a>0$,  there is a sequence of times $t_n$ such that  $g_{t_n}r_{\theta}(Y,\omega_{0})$  converges to  a pair of $(a,\frac 1 {a})$ rectangular punctured tori  in the Hausdorff topology on flat surfaces. 
\end{lem}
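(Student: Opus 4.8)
The plan is to show that, starting from the surface $g_{t_k}r_\theta(Y,\omega_0)$ — which by Proposition~\ref{prop:geom} is, up to small error, a pair of standard square tori glued along a short slit — we can flow a controlled additional amount of time to stretch the square tori into $(a,\frac1a)$ rectangular tori while keeping the slit short. First I would note the following: on a standard square torus, the Teichm\"uller flow $g_s$ in the vertical direction sends it to an $(e^{-s},e^s)$ rectangular torus (horizontal contracts, vertical expands, or vice versa depending on the sign convention). So to reach an $(a,\frac1a)$ rectangular torus we simply take $s$ with $e^s = a$ (or $e^{-s}=a$, choosing the sign appropriately). The natural candidate sequence is therefore $t_n := t_{k_n} + s$ for a suitable subsequence $k_n$, where $t_{k_n}$ is as in Proposition~\ref{prop:geom} and $s=\log a$ is a \emph{fixed} constant.

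The key steps, in order, are as follows. Step 1: apply Proposition~\ref{prop:geom} to get that $g_{t_k}r_\theta(Y,\omega_0)$ is a pair of tori $T_\pm^k$, each within Hausdorff distance $O(\frac{1}{a_{n_k}}+\frac{1}{a_{n_k+1}})$ of a standard square torus, glued along the slit $\zeta^k$ which has $h(\zeta^k)\asymp \frac{1}{a_{n_k+1}}$ and $v(\zeta^k) = O(\frac{q_{n_{k-1}}}{q_{n_k}}+\frac{1}{a_{n_k+1}})$. Step 2: pass to a subsequence $k_n$ along which $a_{n_{k_n}}\to\infty$, $a_{n_{k_n}+1}\to\infty$, and $\frac{q_{n_{k_n-1}}}{q_{n_{k_n}}}\to 0$; such a subsequence exists because condition \eqref{eq:condA} forces $a_{n_k+1}\to\infty$ along a subsequence (since $\sum (a_{n_k+1})^{-1}<\infty$), and along that subsequence $q_{n_k}$ grows at least geometrically while $q_{n_{k-1}}/q_{n_k}\le 1/a_{n_k}\to 0$ can be arranged (or simply pass to a further subsequence making all three quantities behave). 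Along $k_n$ the tori converge in the Hausdorff topology to standard square tori and both components of $\zeta^{k_n}$ go to $0$, so the slit degenerates to a point, i.e. $g_{t_{k_n}}r_\theta(Y,\omega_0)$ converges to a pair of square punctured tori. Step 3: apply $g_s$ with $e^{s}=a$ to everything. Since $g_s$ is a fixed homeomorphism of the space of flat surfaces (continuous in the Hausdorff topology on compact subsets of moduli space), $g_s$ of the limit is the $g_s$-image of the pair of square punctured tori, namely a pair of $(a,\frac1a)$ rectangular punctured tori; and the slit, having length going to $0$, still has length going to $0$ after applying the fixed linear map $g_s$ (which distorts lengths by at most a factor $\max(a,\frac1a)$). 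Hence $g_{t_{k_n}+s}r_\theta(Y,\omega_0)\to$ a pair of $(a,\frac1a)$ rectangular punctured tori, and we set $t_n = t_{k_n}+s$.

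The main obstacle, and the only genuinely delicate point, is making precise the sense in which "the surface converges in the Hausdorff topology on flat surfaces" behaves continuously under $g_s$ and under the degeneration of the slit — in particular justifying that as $v(\zeta^{k_n}), h(\zeta^{k_n})\to 0$ the glued genus-two surface limits onto a pair of \emph{once-punctured} tori (the slit endpoints, the two cone points of angle $4\pi$, collapse to punctures) rather than onto something degenerate or onto a surface that fails to be the naive "pinch." I would handle this by the same bookkeeping already implicit in Proposition~\ref{prop:geom}: realize each $T_\pm^{k_n}$ concretely as a fundamental domain (an affine image of the unit square, by Lemma~\ref{lem:close to square}) with a slit of length $\to 0$ removed and re-glued, apply the explicit linear map $g_s$ to these fundamental domains, and observe convergence of the defining data (the lattice generators and the slit vector) in $\R^2$; convergence of the marked flat surfaces then follows from continuity of the gluing construction, and the slit vector tending to $0$ is exactly the statement that the limit is the pair of punctured tori. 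Everything else is a routine choice of subsequence using \eqref{eq:condA} and the recursion $q_n = a_nq_{n-1}+q_{n-2}$.
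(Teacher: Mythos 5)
Your proposal follows essentially the same route as the paper: invoke Proposition~\ref{prop:geom} at the times $t_k$ to get two nearly-square tori joined by a short slit, then flow an additional fixed time $\log a$ to stretch them to $(a,\tfrac1a)$ rectangles while the slit stays short. The one inaccuracy is in Step 2: condition \eqref{eq:condA} only forces $a_{n_k+1}\to\infty$ and gives no control on $a_{n_k}$, so you cannot extract a subsequence with $a_{n_{k_n}}\to\infty$ (equivalently $q_{n_{k-1}}/q_{n_k}\to 0$) from \eqref{eq:condA} alone; the paper uses condition \eqref{eq:condB} for exactly this, and that is the hypothesis you should cite, since without it the error term $O(1/a_{n_k})$ in the first bullet of Proposition~\ref{prop:geom} need not vanish and the tori need not converge to squares.
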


\begin{proof}
  By (\ref{eq:condA}) and (\ref{eq:condB})   $$\frac{1}{a_{n_k}}+\frac{1}{a_{n_k+1}}\to 0.$$ It follows from the first conclusion of Proposition~\ref{prop:geom} that  for all $t$, for $k$ large enough depending on $t$, the surface
 $g_{t_k+t}r_{\theta}(Y,\omega_{0})$ consists of two tori that are  the image of $T_j^k$ that are still close to rectangular.  The second  conclusion says that  for $k$ large enough compared to $t$, the $k^{th}$ slit is still short. Thus for any  $a$  we choose $t=\log a$ so that 
$g_{t_k+t}r_{\theta}(Y,\omega_{0})$ converges to the  $(a, \frac{1}{a})$ punctured torus. 
\end{proof}

\subsection{The non-ergodic asymmetric case.}

We use the symmetric case to understand the more general case when the measure is asymmetric but not ergodic, i.e. $c \in (-1, 1), c\ne 0$.

\begin{prop}
\label{prop:geom2} In the nonsymmetric case and nonergodic case (i.e. $c\notin\{-1,0,1\}$), the second and third bullets of Proposition~\ref{prop:geom} hold, but the tori $T_-^k,T_+^k$ are not identical. The curves $\sigma_i^k$ and $\beta_i^k$ have the same vertical components as in the symmetric case,  but the horizontal components differ.   The areas of the tori $T_-^k,T_+^k$  are asymptotically different as $k\to\infty$.  
The  ratio of the areas  converges to $\frac{1+c}{1-c}$.      
The ratio of horizontal lengths converges to $\frac{1+c}{1-c}$.
\end{prop}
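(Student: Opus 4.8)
The plan is to reduce everything to the symmetric picture already established in Proposition~\ref{prop:geom} via the explicit conjugacy $f_c$ between $T$ and $T_c$. First I would observe that the vertical foliation $(F,\mu_c)$ has the same underlying topological leaves as $(F,\mu_0)$; since $g_t r_\theta$ acts only in the flow direction and its transverse direction, the vertical components of the holonomy vectors of $\sigma_i^k$, $\beta_i^k$ and $\zeta^k$ are determined purely by the topological combinatorics of how often these curves wrap around in the $\theta$-direction, and hence are unchanged from the $c=0$ case. This is exactly the content of the sentence ``the vertical flow is fixed, but $\ldots$ the horizontal foliation varies''; so the second and third bullets of Proposition~\ref{prop:geom}, which only involve $v(\zeta^k)$ and the combinatorial intersection number $i(\gamma_i,\zeta^k)=\sum_{j\le k-1}2q_{n_j}$, transfer verbatim. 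I would spell out that $i(\gamma_i,\zeta^k)$ is a topological quantity independent of $c$, and that $v(\zeta^k)$ scales the same way because the slit $\zeta^k$ is built from the same flow segments of the same combinatorial length $2q_{n_{k-1}}$.

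Next comes the genuinely new assertion: the ratio of areas of $T_-^k$ and $T_+^k$ tends to $\tfrac{1+c}{1-c}$, and likewise for horizontal lengths. Here I would use Proposition~\ref{prop:crit} together with the measure decomposition $\mu_c=\tfrac12(1-c)\mu_-+\tfrac12(1+c)\mu_+$. The horizontal length of $\gamma_i$ in $(Y,\omega_c)$ is $i((F,\mu_c),\gamma_i)=\tfrac12(1-c)\mu_-(\gamma_i)+\tfrac12(1+c)\mu_+(\gamma_i)$. By Corollary~\ref{cor:unequal} and Theorem~\ref{thm:Veech}, the ergodic measures are (after normalizing total area) supported so that, say, $\mu_-$ gives full weight to one torus and negligible weight to the other in the limit — more precisely, $\mu_-(\gamma_1)/\mu_-(\gamma_2)\to\infty$ type behavior is not needed, only that the limiting area ratio is governed by the convex combination. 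I would compute: the area of $T_+^k$ relative to $T_-^k$ is, up to the $O(1/a_{n_k}+1/a_{n_k+1})$ errors from Proposition~\ref{prop:geom}'s first bullet, the ratio of the $\mu_c$-measures of the two square subtori, which by the decomposition equals $\tfrac12(1-c)\cdot(\text{area in }\mu_-)+\tfrac12(1+c)\cdot(\text{area in }\mu_+)$ over the symmetric partner; normalizing $\mu_\pm$ so each is a probability-like measure concentrated appropriately yields the limit $\tfrac{1+c}{1-c}$. Because horizontal length of $\gamma_i$ is just $i((F,\mu_c),\gamma_i)$ and the two tori are swapped by the involution, the horizontal-length ratio is the same convex-combination computation, giving the same limit.

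For the statement that $\sigma_i^k$ and $\beta_i^k$ have unchanged vertical components but changed horizontal components, I would invoke Lemma~\ref{lem:crossings}(4) and Lemma~\ref{lem:previous}(4): the intersection ratios $i(\gamma_i,\sigma_-^k)/i(\gamma_i,\sigma_+^k)\to\mu_-(\gamma_i)/\mu_+(\gamma_i)\ne1$ are $c$-independent combinatorial facts, so the curves $\sigma_i^k$, $\beta_i^k$ themselves (as homotopy classes) are the same in $(Y,\omega_c)$ as in $(Y,\omega_0)$; hence their vertical holonomy, which is governed by the fixed flow, is unchanged, while their horizontal holonomy is measured against the $c$-dependent transverse measure $\mu_c$ and therefore changes. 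I would conclude by noting the asymptotic asymmetry of areas is then automatic: if the areas were asymptotically equal, Proposition~\ref{prop:crit} applied in the limit would force $c=0$.

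The main obstacle I anticipate is making precise the claim that the area ratio converges to exactly $\tfrac{1+c}{1-c}$ rather than merely to some limit bounded away from $1$. This requires pinning down the normalization conventions for $\mu_-$ and $\mu_+$ (the paper normalizes $|I_1|+|I_2|+|I_3|=|I_4|+|I_5|+|I_6|=1$ for $T$, but the ergodic measures are not so normalized), and then tracking how the shear matrices and the conjugacy $f_c$ interact with these normalizations, all while controlling the $O(1/a_{n_k}+1/a_{n_k+1})$ geometric error from the fact that the tori are only approximately rectangular. I expect this to be a careful but routine bookkeeping argument once the right normalization of $\mu_\pm$ is fixed so that $\mu_\pm(\gamma_1)+\mu_\pm(\gamma_2)$ matches the total horizontal measure; the topological and combinatorial inputs are all already in hand from Lemmas~\ref{lem:slitsize}, \ref{lem:crossings}, and \ref{lem:previous}.
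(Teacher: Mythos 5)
Your proposal follows essentially the same route as the paper: there the $\mu_\pm$ are likewise treated as normalized two-dimensional (probability) measures, the key input is that each ergodic measure asymptotically concentrates entirely on one of the tori $T_\pm^k$ (cited to \cite[Section 3.1]{masur-tabachnikov}), and the Lebesgue areas are read off as $\tfrac12(1-c)\mu_-(T_\pm^k)+\tfrac12(1+c)\mu_+(T_\pm^k)\to\tfrac12(1\mp c)$, yielding the ratio $\tfrac{1+c}{1-c}$. The one caution is that the full concentration $\mu_-(T_-^k)\to 1$ is genuinely needed for the exact limit and does not follow from Theorem~\ref{thm:Veech} alone (which only gives ``more than $1/2$''); the paper sources it from \cite{masur-tabachnikov}, and that normalization of $\mu_\pm$ as probability measures is exactly what resolves the bookkeeping worry you raise at the end.
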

\begin{proof}
 
Let us first focus on the symmetric case. Consider the $k^{th}$ separating slit $\zeta^k$ given by Lemma \ref{lem:slitsize}. As we noted in Definition~\ref{defn:tori}, the slit divides the surface into two symmetric tori denoted $T_-^k$ and $T_+^k$. As $k$ goes to infinity, each of the two ergodic measures for the vertical flow are increasingly supported in one of the tori. That is, let $\mu_-,\mu_+$ be the ergodic measures; we can think of each as a two dimensional measure defined by transverse measure times length along the orbit. Then $\mu_-(T_-^k)$ converges to either $1$ or $0$ and   $\mu_-(T_+^k)$ converges to either $1$ or $0$.  The same is true of $\mu_+$.  See for example \cite[Section 3.1]{masur-tabachnikov}. For convenience let's assume the tori are compatibly named (so $\{(\mu_-(T_-^k),\mu_-(T_+^k))\}_{k=1}^{\infty}$ has a unique limit point, $(1,0)$).
Changing weights commutes with this decomposition, so on $(X_c,\omega_c)$ we have corresponding tori $(T_-^k),(T_+^k)$ whose Lebesgue measure is $\frac{1}{2}(1-c)\mu_-(T_-^k)+\frac{1}{2}(1+c)\mu_+(T_-^k)$ and $\frac{1}{2}(1-c)\mu_-(T_+^k)+\frac{1}{2}(1+c)\mu_+(T_+^k)$.  Then this sequence of decompositions of $(X_c,\omega_c)$ have Lebesgue measure going to $\frac{1}{2}(1-c)$ and $\frac{1}{2}(1+c)$.

\end{proof}

\subsection{The ergodic case.}

We now consider the ergodic case $g_tr_\theta (Y,\omega_1)$ with ergodic measure $\mu_+$. 
The vertical components are the same as in the nonergodic case, but now we  see that the ratio of the areas of the tori goes to $0$.  
In the next lemma we can regard  $\mu_{\pm}$ as an area measure given by the transverse measure times the length along the flow direction.

\begin{lem} 
\label{lem:erg}
Assume $\mu_+(T_+^k)\geq \mu_+(T_-^k)$. 
  Then    $\frac{1}{4a_{n_k+1}} \leq \mu_+(T_-^k)\leq \sum_{j\geq k}^{\infty} \frac {2}{a_{n_j+1}}$.
\end{lem}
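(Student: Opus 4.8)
The plan is to estimate $\mu_+(T_-^k)$, the mass that the ergodic measure $\mu_+$ places on the "wrong" torus in the $k$-th slit decomposition, both from above and below, using the way the slits $\zeta^k$ were constructed from returns of the vertical flow to the interval $I_1$.

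\textbf{Upper bound.} First I would recall that the torus $T_-^k$ is built (in the inductive construction of the slits preceding Definition~\ref{defn:tori}) by following the flow line from $0$ for exactly $2q_{n_1} + 2q_{n_2} + \dots + 2q_{n_{k-1}}$ return times to $I_1$ and then closing up along a subarc of $I_1$; equivalently, the flow line that traces out $T_-^k$ is, up to the short closing segment, the piece of the vertical orbit of $0$ of combinatorial length governed by the partial sums of the $q_{n_j}$. The key point is that $\mu_+$-mass of $T_-^k$ is essentially the $\mu_+$-measure of the union of rectangles swept out by this orbit segment before it returns to $I_1$, and by Lemma~\ref{hit next} together with the choices of $|J_1|$ etc. in the slit construction, this is controlled by $\sum_{j \geq k} <<q_{n_j}\alpha>>$, which by Lemma~\ref{basic dio} is $O(\sum_{j\ge k} \frac{1}{a_{n_j+1} q_{n_j}})$. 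Multiplying by the relevant orbit length (of order $q_{n_{j}}$ at stage $j$) collapses this to $\sum_{j\ge k} \frac{2}{a_{n_j+1}}$, giving the stated upper bound $\mu_+(T_-^k) \le \sum_{j\ge k} \frac{2}{a_{n_j+1}}$. I would phrase this as: the "defect" mass added at stage $j$ is comparable to $<<q_{n_j}\alpha>> \cdot q_{n_j} \asymp \frac{1}{a_{n_j+1}}$, and these defects telescope.

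\textbf{Lower bound.} For $\mu_+(T_-^k) \ge \frac{1}{4a_{n_k+1}}$, I would argue that at stage $k$ the flow line genuinely must spend a definite amount of $\mu_+$-mass in $T_-^k$: the slit $\zeta^k$ has horizontal component $h(\zeta^k) \asymp \frac{1}{q_{n_k} a_{n_k+1}}$ (Lemma~\ref{lem:slitsize}(2)), and the $\mu_+$-area trapped on the $T_-^k$ side of $\zeta^k$ is at least a definite fraction of (width of the trapped strip) $\times$ (length swept), which is $\asymp \frac{1}{q_{n_k} a_{n_k+1}} \cdot q_{n_k} \asymp \frac{1}{a_{n_k+1}}$; tracking the constant (the orbit returns $q_{n_k}$ times before re-entering $I_1$, contributing width $<<q_{n_k}\alpha>> > \frac{1}{(a_{n_k+1}+2)q_{n_k}}$ by Lemma~\ref{basic dio}, traversed over length $> q_{n_k}$) yields a constant of at least $\frac{1}{4}$ after accounting for the $\le \frac12$ normalization of the torus area.

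\textbf{Main obstacle.} The delicate step is the lower bound, and specifically pinning down the constant $\frac14$: one must be careful that the portion of the vertical orbit that is "already" in $T_-^k$ after stage $k-1$ is not inadvertently cancelled, i.e. that the defect at stage $k$ really does add (not remove) $\mu_+$-mass to the distinguished torus, and that the normalization conventions ($|I_1|+|I_2|+|I_3| = 1$, $\mu_+$ a probability measure, areas $\le \frac12$) are consistently applied. I would handle this by working on the original translation surface picture (Figure~1), identifying $T_-^k$ concretely as a union of parallelograms as in the proof of the slit lemma, and computing $\mu_+$ of that union directly via transverse-measure-times-flow-length, invoking Theorem~\ref{thm:Veech} only to know that $\mu_+(T_+^k) \ge \mu_+(T_-^k)$ holds for the compatibly-named torus. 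The upper bound, by contrast, should be routine once the telescoping of the defects $<<q_{n_j}\alpha>> \cdot q_{n_j}$ is set up, using Lemma~\ref{basic dio} and the summability condition~\eqref{eq:condA} only implicitly (the estimate itself is term-by-term).
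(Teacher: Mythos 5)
Your lower bound is essentially the paper's argument: the paper also bounds $\mu_+(T_-^k)$ below by (transverse measure of the slit) $\times$ (minimal return time to the slit), using Lemma~\ref{basic dio} to get $h(\zeta^k)\geq 2<<q_{n_k}\alpha>> > \tfrac{2}{(a_{n_k+1}+2)q_{n_k}}$ and the almost-vertical side of length comparable to $1$ to bound the return time. That part of your proposal is sound.

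The upper bound, however, has a genuine gap. You assert that ``the $\mu_+$-mass of $T_-^k$ is essentially the $\mu_+$-measure of the union of rectangles swept out by this orbit segment,'' and then sum the stage-$j$ defects $<<q_{n_j}\alpha>>\cdot q_{n_j}\asymp \tfrac{1}{a_{n_j+1}}$. But $T_-^k$ is not a thin union of rectangles: it is a full torus of Lebesgue area close to $1$ (you have partly conflated the torus $T_-^k$ with the slit $\zeta^k$, which is what the orbit segment of combinatorial length $2q_{n_1}+\cdots+2q_{n_{k-1}}$ actually traces out). The quantity $\tfrac{1}{a_{n_j+1}}$ is the area of the symmetric difference $T_+^{j}\Delta T_+^{j+1}$ (a union of parallelograms bounded by consecutive slits, with area controlled by the cross product of the slit vectors via Lemma~\ref{lem:slitsize}(2),(3)), and the telescoping of these defects only yields
$$\mu_+(T_-^k)\;\leq\;\mu_+(T_-^m)+\sum_{j=k}^{m-1}\mu_+\bigl(T_+^{j}\Delta T_+^{j+1}\bigr)$$
for each $m>k$. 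To conclude you must know that $\mu_+(T_-^m)\to 0$, i.e.\ that the ergodic measure $\mu_+$ is supported on the flow-invariant set $\liminf_j T_+^j$; this is the linchpin of the paper's upper bound (it writes $\mu_+(T_-^k)=\mu_+(T_-^k\cap \liminf_j T_+^j)$ and covers that intersection by $\bigcup_{j\geq k}(T_+^{j+1}\setminus T_+^j)$ together with $T_-^k\cap T_+^{k+1}$), and it is exactly what your phrase ``is essentially the $\mu_+$-measure of the union of rectangles'' is silently assuming. You also need the comparison $\mu_+\leq 2\lambda$ (since $\lambda=\tfrac12(\mu_-+\mu_+)$) to convert the Lebesgue-area bound on each parallelogram into a bound on its $\mu_+$-measure. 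Neither Theorem~\ref{thm:Veech} nor the hypothesis $\mu_+(T_+^k)\geq\mu_+(T_-^k)$ substitutes for the support statement, so as written the upper bound does not close.
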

\begin{proof}  For the lower bound, notice that the area of each torus is at least the horizontal component of the slit multiplied by the minimal return time of the vertical flow to the slit. At time $q_{n_k}$ the horizontal component is at least $\frac{1}{2a_{n_k+1}}$ and the torus has an almost vertical side of length comparable to $1$, and this bounds from below the length of a minimal return time.  
       
We next prove the upper bound. We will use the notation that $\lambda$ is Lebesgue measure. 
   The measures $\mu_i, i=-,+$ are  supported on the disjoint flow invariant sets $$A_i^\infty=\liminf T_i^k=\{x:\exists N: x\in \cap_{k=N}^\infty T_i^k\}.$$ Thus 
   \begin{multline}
\label{eq:Delta}
\mu_+(T_-^k)=\mu_+(T_-^k\cap A_2^\infty)=\lambda(T_-^k\cap \cup_{n=k+1}^\infty \cap_{j=n}^{\infty} T_+^j)\leq\\
\lambda(T_-^k\cap T_+^{k+1})+\sum_{j\geq k+1} \lambda (T_-^k\cap (T_+^{j+1}\setminus T_+^j)).
\end{multline}  The set   $T_+^j\Delta T_+^{j+1}$ is a union of parallelograms bounded by the two slits whose  area is bounded by the cross product of the pair of vectors.   Thus by (2) and (3) of Lemma~\ref{lem:slitsize},  for each $j$, 
we have 
\begin{equation*}\lambda(T_+^j\Delta T_+^{j+1})\leq \frac{2}{a_{n_j+1}}.
\end{equation*}
Applying this estimate to both terms in (\ref{eq:Delta}) gives the Lemma.
\end{proof}

\begin{cor} In the notation of Definition~\ref{defn:beta}, we have

\label{cor:smallhor}$$\lim_{j\to\infty}\frac{\mu_-(\beta_-^j)}{\mu_+(\beta_-^j)}=\infty.$$
\end{cor}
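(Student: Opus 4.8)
The plan is to combine the upper bound for $\mu_+$ on the ``wrong'' torus from Lemma~\ref{lem:erg} with a lower bound for $\mu_-$ on the almost-horizontal curve $\beta_-^j$, both controlled by the continued fraction data. Recall that $\beta_-^j$ is the previous convergent to $\sigma_-^j$, so it is a closed curve sitting (asymptotically) in the torus $T_-^j$, and its intersection data with $\gamma_i$ was recorded in Lemma~\ref{lem:previous}: $i(\gamma_i,\beta_i^j)\asymp q_{n_j}/a_{n_j}$.

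First I would fix the convention, as in Lemma~\ref{lem:erg}, that the tori are named so that $\mu_+$ concentrates on $T_+^k$; then $\mu_-$ concentrates on $T_-^k$, so $\mu_-(T_-^j)\to 1$ while $\mu_+(T_-^j)\to 0$. Since $\beta_-^j$ lies in $T_-^j$, I would show $\mu_-(\beta_-^j)$ is bounded below independently of $j$ (indeed bounded below and above by positive constants): $\beta_-^j$ is an almost-horizontal curve of flat length $\asymp 1$ in a torus that is close to a standard square torus with respect to $\mu_-$, so its transverse $\mu_-$-measure (i.e.\ its ``width'' as seen by $\mu_-$, which is essentially the horizontal length $1$ of that torus in the $\mu_-$-picture) is $\asymp 1$. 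More carefully, using the description of $\beta_-^j$ via the vector $(p_{n_j-1},q_{n_j-1})$ and the fact that $\mu_-$ restricted to $T_-^j$ is comparable to Lebesgue on a near-square torus for $j$ large, one gets $\mu_-(\beta_-^j)\asymp 1$.

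Second, I would bound $\mu_+(\beta_-^j)$ from above. Since $\beta_-^j\subset T_-^j$ and $\mu_+$ is a flow-invariant measure, $\mu_+(\beta_-^j)$ (the $\mu_+$-transverse measure of this curve) is at most a fixed multiple of the total $\mu_+$-area $\mu_+(T_-^j)$ divided by a lower bound on return time — equivalently, the $\mu_+$-transverse measure of any curve contained in $T_-^j$ is at most a constant times $\mu_+(T_-^j)$ because the flow return time to that curve is $\asymp 1$ (the torus has an almost vertical side of length $\asymp 1$). By Lemma~\ref{lem:erg}, $\mu_+(T_-^j)\leq \sum_{i\geq j} \tfrac{2}{a_{n_i+1}}$, which tends to $0$ as $j\to\infty$ by condition~(\ref{eq:condA}). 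Hence $\mu_+(\beta_-^j)\to 0$.

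Combining, $\mu_-(\beta_-^j)/\mu_+(\beta_-^j)\succ 1/\mu_+(T_-^j)\to\infty$, which is the claim. The main obstacle I anticipate is making precise the passage ``curve of flat length $\asymp 1$ inside $T_-^j$ has $\mu_\pm$-transverse measure comparable to $\mu_\pm(T_-^j)$ (resp.\ to $1$)'': this requires knowing that the geometry of $T_-^j$ in the two ergodic pictures is uniformly controlled — near-square for $\mu_-$ and of small but nondegenerate shape for $\mu_+$ — which is exactly the content encapsulated in Proposition~\ref{prop:geom}, Proposition~\ref{prop:geom2}, and Lemma~\ref{lem:erg}, together with the fact that $\beta_-^j$ crosses the torus boundedly many times. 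A secondary point to be careful about is that $\beta_-^j$ is only \emph{asymptotically} contained in $T_-^j$ (it may clip into $T_+^j$ through a thin sliver of area $O(1/a_{n_j+1})$ by the $T_+^j\Delta T_+^{j+1}$ estimate), but that error is of the same small order as the bound on $\mu_+(T_-^j)$ and so does not affect the divergence of the ratio.
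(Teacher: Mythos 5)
Your proposal is correct and follows essentially the same route as the paper: the paper's proof likewise writes $\mu_{\pm}(T_-^k)\asymp v(\sigma_-^k)\,\mu_{\pm}(\beta_-^k)$ (area as height times transverse width) and then invokes Lemma~\ref{lem:erg} to get $\mu_-(T_-^k)/\mu_+(T_-^k)\to\infty$, which is exactly your two bounds combined with the vertical factor cancelling. Your extra care about $\beta_-^j$ only asymptotically lying in $T_-^j$ is a harmless refinement of the same argument.
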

\begin{proof}
We have that the two dimensional measure $\mu_{\pm}(T_-^k)$ is, up to bounded  multiplicative error, given by $$v(\sigma_-^k)h_{\pm}(\beta_-^k)=v(\sigma_-^k)\mu_{\pm}(\beta_-^k),$$ where again in the last term we are using transverse measure. 
Now by 
Lemma~\ref{lem:erg} we have $$\frac{\mu_-(T_-^k)}{\mu_+(T_-^k)}\to\infty,$$ and so the Corollary follows. 
\end{proof}

We now give the description of $T_-^k$ and $T_+^k$ in the ergodic case. At time $t_k$, by the argument for the first conclusion of Proposition~\ref{prop:geom},  the surface $g_{t_k}r_\theta(Y,\omega)$ is within $O(\frac{1}{a_{n_k}}+\frac{1}{a_{n_k+1}})$  of two glued  tori.  Each has  a side of flat length comparable to one which is almost vertical.  Let $\sigma_i^k$ denote the almost-vertical sides and $\beta_i^k$ the curves that come from the penultimate convergent:  see Definition~\ref{defn:tori}.  On $T_+^k$ the curve $\beta_+^k$ is almost horizontal.  On the other hand, Lemma~\ref{lem:erg} states that the areas of the two tori are very different.  This means that $\beta_-^k$ has much shorter horizontal length.  In fact we conclude that for some constant $C$
\begin{equation} \label{tinytorus}
\frac{C}{a_{n_k+1}}\leq |\beta_-^k|_{t_k}\leq \sum_{j=k}^{\infty}\frac {2}{a_{n_j+1}}.
\end{equation}

We will apply this inequality in the proof of Theorems~\ref{thm:CF3} and \ref{thm:CF4}, when we treat the situation of the accumulation set of the flow of the ergodic measures.

\section{Proof of Theorem \ref{thm:CF}}
The proofs in this section hold more generally in the presence of symmetries of the flat structure that interchange ergodic measures.

{\it Proof of Theorem \ref{thm:CF}:} We first consider the case of the limits of $g_tr_\theta(Y,\omega_0)$. Assume we are given $(Y,\omega_{0})$.
Lemma~\ref{lem:continuity} says that any limit point  of $g_tr_\theta(Y,\omega_c)$ lies in $\Lambda$.  
Proposition~\ref{prop:crit} says that any such  limit  $[F,\omega_\tau]$ is determined by the ratio of the limit of the hyperbolic lengths of the curves $\gamma_1\gamma_2$. Now because $c=0$ the tori $T_-^k,T_+^k$ are identical.  The map that interchanges the tori also interchanges $\gamma_1$ and $\gamma_2$.  Thus any time $t$ it follows that $$\ell_{\rho_t}(\gamma_1)=\ell_{\rho_t}(\gamma_2),$$ 
so that any limit $[F,\mu_\tau]\in\Lambda$ satisfies $$i((F,\mu_\tau),\gamma_1)=i((F,\mu_\tau),\gamma_2).$$  But then 
$\tau=0$, concluding the argument in this case that the only limit point is  the barycenter. 

We next consider the case where $c \notin \{-1,1\}$, aiming to show that $\mu_-$ and $\mu_+$ are not in the accumulation set of $g_t(Y,\omega_c)$. We begin with

\begin{lem}For each $c \notin\{-1,1\}$ there exists $a>0$ so that if $\alpha_-,\alpha_+$ are any pair of curves on the base surface $(X_{0},\omega_{0})$ that are image of each others under the map that exchanges the two tori then  
 $$\underset{t \to \infty}{\limsup}\, \frac{l_t(\alpha_-)}{l_t(\alpha_+)}<a.$$
\end{lem}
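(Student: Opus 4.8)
The plan is to bound the ratio $\ell_t(\alpha_-)/\ell_t(\alpha_+)$ by passing through the combinatorics of the flat structure $g_tr_\theta(Y,\omega_c)$ and relating hyperbolic lengths to flat lengths via the structure of the two-torus-plus-slit decomposition. First I would fix $c \notin \{-1,1\}$. The key input is that the vertical foliation $(F,\mu_c)$ has both ergodic components $\mu_\pm$ appearing with positive weight, namely weights $\tfrac12(1-c)$ and $\tfrac12(1+c)$, both bounded away from $0$. I would use the geometric description from Proposition~\ref{prop:geom2}: at the times $t_k$ (with $e^{t_k}=\sqrt{p_{n_k}^2+q_{n_k}^2}$) the surface $g_{t_k}r_\theta(Y,\omega_c)$ is, up to bounded error, a pair of rectangular tori $T_-^k, T_+^k$ glued along a short slit $\zeta^k$, with the ratio of their areas converging to $\tfrac{1+c}{1-c}$, hence bounded above and below independently of $k$.

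The main step is to control the ratio of hyperbolic lengths $\ell_{\rho_{t_k}}(\alpha_-)/\ell_{\rho_{t_k}}(\alpha_+)$ over the sequence $t_k$, and then to interpolate to all $t$. Because $\alpha_-$ and $\alpha_+$ are interchanged by the involution exchanging the two tori, and because that involution is (up to bounded error) a flat isometry exchanging $T_-^k$ with $T_+^k$ together with a bounded rescaling of the two axes (the ratio of areas being bounded), the flat length of $\alpha_-$ at time $t_k$ is comparable, up to a multiplicative constant depending only on $c$, to the flat length of $\alpha_+$ at time $t_k$. Since each torus is within $O(\tfrac{1}{a_{n_k}}+\tfrac{1}{a_{n_k+1}})$ of a genuine rectangular torus whose shape is bounded, and the slit is short, I can invoke the hyperbolic-versus-flat comparison for curves on a thin-parts-controlled surface (or simply the collar/Bers-type estimates on a surface that is a pair of moderate rectangular tori glued along a short curve) to conclude that the ratio of \emph{hyperbolic} lengths of $\alpha_-, \alpha_+$ at time $t_k$ is also bounded by a constant $a$ depending only on $c$. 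To upgrade from the sequence $t_k$ to a genuine $\limsup$ over all $t$, I would observe that between consecutive $t_k$ the geometry deforms in a controlled (bounded-Lipschitz in $t$) fashion on the relevant scale, or more cleanly, apply the same argument with $t_k$ replaced by $t_k + s$ for bounded $s$, which still gives two moderate rectangular tori glued along a short slit by the argument following Proposition~\ref{prop:geom}; every $t$ large lies within bounded additive distance of some such shifted time, and hyperbolic length ratios change by at most a bounded multiplicative factor under bounded Teichm\"uller-distance moves.

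The hard part will be making precise the claim that the ratio of hyperbolic lengths is controlled by the ratio of flat lengths \emph{uniformly} along the sequence, since the surfaces are degenerating (the slit $\zeta^k$ has length going to $0$ and, in the asymmetric case, the two tori have genuinely different but bounded moduli). The safe route is to use that $\alpha_\pm$ are disjoint from the short curve $\zeta^k$ (they live in the complementary tori), so their hyperbolic lengths are determined, up to universal multiplicative and additive error, by their flat lengths in the nearly-rectangular tori $T_\pm^k$ of bounded modulus — a consequence of the fact that on a hyperbolic torus with one cusp (or one short boundary) of bounded shape, hyperbolic and flat lengths of a simple closed curve are comparable. Combined with the bounded ratio of the flat geometries of $T_-^k$ and $T_+^k$ from Proposition~\ref{prop:geom2}, this yields the uniform bound $a = a(c)$ and completes the proof. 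I would note for later use that the constant $a$ can be taken to tend to $1$ as $c \to 0$ and to $\infty$ as $|c| \to 1$, which is consistent with the barycenter being the unique limit when $c=0$.
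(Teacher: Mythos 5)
Your approach has two genuine gaps, and it misses the one-line idea that makes the paper's proof work at \emph{all} times rather than just along a special sequence.

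First, the interpolation step fails. The times $t_k$ with $e^{t_k}=\sqrt{p_{n_k}^2+q_{n_k}^2}$ are sparse: consecutive gaps $t_{k+1}-t_k$ are of order $\log(q_{n_{k+1}}/q_{n_k})$, which is unbounded (indeed it must grow for conditions (A)--(C) to hold). So it is false that every large $t$ lies within bounded additive distance of some $t_k$, and at intermediate times the complementary tori are \emph{not} of bounded modulus --- one of the basis curves ($\beta_\pm^k$ or $\sigma_\pm^k$) becomes very short, which is exactly the regime analyzed at the times $s_k$ in the proof of Theorem~\ref{thm:CF4}. Your flat-versus-hyperbolic comparison on ``bounded-shape'' tori therefore does not apply on most of the time axis. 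Second, the claim that $\alpha_\pm$ are disjoint from the slit $\zeta^k$ is wrong: the lemma concerns a \emph{fixed} pair of curves (in the application, $\beta_-^j,\beta_+^j$ for fixed $j$, or $\gamma_1,\gamma_2$), and by Lemma~\ref{lem:crossings} these cross $\zeta^k$ on the order of $q_{n_{k-1}}$ times as $k\to\infty$. So the curves do not ``live in the complementary tori,'' and their hyperbolic lengths pick up contributions from the collar of the slit that your argument does not control.

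The paper's proof sidesteps all of this: at \emph{every} time $t$ there is a piecewise affine self-map $A(x,y)=(A_1(x,y),y)$ of the surface exchanging the two tori and the two saddle connections forming the slit; it preserves vertical lengths exactly (they agree on the two tori) and distorts horizontal lengths by a factor controlled by the area ratio, which for $c\notin\{-1,1\}$ is bounded. Hence $A$ has uniformly bounded dilatation, interchanges $\alpha_-$ and $\alpha_+$, and a $K$-quasiconformal self-map changes hyperbolic lengths by a factor depending only on $K$. This gives the bound at all $t$ simultaneously, with no degeneration analysis and no need to track intersections with the slits. If you want to salvage your outline, the quasiconformal involution is the missing ingredient; without it you would need to redo, for arbitrary $t$, essentially the full length estimates of Section~6 (Proposition~\ref{prop:augmented} and Lemma~\ref{lem:asymptotic}), including the collar contributions.
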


\begin{proof} 
At any time $t$ there  is an piecewise affine  map of the surface to itself which exchanges the two tori, exchanging the pair of saddle 
connections
that form  the slit,  and in the flat coordinates $(x,y)$ is of the form $$A(x,y)=(A_1(x,y),y).$$ This is possible since vertical lengths are the same on each torus. The map interchanges $\alpha_-$ and $\alpha_+$.  Now since horizontal lengths are comparable,  in fact the map $A$ has  uniformly bounded dilatation.
Such maps change hyperbolic lengths by a bounded factor.  
\end{proof}

 Continuing the proof of Theorem~\ref{thm:CF}, let $a>0$ be the constant from the above lemma.  Now  Corollary~\ref{cor:smallhor} says that  $\frac{\mu_-(\beta_-^j)}{\mu_+(\beta_-^j)}\to\infty$.  This implies that for  $j$ large enough   
$$\underset{k \to \infty}{\lim}\, \frac{i(\beta_-^j,\sigma_-^k)}{i(\beta_-^j,\sigma_+^k)}>a.$$ 

Now by Lemma \ref{lem:crossings}(1-2), because $\beta_+^j$ and $\beta_-^j$ are exchanged (Corollary~\ref{cor:unequal}) by the map which exchanges the tori, we have
  $i(\beta_+^j,\sigma_-^k)=i(\beta_-^j,\sigma_+^k)$. Thus for the fixed pair of curves $\beta_-^j,\beta_+^j$ we have 
\begin{align*}
\lim_{k\to\infty}\frac{i(\beta_-^j,\sigma_-^k)}{i(\beta_+^j,\sigma_-^k)} &=\lim_{k\to\infty}\frac{i(\beta_-^j,\sigma_-^k)}{i(\beta_-^j,\sigma_+^k)} \frac{i(\beta_-^j,\sigma_+^k)}{i(\beta_+^j,\sigma_-^k)}\\
&=\lim_{k\to\infty}\frac{i(\beta_-^j,\sigma_-^k)}{i(\beta_-^j,\sigma_+^k)}\\
&>a\\
&> \underset{t \to \infty}{\limsup}  \frac{l_t(\beta_-^j)}{l_t(\beta_+^j)}.
\end{align*}
  It follows from Lemma~\ref{lem:ergodiclimit} that $\mu_{-}$ is not a limit point.
Similarly $\mu_{+}$ is not a limit point. 


\section{Convergence of hyperbolic lengths} \label{sec:hyperbolic}
Because we are interested in controlling the limits of \Teich geodesics in the Thurston compactification, we need to be able to estimate not just the flat lengths of curves on our translation surfaces (with estimates like those in the last section), but we also need to control the corresponding hyperbolic lengths. In this section, we record some basic tools that will allow us to interpret the estimates in the previous section into estimates for hyperbolic lengths.

We begin with a comment about how simple closed curves meet collars. Suppose $(X_k,\omega_k)$ is a sequence of translation surfaces that arise from tori $T_-^k,T_+^k$ glued along a slit of length $\epsilon_k\to 0$ based at the origin of each. Suppose for each $j=-,+$ that $$\lim_{k\to\infty} T_j^k=
S_j^0,$$ which is either a punctured  torus, with its puncture at the origin, or a thrice-punctured sphere with one puncture at the origin.   Any sufficiently small neighborhoods $U_-,U_+$ of the origin in each $S_j^0$ can be considered to be neighborhoods of the corresponding short geodesic for large $k$.  The next lemma is well-known to the community; we include a proof to facilitate the exposition.

\begin{lem} \label{thm:non-crossing curves shallow}
There exist $U_-,U_+$ such that for large $k$, any simple closed hyperbolic geodesic on $X_k$ that enters $U_j$ also crosses the short geodesic.  
\end{lem}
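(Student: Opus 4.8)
The plan is to set up the geometry so that the short slit is genuinely in the thin part of $X_k$, and then use the collar lemma together with the fact that the tori $T_j^k$ converge geometrically to complete hyperbolic surfaces $S_j^0$. First I would observe that since the flat slit has length $\epsilon_k\to 0$, the extremal length (hence the hyperbolic length) of the core curve $\delta_k=\zeta^k$ of the slit tends to $0$: indeed the slit together with its image under the torus-exchange map bounds an annulus of large modulus (one can estimate the modulus from below using the flat geometry — the complement of the slit in each torus contains a round-ish subdomain of definite size attached along an arc of length $\epsilon_k$, so the modulus grows like $\log(1/\epsilon_k)$). Thus by the collar lemma there is a standard collar $\mathcal{C}_k$ about the geodesic representative of $\delta_k$ whose width $w(\epsilon_k)\to\infty$.

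Next I would make precise the sense in which the neighborhoods $U_j$ can be taken ``inside'' the collar. Fix $S_j^0 = \lim_k T_j^k$, a complete finite-area hyperbolic surface with a cusp at the image of the origin. Choose standard cusp neighborhoods $U_j\subset S_j^0$ (horoball quotients) of small area, say area $<\epsilon_0$ for a universal $\epsilon_0$ smaller than the Margulis constant. Geometric convergence $T_j^k\to S_j^0$ (which follows from the flat-geometric estimates of Section 4, e.g. Proposition~\ref{prop:geom} and its analogues, giving convergence of the flat structures and hence of the uniformizing hyperbolic metrics on compact subsets and on the relevant ends) means that for large $k$ there are almost-isometric embeddings of a large compact core of $S_j^0$ into $X_k$, and the cusp region $U_j$ corresponds, for large $k$, to a piece of the thin part of $X_k$ surrounding the short geodesic $\delta_k$. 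Concretely, for $k$ large the image of $U_j$ in $X_k$ lies in the component of the $\epsilon_0$-thin part containing $\delta_k$, i.e.\ inside (a fixed-depth sub-collar of) $\mathcal{C}_k$.

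Now the conclusion is essentially the collar lemma applied twice. Suppose $\gamma$ is a simple closed geodesic on $X_k$ that enters $U_j$. Then $\gamma$ enters the collar $\mathcal{C}_k$ of $\delta_k$. A simple closed geodesic that meets the $R$-collar of a short geodesic $\delta$ either equals $\delta$, or is disjoint from $\delta$ and stays within bounded distance of it but then cannot penetrate deep into the collar (its intersection with a collar of width $w$ has length $\gtrsim 2\,\mathrm{arcsinh}(1/\sinh(\ell(\delta)/2))$ only if it crosses), or it crosses $\delta$. Since $\delta_k=\zeta^k$ is separating and $\gamma$ is a simple closed curve meeting $U_j$ which is topologically a neighborhood of a cusp on one side of $\delta_k$, the only way for a geodesic to reach depth corresponding to $U_j$ (which sits arbitrarily deep in $\mathcal{C}_k$ as $k\to\infty$ since the collar width $\to\infty$ while $U_j$ is at fixed depth from the core on the far side) is to cross $\delta_k$; a non-crossing simple closed geodesic is confined to the outer part of the collar, at bounded distance from its boundary, contradicting its entering $U_j$ for $k$ large. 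Hence $\gamma$ crosses the short geodesic.

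The main obstacle is the second paragraph: rigorously upgrading the flat-geometric convergence $T_j^k\to S_j^0$ to the statement that the fixed cusp neighborhood $U_j$ lands, for large $k$, arbitrarily deep inside the collar $\mathcal{C}_k$. This requires a quantitative comparison between flat and hyperbolic geometry near a shrinking slit — one needs that the hyperbolic collar of $\zeta^k$ ``eats up'' any fixed compact piece of the cusp region of the limiting surface. I would handle this by combining a lower bound on the modulus of the slit annulus (purely flat, giving collar width $\to\infty$) with the continuity of the uniformization on the thick part (giving that $\partial U_j$ is at bounded hyperbolic distance from a fixed thick-part basepoint, independent of $k$), so that relative to the core of $\mathcal{C}_k$ the set $U_j$ is at bounded depth on the $S_j^0$ side while the full collar has unbounded width — forcing any curve entering $U_j$ from the $\delta_k$ side to have crossed $\delta_k$.
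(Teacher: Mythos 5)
Your overall strategy is the same as the paper's: identify $U_j$ with a fixed-depth region inside the long hyperbolic collar of the pinching curve, and then argue that a simple closed geodesic which does not cross the core can only penetrate the collar to a bounded depth. The genuine issue is that this last step --- the heart of the lemma --- is asserted rather than proved, and the one-line justification you offer for it does not work. You write that a non-crossing simple closed geodesic ``cannot penetrate deep into the collar'' because its intersection with the collar has length $\gtrsim 2\,\mathrm{arcsinh}(1/\sinh(\ell(\delta)/2))$ only if it crosses. That quantity is the collar half-width itself, which tends to infinity as $\ell(\delta)\to 0$; so even granting the claim, it would only confine a non-crossing geodesic to a depth comparable to the full collar width, which is no restriction at all. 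What the argument actually needs is a depth bound $d$ that is \emph{uniform in} $\epsilon_k$, so that one may fix $U_j$ once and for all at depth greater than $d$ and have the conclusion hold for all large $k$. The standard collar lemma (a length lower bound for geodesics that \emph{do} cross the core) says nothing about how deep a \emph{non-crossing} geodesic may go.

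The paper supplies exactly this missing ingredient as Lemma~\ref{lem:ShallowNoncross}: in the strip model of one half of the collar, a simple geodesic arc entering through the long boundary either exits through the long boundary after penetrating at most a universal depth $d$, or else it would have to wrap around the collar repeatedly, and the wrapping alternative is excluded by an argument that uses the \emph{simplicity} of the geodesic in an essential way (successive strands would be forced to cross earlier strands). Note that simplicity really is needed --- a non-simple closed geodesic can spiral arbitrarily deep into a collar without meeting the core --- so any correct proof must invoke it, and yours currently does not. Your second paragraph (upgrading the flat convergence $T_j^k\to S_j^0$ to the statement that $U_j$ sits inside the collar at a definite depth) matches the paper's plumbing-type argument and is fine in outline; it is the uniform shallow-penetration estimate that needs an actual proof.
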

\begin{proof} 

We begin with a preliminary lemma.  Choose  a small number $\epsilon$ and consider a 
hyperbolic surface $S$ with a short geodesic $\alpha$ of length $\epsilon$. Let $\mathcal{C} \subset S$ be an embedded collar around $\alpha$, and let $V=V_{\epsilon}$ be one component of $\mathcal{C} \sim \alpha$; thus $V$ may be parametrized as $V = \{c_- < \Im z < \frac{\pi}{2\epsilon} | 0 < \Re z <1\}$ with sides identified by the the translation $z \mapsto z+1$. The hyperbolic metric on V is represented in these coordinates by $ds^2 = \epsilon^2 \csc^2( \epsilon y) |dz|^2$, and the core geodesic $\alpha$ is represented at height $y=\frac{\pi}{2\epsilon}$.   Here we denote the length of the long boundary curve $\{y=c_-\} = \Gamma_- \subset \partial V$ by $\ell_-$.  We choose $c_-$ and $\ell_->0$ so that for any $\epsilon$ sufficiently small, there are always embedded collars bounded by constant geodesic curvature curves of length $\ell_-$.

We claim

\begin{lem} \label{lem:ShallowNoncross}
Fix $\ell_-$.  Then there is a number $d$ independent of the short length $\epsilon$ so that if $W_d$ denotes the $d$-neighborhood in $V$ of $\Gamma_- \subset \partial V$, and if $\gamma \subset S$ is any simple closed geodesic on $S$ which meets $\Gamma_-$, then either $\gamma \cap (W_d \cup V) \subset W_d$ or $\gamma$ crosses the core geodesic $\alpha$.
\end{lem}

The point here is that if $\gamma$ does not cross the core geodesic, then $\gamma$ invades the collar to only a depth $d$.

\begin{proof}
Let $\gamma$ be a a simple geodesic arc in $V$ that does not cross the core geodesic but meets $\Gamma_-$.  Up to translating our model, we may assume that $\gamma$ enters $V$ at the point $(0, c_-)$, the bottom left corner of the parametrizing strip. 
Either $\gamma$ exits $V$ before meeting the right boundary $\{x=1\}$ of the strip, or $\gamma$ crosses the strip and meets the right boundary at $(1, y_1)$, where $y_1 > c_-$. 
(This geodesic cannot exit the strip along the left boundary, as the segment and the left boundary would bound a disk-like domain in hyperbolic space with two geodesic arcs, and it cannot exit the strip through the core geodesic $\{y = \frac{\epsilon}{2\pi}\}$, by hypothesis.) 
There are a compact family of segments of the first type parametrized by the points $(x, c_-) \in \Gamma_-$ where the segment exits $V$. A computation in planar hyperbolic geometry shows that such arcs only penetrate $V$ up to at most a distance $d < \infty$ from $\partial V$, with $d$ independent of $\epsilon$.

On the other hand, we claim that this geodesic $\gamma$ cannot meet the right edge of the strip, either. For, if the initial segment, say $\gamma_-$, of $\gamma$ does meet the right boundary of $V$, say at $(1, y_1)$, then since $(1, y_1)$ is equivalent to $(0, y_1)$ in our model, we can represent $\gamma$ as continuing its path moving in a rightward path from $(0, y_1)$.  
Now, this new segment, say $\gamma_+$, of $\gamma$ cannot cross the initial segment $\gamma_-$ of $\gamma$, as we have assumed that the geodesic $\gamma$ has no self-intersections; thus, as in the case of the first segment $\gamma_-$, the segment $\gamma_+$ must also reach and cross the right edge of the strip, say at $(1, y_2)$.  
Of course, since we have noted that $\gamma_+$ must lie above $\gamma_-$ to avoid self-intersections, we see that for the coordinates $y_i$ of the right endpoints of $\gamma_i$, we have $y_2 > y_1$.  As before, the point $(1, y_2)$ is equivalent to the point $(0, y_2)$, and we continue in this way obtaining an increasing sequence $\{y_n\}$ of endpoints of segments $\gamma_n$ of $\gamma$.
Next, we have already noted that the geodesic $\gamma$ does not cross the core geodesic (located at height $y=\frac{\pi}{2\epsilon_n}$), so since $\gamma$ (and hence $\gamma \cap V$) is also compact, the sequence $\{y_n\}$ realizes a maximum height, say $y_m < \frac{\pi}{2\epsilon}$.
Focusing our attention on the $m+1$st segment $\gamma_{m+1}$ which begins at $(0, y_m)$, we see that, as in the case of the segment $\gamma_-$, this segment $\gamma_{m+1}$ must also either end on $\partial V$ at some point $(x, c_-)$, or on the right edge at $(1, y_{m+1})$: by assumption that $y_m$ is at the maximum height, we have that $y_{m+1} < y_m$.
Yet this latter alternative is not possible as the segment $\gamma_m$ divides the strip into two components, one containing the initial point $(0,y_m)$ of $\gamma_{m+1}$ and one containing the terminal point $(1,y_{m+1})$ of $\gamma_{m+1}$, here using on the left edge that $y_m > y_{m-1}$ and on the right edge that $y_m > y_{m+1}$. Similarly, the segment cannot exit through $\partial V$ at some point $(x, c_-)$, as this is also separated from $(0,y_m)$ by $\gamma_m$.

Thus since $W_d$ is constructed to contain all the points at distance $d$ from the longer component $\Gamma_- \subset\partial V$, then any simple closed curve which enters $W_d$ from its long boundary $\Gamma_-$ and exits from its short boundary at distance $d$ away must also then cross the core geodesic $\alpha$.  
\end{proof}

We now return to the proof of Lemma~\ref{thm:non-crossing curves shallow}.

Let $V= V^j$ be a neighborhood of the puncture on the punctured hyperbolic surface $T_0^j$. The set $V$ then contains a horocyclic neighborhood $V_0$ of that puncture, i.e. a rotationally invariant neighborhood of the cusp foliated by horocycles and bounded by a horocycle of length $\ell_0$.  We routinely parametrize $V_0$ as a strip 
${\{\Im z > \frac{1}{2\ell_-} | 0 < \Re z <1\}}$
 with vertical sides identified by the translation $z \mapsto z+1$. We are interested in precompact subneighborhoods $W_0 = \{\frac{1}{\ell_+} > \Im z > \frac{1}{\ell_-} | 0 < \Re z <1\}$ (with the same identification) embedded in $V_0$.

We next find analogues $W^k = W_j^k \subset T_j^k$ of that compact rotationally invariant neighborhood $W_0 \subset V_0 \subset T_0^j$ on which to focus. This is a variant of the well-known "plumbing" construction (see e.g. \cite{Bers}, \cite{Fay}, \cite{Mas76}, \cite {W91}) but we include some details both for the sake of completeness as well as to set our notation. 

The surface $X^k$ is constructed as the union of a pair $\{T_-^k, T_+^k\}$ of tori glued along a slit but may be uniformized as a pair of open hyperbolic tori $\{\tau_-^k, \tau_+^k\}$ glued along the geodesic which is freely homotopic to the curve defined by the slit. 
As $k \to \infty$, the length of the slit tends to zero: the flat length of the $k$th slit is short, so we may link it by an annulus of very large modulus (that tends to infinity with $n$). It is convenient to work for now with the hyperbolic surfaces $\tau_j^k$ with their geodesic boundary, but at the end of the argument, we will pass back to the flat slit surfaces $T_j^k$ with their slit boundary, using that the slits and the geodesic boundaries both tend geometrically to the puncture.

The (open) complement of the geodesic boundary is homeomorphic to the cusped torus $T_j^0$, so we may regard the hyperbolic surfaces $\tau_j^k$ (and the cusped torus $T_j^0$) as all sharing a common underlying differentiable manifold $T$, which we then equip with hyperbolic metrics $h^k$ (and $h^0$, resp.), each defined up to a diffeomorphism of $T$.  
In this setting, we may choose representatives $h^k$ and $h^0$ of those metrics so that $h^k \to h^0$, uniformly on compacta of $T$.  
In particular, there is a neighborhood $K \subset T$, which is isometrically the complement of (an isometrically embedded copy of) $V_0 \sim W_0$, in which the metrics $h^k$ converge uniformly to the metric $h^0$.  Thus, we can then find a compact annular neighborhood $W \subset T_0$ with the properties

\noindent(i) $W \subset V_0$ and is $h^0$-rotationally invariant,

\noindent(ii) the $h^k$-injectivity radius of $W$ is uniformly bounded below, as is the $h^0$-injectivity radius, (and similarly  so is the $h^k$-diameter also bounded above, as is the $h^0$-diameter), and 

\noindent(iii) there is a region $W^k \subset W$ which is $h^k$-rotationally invariant and isometric to the region $W_d = W_d(\epsilon_k)$ constructed in Lemma~\ref{lem:ShallowNoncross}. 
(In particular, $W^k$ is foliated by curves of constant geodesic curvature, is bounded by one long such curve of length $\ell_-$ and has boundary components at distance $d$ from one another.)

Note that the constant $d$ in the last condition is independent of $k$.

As the set $W$ contains $W^k$ for each $k$ and any simple geodesic which crosses both boundary curves of $W^k$ must cross the core geodesic, we see that any geodesic that crosses both boundary curves of $W$ must also cross a core geodesic. 
\end{proof}

\subsection{Twisting}
Twisting is defined both in the hyperbolic sense and with respect to the flat metric. 
Suppose one has a curve $\beta$. Lift  to the annular cover  $A(\beta)$ and denote the closed lift by $\tilde \beta$.  
In the case of the hyperbolic metric choose some perpendicular $\tilde\tau$ to $\beta$ and extend to both components of $\partial A$.  Now suppose $\gamma$ is any other curve crossing $\beta$. We take its hyperbolic geodesic representative and lift to a curve $\tilde\gamma$ on $A(\beta)$ that meets $\tilde \beta$.  Then we define the hyperbolic twist $$\twist^h_\beta(\gamma)=i(\tilde\gamma,\tilde\tau).$$ This number is well defined up to a small additive error. 
In the case of the flat metric we do the same construction now taking flat geodesic representatives and lifting. 
We denote this by $\twist^f_\beta(\gamma)$.

Suppose in the flat metric  there is no flat cylinder about $\beta$.  Equivalently, $\beta$ is represented by a union of saddle connections and, for at least one zero on $\beta$ the incoming and outgoing saddle connections make an angle greater than $\pi$.
In particular this condition holds in the case of the slits $\zeta^k$ in this paper. Suppose a geodesic $\gamma$  crosses $\beta$, and $\gamma$  does not have any saddle connections in common with $\beta$. This holds for the pair of curves $\gamma_i$
that we are interested in. 
\begin{lem} \label{lem:bounded twist}
Under the above hypothesis,  $\twist^f_\beta(\gamma)=O(1)$. 
\end{lem}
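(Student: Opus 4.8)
The plan is to work directly in the annular cover $A(\beta)$ of the slit $\beta = \zeta^k$ and to track how far a lifted geodesic arc $\tilde\gamma$ can ``drift'' in the direction along $\beta$ while it is close to $\tilde\beta$. The point of the hypothesis — that $\beta$ is a union of saddle connections with a reflex angle ($>\pi$) at a zero, so that there is no flat cylinder around it — is precisely that $\beta$ does not admit a foliated neighborhood by parallel closed geodesics; instead a neighborhood of $\beta$ in the flat metric looks like two half-planes (or sectors of total angle $>2\pi$) glued along $\beta$. So I would first set up coordinates: lift $\beta$ to $\tilde\beta\subset A(\beta)$, choose the flat-geodesic perpendicular transversal $\tilde\tau$, and observe that because $\gamma$ shares no saddle connection with $\beta$, each time $\tilde\gamma$ meets $\tilde\beta$ it does so transversally at an interior point of a saddle connection of $\beta$, and it then leaves a fixed neighborhood of $\tilde\beta$ within bounded length.

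The key estimate I would isolate is this: a flat geodesic segment of $\tilde\gamma$ that stays within the ``no-cylinder'' neighborhood of $\tilde\beta$ cannot wind; more precisely, between two consecutive crossings of $\tilde\beta$ (or between a crossing and the moment it exits the neighborhood) the arc is a flat geodesic, hence a concatenation of straight segments turning only at zeros by angle $\ge\pi$ on each side, and the reflex angle at the zero on $\beta$ forces any such geodesic that re-approaches $\tilde\beta$ to do so after bounded displacement along $\tilde\beta$. So I would argue: (1) $\tilde\gamma\cap A(\beta)$ decomposes into at most a bounded number of ``excursions'' near $\tilde\beta$ together with a part that travels through the rest of the cover; (2) each excursion contributes $O(1)$ to $i(\tilde\gamma,\tilde\tau)$ because of the reflex-angle geometry and the fact that a flat geodesic cannot cross itself; (3) outside a neighborhood of $\tilde\beta$ the cover $A(\beta)$ has, away from $\tilde\beta$, no ``long thin'' part in the direction along $\beta$ that a geodesic could traverse without crossing $\tilde\beta$ — this is again the absence of a flat cylinder. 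Summing these gives $\twist^f_\beta(\gamma)=O(1)$, where the implied constant depends only on the combinatorial type (number of saddle connections in $\beta$, the cone angles), not on $k$.

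Concretely I would phrase (2) as follows: a flat geodesic arc entering a one-sided neighborhood of $\tilde\beta$ and returning to $\tilde\beta$ must, since the side is (a piece of) a Euclidean half-plane or a sector of angle in $(\pi, 2\pi)$ bounded by straight segments meeting $\tilde\beta$ transversally, either cross $\tilde\beta$ immediately or turn around a zero; a straight segment in a half-plane with both endpoints on the boundary line has the two endpoints within a bounded multiple of the segment length of each other, and the reflex angle at the zero means at most one turn is possible before the geodesic must exit or cross $\tilde\beta$ (two turns in the same sector would violate the geodesic/embedded-arc condition, exactly as in the strip argument in Lemma~\ref{lem:ShallowNoncross}). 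Hence each excursion moves the lift by at most one fundamental domain of $\tilde\tau$, up to a fixed additive constant.

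The main obstacle I anticipate is step (3): controlling the part of $\tilde\gamma$ that is \emph{not} near $\tilde\beta$, i.e. ruling out that $\tilde\gamma$ travels a long way ``parallel'' to $\beta$ through the bulk of $A(\beta)$ without crossing $\tilde\beta$. For a curve $\beta$ with a flat cylinder this is exactly what happens (and the flat twist can be large), so the argument must use the no-cylinder hypothesis in an essential way. I would handle this by choosing the neighborhood of $\tilde\beta$ large enough that its complement in $A(\beta)$ is, in the relevant homotopy sense, ``thin'' — precisely because the reflex angle prevents any embedded family of parallel geodesic arcs homotopic to $\beta$ — so that any arc of $\tilde\gamma$ in the complement that is homotopically nontrivial in $A(\beta)$ is forced back across $\tilde\beta$ after bounded combinatorial length. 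Making ``thin'' quantitative and $k$-independent — using that the combinatorics of $\zeta^k$ near its zeros is uniformly controlled by the construction in Section~3 — is where the real work lies.
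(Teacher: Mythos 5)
Your central mechanism is the one the paper uses: the reflex angle at a zero of $\beta$ gives a saddle connection $\beta_0\subset\beta$ whose straight continuations $\kappa_1,\kappa_2$ at its endpoints are not contained in $\beta$, and since $\gamma$ shares no saddle connection with $\beta$, an arc of $\gamma$ confined to a thin neighborhood of $\beta$ cannot cross both $\kappa_1$ and $\kappa_2$, hence cannot complete a wind. Your step (2) is, in substance, the entirety of the paper's proof.

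The genuine gap is the one you flag yourself, step (3), and the fix you sketch would not work. The complement of \emph{any} metric neighborhood of $\tilde\beta$ in $A(\beta)$ still contains arcs and closed curves freely homotopic to the core (e.g.\ components of the boundary of the $R$-neighborhood of $\tilde\beta$ for every $R$), so there is no choice of neighborhood whose complement is ``thin'' in the sense of not supporting winding; the no-cylinder hypothesis does not rule out essential curves far from the core, it only rules out \emph{parallel geodesic} ones. What closes the argument is a homotopy-theoretic observation rather than a metric one: if the twist were large, some sub-arc $\omega$ of $\tilde\gamma$ between consecutive essential crossings of $\tilde\tau$ would close up with a sub-arc of $\tilde\tau$ to a curve homotopic to the core. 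Because there is no flat cylinder, $\tilde\beta$ is the \emph{unique} flat geodesic in that free homotopy class, so the geodesic arc $\omega$ is forced to fellow-travel $\tilde\beta$: it must either contain saddle connections of $\beta$ (excluded by hypothesis) or lie in a thin neighborhood of $\tilde\beta$ while passing both endpoints of $\beta_0$, where your step (2) applies. In other words, the homotopy class of a winding arc drags it back into the near-$\tilde\beta$ regime, and the ``far'' case never has to be analyzed metrically; the paper's choice of the perpendicular $\tilde\tau$ through the interior of $\beta_0$ is made precisely so that this reduction is available. Separately, your step (1) assertion that $\tilde\gamma$ makes only boundedly many excursions near $\tilde\beta$ is both unjustified (the number of crossings of a lift with the core is itself controlled by the twist, so assuming it bounded is close to circular) and unnecessary: what must be bounded is the winding per wrap of $\tilde\tau$, not the number of visits to the core.
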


\begin{proof}

There is a saddle connection(s)  $\beta_0\subset\beta$ in some direction $\theta_0$ with the property that there  are segments $\kappa_1,\kappa_2$ leaving the  endpoints of $\beta_0$ in direction $\theta_0$ that makes an angle $\pi$ with $\beta_0$ and such that  $\kappa_i$ is  not itself  a subsegment of  $\beta$.  Since $\gamma$ has no saddle connections in common with $\beta$, if a segment of $\gamma$ crosses $\kappa_1$ in a small neighborhood of $\beta$ then it cannot simultaneously stay in that small neighborhood of $\beta$ and also cross $\kappa_2$.   Thus we can find a perpendicular to $\beta_0$ so that there is no arc of $\gamma$  which together with the perpendicular makes a closed curve homotopic to $\beta$.  This bounds the twist.  

\end{proof}

\begin{cor}
\label{twists}
Let $U$ be the collar.  Then $\ell_\rho(\gamma\cap U) \asymp  
-i(\gamma,\beta)\log \ell(\beta)$.
\end{cor}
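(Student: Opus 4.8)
The plan is to estimate the hyperbolic length of $\gamma \cap U$, where $U$ is the collar around the slit curve $\beta$, by decomposing the contribution into a "crossing" part and a "twisting" part. Recall the standard collar model: $U$ is parametrized as a strip $\{0 < \Re z < 1\}$ with $z \mapsto z+1$ identifying the sides, and with the hyperbolic metric $ds^2 = \epsilon^2 \csc^2(\epsilon y)\,|dz|^2$ where $\epsilon = \ell(\beta)$; the core geodesic sits at height $y = \frac{\pi}{2\epsilon}$. A geodesic arc of $\gamma$ passing through $U$ and crossing the core geodesic runs essentially from one boundary horocycle-like curve to the other. The key geometric fact is that the total width of the collar, i.e. the hyperbolic distance across $U$ from boundary to boundary through the core, is comparable to $-2\log\epsilon = -2\log\ell(\beta)$ up to an additive constant, and more precisely the distance from a boundary curve to the core geodesic is $\log(1/\epsilon) + O(1)$.

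The main steps are as follows. First, I would observe that $\gamma \cap U$ consists of $i(\gamma,\beta)$ arcs (up to the usual $O(1)$ ambiguity from how one counts intersections of a geodesic with a collar), each crossing the core geodesic once — this uses that $\gamma$ realizes its geometric intersection number with $\beta$ and Lemma~\ref{thm:non-crossing curves shallow}/Lemma~\ref{lem:ShallowNoncross} to control how arcs sit relative to the core. Second, for a single such arc, I would bound its length from below by the collar width, giving $\ell_\rho(\text{arc}) \geq 2\log(1/\epsilon) - C$ for a universal $C$; this is immediate since the arc must traverse the full collar. Third, for the upper bound on a single arc, the arc's length is controlled by the collar width plus a term proportional to how much it "wraps" around the core, i.e. its twisting number $\twist^h_\beta(\gamma)$ per crossing; an arc that wraps $n$ times has length roughly $2\log(1/\epsilon) + 2\log(n+1)$ near the thin part. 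Fourth — and this is where Lemma~\ref{lem:bounded twist} enters — since $\beta$ is a slit $\zeta^k$ which is not a flat cylinder and $\gamma = \gamma_i$ shares no saddle connection with $\beta$, we have $\twist^f_\beta(\gamma) = O(1)$; combined with the standard comparison between flat and hyperbolic twisting (the two differ by $O(1 + \text{something bounded})$ when the curve is short, or more carefully, bounded flat twisting plus bounded modulus defect gives bounded hyperbolic twisting), we get $\twist^h_\beta(\gamma) = O(1)$. Hence each arc has length $2\log(1/\ell(\beta)) + O(1)$, and summing over the $i(\gamma,\beta)$ arcs yields
$$
\ell_\rho(\gamma \cap U) \asymp i(\gamma,\beta)\bigl(-\log\ell(\beta) + O(1)\bigr) \asymp -i(\gamma,\beta)\log\ell(\beta),
$$
where the last $\asymp$ absorbs the additive constant since $-\log\ell(\beta) \to \infty$.

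The main obstacle I anticipate is the passage from bounded \emph{flat} twisting (Lemma~\ref{lem:bounded twist}) to bounded \emph{hyperbolic} twisting. The slit $\beta$ need not be the core of a flat cylinder, so the relationship between the flat geometry near $\beta$ and the hyperbolic collar is not the clean "large embedded flat cylinder $\Rightarrow$ comparable hyperbolic collar" dictionary; one must argue that even without a flat cylinder, the flat and hyperbolic twisting parameters agree up to a bounded error (cf. the Minsky product-regions / Rafi-type estimates), using that the flat neighborhood of the slit still has large modulus as $k \to \infty$ by Condition~(C) and the estimates of Section~4. I would likely route this through the observation that a curve with $O(1)$ flat twisting and crossing a region of large modulus $M$ has hyperbolic twisting $O(1)$ as well, since the discrepancy between the two is itself $O(1)$ uniformly. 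The counting of arcs (the $O(1)$ slack in "$i(\gamma,\beta)$ arcs") is routine but should be stated carefully so the final $\asymp$ is honest.
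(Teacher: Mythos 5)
Your overall strategy is the same as the paper's: count the $i(\gamma,\beta)$ crossing arcs, observe each contributes a collar-width term of order $-2\log\ell(\beta)$, and control the extra length from wrapping via the bounded flat twisting of Lemma~\ref{lem:bounded twist}. The one step that fails as written is exactly the one you flagged as the main obstacle: the passage from bounded flat twisting to bounded hyperbolic twisting. The ``observation'' you propose --- that the discrepancy between $\twist^f_\beta(\gamma)$ and $\twist^h_\beta(\gamma)$ is $O(1)$ uniformly --- is false. The standard comparison (Theorem 4.3 of Rafi \cite{R}, which is what the paper invokes) gives $|\twist^f_\beta(\gamma)-\twist^h_\beta(\gamma)|=O(1/\ell(\beta))$, so bounded flat twisting only yields $\twist^h_\beta(\gamma)=O(1/\ell(\beta))$, which blows up as the slit shrinks. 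Your claim $\twist^h_\beta(\gamma)=O(1)$ is therefore unjustified.

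The argument survives anyway, and the repair is precisely what the paper does: the length a strand picks up from wrapping is (hyperbolic twisting) times $\ell(\beta)$, so the total twisting contribution is $O(1/\ell(\beta))\cdot\ell(\beta)\cdot i(\gamma,\beta)=O(i(\gamma,\beta))$, which is negligible compared to the crossing term $-i(\gamma,\beta)\log\ell(\beta)$ since $-\log\ell(\beta)\to\infty$. In other words, you do not need the hyperbolic twisting to be bounded; you only need its contribution to length, after multiplying by $\ell(\beta)$, to be dominated by $\log(1/\ell(\beta))$. With that substitution your proof matches the paper's. (A minor further quibble: your per-arc formula ``$2\log(1/\epsilon)+2\log(n+1)$'' for an arc wrapping $n$ times is not the right asymptotic --- the added length is governed by $\cosh L=\cosh w\cosh(n\ell)$, so it is roughly $n\ell(\beta)$ when $n\ell(\beta)$ is large --- but again this does not affect the conclusion once the correct twisting bound is in place.)
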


\begin{proof}
The flat twisting about $\beta$ is $O(1)$ by the above Lemma~\ref{lem:bounded twist} and then by Theorem 4.3 of Rafi \cite{R} 
the amount of hyperbolic twisting is  $O(\frac{1}{\ell(\beta)})$.  The hyperbolic length due to twisting of $\gamma$ is then the product of the amount of twisting with $i(\gamma,\beta))\ell(\beta)$, or $O(i(\gamma,\beta))$ which is small compared to the quantity in the statement of the Lemma. 
\end{proof}

It is a well-known that the function $\ell:\T(S)\times \MF\to \R$ which assigns, to each hyperbolic surface and measured lamination, the length of the lamination is a continuous function.  Here we extend to the case of noded surfaces.
The added assumption is that lengths of the arcs of curves spent crossing collars about $\beta$ are asymptotically small.  We will consider a special case when the lamination on the limiting surface is a simple closed curve.

\begin{prop}
\label{prop:augmented}

Suppose $X^k$ is a sequence of genus $2$ surfaces  consisting of tori $T_-^k,T_+^k$ glued along a slit $\zeta^k$  with metrics $\rho^k$ converging uniformly on compact sets to $X^\infty$, a pair of rectangular tori $T_-^\infty,T_+^\infty$ with paired punctures (obtained by pinching $\zeta^k$), equipped with hyperbolic metrics $\rho_{\pm}^\infty$.   Let the almost horizontal curves for $T_{\pm}^k$ be denoted by  $\beta_{\pm}$.  As in  Lemma~\ref{thm:non-crossing curves shallow},  fix 
a neighborhood $U$ of the paired punctures associated to the pinching curves $\zeta^k$ with   the property  that if a  closed simple geodesic on $X^k$ enters $U$, then it crosses $\zeta^k$ before exiting $U$.  Let  $\gamma_k$ be a sequence of simple geodesics on $X^k$,    
converging in each $T_{\pm}^\infty$ to $\beta_{\pm}$ in the sense that there is a sequence $r_{\pm}^k$ such that $$r_{\pm}^ki(\gamma_k,\alpha)\to i(\beta_{\pm},\alpha)$$ for any $\alpha\subset T_{\pm}^\infty$ thought of as a curve in $T_{\pm}^k\setminus U$. 
Further suppose 
$$\frac{\ell_{\rho^k}(\gamma_k\cap U)}{\ell_{\rho^k}(\gamma_k\cap (X^k\setminus U))}\to 0.$$
Then for the almost vertical closed curve $\sigma_{\pm}^k$  in $T_{\pm}^k$
$$\ell_{\rho^k}(\gamma_k)\sim \sum_{i=\pm}i(\gamma_k,\sigma_{\pm}^k)\ell_{\rho_{\pm}^\infty}(\beta_{\pm}).$$
\end{prop}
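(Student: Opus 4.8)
The plan is to decompose $\gamma_k$ according to the torus decomposition $X^k = T_-^k \cup T_+^k$ and to measure, on each torus, the contribution to hyperbolic length coming from the part of $\gamma_k$ that lies far from the slit. First I would observe that, by the hypothesis on $U$, every arc of $\gamma_k$ in $T_\pm^k \setminus U$ has endpoints on $\partial U$ and either crosses the slit or stays shallow; combined with the assumed smallness of $\ell_{\rho^k}(\gamma_k \cap U)/\ell_{\rho^k}(\gamma_k \cap (X^k\setminus U))$, this reduces the estimate to controlling $\ell_{\rho^k}(\gamma_k \cap (T_\pm^\infty \setminus U))$ term by term. Each such piece lives in a compact region $T_\pm^\infty \setminus U$ on which $\rho^k \to \rho_\pm^\infty$ uniformly (this is exactly the convergence set up in the proof of Lemma~\ref{thm:non-crossing curves shallow}), so hyperbolic lengths of arcs there are, up to $(1+o(1))$ factors, computed in the fixed metric $\rho_\pm^\infty$.

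Next I would use the hypothesis that $r_\pm^k i(\gamma_k,\alpha) \to i(\beta_\pm,\alpha)$ for all $\alpha \subset T_\pm^\infty$ to identify the projective limit of $\gamma_k \cap T_\pm^k$ (as a curve system in the closed-up torus) with $\beta_\pm$. On the compact surface $T_\pm^\infty$, the length function $\ell_{\rho_\pm^\infty}(\cdot)$ extends continuously from weighted simple closed curves to measured foliations/laminations, so $r_\pm^k \ell_{\rho_\pm^\infty}(\gamma_k \cap (T_\pm^\infty\setminus U)) \to \ell_{\rho_\pm^\infty}(\beta_\pm)$ once I check that the part of $\gamma_k \cap T_\pm^k$ outside $U$ captures all of its mass in the limit — i.e. the part inside $U$ carries asymptotically no intersection number with curves of $T_\pm^\infty$ and asymptotically no length, which is precisely the two standing hypotheses. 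This pins down $\ell_{\rho^k}(\gamma_k \cap T_\pm^k) \sim r_\pm^{k\,-1}\,\ell_{\rho_\pm^\infty}(\beta_\pm)$, or rather $\ell_{\rho^k}(\gamma_k\cap T_\pm^k) \sim c_\pm^k \ell_{\rho_\pm^\infty}(\beta_\pm)$ with the scaling factor $c_\pm^k$ still to be identified geometrically.

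To identify $c_\pm^k$ as $i(\gamma_k,\sigma_\pm^k)$, I would use that $\sigma_\pm^k$ is the almost-vertical curve in $T_\pm^k$ and so, in the compactified limit $T_\pm^\infty$, $\sigma_\pm^k$ converges to a curve transverse to $\beta_\pm$ with $i(\sigma_\pm^k,\beta_\pm) \to$ a definite nonzero constant (normalizing $\sigma_\pm^k$ to represent a fixed homotopy class in the limit torus, namely the one dual to $\beta_\pm$). Then $r_\pm^k\, i(\gamma_k,\sigma_\pm^k) \to i(\beta_\pm,\sigma_\pm^k|_{\infty})$, which is that same constant; after absorbing it into the asymptotics (or normalizing the generators of $H_1$ of each limiting torus so $i(\beta_\pm,\sigma_\pm^k)=1$ in the limit), one gets $c_\pm^k \asymp i(\gamma_k,\sigma_\pm^k)$ and in fact $\ell_{\rho^k}(\gamma_k\cap T_\pm^k) \sim i(\gamma_k,\sigma_\pm^k)\,\ell_{\rho_\pm^\infty}(\beta_\pm)$. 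Summing over $i=\pm$ and adding back the negligible $\gamma_k \cap U$ contribution via Corollary~\ref{twists} (the twisting contribution is $O(i(\gamma_k,\zeta^k))$, small compared to $-i(\gamma_k,\zeta^k)\log\ell(\zeta^k)$, which in turn is small compared to the torus contributions by hypothesis) yields the claimed asymptotic.

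The main obstacle I anticipate is the bookkeeping at the interface: making rigorous that the arcs of $\gamma_k$ crossing $U$, together with the incident boundary behavior on $\partial U$, do not distort the identification of $\gamma_k \cap T_\pm^k$ with a multiple of $\beta_\pm$ — one must rule out that $\gamma_k$ wraps around the collar in a way that inflates intersection numbers with limit-torus curves without contributing proportional length, or vice versa. This is handled by the non-crossing/shallow-penetration Lemma~\ref{thm:non-crossing curves shallow} together with the flat bounded-twisting Lemma~\ref{lem:bounded twist} and Corollary~\ref{twists}, but assembling these into a clean "$\gamma_k$ restricted to each torus is, up to controlled error, a weighted simple closed curve converging to $\beta_\pm$" statement is the delicate part, and the continuity of the length function on the augmented/noded surface (the case needed here, where the limit lamination is a simple closed curve) must be invoked in the form stated just before the Proposition.
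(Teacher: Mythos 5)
Your proposal reaches the right conclusion but by a genuinely different route from the paper. The paper's proof is a direct computation: it takes each component $\Gamma_i$ of $\gamma_k\cap(X^k\setminus U)$ lying in one torus, notes that simplicity forces all such components to have a common slope $(m,n)$ with $n/m\to 0$ (this is where the first hypothesis enters), writes the corresponding element of the free group as a word $B^{b_1}SB^{b_2}S\cdots B^{b_l}S^s$ with nearly equal exponents $b_j\approx m/n$, and then uses the hyperbolic law of cosines to show that the translation length of such a word is asymptotically additive, giving $\ell_{\rho^k}(\Gamma_i)\sim i(\Gamma_i,\sigma)\,\ell_{\rho^k}(\beta)$ directly; summing over components and discarding the collar contribution (the second hypothesis) finishes the argument. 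You instead argue softly: uniform convergence of $\rho^k$ to $\rho_\pm^\infty$ on compacta, convergence $r_\pm^k\gamma_k\to\beta_\pm$ in $\MF$, continuity of the length function on the fixed limit torus, and then identification of the normalizing constant $1/r_\pm^k$ with $i(\gamma_k,\sigma_\pm^k)$ via $i(\beta_\pm,\sigma_\pm)=1$. The one step you flag as "delicate" is in fact the crux, and it is exactly what the paper's word computation supplies: $\gamma_k\cap(T_\pm^k\setminus U)$ is a union of geodesic \emph{arcs} rel $\partial U$, not a closed multicurve, so the standard continuity of $\ell\colon\T\times\MF\to\R$ does not apply off the shelf; one must show that closing up the arcs along $\partial U$ changes total length only by $O(\text{number of components})$, which is $o(\ell_{\rho^k}(\gamma_k\cap(X^k\setminus U)))$ since each component of $\gamma_k\cap U$ has length bounded below by the collar width. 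Be careful also not to cite the paragraph preceding the Proposition as a prior result: that paragraph is only a heuristic announcement of the Proposition itself, so invoking "continuity of length on the noded surface" there would be circular. With the arc-closing estimate made explicit, your argument is a valid and arguably more conceptual alternative; the paper's computation buys an elementary, self-contained estimate that avoids any appeal to the theory of measured laminations.
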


\begin{proof}
Consider a connected component $\Gamma_i = \Gamma_{-,i}^k = \gamma_k \cap (X^k \setminus U)$ of $\gamma_k$ on the $T_-^k$ toral component of 
$X^k \setminus U$. A similar construction will work for the components of $\gamma_k$ on $T_+^k$, and since it will not matter which component we choose, we will denote the component of interest by just the symbol $\Gamma_i$, leaving it understood that this is a subarc of $\gamma_k$ on $T_-^k$.

The arc $\Gamma_i$ has endpoints on $T_-^k \cap \partial U$, and so is homotopic, rel $\partial U$, to a curve of slope $(m_i, n_i)$, written here in terms of the basis $\beta_-^k$ and $\sigma_-^k$. Indeed, because the curve $\gamma_k$ is a simple closed curve, if $\Gamma_j^k$ is a different component of 
$\gamma_k \cap (X^k \setminus U)$ of $\gamma_k$, then $(m_j, n_j) = (m_i, n_i)$, so we might as well denote the slopes of both $\Gamma_i$ and $\Gamma_j$ by $(m,n) = (m_-, n_-)$.

Our first displayed hypothesis implies that $\frac{n_-}{m_-} \to 0$ (and also that $\frac{n_+}{m_+} \to 0$), so in particular $m_{\pm} >> n_{\pm}$. Now, we may lift the hyperbolic one-holed torus $T_-^k$ to $\HH^2$, with the holonomic image of the elements $[\beta]$ and $[\sigma]$ being given by hyperbolic isometries $B$ and $S$, respectively.  A word in this (free) fundamental group is given by a word in $B$ and $S$, but our conclusion that $m >> n$ implies that the element $[\Gamma_i]$ is represented by a word of the form 
$$B^{b_1}SB^{b_2}SB^{b_3}S....B^{b_l}S^s,$$ 
where $|b_i -b_j| \le 1$, $|b_i - [\frac{m_{\pm}}{n_{\pm}}]| \le 1$ and the final exponent $s$ is either $0$ or $1$: the point of this representation is that the arc $\Gamma_i$ may be thought of as first traversing $\beta$ many times, roughly $[\frac{m_{\pm}}{n_{\pm}}]$ times, up to an error of $\pm 1$, before adding a single iteration of $\sigma$, and then starting over.

Let $p$ be the initial point of $\Gamma$ (so that $p \in \partial U \cap T_-^k$). Then the terminal point of $\Gamma$ is within a bounded distance (say twice the diameter of $T_-^{\infty}$) of $B^{b_1}SB^{b_2}SB^{b_3}S....B^{b_l}S^s(p)$.  We set about to compute this distance.

We begin by noticing that, for $b_j$ large, the distance $d_{\HH^2}(q, B^{b_j}S(q))$ is asymptotic to $b_j\ell_{\rho^k}(\beta)$: this is because the geodesic connecting the points $q$ and $B^{b_j}S(q)$ is within a universally bounded distance of a broken geodesic consisting of following a lift $\tilde{\beta}$ for $b_j$ iterations of its generator, followed by a single lift $\tilde{\sigma}$ of $\sigma$, so that the hyperbolic law of cosines yields the asymptotic statement that the ratio goes to $1$. Then we note that $B^{b_1}SB^{b_2}SB^{b_3}S....B^{b_l}S^s$ is nearly $l$ iterations of $B^{b_j}S$ up to at most $l$ occasional insertions or deletions of a single $B$ or $S$.  Thus, the product of these elements is asymptotically additive for length, so
\begin{align*}
d_{\HH^2}(p, B^{b_1}SB^{b_2}SB^{b_3}S....B^{b_l}S^s(p)) &= \sum_j b_j\ell_{\rho^k}(\beta) + O(l) \\
&\sim i(\Gamma, \sigma)\ell_{\rho^k}(\beta).
\end{align*}
Of course, the absolutely bounded constants are unimportant in the limit, so we find that 
$$\ell_{\rho^k}(\Gamma) \sim d_{\HH^2}(p, B^{b_1}SB^{b_2}SB^{b_3}S....B^{b_l}S^s(p)) \sim i(\Gamma, \sigma)\ell_{\rho^k}(\beta)$$.

Finally we add up the lengths of the components of $\gamma_k$ to find that 
\begin{align*}
\ell_{\rho^k}(\gamma_k) &= \ell_{\rho^k}(\gamma \cap (X^k \setminus U)) + \ell_{\rho^k}(\gamma_k \cap U)\\
&\sim \ell_{\rho^k}(\gamma_k \cap U) \text{ because of the second displayed hypothesis}\\
&\sim \sum_{i, \pm} i(\Gamma_i, \sigma_{\pm}^k) \ell_{\rho^k}(\beta_{\pm}^k)\\
&=\sum_{\pm} i(\gamma_k, \sigma_{\pm}^{k}) \ell_{\rho^k}(\beta_{\pm}^{k})\\
&\sim \sum_{\pm} i(\gamma_k, \sigma_{\pm}^{k}) \ell_{\rho_{\pm}^{\infty}}(\beta_{\pm}),
\end{align*}
as desired.

\end{proof}

Much of our effort will be spent estimating hyperbolic lengths, and in many cases, Proposition~\ref{prop:augmented} will be sufficient.  We will have some cases however, in which the limiting torus will also have a short curve, and for those cases, we will need a refinement of Proposition~\ref{prop:augmented}.

\begin{lem}
\label{lem:asymptotic} Let $(T, \rho)$ be the complement of a cusp neighborhood of a once-punctured torus.  We assume that $T$ has a short curve $\alpha$.  Let $\tau$ be a curve which meets $\alpha$ and has least length among all such intersecting curves. Suppose that $\gamma$ is a (long) simple geodesic arc with endpoints on $\partial T$. Then
\begin{equation}
\label{eq:asymptotic}
\ell_{\rho}(\gamma) \sim i(\gamma, \alpha)(-2\log \ell_{\rho}(\alpha)) + i(\gamma, \tau)\ell_{\rho}(\alpha)
\end{equation}
\end{lem}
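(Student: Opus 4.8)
\textbf{Proof proposal for Lemma~\ref{lem:asymptotic}.}

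The plan is to decompose the geodesic arc $\gamma$ according to how it interacts with the thin part of $(T,\rho)$ determined by the short curve $\alpha$, and then apply the collar-lemma estimates together with a hyperbolic-trigonometry argument of the same flavor as in the proof of Proposition~\ref{prop:augmented}. First I would fix the standard collar $\calC$ about $\alpha$, of half-width $w \asymp -\log \ell_\rho(\alpha)$, and parametrize it as in Lemma~\ref{lem:ShallowNoncross}. Each time $\gamma$ crosses $\alpha$ it must traverse the full collar, contributing hyperbolic length asymptotic to $2w \sim -2\log\ell_\rho(\alpha)$ per crossing by the collar geometry; since $\gamma$ crosses $\alpha$ exactly $i(\gamma,\alpha)$ times (up to the bounded twisting correction, which is lower order here because $\gamma$ is a geodesic arc), the total contribution from collar traversals is $\sim i(\gamma,\alpha)(-2\log\ell_\rho(\alpha))$, which is the first term.

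Next I would analyze the part of $\gamma$ lying in the thick part $T \setminus \calC$. Here the key point is that $T \setminus \calC$ is (coarsely) a pair of pants-like piece, or rather, since $T$ is a once-punctured torus with a short curve $\alpha$, cutting along $\alpha$ yields a piece quasi-isometric to a fixed bounded geometry surface with boundary. The relevant combinatorial data is how $\gamma$ wraps around $\tau$: lifting to $\HH^2$ as in Proposition~\ref{prop:augmented}, the arc $\gamma$ is represented by a word of the form $A^{a_1} T_0 A^{a_2} T_0 \cdots$ in the holonomy generators $A$ (for $\alpha$) and $T_0$ (for $\tau$), where the number of $T_0$-syllables is $\asymp i(\gamma,\tau)$ and the $A$-exponents record crossings of $\alpha$. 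The contribution of the thick-part segments that run alongside $\tau$ is asymptotically $i(\gamma,\tau)\ell_\rho(\tau)$; but one must replace $\ell_\rho(\tau)$ by $\ell_\rho(\alpha)$. This is the heart of the matter: in a once-punctured torus with short $\alpha$, the shortest curve $\tau$ meeting $\alpha$ has $\ell_\rho(\tau) \asymp \ell_\rho(\alpha)$ up to multiplicative constants — indeed, by the collar lemma and area considerations the dual curve length and the short curve length are comparable — and in fact for the purposes of the $\sim$ statement one needs the sharper fact that the orthogeodesic-type segment of $\tau$ that sits in the thick part, measured appropriately, contributes $\ell_\rho(\alpha)$ asymptotically. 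I would invoke the standard collar/Fenchel–Nielsen coordinate estimates (e.g. the formulas relating $\ell_\rho(\tau)$ to $\ell_\rho(\alpha)$ and the twist) to pin this down.

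Then I would assemble the pieces: writing $\gamma = (\gamma \cap \calC) \cup (\gamma \cap (T\setminus \calC))$, the collar part contributes $\sim i(\gamma,\alpha)(-2\log\ell_\rho(\alpha))$, the thick part contributes $\sim i(\gamma,\tau)\ell_\rho(\alpha)$, and the finitely many ``transition'' segments near $\partial\calC$ and $\partial T$ contribute $O(i(\gamma,\alpha) + i(\gamma,\tau))$, which is lower order compared to the first term (since $-\log\ell_\rho(\alpha)\to\infty$) and absorbed into the $\sim$. Summing the contributions of all components of $\gamma \cap (T\setminus\calC)$ and using that $\gamma$ is simple (so all components have the same combinatorial slope, exactly as in the proof of Proposition~\ref{prop:augmented}) gives \eqref{eq:asymptotic}.

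\textbf{Main obstacle.} The delicate point is the second term: justifying that the thick-part contribution is asymptotic to $i(\gamma,\tau)\ell_\rho(\alpha)$ rather than $i(\gamma,\tau)\ell_\rho(\tau)$ or $i(\gamma,\tau)\cdot O(1)$. One has to be careful about \emph{which} geometric quantity of the short surface the windings around $\tau$ actually ``see'': the piece of $\tau$ that is not inside the collar has length $\asymp 1$, but each wrap of $\gamma$ around $\tau$ while staying outside the collar does not cost a full $\ell_\rho(\tau)$; rather it costs roughly the translation length of the holonomy of $\tau$ projected transversally, which scales like $\ell_\rho(\alpha)$. Making this precise requires a careful hyperbolic law-of-cosines computation in the universal cover, keeping track of the interplay between the $A$-power (number of $\alpha$-crossings between consecutive $\tau$-wraps) and the displacement, and checking that the cross-terms are genuinely $o$ of the main terms. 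This is exactly the kind of estimate that the authors presumably carry out in detail; at the level of a sketch, I would reduce it to the standard two-generator hyperbolic estimate and cite or reprove the needed trigonometric asymptotic.
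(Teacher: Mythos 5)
Your overall decomposition (collar about $\alpha$ plus thick part) matches the paper's, and your account of the first term is right: each of the $i(\gamma,\alpha)$ collar crossings contributes the collar depth $\sim -2\log\ell_\rho(\alpha)$. But there is a genuine gap in your treatment of the second term, and the heuristics you offer for it are incorrect. First, the claim that the shortest transversal satisfies $\ell_\rho(\tau)\asymp\ell_\rho(\alpha)$ is false: by the collar lemma any curve crossing a geodesic of length $\epsilon$ has length at least about $2\log(1/\epsilon)$, so $\ell_\rho(\tau)\to\infty$ as $\ell_\rho(\alpha)\to 0$; there is no ``area consideration'' that makes the two comparable. Second, you locate the $i(\gamma,\tau)\ell_\rho(\alpha)$ contribution in the thick part $T\setminus U$, but the thick part is a three-holed sphere of uniformly bounded diameter whose arcs of $\gamma$ fall into finitely many topological types of bounded length; it contributes only $O(i(\gamma,\alpha))$, which is absorbed into the first term. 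The winding that $i(\gamma,\tau)$ detects takes place \emph{inside} the collar, and your word-combinatorics is inverted accordingly: the number of $\tau$-syllables is $\asymp i(\gamma,\alpha)$ and the $A$-exponents record crossings of $\tau$, not of $\alpha$.

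The mechanism the paper uses, and which your sketch defers to an unexecuted ``law-of-cosines computation,'' is this: a component $\Gamma$ of $\gamma\cap U$ is, after a bounded-length adjustment, the hypotenuse of a hyperbolic right triangle whose legs are the orthogonal diameter $\tau_0$ of the collar (length $\sim -2\log\ell_\rho(\alpha)$) and a sub-arc of the core geodesic $\alpha$ of length $i(\Gamma,\tau_0)\,\ell_\rho(\alpha)$; the hyperbolic Pythagorean theorem $\cosh c=\cosh a\cosh b$ then gives $\ell_\rho(\Gamma)\sim -2\log\ell_\rho(\alpha)+i(\Gamma,\tau)\ell_\rho(\alpha)$, and summing over components yields \eqref{eq:asymptotic}. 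In other words each collar crossing costs \emph{both} terms at once --- the depth plus the sideways travel along the core --- rather than the collar supplying one term and the thick part the other. Since this single estimate is the entire content of the lemma and your proposal neither carries it out nor sets it up in the right place, the proof as written does not go through.
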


\begin{rem} Of course, for the applications we have in mind, we imagine $\gamma$ as being a component of the intersection of a closed geodesic with $T$.
\end{rem}

\begin{proof}
Let $U$ be a collar neighborhood of $\alpha$ as we have previously constructed in Lemma~\ref{lem:ShallowNoncross}, so that any component of $\gamma \cap U$ also meets $\alpha$.

Then the compact set $T \setminus U$ is a three-holed sphere $S$ of diameter bounded independently of the length of $\alpha$.  In particular, because $\partial\gamma$ lies on a single component of $\partial   S$, the set $\gamma \cap S$ consists exactly of two arcs which meet $\partial T \subset \partial S$, and multiple arcs, say N such arcs, which connect a component of $\partial U$ to another component of $\partial U$. Of course, by the construction of the neighborhood $U$, once $\gamma$ meets $\partial U$, it then continues across $U$, intersecting $\alpha$ enroute; thus we see that 
\begin{equation} \notag
N= i(\gamma, \alpha) -1.
\end{equation}

We classify the topological type of those arcs that connect $\partial U$ to $\partial U$.  

Consider first an arc $A$ in $S$ that connects a component $\partial_0 U$ of $\partial U$ to itself.  Then, as a three-holed sphere is the double of a hexagon, glued along their odd sides, and the simplicity of $A$ precludes it having two segments in the same hexagon which pass between the same two odd edges, we see that there are only a finite number of possible homotopy classes of arcs $A$, components of $\gamma \cap S$ of the simple curve $\gamma$.  Thus taking a maximum over geodesic representatives of these homotopy classes over the compact set of possible endpoints and over the compact moduli spaces of these particular three-holed spheres of bounded geometry yields an upper bound on the possible lengths of arcs $A$ in $S$ that connect a component $\partial_0 U$ of $\partial U$ to itself. 

Next we consider the arcs which connect a boundary component $\partial_0 U$ of $\partial U$ to the other boundary component, say $\partial_1 U$, of $\partial U$.    We can imagine the three-holed sphere $S$ here as being a one-holed cylinder, with the cylindrical part bounded 
by $\partial U$.  From this point of view, there is a shortest arc $B$ across the cylinder, and any component of $\gamma \cap S$ that connects the opposite components of $\partial U$ in $S$ just twists around the cylinder, with its twist reflected in intersections with $B$.  

We want to imagine these twists as occurring within $U$ so, informally, we expand $U$ to include them. To see that this is possible with adding only a finite diameter neighborhood to $U$, consider a hyperbolic geometric model for the three-holed sphere $S$ as follows.  Begin with a vertical strip, say $\{|x| < 1\}$, in the upper half plane model for $\HH^2$. Intersect that strip with a hyperbolic halfplane bounded by a geodesic, say $\delta$, whose endpoints are at $\pm d \in \R$, where $d<1$ is a number near $1$. Connect the geodesic $\delta$ to the edges of the strip by a pair of geodesics -- those geodesics, say $\alpha_-$ and $\alpha_+$, will meet the real line in pairs of points that are either both positive or both negative.  Cutting off this domain in the strip by non-geodesic constant curvature curves with the same endpoints as $\alpha_-$ and $\alpha_+$ but which (as non-geodesics) intrude further into the strip and also by a horocycle $\{y = d^*\}$ yields a domain shaped like a hexagon. This hexagon doubles to give a model for $S$ (incorporating its symmetries).  Then note that we can choose those constant geodesic curves both symmetrically and also so large that they touch each other along the $y$-axis, and we can then also bring the horocycle down to where those curves meet the edge of the strip. (Note that the constant geodesic curves maximize their $y$-value on the edge of the strip, as the geodesic -- and hence each of these curves -- meets this edge orthogonally.)This new domain gives a model for $S$ where any arc which connects $\partial_0 U$ of $\partial U$ to the other boundary component $\partial_1 U$ of $\partial U$ has uniformly bounded length -- this is because the model for $S$ is now the double of the remaining triangle in the strip.

This discussion above then serves to bound the length of the components of $\gamma \cap S$ and to estimate the number of such components.





It remains to consider a component, say $\Gamma$, of $\gamma \cap U$.  We can imagine that $\Gamma$ meets $\tau \cap U$ in $p$ crossings. (If the segment $A$ of $\gamma \cap S$ that preceded $\Gamma$ had $p'$ inessential crossings of $\tau \cap S$ with $\gamma \cap S$, we imagine -- proceeding as we did in the paragraph which constructed a hyperbolic geometric model for $S$ -- extending $U$ by a bounded amount (independent of the length of the short curve $\ell_{\rho}(\alpha)$ or the diameter of $U$) to capture all of those $p'$ inessential crossings of $A$ with $\tau$ (rel $\partial U$): this will simplify the accounting while not affecting the terms in the estimate.) 

We estimate the length of $\Gamma$. It will be convenient to estimate the length of a curve that is a slightly perturbed version of $\Gamma$, but whose length is within a fixed constant of that of $\Gamma$. To begin, in the standard collar $U$, consider a geodesic diameter $\tau_0$ of $U$ which meets the core curve $\alpha$ orthogonally; we may choose $\tau_0$ so that it shares an endpoint with $\Gamma$. Then connect the other endpoint of $\Gamma$ to the other endpoint of $\tau_0$ by an arc in the boundary $\partial U$ of $U$ of at most half the length of $\partial U$; we call this new arc $\Gamma'$.  Let $\Gamma_0$ be the geodesic arc in $U$ which is homotopic to $\Gamma'$: it is easy hyperbolic geometry that $\Gamma$ and $\Gamma_0$ have lengths differing by at most a fixed constant. In addition, since $\tau_0$ intersects $\tau$ at most a bounded number of times, and $\Gamma_0$ is homotopic to $\Gamma$ up to a half turn of $\partial U$, we see that 

\begin{equation} \label{eqn: intersection bound}
|i(\Gamma, \tau) - i(\Gamma_0, \tau_0)| = O(1)
\end{equation}

On the other hand, the arc $\Gamma_0$ is preserved by the reflection in $U$ across the core geodesic $\alpha$, and so the length of $\Gamma_0$ is given as twice the length of its restriction to one component, say $U_-$, of $U \setminus\alpha$.  Lifting to the universal cover $\tilde{U_-}$ (and using the natural notation for the arcs), we see that $\tilde{\Gamma_0}$ is the hypotenuse of a hyperbolic right triangle with legs $\tilde{\tau_0}$ and a portion, say $\tilde{\alpha'}$, of  $\tilde{\alpha}$ of length $i(\Gamma_0, \tau_0) \ell_{\rho}(\alpha))$.  

It is now easy to estimate the length of $\tilde{\Gamma_0}$, since the hyperbolic Pythagorean theorem tells us that 
\begin{equation} \label{eqn: Pyth}
\cosh \ell_{\rho} (\tilde{\Gamma_0}) = \cosh  \ell_{\rho} (\tilde{\tau_0}) \cdot  \cosh  \ell_{\rho} (\tilde{\alpha'}).
\end{equation}
If all three lengths are long, we conclude from \eqref{eqn: Pyth} that
\begin{equation*}
\ell_{\rho} (\tilde{\Gamma_0}) \sim\ell_{\rho} (\tilde{\tau_0}) + \ell_{\rho} (\tilde{\alpha'}),
\end{equation*}
and so by the definition of $\tilde{\alpha'}$ and the bounds \eqref{eqn: intersection bound} on intersection numbers, we find that
\begin{align*}
\ell_{\rho}(\Gamma) &\sim \ell_{\rho}(\tau_0) + i(\Gamma, \tau) \ell_{\rho}(\alpha)\\
&\sim -2\log \ell_{\rho}(\alpha) + i(\Gamma, \tau) \ell_{\rho}(\alpha).
\end{align*}



 Putting these estimates together, we see that, summing over components $A$ of $\gamma \cap S$ and components $\Gamma_i$ of $\gamma \cap U$, we have 
\begin{align*}
\ell_{\rho}(\gamma) &= \sum_A \ell_{\rho}(A)  + \sum_{\Gamma_i} \ell_{\rho}(\Gamma_i) \\
&\sim N + \sum_{\Gamma_i} \{i(\Gamma_i, \alpha)(-2\log \ell_{\rho}(\alpha)) + i(\Gamma_i, \tau)\ell_{\rho}(\alpha)\}\\
&\sim[i(\gamma, \alpha) + 1] +  i(\gamma, \alpha)(-2\log \ell_{\rho}(\alpha)) + (\sum_{\Gamma_i}i(\Gamma_i, \tau))\ell_{\rho}(\alpha).
\end{align*}
 We have already noted that $| i(\gamma, \tau) - \sum_{\Gamma_i} i(\Gamma_i, \tau)| \le N$, with a small ambiguity of whether a crossing of $\gamma$ between one of the components of of $\partial U$ and itself happens in $S$ or not. Thus, 
\begin{equation*}
|\sum_{\Gamma_i} i(\Gamma_i, \tau) \ell_{\rho}(\alpha) - i(\gamma, \tau)\ell_{\rho}(\alpha)| \le N\ell_{\rho}(\alpha) \sim i(\gamma, \alpha)\ell_{\rho}(\alpha).
\end{equation*}
As we are assuming that $\ell_{\rho}(\alpha)$ is small, especially in comparison to $-2\log \ell_{\rho}(\alpha)$, we see that 
\begin{align*}
\ell_{\rho}(\gamma) &\sim [i(\gamma, \alpha) + 1] +  i(\gamma, \alpha)(-2\log \ell_{\rho}(\alpha)) + i(\gamma, \tau)\ell_{\rho}(\alpha)\\
&\sim i(\gamma, \alpha)(-2\log \ell_{\rho}(\alpha)) + i(\gamma, \tau))\ell_{\rho}(\alpha),
\end{align*}
as desired.

\end{proof}

Having shown that some of the geometric limits of the flow will be noded surfaces with rectangular tori components, we pause to record some of the hyperbolic geometry of these surfaces.

\begin{prop}
\label{thm:increasing}
In the once punctured rectangular $(a,\frac 1 a)$-torus $T_a$, the hyperbolic length of the geodesic in the horizontal homotopy class is increasing with $a$.
\end{prop}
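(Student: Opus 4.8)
\emph{Proof plan.} The idea is to cut $T_a$ along its vertical geodesic, recognize the resulting piece as a rigid cusped pair of pants, express the length of the horizontal geodesic $\gamma_h$ as an explicit decreasing function of the length of the vertical geodesic $\gamma_v$, and then show that the latter length is strictly decreasing in $a$. Write $T_a=(\CC/(a\ZZ\oplus (i/a)\ZZ))\setminus\{0\}$ with its complete hyperbolic metric $\rho$. Since the lattice is rectangular, $z\mapsto\bar z$ and $z\mapsto -\bar z$ descend to orientation-reversing isometries $\sigma_h,\sigma_v$ of $(T_a,\rho)$. The component $\{\operatorname{Im}z=\tfrac1{2a}\}$ of $\operatorname{Fix}(\sigma_h)$ misses the puncture, hence is the closed $\rho$-geodesic in the horizontal homotopy class $\gamma_h$, and likewise $\{\operatorname{Re}z=\tfrac a2\}$ is the geodesic $\gamma_v$. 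These two geodesics cross exactly once, and there they are orthogonal, because $\rho$ is conformal to the flat metric in which the two lines meet at a right angle.

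First I would cut $T_a$ along the non-separating geodesic $\gamma_v$. The result is a hyperbolic pair of pants $P$ with two geodesic boundary curves of length $L:=\ell_\rho(\gamma_v)$ and a cusp; such a $P$ is rigid, determined by $L$. Since $\gamma_h$ meets $\gamma_v$ once and orthogonally, it becomes a single geodesic arc of $P$ joining the two boundary curves and perpendicular to each, i.e.\ their common perpendicular — the ``seam'' — so $\ell_\rho(\gamma_h)$ equals the seam length. The right-angled hexagon trigonometry of a cusped pair of pants gives
$$\ell_\rho(\gamma_h)=D(L),\qquad \sinh\!\big(\tfrac{D(L)}{2}\big)\sinh\!\big(\tfrac L2\big)=1,$$
equivalently $\cosh D(L)=1+2/\sinh^{2}(L/2)$; in particular $D$ is a strictly decreasing (in fact involutive) bijection of $(0,\infty)$ onto itself. (Interchanging the roles of $\gamma_h$ and $\gamma_v$ gives $\ell_\rho(\gamma_v)=D(\ell_\rho(\gamma_h))$ as well.)

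It then remains to prove that $L(a)=\ell_\rho(\gamma_v)$ is strictly decreasing in $a$. It is continuous. As $a\to 0$ one has $i/a^{2}\to i\infty$: the flat cylinder obtained by cutting $T_a$ along $\gamma_h$ has modulus $a^{-2}\to\infty$, so $\ell_\rho(\gamma_h)\to 0$, and since $\gamma_v$ must cross the collar about $\gamma_h$ the Collar Lemma forces $L(a)\to\infty$; applying the same reasoning to $\gamma_v$ gives $L(a)\to 0$ as $a\to\infty$. Finally $L$ is injective: $T_a$ is obtained from $P$ by identifying the two $\gamma_v$-boundaries along the twist-zero (``seam to seam'') gluing — this is forced because $\gamma_h\perp\gamma_v$ makes $\gamma_h$ the seam — so $T_a$ is determined up to isometry by $L(a)$; hence $L(a)=L(a')$ forces $i/a^{2}$ and $i/a'^{2}$ to lie in one $\mathrm{PSL}(2,\ZZ)$-orbit, i.e.\ $a'\in\{a,1/a\}$, and the case $a'=1/a$ would give $\ell_\rho(\gamma_v)=\ell_\rho(\gamma_h)$ on $T_a$ (via the isometry $T_a\cong T_{1/a}$ exchanging the two classes), whence $\ell_\rho(\gamma_v)=D(\ell_\rho(\gamma_v))$, so $\ell_\rho(\gamma_v)$ is the fixed point $2\log(1+\sqrt 2)$ of $D$ and $T_a$ is the square torus, i.e.\ $a=1=a'$. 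A continuous injection of $(0,\infty)$ into itself is monotone, and the two limits make it decreasing; therefore $\ell_\rho(\gamma_h)=D(L(a))$ is a composition of two strictly decreasing functions of $a$, hence strictly increasing.

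The step needing the most care is precisely this monotonicity — really the injectivity — of $a\mapsto\ell_\rho(\gamma_v)$; it is what links the continuous family of rectangular tori to the pair-of-pants picture, and it rests on the orthogonality of the two distinguished geodesics, which pins the rectangular gluing down to the twist-zero one. Everything else (identifying $\gamma_h,\gamma_v$ as geodesics, the rigidity of the cusped pants, and the seam-length identity) is routine hyperbolic trigonometry. Alternatively one could bypass the pants picture and compute $\ell_\rho(\gamma_h)$ directly from the theta-function uniformization of $T_a$ and differentiate, but the geometric route above is cleaner.
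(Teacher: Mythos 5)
Your proof is correct, but it takes a genuinely different route from the one in the paper. The paper argues analytically: it considers the harmonic map $F\from T_a\to T_b$, writes its Hopf differential as $e^{i\theta}dz^2$, uses the reflection symmetries of the rectangular tori to force $\theta\in\{0,\pi\}$, and then invokes Gardiner's formula for the derivative of extremal length to rule out $\theta=\pi$, so that the first-variation formula $\int_\gamma \mathrm{Re}(\Phi)/g_0$ is positive. You instead argue synthetically: cutting along the vertical geodesic produces a rigid cusped pair of pants with two boundary circles of length $L=\ell_\rho(\gamma_v)$; the orthogonality $\gamma_h\perp\gamma_v$ identifies $\gamma_h$ with the seam and simultaneously pins the regluing down to the foot-to-foot (twist-zero) one; and the hexagon identity $\sinh(\ell_\rho(\gamma_h)/2)\sinh(L/2)=1$ reduces everything to the strict monotonicity of $a\mapsto L(a)$, obtained from continuity, injectivity and the two boundary limits. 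I checked the details: the hexagon formula is right, the image of $\gamma_h$ in the pants is indeed the unique simple orthogeodesic between the two boundary circles, and your handling of the residual case $a'=1/a$ (forcing $L$ to equal the fixed point of $D$, whence $T_a\cong T_1$ by the rigidity of the foot-to-foot gluing) does close the injectivity argument, though it is compressed; it would be cleaner to work in Teichm\"uller space with Fenchel--Nielsen coordinates based on $\gamma_v$, where ``twist coordinate independent of $a$'' together with the injectivity of $a\mapsto i/a^2$ gives the injectivity of $L$ at once and avoids the $\mathrm{PSL}(2,\ZZ)$ detour. One genuinely minor slip: the embedded annulus of modulus $a^{-2}$ you want for the extremal-length bound is the complement of the horizontal circle \emph{through the puncture}, not the complement of $\gamma_h$ itself, whose complementary cylinder contains the puncture. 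As for what each approach buys: the paper's harmonic-map argument is soft and generalizes beyond the torus case, while yours is elementary and quantitative --- the exact relation $\sinh(\ell(\gamma_h)/2)\sinh(\ell(\gamma_v)/2)=1$ essentially re-proves the asymptotics $\ell(\gamma_h)=4\log a+O(1)$ of Proposition~\ref{lem:largecylinder} once one knows $\ell(\gamma_v)\asymp a^{-2}$.
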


\begin{proof}
Given $a,b$ without loss of generality assume  $b>a$.   We use the harmonic map $F:T_a\to T_b$ with Hopf differential $\Phi$, a quadratic differential. Since $\Phi$ is defined on the punctured torus,  we may write $\Phi=e^{i\theta} dz^2$  for some $\theta$. Let $\gamma$ be the hyperbolic geodesic crossing horizontally.  One easily computes (see e.g.\cite{Yamada}, \cite{WHessian}) that the 
derivative of length is given by $$\int_\gamma Re(\Phi)/g_0$$ where $g_0$ is the hyperbolic metric.  To show this derivative is positive  we wish to prove $\theta=0$. 
Consider the reflections $r_a,r_b$ about the vertical central curves of $T_a,T_b$, each of which are geodesics in the hyperbolic metrics.   Then $r_b\circ F\circ r_a$ has the same energy as $F$ since $r_a,r_b$ are isometries. Since the hyperbolic surfaces are negatively curved, the harmonic map is unique, and so $F=r_b\circ F\circ r_a$. We conclude that $F$ preserves the horizontal and vertical frame along the geodesic.   Since the map  expands along the horizontal trajectories of the Hopf differential, we conclude that $\theta$ is either $0$ or $\pi$. 

Consider now a quadratic differential $\Phi = cdz^2$ on a rectangular surface; we are interested in the sign of $c$. For $c$ real, the leaf space of the horizontal trajectories is a circle of length 1, and the energy of the projection map is $E =  (1/2) \Ext_h$, where $\Ext_h$ is the extremal length of the horizontal curve.  Now the gradient in Teichm\"uller space of either $E$ or $\frac{1}{2} \Ext_h$ is $-2 \Psi$ (see \cite{Wentworth:Gardiner} or \cite{WExtremal}), where $\Psi$ is the Hopf differential of the harmonic map from the surface to the target circle, i.e. the leaf space of the horizontal foliation.  That the gradient has this form is commonly known as Gardiner's formula 
\cite{Gardiner:MinimalNorm}, \cite{Gardiner:TeichTheoryBook} and written as an expression for the the derivative of extremal length as
\begin{equation*}
d Ext_h [\mu] = -2 \Re \int \Psi \mu
\end{equation*}
where $\mu$ is an (infinitesimal) Beltrami differential.

Because the minimal stretch of this harmonic map to the circle -- as a projection along the horizontal leaves --  is clearly along those leaves (since each such leaf is projected to a point, making the stretch in that direction vanish, hence the minimum possible), the minimal stretch direction for this map to the leaf space is vertical for the Hopf differential $\Psi$; since that direction is also horizontal for $\Phi$, we may conclude that $\Psi = -k \Phi$ , where $k>0$.
Thus the derivative of extremal length may be written in terms of $\Phi$ as
\begin{align*}
d Ext_h [\mu] &= -2 \Re \int \Psi \mu\\
&=  +2 k \Re \int \Phi \mu 
\end{align*}
where again $\mu$ is an (infinitesimal) Beltrami differential and $k>0$. 

We apply this formula to the deformation for the Riemann surface which stretches it from having width $a$ to having width $b$.   An infinitesimal deformation in that direction may be represented as  $\mu= \bar{d} \frac{d\bar{z}}{dz}$ where $d>0$: this representation follows by explicitly computing the Beltrami differential for an affine stretch between rectangular tori of identical heights but where the widths of the range is $b$ and the width of the domain is $a<b$.

For such a stretch, because the extremal length of the horizontal curve on the domain is $a$, while  the extremal length of that same curve system on the range is $b>a$, we see that $dExt_h [\mu] > 0$.

On the other hand, applying the displayed formula above and substituting our expression for $\mu$, we compute:
\begin{align*}
0< dExt_h [\mu] &= 2 k \Re \int \Phi \bar{d}\\
                    &= 2 \Re \int kc \bar{d}
\end{align*}
                 
Thus we conclude that, since $d>0$ and $k> 0$, then so is $c>0$, or alternatively that $\theta =0$, which is what we required in the first paragraph to conclude that the hyperbolic length of the horizontal homotopy class was increasing.



\end{proof}

\begin{prop}
\label{lem:largecylinder} There exists $D$ such that the hyperbolic length of the horizontal geodesic in the once punctured rectangular $(a, \frac 1 a)$-torus is between $4\log(a)-D$ and $4\log(a)+D$.
\end{prop}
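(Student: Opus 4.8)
The plan is as follows. Write $\alpha\subset T_a$ for the vertical core curve (flat length $1/a$) and $\gamma$ for the horizontal geodesic whose hyperbolic length we want; note $i(\alpha,\gamma)=1$. I would show first that $\alpha$ is the short curve of $T_a$, with $\ell_\rho(\alpha)\asymp a^{-2}$, and then compute $\ell_\rho(\gamma)$ by the method of Lemma~\ref{lem:asymptotic}, using the rectangular symmetry to pin down the one ``deep'' arc of $\gamma$ with additive, rather than merely multiplicative, precision.

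\emph{Step 1: the length of $\alpha$.} Cutting $T_a$ along the horizontal direction realizes $\alpha$ as the core of a flat cylinder of circumference $1/a$ and height $a$, hence of modulus $a^2$; since the area of the torus is $1$ this computes the extremal length exactly, $\Ext_{T_a}(\alpha)=a^{-2}$ (the puncture is irrelevant for this class). For $a$ large this is small, so the classical two-sided comparison between extremal and hyperbolic length of a short curve (Maskit) gives $\ell_\rho(\alpha)\asymp\Ext_{T_a}(\alpha)=a^{-2}$. Set $\epsilon=\ell_\rho(\alpha)$, so $-2\log\epsilon=4\log a+O(1)$, and let $\mathcal C$ be the standard maximal collar of $\alpha$, of half-width $w$ determined by $\sinh w\,\sinh(\epsilon/2)=1$; then a geodesic crossing $\alpha$ perpendicularly meets $\partial\mathcal C$ at distance $w$ on each side, with $2w=-2\log\epsilon+O(1)=4\log a+O(1)$, while $T_a\setminus\mathcal C$ is a pair of pants (two boundary geodesics from $\alpha$, one cusp) of diameter bounded independently of $a$, as in the proofs of Lemmas~\ref{thm:non-crossing curves shallow} and \ref{lem:asymptotic}.

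\emph{Step 2: the length of $\gamma$.} Since $\rho$ is the uniformizing metric, the reflection $r$ of $T_a$ across the horizontal direction is a $\rho$-isometry. It preserves the horizontal homotopy class, so $r(\gamma)=\gamma$ by uniqueness of geodesic representatives, and it preserves the vertical class while reversing its orientation, so $r(\alpha)=\alpha$. As $i(\alpha,\gamma)=1$, the point $p=\alpha\cap\gamma$ is fixed by $r$, and $dr_p$ is an orientation-reversing linear isometry whose $-1$-eigenline is the tangent line to $\alpha$; it preserves the tangent line to $\gamma$, which being distinct from that of $\alpha$ must then be the orthogonal $+1$-eigenline, so $\gamma\perp\alpha$ at $p$. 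Hence the component of $\gamma\cap\mathcal C$ through $p$ is the radial geodesic of $\mathcal C$ through $p$, of length exactly $2w=4\log a+O(1)$. Every other component of $\gamma\cap\mathcal C$ misses $\alpha$, hence by Lemma~\ref{lem:ShallowNoncross} penetrates $\mathcal C$ only to a universally bounded depth; these, together with $\gamma\cap(T_a\setminus\mathcal C)$, lie in regions of uniformly bounded diameter and are boundedly many in number (since $i(\gamma,\alpha)=1$), so they contribute $O(1)$ in total --- exactly the kind of term absorbed into additive constants in the proof of Lemma~\ref{lem:asymptotic}. Summing, $\ell_\rho(\gamma)=2w+O(1)=4\log a+O(1)$, which is the claim with $D$ the resulting constant; for $a$ in a fixed compact subset of $(0,\infty)$ bounded away from $0$ the assertion is trivial.

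\emph{Main obstacle.} The delicate point is Step 1: one must know $\ell_\rho(\alpha)$ to within a bounded multiplicative factor of $a^{-2}$, and the comparison $\ell_\rho\asymp\Ext$ is only available once $\Ext_{T_a}(\alpha)$ is small, so the computation $\Ext_{T_a}(\alpha)=a^{-2}\to 0$ must be made first. A secondary issue is the bookkeeping in Step 2 showing that the shallow excursions of $\gamma$ into $\mathcal C$ and the part of $\gamma$ outside $\mathcal C$ cost only $O(1)$; this is of the same type as, and can be imported from, the analysis in Lemma~\ref{lem:asymptotic} specialized to $i(\gamma,\alpha)=1$ and $i(\gamma,\tau)=0$.
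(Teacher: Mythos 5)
Your proof is correct, but it reaches the key quantitative input by a genuinely different route than the paper. The paper first pins down the hyperbolic length of the \emph{short} (vertical) geodesic with a precise asymptotic, $\ell_v(a)=\tfrac{\pi}{a^2-1}+O(a^{-4})$, by realizing the $(a,\frac1a)$-torus as a grafting of the $(L,L)$-torus along its vertical geodesic and invoking Theorem~6.6 of \cite{DumasWolf}; it then converts this into the statement via the diameter $2\log\frac{C}{\ell_v(a)}+O(1)$ of the hyperbolic collar, exactly as you do in your final step. You replace the grafting/Dumas--Wolf input with the exact extremal length computation $\Ext(\alpha)=a^{-2}$ (correctly noting the puncture does not change this) together with Maskit's comparison, which yields only $\ell_\rho(\alpha)\asymp a^{-2}$ --- weaker than the paper's asymptotic, but entirely sufficient since only $\log\ell_\rho(\alpha)$ enters the final estimate. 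This makes your argument more elementary and self-contained (it uses only tools already deployed elsewhere in the paper: extremal length, \cite{Maskit}, and the collar lemma), at the cost of losing the sharp constant $\pi$ in $\ell_v(a)$, which the proposition does not need. Your Step~2 is also a welcome tightening of a point the paper treats briskly: the reflection symmetry forces the horizontal geodesic to cross $\alpha$ orthogonally, so the deep arc is exactly the radial diameter of the collar, and the residual contributions are $O(1)$ by the same bookkeeping as in Lemma~\ref{lem:asymptotic} (indeed, with the collar chosen as in Lemma~\ref{thm:non-crossing curves shallow}, the single essential crossing is the only component of $\gamma\cap\mathcal C$, so the accounting is even simpler than you allow for). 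Both proofs are sound; yours trades precision about $\ell_v(a)$ for elementarity.
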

\begin{proof}

First note that the $(1,1)$ punctured (flat) rectangular torus is conformally equivalent to the 
$(L,L)$ punctured (flat) rectangular torus, where $L$ indicates the length of the vertical 
geodesic on the uniformized $(1,1)$ punctured rectangular torus. Then we see that if we 
graft a cylinder of length $\lambda$ along that vertical geodesic, we obtain a torus 
conformally equivalent to the $(L + \lambda, L)$ punctured rectangular torus, hence also 
equivalent to the $(a, \frac{1}{a})$ punctured rectangular torus for $a= \sqrt{1+ 
\frac{\lambda}{L}}$.

On such a $(L + \lambda, L)$ punctured rectangular torus, Theorem~6.6 of \cite{DumasWolf} 
estimates the hyperbolic length $\ell_v$ of the vertical geodesic as $\ell_v = \ell_v(a) = 
\ell_v(L,\lambda) =  \frac{\pi L}{\lambda} + O(\lambda^{-2}) = \frac{\pi}{a^2-1} + O(a^{-4})$.

Letting $a \to \infty$, these $(a, \frac{1}{a})$ punctured rectangular tori converge to 
noded hyperbolic tori (i.e. a hyperbolic triply punctured sphere, where two of the 
punctures are paired); one way to see this is to represent the $(a, \frac{1}{a})$ 
punctured rectangular tori as a family of metrics on a smooth punctured torus, and then 
observe that the metric tensors converge, uniformly on compacta in the complement of the 
short vertical geodesic. From this convergence, we see that for any $\epsilon>0$ and $a$ 
sufficiently large (depending on $\epsilon$), the $(a, \frac{1}{a})$ punctured rectangular 
torus may be decomposed as a disjoint union of a (rotationally invariant) hyperbolic 
cylinder with boundary arcs of a fixed length $C_0$ and core geodesic of length 
$\ell_v(a)$, together with a compact punctured sphere with two boundary components of 
length $C_0$: here all of the compact pieces are uniformly $1+\epsilon$ quasi-isometric, 
independent of the choice of $a$ (sufficiently large).

In particular, because a hyperbolic cylinder with core geodesic $\ell_v(a)$ and boundary 
of length $C_0$ has diameter $D(a,C_0) = 2 \log \frac{C}{\ell_v(a)}+O(1)$, we see that the 
horizontal geodesic of the $(a, \frac{1}{a})$ punctured rectangular torus is of length $2 
\log \frac{C}{\ell_v(a)}+O(1) = 4 \log a +O(1)$, once we apply above our formula for 
$\ell_v(a)$.  This concludes the proof of the lemma.

\end{proof}

\begin{lem}
\label{lem:extremal}
Suppose  $T_-,T_+$ are tori glued along  an almost vertical  slit $\zeta$ of length  $\delta$ . The total glued surface has area $2$ with the area of $T_+$ at least $1$.   Suppose  $T_-$ has an almost vertical 
side $\sigma_-$ of length $b$ and the other curve $\beta_-$  is not almost vertical and its horizontal component  has length  $\delta$.  Assume $\delta<1/b$. Assume also in the standard basis for $T_+$  there is an almst vertical curve $\sigma_+$ of length $b$ and the other basis curve $\beta_+$ has horizontal component comparable to $1/b$.  
Then \begin{itemize}
\item the hyperbolic length of $\beta_-$  is comparable to  $\delta/b$.
\item if there is another not almost vertical slit $\zeta'$ that crosses the first and has length  $\delta\leq \kappa=O(1)$, then  the hyperbolic length of $\zeta$ is comparable to $\frac{1}{\log(\kappa/\delta)}$.
\end{itemize} 
\end{lem}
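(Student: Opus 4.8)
The plan is to reduce both assertions to estimates of extremal length / modulus, using two standard facts: for a simple closed curve $\alpha$ on a Riemann surface the extremal length $\Ext(\alpha)$ is the reciprocal of the supremal modulus $\mathrm{Mod}([\alpha])$ of an embedded annulus with core in the class $[\alpha]$; and for $\alpha$ of small extremal length one has $\ell_\rho(\alpha)\asymp\Ext(\alpha)$ (the comparison of hyperbolic and extremal length of short geodesics). Throughout I work in the regime of the applications, where $\delta\to 0$ (and $b\asymp 1$), so the relevant extremal lengths are small and the relevant moduli large.

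\textbf{First bullet.} Up to the bounded conformal distortion caused by opening the short slit, $T_-$ is a flat torus of area $\asymp b\delta$ in which $\sigma_-$ and $\beta_-$ are represented by holonomy vectors $\sigma_-\approx(0,b)$ and $\beta_-=(\delta,v)$ with $|v|=O(\delta)$ (so that $\ell_{\mathrm{flat}}(\beta_-)\asymp\delta$, consistent with \eqref{tinytorus}). On a flat torus every simple closed curve $c$ has $\Ext(c)=\ell_{\mathrm{flat}}(c)^2/(\text{area})$, whence $\Ext_{T_-}(\beta_-)\asymp\delta/b$. The conformal inclusion $T_-\hookrightarrow X$ gives the upper bound $\Ext_X(\beta_-)\le\Ext_{T_-}(\beta_-)\asymp\delta/b$. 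For the matching lower bound I use the flat metric of $T_-$, extended by zero over $T_+$, as a competitor in the definition of $\Ext_X(\beta_-)$: since $\beta_-$ is not peripheral in $T_-$, any curve in its free homotopy class in $X$ must traverse $T_-$ with flat length $\gtrsim\ell_{\mathrm{flat}}(\beta_-)\asymp\delta$, while the competitor has area $\asymp b\delta$, so $\Ext_X(\beta_-)\gtrsim\delta/b$. Hence $\Ext_X(\beta_-)\asymp\delta/b$, and since this is small, $\ell_\rho(\beta_-)\asymp\delta/b$.

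\textbf{Second bullet.} Let $\hat\zeta=\zeta\cup\tau(\zeta)$ denote the (almost vertical, separating) flat geodesic representing the class $\zeta$; it passes through both cone points, has flat length $\asymp\delta$, and carries no flat cylinder (it is a union of saddle connections meeting at angle exceeding $\pi$). By the reduction above it suffices to prove $\mathrm{Mod}_X([\zeta])\asymp\log(\kappa/\delta)$. Because $\zeta$ has no flat cylinder, $\mathrm{Mod}_X([\zeta])$ is realized, up to an additive constant, by a flat \emph{expanding} annulus about $\hat\zeta$ (the structure theory of maximal annuli in flat surfaces; cf. \cite{R}). Such an annulus expands outward from $\hat\zeta$ on each of its two sides until it abuts the singular set; since both cone points lie on $\hat\zeta$, the first obstruction is a saddle connection joining them not homotopic rel endpoints into $\hat\zeta$, and the crossing slit $\zeta'$ (and its image $\tau(\zeta')$) — which we may take to be comparable to the shortest such connection, as holds in the applications — limits the radius of expansion to $\asymp\kappa$ while permitting expansion to radius $\gtrsim\kappa$ on the side carrying the bulk of $\zeta'$. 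An expanding annulus about a geodesic of flat length $\asymp\delta$ reaching radius between a fixed multiple and a fixed submultiple of $\kappa$ has modulus $\asymp\log(\kappa/\delta)$, so $\mathrm{Mod}_X([\zeta])\asymp\log(\kappa/\delta)$ and hence $\ell_\rho(\zeta)\asymp 1/\log(\kappa/\delta)$.

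\textbf{Main obstacle.} The delicate point is the \emph{upper} bound in the second bullet — bounding the modulus of an arbitrary embedded annulus $A$ about $\zeta$, not just the explicit flat one. One must show any such $A$ is confined to a flat neighborhood of $\hat\zeta$ of radius $O(\kappa)$: since $i(\zeta',\zeta)>0$ the slit $\zeta'$ is traversed by $A$, and because $\zeta'$ joins the two cone points lying on the core $\hat\zeta$ by an arc of flat length $\le\kappa$, letting $A$ extend further would force it to meet $\zeta'$ or the singularities and cease to be embedded; combined with the absence of a flat cylinder in $[\zeta]$, the modulus of the confined annulus is then governed logarithmically by its radius, hence is $\lesssim\log(\kappa/\delta)$. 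A parallel, milder subtlety already present in the first bullet is that curves and annuli supported on $T_-$ could a priori leak into $T_+$; the zero-extension competitor metric, and (for the second bullet) the confinement forced by $\zeta'$, are precisely the devices ruling this out, both ultimately resting on the collar estimate of Lemma~\ref{thm:non-crossing curves shallow}.
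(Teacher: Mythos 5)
Your treatment of the first bullet is essentially the paper's argument: compare hyperbolic with extremal length for short curves (\cite{Maskit}), get the lower bound on extremal length from a competitor metric supported on the flat $T_-$, and get the upper bound from a large embedded flat cylinder in the class of $\beta_-$. Two caveats on your version. First, for the lower bound you extend the flat metric by zero over \emph{all} of $T_+$ and assert that any representative of $[\beta_-]$ picks up length $\gtrsim\delta$ inside $T_-$; this is not immediate, because a representative may enter $T_+$ through the slit, travel for free, and re-enter $T_-$ at another point of the slit, so one must argue that an arc of $T_-$ joining the slit to itself in the relevant relative class still has length $\gtrsim\delta$ (using that $\zeta$ is almost vertical while $\beta_-$ is not). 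The paper sidesteps exactly this by extending the flat metric into $T_+$ over a $\delta\times b$ rectangle abutting the slit, so that any excursion into $T_+$ is itself charged. Second, your upper bound via ``bounded conformal distortion caused by opening the short slit'' is not literally correct near the slit; what does work, and is what the paper uses, is that the slit torus already contains an embedded flat cylinder in the class of $\beta_-$ of modulus $\gtrsim b/\delta$ (the slit, being almost vertical of length $\delta$, only removes a band of transverse width $O(\delta)$ from the full torus cylinder), and monotonicity of modulus under the inclusion into $X$ finishes it. For the second bullet the routes genuinely differ: the paper simply quotes the first conclusion of Lemma~2.2 of \cite{R}, while you sketch the expanding-annulus proof behind that statement. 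The sketch is the right circle of ideas, but the step you yourself flag as the ``main obstacle'' --- bounding the modulus of an \emph{arbitrary} annulus homotopic to $\zeta\cup\tau(\zeta)$ by confining it to a flat $O(\kappa)$-neighborhood, and showing the expansion really reaches radius $\gtrsim\kappa$ even though both cone points lie on the core --- is precisely the content supplied by the citation, and also quietly imports an extra hypothesis (that $\zeta'$ is comparable to the shortest crossing saddle connection) not stated in Lemma~\ref{lem:extremal}. So as written that half is a plausible outline rather than a complete proof; either carry out the confinement argument or, as the paper does, invoke \cite{R}. The appeal to Lemma~\ref{thm:non-crossing curves shallow} at the end is not needed here, since that lemma concerns hyperbolic collars rather than flat moduli.
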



\begin{proof}
For curves of small hyperbolic length, the hyperbolic length is comparable to the extremal length \cite{Maskit}.
To find a lower bound for extremal length, we build a metric $\rho$ which is the given  flat metric on 
$T_-$  and extended to $T_+$ in a  neighborhood of the slit defined so that the intersection of that neighborhood with $T_+$ is a rectangular region of vertical length $b$ and horizontal length $\delta$ (hence has area comparable to $\delta b$). On the complement of this neighborhood in $T_+$ we define  $\rho$ to be $0$.  Then in the $\rho$ metric the length of any curve in the homotopy class of $\beta_-$ is at least $\delta$ and the area is comparable to $\delta b$ so the extremal length is at least a number comparable to $\delta^2/\delta b=\delta/b$.    For the upper bound  we use that the extremal length  is the reciprocal of the modulus of the biggest cylinder in the homotopy class.  Since $\beta_-$ is not almost vertical and $\sigma_-$is almost vertical, the cylinder in $T_-$ in the class of $\beta_-$ has modulus at least comparable to  $b/\delta$.   

The second statement is the first conclusion of Lemma 2.2 of \cite{R}.
\end{proof}

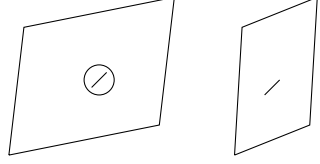
\begin{figure}[h]
\caption{The metric is 1 on the second parallelogram and in the circle on the first parallelogram and 0 elsewhere}
\begin{tikzpicture}
\draw (0,0) --(2.0,0.4)--(2.2,2.1)--(0.2,1.7)--(0,0);
\path (1.1,.9) edge (1.3,1.1);
\draw (3,0)--(4,0.4)--(4.1,2.1)--(3.1,1.7)--(3,0);
\path (3.4,.8)edge(3.6,1);
\draw (1.2,1) circle (0.20);
\end{tikzpicture}
\end{figure}

\section{Proof of Theorems~\ref{thm:CF2}, \ref{thm:CF3}, \ref{thm:CF4}}.

\subsection{Proof of Theorem \ref{thm:CF2}}

We choose $c \notin \{-1,0,1\}$ and consider $g_t(X_c, \omega_c)$.
We first show that the barycenter $[F,\mu_{0}]$ is a limit  point.  Since the set of limit points is closed, it is enough to show that there are limit points $[F,\mu_\tau]$ for $\tau$ arbitrarily close to $0$.  For a given $a$, consider the  rectangular punctured tori $T_-^\infty,T_+^\infty$ with the same vertical side lengths and such that  $T_+^\infty$ has flat horizontal length $a$ and $T_-^\infty$ has flat horizontal length 
$a \frac{c}{1-c}$.
By Proposition~\ref{lem:largecylinder}, for  $a$ sufficiently large,  their hyperbolic metrics $\rho_-,\rho_+$ are such that $\frac{\rho_-(\beta_-)}{\rho_+(\beta_+)}$ is arbitrarily close to $1$,  where $\beta_-,\beta_+$ are the horizontal curves $\beta_{\pm}= \lim_{k \to \infty} \beta_{\pm}^k$.

 By Proposition~\ref{prop:geom} and Proposition~\ref{prop:geom2},  there  are   times $t_k$ with  $$e^{t_k}\asymp aq_{n_k}$$ so that   $g_{t_k}r_{\theta_c} (Y,\omega_c)$ consists of tori $T_-^k,T_+^k$ glued along the $k^{th}$ slit and that converge to $T_-^\infty,T_+^\infty$.

On the surface $g_{t_k}r_{\theta_c} (Y,\omega_c)$ 
the curves $\gamma_i$  are almost horizontal. 
We wish to apply Proposition~\ref{prop:augmented} to the curves $\gamma_i$.  We check the hypotheses of that Proposition.

  Since the slope of $\gamma_i$ in the flat structures goes to $0$, the  first condition  holds in 
Proposition~\ref{prop:augmented}. 
Now we check the second condition of Proposition~\ref{prop:augmented}.
The length $|\zeta^k|_{t_k}$ of the $k^{th}$ slit $\zeta^k$ with respect to the flat metric of $g_{t_k}r_{\theta}(Y,\omega_c)$ satifies $$|\zeta^k|_{t_k}\asymp \frac{a}{a_{n_k+1}}.$$  Its extremal length is larger than the square of its flat length, and for short curves, extremal length is comparable to hyperbolic length.  Thus 
$$\ell_{\rho_{t_k}}(\zeta^k)\geq c(\frac{a}{a_{n_k+1}})^2,$$ for some $c>0$.  

The third conclusion of Proposition~\ref{prop:geom} says that the number of times that $\gamma_i$ intersects $\zeta^k$ is $$i(\gamma_i,\zeta^k)\asymp q_{n_{k-1}}.$$ Thus by Corollary~\ref{twists} 
$$\ell_{\rho_{t_k}}(\gamma_i\cap U)\asymp  q_{n_{k-1}}(-\log a_{n_k+1}-\log a).$$  On the other hand  the horizontal distance across
the 
 tori  $T_{\pm}^k$  is comparable to $1$ and the number of times $\gamma_i$  crosses the tori is comparable to $q_{n_k}$,  so that the total length of these arcs is comparable to $q_{n_k}$.  But since $a$ is fixed, Condition~\ref{eq:condC}  says that 
 $$\lim_{k\to\infty} \frac{\ell_{\rho_{t_k}}(\gamma_i\cap U)}{\ell_{\rho_{t_k}}(\gamma_i\cap (T^k_{\pm} \cap U))}=\lim_{k\to\infty}\frac{-q_{n_{k-1}}\log a_{n_k+1}}{q_{n_k}}\to 0.$$ 
This verifies the second condition   of Proposition~\ref{prop:augmented}.
We conclude 
\begin{equation}
\label{eq:lengthsratio}
\frac{\ell_{\rho_{t_k}}(\gamma_1)}{\ell_{\rho_{t_k}}(\gamma_2)}- 
\frac{i(\gamma_1,\sigma_-^k)\ell_{\rho_-}(\beta_-)+i(\gamma_1,\sigma_+^k)
\ell_{\rho_+}(\beta_+)}{i(\gamma_2,\sigma_-^k)\ell_{\rho_-}(\beta_+)+i(\gamma_2,\sigma_+^k)\ell_{\rho_+}(\beta_+)}\to 0.
\end{equation}
Since  $\frac{\ell_{\rho_-}(\beta_-)}{\ell_{\rho_+}(\beta_+)}$ is close to $1$,
and $i(\gamma_1,\sigma_-^k)= i(\gamma_2,\sigma_+^k)$ while $i(\gamma_2,\sigma_-^k)=i(\gamma_1,\sigma_+^k)$ by Lemma~\ref{lem:crossings}, it follows that  
$\frac{\ell_{\rho_{t_k}}(\gamma_1)}{\ell_{\rho_{t_k}}(\gamma_2)}$ is close to $1$ so the barycenter is a limit.

Now we similarly show that we also find limit points that are not the barycenter.  We use rectangular  punctured tori $T_-^\infty,T_+^\infty$ as limiting surfaces  with horizontal curves $\beta_-$, $\beta_+$ with fixed but unequal flat lengths, which we can assume without loss of generality satisfy  $$|\beta_-|>|\beta_+|.$$    Proposition~\ref{thm:increasing}  then says that their hyperbolic lengths satisfy $$\ell_{\rho_-} (\beta_-)>\ell_{\rho_+}(\beta_+).$$ 
We then see from (\ref{eq:lengthsratio}) (and again using Lemma~\ref{lem:crossings}) that   
$$\lim_{k\to\infty}\frac{\ell_{\rho_{t_k}}(\gamma_1)}{\ell_{\rho_{t_k}}(\gamma_2)}>1,$$
which says that the limit is not the barycenter. 


 \subsection{Sketch of proof of Theorem~\ref{thm:CF3}, Theorem~\ref{thm:CF4}}

In the ergodic case the tori have areas that have unbounded ratios by Lemma~\ref{lem:erg}.  Consequently it is possible that one of the tori can live in the compact set of tori while the other may have a short curve. 

In proving Theorem~\ref{thm:CF3} we need to find {\em some} times where we are near the ergodic endpoints and in Theorem~\ref{thm:CF4} to show that this happens for {\em all} large times. The strategy will be to show that the hypothesis of  Lemma~\ref{lem:ergodiclimit} holds. This hypothesis requires that the limit of the ratio of the  lengths of $\gamma_2$ to $\gamma_1$  can be expressed  just in terms of intersections with curves converging to the desired ergodic endpoint $[F,\mu_+]$.  In the case at hand these latter curves are the curves $\sigma_+^k,\beta_+^k$ that are contained in the torus $T_+^k$. One therefore needs to show that the terms that involve  intersections of $\gamma_i$ with curves in $T_-^k$ should not appear asymptotically in the ratio of lengths of the $\gamma_i$. The formulas 
for lengths of $\gamma_i$ are obtained in Proposition~\ref{prop:augmented} and Lemma~\ref{lem:asymptotic} and so, to apply the formula in Lemma~\ref{lem:ergodiclimit}, we need to show that lengths of $\gamma_i$ restricted to  $T_+^k$ dominate lengths of $\gamma_i$ restricted to  $T_-^k$. We will need to show either that there are fewer intersections of the $\gamma_i$ with curves in $T_-^k$ than in $T_+^k$ or that lengths of each arc is smaller. Both of these possibilities  will follow from the fact that ergodicity of the initial flow implies that the area of $T_-^k$ is much smaller than the area of $T_+^k$.  Since the vertical components of closed curves do not change, what occurs is that the horizontal components become much smaller, so even though we have little control over the shape of the smaller rectangle $T_-^k$, that certain lengths are so much smaller allows us to conclude that the hyperbolic lengths in the larger torus dominate those of the smaller rectangle. Thus, in this first part of the proof of Theorem~\ref{thm:CF3}, because of this length estimate and because we will only need to estimate the geometry along a single sequence of times, our argument will depend just on the choice of  $k^2$ in the continued fraction expansion. 

To prove that the barycenter is a limit point, even though again we are only interested in subsequences of times along the flow, we will also need to choose the other terms $a_{4k+2}$ and $a_{4k+4}$, and make those much larger than $k^2$. For this argument, we use that we can deform the limits we obtained in the first part to ones that are stretched somewhat horizontally.  Indeed, we can stretch those limiting rectangles so much that the hyperbolic lengths of the horizontals, even though they are taken on surfaces which still have very different flat areas, are approximately the same length. In short, we overcome the vast differences in flat areas by a long stretch of the approximates (while still keeping the relevant slits small enough so that the surfaces are nearly pinched).

In Theorem~\ref{thm:CF4} one needs to show that the only limit is the ergodic endpoint.  Of course, this will be accomplished again by the  choice of the continued fraction, but in this case, the choice is more delicate, as we have to control the limits of all subsequences -- not only obtaining the desired limit point but making sure that there are no stretches such as those in the second part of Theorem~\ref{thm:CF3} which would yield an interior point of the possible limiting simplex.  In particular, we cannot allow larger horizontal stretches of our approximates such as those we designed in the second part of Theorem~\ref{thm:CF3}. In outline, the argument follows that for Theorem~\ref{thm:CF3}: we aim to use Lemma~\ref{lem:ergodiclimit} to conclude that  only the ergodic endpoint appears as a limit, and this requires us to show that the lengths in the larger torus $T_+^k$ dominate those in the smaller torus $T_-^k$. To accomplish those estimates, we use the formulae from Lemma~\ref{lem:asymptotic} and Proposition~\ref{prop:augmented} to restrict our attention either to the amount of twist of a curve $\gamma_i$ or the length that it picks up while crossing a short collar. Now, because in different regimes of time for $t$, the lengths in the two different tori are dominated by different geometric types, the argument breaks up into three cases.  In each of the three cases, the basic estimate needed to invoke Lemma~\ref{lem:ergodiclimit} is achieved by different dominant lengths.

\subsection{Proof of Theorem~\ref{thm:CF3}.} 

The accumulation set is clearly an interval, as the trajectories are proper in \Teich space. Thus we need only to find a sequence of times which approach the ergodic endpoint and a sequence of times that approach the barycenter.  We begin with the approach to the ergodic endpoint.

\begin{prop}\label{prop:sec2 ergpt} Along the sequence of times  $$t_k=\frac{1}{2}\log(q_{4k+5}^2+p_{4k+5}^2)$$ 
 we get the limit point which is the ergodic endpoint $[F,\mu_+]$. 
\end{prop}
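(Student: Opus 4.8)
The plan is to apply Lemma~\ref{lem:ergodiclimit}, using the almost-vertical closed curves $\sigma_+^m$ in the large torus $T_+^m$ as the auxiliary curves converging to $[F,\mu_+]$. Throughout write $m=m(k)$ for the index with $n_m=4k+5$, so that $t_k$ is precisely the time ``$t_m$'' of Lemma~\ref{lem:close to square}: at time $t_k$ the surface $g_{t_k}r_\theta(Y,\omega_1)$ decomposes as $T_-^m\cup T_+^m$ glued along the short slit $\zeta^m$, with the vertical measure $\mu_+$ concentrating on $T_+^m$ in the sense of Lemma~\ref{lem:erg}. Two inputs are immediate. First, by Lemma~\ref{lem:continuity}, after passing to a subsequence the metrics $\rho_{t_k}$ converge to some point of $\Lambda$. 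Second, $\sigma_+^m\to[F,\mu_+]$ in $\PMF$: since $h(\sigma_+^m)\to0$ by Lemma~\ref{lem:slitsize}(1), any projective limit of the $\sigma_+^m$ has zero intersection with $(F,\mu_+)$, hence lies in $\Lambda$ by Lemma~\ref{lem:continuity}, while combining Lemma~\ref{lem:crossings}(1),(2),(4) with the hyperelliptic symmetry that exchanges $(\gamma_1,\mu_+)$ and $(\gamma_2,\mu_-)$ gives $i(\sigma_+^m,\gamma_1)/i(\sigma_+^m,\gamma_2)\to\mu_+(\gamma_1)/\mu_+(\gamma_2)$, so by Proposition~\ref{prop:crit} that limit is $[F,\mu_+]$. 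It therefore remains only to prove the ratio estimate
\begin{equation*}
\Bigl|\frac{\ell_{\rho_{t_k}}(\gamma_1)}{\ell_{\rho_{t_k}}(\gamma_2)}-\frac{i(\sigma_+^m,\gamma_1)}{i(\sigma_+^m,\gamma_2)}\Bigr|\longrightarrow0,
\end{equation*}
after which Lemma~\ref{lem:ergodiclimit} (in its $\mu_+$ version) gives $\rho_{t_k}\to[F,\mu_+]$ along the subsequence, hence along the whole sequence since this is the only subsequential limit.

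For the ratio estimate I would split each $\gamma_i$ into the three pieces $\gamma_i\cap T_+^m$, $\gamma_i\cap T_-^m$, and $\gamma_i\cap U$, where $U$ is the collar of the pinching curve $\zeta^m$ as in Lemma~\ref{thm:non-crossing curves shallow}, and show the $T_+^m$-piece dominates. On $T_+^m$: at time $t_k$ this torus is within $O(a_{n_m}^{-1}+a_{n_m+1}^{-1})$ of a once-punctured rectangular torus $T_+^\infty$ of bounded geometry with $\ell_{\rho_+^\infty}(\beta_+)\asymp1$, and the flat slope of $\gamma_i$ there tends to $0$ because $i(\gamma_i,\beta_+^m)\asymp q_{n_m}/a_{n_m}$ is much smaller than $i(\gamma_i,\sigma_+^m)\asymp q_{n_m}$ (Lemmas~\ref{lem:crossings}(3), \ref{lem:previous}(1), and Condition~\ref{eq:condB}); so the argument of Proposition~\ref{prop:augmented} yields $\ell_{\rho_{t_k}}(\gamma_i\cap T_+^m)\sim i(\gamma_i,\sigma_+^m)\,\ell_{\rho_+^\infty}(\beta_+)\asymp q_{n_m}$. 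On $T_-^m$: here ergodicity bites. By \eqref{tinytorus} the horizontal size of $T_-^m$ has collapsed, $|\beta_-^m|_{t_k}\to0$, so $\beta_-^m$ is now itself short with $\ell_{\rho_{t_k}}(\beta_-^m)\asymp|\beta_-^m|_{t_k}$ by Lemma~\ref{lem:extremal}; applying the refinement Lemma~\ref{lem:asymptotic} to the torus $T_-^m$ with short curve $\beta_-^m$ and shortest transversal $\sigma_-^m$, and inserting $i(\gamma_i,\beta_-^m)\asymp q_{n_m}/a_{n_m}$ and $i(\gamma_i,\sigma_-^m)\asymp q_{n_m}$, gives $\ell_{\rho_{t_k}}(\gamma_i\cap T_-^m)=O\!\bigl(\tfrac{q_{n_m}}{a_{n_m}}\log a_{n_m+1}\bigr)+O\!\bigl(q_{n_m}\,|\beta_-^m|_{t_k}\bigr)=o(q_{n_m})$, using $a_{n_m}=a_{n_m+1}=m^2\to\infty$. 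Finally $|\zeta^m|_{t_k}\to0$ with $-\log|\zeta^m|_{t_k}=O(\log a_{n_m+1})$ (Proposition~\ref{prop:geom}, in its ergodic adaptation) and $i(\gamma_i,\zeta^m)\asymp q_{n_{m-1}}$ (Lemma~\ref{lem:crossings}(5)), so by Corollary~\ref{twists} $\ell_{\rho_{t_k}}(\gamma_i\cap U)=O(q_{n_{m-1}}\log a_{n_m+1})=o(q_{n_m})$, the last estimate because $a_{n_m}=m^2$ forces $q_{n_m}\ge m^2q_{n_{m-1}}$ (which is why Condition~\ref{eq:condC} holds automatically for this $\alpha$). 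Summing the three pieces, $\ell_{\rho_{t_k}}(\gamma_i)\sim i(\gamma_i,\sigma_+^m)\,\ell_{\rho_+^\infty}(\beta_+)$, whence $\ell_{\rho_{t_k}}(\gamma_1)/\ell_{\rho_{t_k}}(\gamma_2)\sim i(\gamma_1,\sigma_+^m)/i(\gamma_2,\sigma_+^m)$, which is exactly the displayed ratio estimate; by Proposition~\ref{prop:crit} the common limit is $i((F,\mu_+),\gamma_1)/i((F,\mu_+),\gamma_2)$, as it must be.

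The main obstacle is precisely bounding the $T_-^m$ and collar contributions against the $T_+^m$ one; the rest (checking the hypotheses of Proposition~\ref{prop:augmented}, the vanishing of flat slopes, etc.) runs as in the proof of Theorem~\ref{thm:CF2}. This is where ergodicity is essential: because $\mu_+(T_-^m)\to0$, Lemma~\ref{lem:erg} collapses the flat area, hence the horizontal size, of $T_-^m$ at the controlled rate $a_{n_m+1}^{-1}=m^{-2}$, so $\beta_-^m$ degenerates and both its hyperbolic length (Lemma~\ref{lem:extremal}) and the collar-crossing length of $\gamma_i$ inside $T_-^m$ (Lemma~\ref{lem:asymptotic}) become small enough---once divided by the $\asymp q_{n_m}$ worth of length $\gamma_i$ accumulates crossing $T_+^m$---to disappear in the limit. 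Carrying this out cleanly requires simultaneously the correct length formula on a torus with a short curve (Lemma~\ref{lem:asymptotic}), the extremal-length estimate for $\ell(\beta_-^m)$ (Lemma~\ref{lem:extremal}), and the intersection asymptotics of Lemmas~\ref{lem:crossings} and \ref{lem:previous}; keeping every error term $o(q_{n_m})$ is the delicate bookkeeping.
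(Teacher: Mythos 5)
Your proposal is correct and follows essentially the same route as the paper: decompose each $\gamma_i$ into its pieces in $T_+^k$, in $T_-^k$, and in the collar $U$ of the slit, show via Proposition~\ref{prop:augmented}/Lemma~\ref{lem:asymptotic} that the $T_+^k$ contribution is $\asymp q_{4k+5}$ while the $T_-^k$ contribution (controlled by \eqref{tinytorus} and Lemma~\ref{lem:extremal}) and the collar contribution (controlled by $i(\gamma_i,\zeta^{k+2})\asymp q_{4k+1}$ and the $O(\log k)$ collar diameter) are both $o(q_{4k+5})$, and then invoke Lemma~\ref{lem:ergodiclimit}. The only differences are cosmetic bookkeeping (your explicit verification that $\sigma_+^m\to[F,\mu_+]$ and your slightly different, and if anything more careful, value $i(\gamma_i,\beta_-^m)\asymp q_{n_m}/a_{n_m}$ for the penultimate-convergent intersection number), neither of which changes the argument.
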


We begin by recalling our discussion of the estimate \eqref{tinytorus}: the surface $g_{t_k}r_\theta(Y,\omega)$ is within $O(\frac{1}{a_{4k+5}}+\frac{1}{a_{4k+6}})$  of two glued  tori with vertical side of flat length comparable to one. Let $\sigma_i^k$ denote the almost-vertical sides and $\beta_i^k$ the now almost-horizontal curves that come from the penultimate convergent:  see Definition~\ref{defn:tori}.

The flat length $|\beta_+^k|_{t_k}\asymp 1$.
We wish to show that 
the term  $$i(\gamma_i,\sigma_+^k) \ell_{\rho_{t_k}}(\beta_+^k)\asymp q_{4k+5}$$  in Proposition~\ref{prop:augmented}  dominates  the  terms coming from lengths in $T_-^k$. There are three terms contributing to the length of $\gamma_i$ in $T_-^k$: crossing $\beta_-^k$, crossing $\sigma_-^k$ and the collar. The first two will be treated in Equation \eqref{eq:5 bound1} and the last will be covered by Lemma \ref{lem:collar}.

By the estimate \eqref{tinytorus},    $$\frac{1}{4k^2}\leq |\beta_-^k|_{t_k}\leq \sum_{j=k}^{\infty}\frac{1}{j^2}\asymp \frac 1 k.$$  Invoking Lemma~\ref{lem:extremal}, 
then yields
$$\frac{1}{16k^4}\leq \ell_{\rho_{t_k}}(\beta_-^k)\leq O(1/k^2).$$
 Then  the term   appearing in the right side of Lemma~\ref{lem:asymptotic} coming from lengths in $T_-^k$,  
  \begin{equation}\label{eq:5 bound1}i(\gamma_i,\sigma_-^k)\ell_{\rho_{t_k}}(\beta_-^k)-2i(\gamma_i,\beta_-^k)\log \ell_{\rho_{t_k}}(\beta_-^k)=o(q_{4k+5}).
  \end{equation}
   This is 
because, from Lemma~\ref{lem:crossings}, we have that  $i(\gamma_i,\sigma_-^k) = O(q_{4k+5})$, while $\ell_{\rho_{t_k}}(\beta_-^k)$ is small by the previous displayed estimate and $i(\gamma_i,\beta_-^k)$ is comparable to $q_{4k+1}$ by Lemma~\ref{lem:crossings} and because $\beta_-^k$ refers to the convergent prior to $\sigma_-^k$.

Thus the expression on the right is small compared to  $i(\gamma_i,\sigma_+^k) \ell_{\rho_{t_k}}(\beta_+^k)\asymp q_{4k+5}$.

Finally we consider the term coming from the collar $U$.
\begin{lem}\label{lem:collar} The term $\ell_{\rho_{t_k}}(\gamma_i\cap U)$ coming from lengths of arcs of $\gamma_i$ in the collar $U$ of the $(k+2)^{nd}$ slit is $o(q_{k+5})$
\end{lem}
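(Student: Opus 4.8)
The plan is to estimate $\ell_{\rho_{t_k}}(\gamma_i\cap U)$ through Corollary~\ref{twists}, applied with $\beta$ equal to the $(k+2)^{nd}$ slit $\zeta^{k+2}$, whose collar is the neighborhood $U$ in the statement. The hypotheses of that corollary hold here: there is no flat cylinder about a slit, and $\gamma_i$ has no saddle connection in common with $\zeta^{k+2}$, as recorded in Section~\ref{sec:hyperbolic}. Thus
\[
\ell_{\rho_{t_k}}(\gamma_i\cap U)\ \asymp\ -\,i(\gamma_i,\zeta^{k+2})\,\log\ell_{\rho_{t_k}}(\zeta^{k+2}).
\]
By Lemma~\ref{lem:crossings}(5) (equivalently the third bullet of Proposition~\ref{prop:geom}) the intersection number is $i(\gamma_i,\zeta^{k+2})=O(q_{n_{k+1}})=O(q_{4k+1})$. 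So the problem reduces to bounding $-\log\ell_{\rho_{t_k}}(\zeta^{k+2})$ from above and then comparing $q_{4k+1}$ with $q_{4k+5}$.

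For the hyperbolic length of the slit I would argue as in the proof of Theorem~\ref{thm:CF2}. At time $t_k$ one has $e^{t_k}=\sqrt{q_{4k+5}^2+p_{4k+5}^2}\asymp q_{n_{k+2}}$, and the flat geometry of $\zeta^{k+2}$ at this time is the one already used in the proof of Lemma~\ref{lem:erg}: its horizontal component satisfies $h_{t_k}(\zeta^{k+2})\ge c_0/a_{n_{k+2}+1}=c_0/(k+2)^2$ for some $c_0>0$, while (using that vertical components of curves and slits are unchanged, as in Lemma~\ref{lem:slitsize}) its vertical component is $O\!\big(q_{n_{k+1}}/q_{n_{k+2}}+1/a_{n_{k+2}+1}\big)=O(1/(k+1)^2)$; in particular $|\zeta^{k+2}|_{t_k}\to0$. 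Now the extremal length of $\zeta^{k+2}$ is at least the square of its flat length divided by the (fixed) total area, so using $h_{t_k}(\zeta^{k+2})$ as a lower bound for the flat length, $\Ext_{\rho_{t_k}}(\zeta^{k+2})\ge c/(k+2)^4$ for some $c>0$; since $\zeta^{k+2}$ is short, extremal length and hyperbolic length are comparable, so $\ell_{\rho_{t_k}}(\zeta^{k+2})\ge c'/(k+2)^4$ and hence $-\log\ell_{\rho_{t_k}}(\zeta^{k+2})=O(\log k)$.

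Combining the two bounds gives $\ell_{\rho_{t_k}}(\gamma_i\cap U)=O(q_{4k+1}\log k)$. To finish I would use that, in the continued fraction of Theorem~\ref{thm:CF3}, $a_{n_{k+1}+1}=a_{4k+2}=(k+1)^2$, so the recursion $q_{4k+2}=a_{4k+2}q_{4k+1}+q_{4k}$ yields $q_{4k+5}\ge q_{4k+2}\ge (k+1)^2 q_{4k+1}$, and therefore
\[
\frac{\ell_{\rho_{t_k}}(\gamma_i\cap U)}{q_{4k+5}}=O\!\left(\frac{\log k}{(k+1)^2}\right)\longrightarrow 0,
\]
that is, $\ell_{\rho_{t_k}}(\gamma_i\cap U)=o(q_{4k+5})$, as claimed. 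The main obstacle I anticipate is essentially the index bookkeeping: confirming that at the time $t_k=\tfrac{1}{2}\log(q_{4k+5}^2+p_{4k+5}^2)$ it is precisely the slit $\zeta^{k+2}$ that has become short, with the flat holonomy used above — this is where the alignments $n_{k+2}=4k+5$, $n_{k+1}=4k+1$ and the identity $a_{n_{k+1}}=a_{n_{k+1}+1}=(k+1)^2$ enter. Once that is in place, the remaining inequalities are routine consequences of Corollary~\ref{twists}, the extremal-length lower bound, and the growth of the $q_n$.
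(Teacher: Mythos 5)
Your proposal is correct and follows essentially the same route as the paper: intersection number $i(\gamma_i,\zeta^{k+2})\asymp q_{4k+1}$, a lower bound $\ell_{\rho_{t_k}}(\zeta^{k+2})\gtrsim k^{-4}$ via the flat/extremal length of the short slit, hence collar crossings of length $O(\log k)$ with bounded twisting, giving $O(q_{4k+1}\log k)$. The only (harmless) difference is in the last comparison: the paper invokes the freedom to take $a_{4k+3}$ large, whereas you deduce $q_{4k+5}\ge(k+1)^2q_{4k+1}$ directly from the fixed coefficient $a_{4k+2}=(k+1)^2$, which also suffices.
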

 We have $$i(\gamma_i,\zeta^{k+2})\asymp q_{4k+1}.$$    
At time $t_k$  the horizontal component $h_{t_k}(\zeta^{k+2})\asymp \frac{1}{k^2}$  and the vertical component $v_{t_k}(\zeta^{k+2})\asymp \frac{q_{4k+1}}{q_{4k+5}}$. We can assume the former is larger as $a_k$ grows large quickly.  Since $|\zeta^{k+2}|^2_{t_k}\asymp 1/k^4$  and this a lower bound for extremal length, and extremal length is itself comparable to hyperbolic length, we find  $$\ell_{\rho_{t_k}}(\zeta^{k+2})\asymp \frac{1}{k^4},$$ and thus the diameter of the collar is $O(\log k)$. 
 
 Again the contribution to length of the twisting of each segment of $\gamma_i$ around the slit is $O(1)$.    
Thus $$\ell_{\rho_{t_k}}(\gamma_i\cap U)=O(q_{4k+1}\log k)=o(q_{4k+5})$$ for $a_{4k+3}$ large.

These estimates now finally say that the term  $i(\gamma_i,\sigma_+^k) \ell(\beta_+^k)$ dominate the other terms in Lemma~\ref{lem:asymptotic} so that the hypothesis of Lemma~\ref{lem:ergodiclimit} is satisfied.  We conclude  that we get the  ergodic endpoint
$[F,\mu_+]$ as limit point along times $t_k$. 

\begin{prop}\label{prop:barycenter for erg} Along the sequence
$$s_k=\frac{1}{2}\log(q_{4k+3}^2+p_{4k+3}^2)+k.$$
we have the barycenter as the limit in $\PMF$.
\end{prop}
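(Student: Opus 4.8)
The plan is to produce, for each $k$, a decomposition of $g_{s_k}r_\theta(Y,\omega)$ into two punctured tori glued along a very short slit in which \emph{both} tori have been sheared so far in the horizontal direction that their hyperbolic shapes become asymptotically indistinguishable, even though their flat areas differ by a factor that grows polynomially in $k$. Let $t_{k+2}=\tfrac12\log(q_{4k+5}^2+p_{4k+5}^2)$ be the time from Proposition~\ref{prop:sec2 ergpt}, at which the surface is close to a glued pair $T_+^{k+2},T_-^{k+2}$ with $T_+^{k+2}$ essentially a unit square torus and $T_-^{k+2}$ a thin rectangle whose short side lies between $\asymp(k+2)^{-2}$ and $\asymp(k+2)^{-1}$ by \eqref{tinytorus}. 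From $q_{4k+5}\asymp (k+2)^2 a_{4k+4}q_{4k+3}$ one gets $\Delta_k:=t_{k+2}-s_k=\log a_{4k+4}-k+O(\log k)$, so a sufficiently large choice of the free coefficient $a_{4k+4}$ (say $a_{4k+4}\ge e^{k^2}$) forces $\Delta_k\to\infty$. The first thing I would verify is that at $s_k$ the slit $\zeta^{k+2}$ is short while $\zeta^{k+1}$ has already become long, so that the $(k{+}2)^{\mathrm{nd}}$ decomposition is the one in force: by Lemma~\ref{lem:slitsize}, $h_{s_k}(\zeta^{k+2})=O(e^{-\Delta_k})$ and $v_{s_k}(\zeta^{k+2})=q_{4k+1}e^{-s_k}=o(1)$, whereas $h_{s_k}(\zeta^{k+1})\gtrsim e^k$; one should also check that no other slit is short at $s_k$.

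The second step identifies the limiting tori. Flowing backwards from $t_{k+2}$ by $\Delta_k$ divides horizontal periods and multiplies vertical periods by $e^{\Delta_k}$, so at $s_k$ the torus $T_+^{k+2}$ has modulus $\asymp e^{2\Delta_k}$ and $T_-^{k+2}$ has modulus $\asymp (k+2)^{j}e^{2\Delta_k}$ for some $j\in\{1,2\}$ -- the two differ only by the flat-area ratio. Hence in each torus the hyperbolically short curve is the almost-horizontal $\beta_\pm^{k+2}$, and the cylinder estimates underlying Proposition~\ref{lem:largecylinder} give
\[
-2\log\ell_{\rho_{s_k}}(\beta_+^{k+2})\ \sim\ -2\log\ell_{\rho_{s_k}}(\beta_-^{k+2})\ \sim\ 4\Delta_k ,
\]
the gap being the $O(\log k)=o(\Delta_k)$ contributed by the area ratio. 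This is the crux: the merely logarithmic dependence of hyperbolic length on flat modulus lets a polynomial gap in flat areas wash out once $\Delta_k\gg\log k$.

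With this in hand I would estimate $\ell_{\rho_{s_k}}(\gamma_i)$. Since now \emph{both} tori carry a short curve, I would apply Lemma~\ref{lem:asymptotic} (rather than the simpler Proposition~\ref{prop:augmented}) to the arcs of $\gamma_i$ in each $T_\pm^{k+2}\setminus U$, where $U$ is the collar of $\zeta^{k+2}$ from Lemma~\ref{thm:non-crossing curves shallow}; with short curve $\beta_\pm^{k+2}$ and cheapest transversal $\sigma_\pm^{k+2}$, the toral contribution is $i(\gamma_i,\beta_\pm^{k+2})(-2\log\ell_{\rho_{s_k}}(\beta_\pm^{k+2}))+i(\gamma_i,\sigma_\pm^{k+2})\ell_{\rho_{s_k}}(\beta_\pm^{k+2})$. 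By Lemma~\ref{lem:previous}(1) and Lemma~\ref{lem:crossings}(3) the first term is $\asymp q_{4k+5}(k+2)^{-2}\Delta_k$ and the second is $\asymp q_{4k+5}e^{-2\Delta_k}$, hence negligible; and by Corollary~\ref{twists} and Lemma~\ref{lem:bounded twist} the length picked up inside $U$ is $\asymp i(\gamma_i,\zeta^{k+2})\,|\log\ell_{\rho_{s_k}}(\zeta^{k+2})|=O(q_{4k+1}(k+\log a_{4k+3}))$, which is $o(q_{4k+5}(k+2)^{-2}\Delta_k)$ because $q_{4k+5}(k+2)^{-2}\asymp a_{4k+4}q_{4k+3}\gg q_{4k+1}$. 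Feeding in the common factor $4\Delta_k$ from the second step yields
\[
\ell_{\rho_{s_k}}(\gamma_i)\ \sim\ 4\Delta_k\bigl(i(\gamma_i,\beta_+^{k+2})+i(\gamma_i,\beta_-^{k+2})\bigr),\qquad i=1,2 ,
\]
and the symmetry relations $i(\gamma_1,\beta_+^{k+2})=i(\gamma_2,\beta_-^{k+2})$, $i(\gamma_1,\beta_-^{k+2})=i(\gamma_2,\beta_+^{k+2})$ of Lemma~\ref{lem:previous}(2--3) make the two bracketed sums equal. Hence $\ell_{\rho_{s_k}}(\gamma_1)/\ell_{\rho_{s_k}}(\gamma_2)\to1$, and Proposition~\ref{prop:crit} identifies the limit along $(s_k)$ as the barycenter $[F,\mu_0]$.

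The step I expect to be the main obstacle is the identification of $T_-^{k+2}$ in the second paragraph: \eqref{tinytorus} controls its flat shape only up to a bounded multiplicative factor (and up to which side is the long one), so one must argue that after the long backward flow by $\Delta_k$ this ambiguity is entirely absorbed into the $o(\Delta_k)$ error in $-2\log\ell_{\rho_{s_k}}(\beta_-^{k+2})\sim4\Delta_k$. That is precisely why the statement of Theorem~\ref{thm:CF3} must allow the free coefficients to be chosen: the later one, $a_{4k+4}$, must be large enough that $\Delta_k\gg\log k$ and that $\zeta^{k+2}$ stays short at $s_k$, while the earlier one, $a_{4k+3}$, is constrained only mildly (it must not be so large as to inflate the collar term past the main term). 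The remaining ingredients -- the flat-geometry bookkeeping for the slit components and intersection numbers, and confirming that the decomposition used is the right one at time $s_k$ -- are routine given Sections~2--6.
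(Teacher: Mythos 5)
Your overall strategy is the one the paper uses: decompose $g_{s_k}r_\theta(Y,\omega)$ along the short slit $\zeta^{k+2}$, show that the hyperbolic lengths of the distinguished crossing curves in the two tori have ratio tending to $1$ because hyperbolic length depends only logarithmically on the flat data (so the polynomial area discrepancy of Lemma~\ref{lem:erg} washes out), control the collar contribution, and finish with the intersection-number symmetry. Your first step (identifying $\zeta^{k+2}$ as the slit in force at $s_k$) and your final symmetry step are fine.

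The genuine gap is your second step, the identification of the tori at time $s_k$. You claim that after flowing backward by $\Delta_k$ from $t_{k+2}$ the hyperbolically short curve of $T_+^{k+2}$ is the ``almost-horizontal'' $\beta_+^{k+2}$ with $-2\log\ell_{\rho_{s_k}}(\beta_+^{k+2})\sim 4\Delta_k$. This is false. At time $t_{k+2}$ the torus is only within $O(1/a_{4k+5})=O((k+2)^{-2})$ of the unit square: the curve $\beta_+^{k+2}$ has horizontal component $\asymp 1$ but \emph{vertical} component $\asymp (k+2)^{-2}$, small but nonzero. Applying $g_{-\Delta_k}$ multiplies that vertical component by $e^{\Delta_k}$, and since your choice $a_{4k+4}\ge e^{k^2}$ forces $\Delta_k\gg\log k$, at time $s_k$ the curve $\beta_+^{k+2}$ is long and nearly \emph{vertical}, with flat length $\asymp e^{\Delta_k}(k+2)^{-2}\to\infty$; directly, $v_{s_k}(\beta_+^{k+2})=q_{4k+4}e^{-s_k}\asymp a_{4k+4}e^{-k}\to\infty$. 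The ``close to square'' control is simply not preserved under a backward flow of length much greater than $\log a_{4k+5}$. The actual systole of $T_+^{k+2}$ at $s_k$ is the intermediate convergent curve $(p_{4k+3},q_{4k+3})$ (almost vertical, flat length $e^{-k}$), the complementary basis curve is the almost-horizontal $(p_{4k+2},q_{4k+2})$ curve of flat length $\asymp e^{k}$, and the relevant logarithm is $4k+O(1)$, not $4\Delta_k$. The correct route --- the paper's --- is to flow \emph{forward} by $k$ from the balanced time $\tfrac12\log(q_{4k+3}^2+p_{4k+3}^2)$ of this intermediate convergent, obtaining $e^{k}\times e^{-k}$ rectangles; Lemma~\ref{lem:erg} then gives horizontal sides $\asymp e^{k}$ and $\ge e^{k}/(4k^2)$, and Proposition~\ref{lem:largecylinder} gives hyperbolic lengths of the horizontal curves with ratio in $\bigl[1,\tfrac{4k+D}{4\log(e^{k}/4k^2)-D}\bigr]\to1$. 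Consequently your downstream estimates (the dominant term $q_{4k+5}(k+2)^{-2}\Delta_k$ and its comparison with the collar term) are keyed to the wrong curves and the wrong scale; the comparison that makes the collar term negligible must be made against a main term $\asymp q_{4k+3}\cdot k$, not $\asymp a_{4k+4}q_{4k+3}\Delta_k$.
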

 
Again  by the first conclusion of Proposition~\ref{prop:geom}  at time $s_k$ the surface $g_{s_k}r_\theta(Y,\omega_1)$ is within $O(\frac{1}{a_{4k+3}}+\frac{1}{a_{4k+4}})$  of two glued rectangular tori $T_-^k, T_+^k$ with vertical sides 1.   Then the  surface $g_{s_k+k}r_\theta(Y,\omega_1)$ is within $O(\frac{e^k}{a_{4k+3}}+\frac{e^k}{a_{4k+4}})$ of two glued rectangular tori $T_{\pm}^k$ with vertical sides $e^{-k}$. The length of the  horizontal side $\beta_+^k$ of 
$T_+^k$  is comparable to  $e^k$ and the length of the horizontal side of $T_-^k$ is at least $\frac 1 {4k^2}e^k$ by Lemma \ref{lem:erg}. 

  By Proposition~\ref{lem:largecylinder} on the corresponding limiting punctured tori $T_-^\infty,T_+^\infty$, with these flat lengths and hyperbolic metrics 
$\rho_-,\rho_+$, for some $D$, the ratio of the hyperbolic lengths of the horizontal curves $\beta_{\pm}$ satisfies \begin{equation}
\label{eq:closeto1}
1\leq \frac{\ell_{\rho_+}(\beta_+)}{\ell_{\rho_-}(\beta_-)}\leq \frac{4\log(e^k)+D}{4\log (\frac{e^k}{4k^2})-D}.
\end{equation}
We see that the right hand side goes to $1$ as $k\to \infty$.  
Now fix  $\epsilon>0$.  Then for large $k$,  the right hand side is at most $1+\epsilon/2$.  We would like to apply   Proposition~\ref{prop:augmented} and its consequence (\ref{eq:lengthsratio}). 

We have  punctured tori $T_-^k,T_+^k$ and almost horizontal crossing curves $\beta_-^k,\beta_+^k$.  We have, for $i=-,+$, the length $\ell_{\rho_{s_k+k}}(\gamma_i \cap (T_+^k\setminus U))\asymp q_{4k+5}\ell_{\rho_0}(\beta_+)$ and  $\ell_{\rho_{s_k+k}}(\gamma_i \cap (T_-^k\setminus U)) \asymp q_{4k+5}\ell_{\rho_0}(\beta_-)$. We need to check the following two conditions. 

  \begin{itemize}
\item  $g_{s_k+k}r_\theta(Y,\omega_1)$ is sufficiently close to  $T^\infty_-,T_+^\infty$.
\item the  hyperbolic length of $\gamma_i$ in the collar $U$ about the slit is small compared to the length of $\gamma_i$ in the complement of the collar in each torus $T_i^k$.  
\end{itemize}
 If these conditions hold then (\ref{eq:closeto1}) implies the desired  $|\frac{\ell_{\rho_{s_k+k}}(\gamma_1)}{\ell_{\rho_{s_k+k}}(\gamma_2)}-1|<\epsilon$. Since $\epsilon$ arbitrary we would be done.     

By choosing  $a_{4k+3},a_{4k+4}$  appropriately we can guarantee that  the first bullet is satisfied for $g_{s_k+k}r_\theta(Y,\omega_1)$.
 
The argument for the second bullet is similar to what has come before in the discussion of the length of slits $\zeta_{k+2}$.   
By Lemma~\ref{lem:slitsize}, we have 
$$h_{s_k+k}(\zeta_{k+2})\asymp \frac{e^{s_k+k}}{q_{4k+5}} \asymp  \frac{e^kq_{4k+3}}{k^2q_{4k+5}}$$
 and $$v_{s_k+k}(\zeta_{k+2})\asymp \frac{q_{4k+1}}{e^kq_{4k+3}}.$$  The twisting about the slit is $O(1)$ as in our discussion of the time $t_k$.  The number of intersections of $\gamma_i$ with the slit is comparable to $q_{4k+1}$. Thus the ratio of the contribution to 
$\ell_{\rho_{s_k+k}}(\gamma_i)$ from crossing the collar to length in the complement of collar in $T_+^k$ is   proportional to

\begin{equation}
\label{ratio2}
\frac{q_{4k+1}\max (-\log \frac{e^kq_{4k+3}}{k^2q_{4k+5}}, -\log \frac{q_{4k+1}}{e^kq_{4k+3}})}
{2\log (e^{2k})q_{4k+5}}.
\end{equation}
Here the term in the maximum comes from the length across the slit whose horizontal and vertical sizes were estimated just above, while it is multiplied by the intersection number of $\gamma_i$ with the slit, also as above; the denominator is the product of $i(\gamma_i, \sigma_+^k) \asymp q_{4k+5}$ by Lemma~\ref{lem:crossings}(1), and the length of $\gamma_i \cap (T^k_2\setminus U)$.  Since $\sigma_+^k$ has hyperbolic length comparable to its extremal length  which is comparable to  $e^{-2k}$, the arcs crossing the collar pick up lengths comparable to $-\log(e^{-2k})=2k$.

We can make this ratio as small as we want  by making $a_{4k+3}, a_{4k+4}$ large. 
similarly the length of $\gamma_i$ crossing the slit is small compared to the length crossing $T_-^k$.
The second bullet is satisfied.
This concludes the proof of (1).

\subsection{Proof of Theorem~\ref{thm:CF4}} Let $n_k=k$ so that $a_{n_k}=2^k$.  To establish the proof we wish to apply Lemma~\ref{lem:ergodiclimit}   at {\em all} times.   To do that we will again use Proposition~\ref{prop:augmented} and Lemma~\ref{lem:asymptotic}.  In particular, we will show that for each of our initial curves $\gamma_i$, the dominant term or terms in the formula in Lemma~\ref{lem:asymptotic} are the two terms that involve the length of $\gamma_i$ in the torus $T_+^k$: the contributions to the lengths from the torus $T_-^k$ will become increasingly negligible.  This will imply the hypothesis of Lemma~\ref{lem:ergodiclimit}, i.e. that in this case,
\begin{equation} \label{eqn:ergodiclimit}
\limsup_{s \to \infty}\frac{\ell_{\rho_{s}}(\gamma_i\cap T_-^k)}{\ell_{\rho_{s}}(\gamma_i \cap T_+^k)} = 0
\end{equation}

Set 
\begin{align*}
s_k &=\log q_{k-1}+\frac {k\log 2}{2} \\ 
t_k &=\log q_k.
\end{align*}
We will break up the interval $[s_k,s_{k+1}]$ into two intervals $[s_{k}, t_k]$  and $[t_k,s_{k+1}]$ and analyze each separately. 
\subsubsection{The subinterval $[t_k,s_{k+1}]$.}

We first consider the second interval $[t_k,s_{k+1}]$.  
As we have seen before,  by Lemma~\ref{lem:close to square} at time $t_k$ the larger torus $T_+^k$  has bounded geometry  with almost vertical curve $\sigma_+^k$ and almost horizontal curve $\beta_+^k$.   At the later time $s_{k+1}$, the torus $T_+^k$ is short in the vertical direction and long in the horizontal direction having basis vectors $\sigma_+^k$ and $\beta_+^k$  with lengths comparable to $(\frac{1}{\sqrt{a_{k+1}}},\frac 1 {\sqrt{a_{k+1}}})$ and $(\sqrt{a_{k+1}},o(1))$ respectively.

We focus first on the times $\{t_k\}$.
The change of area formula Lemma~\ref{lem:erg} (see also formula \eqref{tinytorus} and the discussion preceeding it) and the definition of $\beta_-^k$ implies that at time $t_k$, the surface $T_-^k$ is a torus with an approximately vertical side $\sigma_-^k$ with length comparable to $1$ and another side $\beta_-^k$ with vector comparable to $(2^{-k},2^{-k})$. 

\begin{lem}\label{lem:tk} Let $U$ be a collar about the slit as described in section~\ref{sec:hyperbolic}. Then $$\frac{\ell_{\rho_{t_k}}(\gamma_i\cap (T_-^k\setminus U))+\ell_{\rho_{t_k}}(\gamma_i\cap U)}{\ell_{\rho_{t_k}}(\gamma_i \cap T_+^k)} \asymp\frac{k}{2^k}.$$
\end{lem}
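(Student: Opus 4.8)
The plan is to estimate separately the numerator and denominator using the formulae of Proposition~\ref{prop:augmented} and Lemma~\ref{lem:asymptotic}, applied inside each torus. At time $t_k = \log q_k$, we first record the relevant flat data: by Lemma~\ref{lem:close to square} the torus $T_+^k$ has bounded geometry with $\sigma_+^k$ almost vertical of length $\asymp 1$ and $\beta_+^k$ almost horizontal of length $\asymp 1$; by the change of area estimate \eqref{tinytorus} (and the preceding discussion), $T_-^k$ has almost vertical side $\sigma_-^k$ of length $\asymp 1$ and the penultimate curve $\beta_-^k$ with holonomy $\asymp(2^{-k},2^{-k})$, so $|\beta_-^k|_{t_k}\asymp 2^{-k}$. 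Invoking Lemma~\ref{lem:extremal} (the first bullet, with $\delta\asymp 2^{-k}$, $b\asymp 1$) gives $\ell_{\rho_{t_k}}(\beta_-^k)\asymp 2^{-k}$, hence $-\log\ell_{\rho_{t_k}}(\beta_-^k)\asymp k$.

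Next I would plug these into the length formula. For the denominator, since $\gamma_i$ is almost horizontal and $\beta_+^k$ has bounded hyperbolic length, Proposition~\ref{prop:augmented} (or the $T_+^k$ term of Lemma~\ref{lem:asymptotic}, noting $T_+^k$ has no short curve) gives
\begin{equation*}
\ell_{\rho_{t_k}}(\gamma_i\cap T_+^k)\asymp i(\gamma_i,\sigma_+^k)\,\ell_{\rho_{t_k}}(\beta_+^k)\asymp q_{n_k}=q_k,
\end{equation*}
using $i(\gamma_i,\sigma_+^k)\asymp q_{n_k}$ from Lemma~\ref{lem:crossings}(3). For the $T_-^k$ part of the numerator, Lemma~\ref{lem:asymptotic} contributes the two terms $i(\gamma_i,\sigma_-^k)\ell_{\rho_{t_k}}(\beta_-^k)$ and $-2\,i(\gamma_i,\beta_-^k)\log\ell_{\rho_{t_k}}(\beta_-^k)$. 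By Lemma~\ref{lem:crossings}(3) and Lemma~\ref{lem:previous}(1), $i(\gamma_i,\sigma_-^k)\asymp q_{n_k}=q_k$ and $i(\gamma_i,\beta_-^k)\asymp q_{n_k}/a_{n_k}=q_k/2^k$; combined with $\ell_{\rho_{t_k}}(\beta_-^k)\asymp 2^{-k}$ and $-\log\ell_{\rho_{t_k}}(\beta_-^k)\asymp k$ this yields
\begin{equation*}
\ell_{\rho_{t_k}}(\gamma_i\cap (T_-^k\setminus U))\asymp q_k\,2^{-k} + (q_k/2^k)\,k \asymp \frac{k\,q_k}{2^k}.
\end{equation*}
For the collar term, I would run the same slit computation as in Lemma~\ref{lem:collar}: estimate $h_{t_k}(\zeta^{k})$ and $v_{t_k}(\zeta^k)$ from Lemma~\ref{lem:slitsize}, conclude the slit has hyperbolic length a small power of $2^{-k}$ so its collar has diameter $O(k)$, use that flat twisting is $O(1)$ (Lemma~\ref{lem:bounded twist}) hence hyperbolic twisting contributes $O(i(\gamma_i,\zeta^k))$ which is negligible, and get $\ell_{\rho_{t_k}}(\gamma_i\cap U)=O(k\, i(\gamma_i,\zeta^k))$; since $i(\gamma_i,\zeta^k)\asymp q_{n_{k-1}}=q_{k-1}$ is far smaller than $q_k/2^k$ once the $a_i=2^i$ grow (indeed $q_{k-1}\ll q_k/2^k$ fails in general, so here one must be careful — see below). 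Dividing numerator by denominator gives the claimed ratio $\asymp k/2^k$.

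The main obstacle is the collar term: one needs $\ell_{\rho_{t_k}}(\gamma_i\cap U)=o(k\,q_k/2^k)$, i.e. that the arcs crossing the collar about $\zeta^k$ are genuinely negligible against the $T_-^k$-body contribution, and this requires a careful comparison of $q_{k-1}\log(\text{something})$ against $q_k/2^k$ using $a_k=2^k$ and the recursion $q_k=2^k q_{k-1}+q_{k-2}$; from that recursion $q_k/2^k \asymp q_{k-1}$, so the collar term $O(k q_{k-1})$ is actually the \emph{same} order as $k q_k/2^k$, not smaller. The resolution is that one does not need the collar term to be negligible against the $T_-^k$ body — only against the $T_+^k$ denominator $\asymp q_k$, and $k q_{k-1}=k q_k/2^k=o(q_k)$, which suffices; so the honest statement is that numerator $\asymp k q_k/2^k$ (the slit contribution merely being absorbed into the same order as the $-\log$ term) and the lemma follows. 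I would therefore organize the write-up so that the collar estimate is compared only to the denominator, avoiding the false claim that it is lower order than the body of $T_-^k$.
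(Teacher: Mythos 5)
Your proposal is correct and follows essentially the same route as the paper: the same decomposition into the $T_-^k$ body (estimated via Lemma~\ref{lem:asymptotic} with $\ell_{\rho_{t_k}}(\beta_-^k)\asymp 2^{-k}$ and $i(\gamma_i,\beta_-^k)\asymp q_k/2^k$, giving $\asymp kq_k/2^k$), the collar (crossed $\asymp q_{k-1}\asymp q_k/2^k$ times with $O(k)$ per crossing, hence $O(kq_k/2^k)$), and the $T_+^k$ body ($\asymp q_k$ by bounded geometry). Your closing observation — that the collar term is of the \emph{same} order as the $T_-^k$ body term and need only be compared against the denominator $q_k$ — is exactly how the paper's own accounting works, so there is no gap.
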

\begin{claim}$\ell_{\rho_{t_k}}(\gamma_i \cap (T_-^k\setminus U))\asymp\frac{kq_k}{2^k}.$
\end{claim}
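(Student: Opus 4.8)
\emph{Plan.} The Claim only concerns the part of $\gamma_i$ lying in the small torus $T_-^k$, which at time $t_k$ carries a short curve, so the natural tool is Lemma~\ref{lem:asymptotic} applied inside $T_-^k$. Recall from the paragraph preceding the Claim that at time $t_k=\log q_k$ the torus $T_-^k$ is, up to bounded quasi-isometry, a flat torus whose almost vertical side $\sigma_-^k$ has flat length $\asymp 1$ and whose other side $\beta_-^k$ has flat holonomy vector $\asymp (2^{-k},2^{-k})$; in particular the pinching slit $\zeta^k$ and the horizontal component of $\beta_-^k$ both have flat length $\asymp 2^{-k}$. The first bullet of Lemma~\ref{lem:extremal}, with $\delta\asymp 2^{-k}$ and $b\asymp 1$, then gives $\ell_{\rho_{t_k}}(\beta_-^k)\asymp 2^{-k}$, so $\beta_-^k$ is short and $-2\log\ell_{\rho_{t_k}}(\beta_-^k)\asymp k$. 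Moreover $\beta_-^k$ is the systole of $T_-^k$, and the shortest curve $\tau$ of $T_-^k$ meeting it has flat (hence hyperbolic) length $\asymp 1$; since $\tau$ differs from $\sigma_-^k$ by a bounded multiple of $\beta_-^k$, we have $i(\gamma_i,\tau)\asymp i(\gamma_i,\sigma_-^k)$.

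\emph{Main step.} Delete from $T_-^k$ the collar $U$ of the slit $\zeta^k$, so $T_-^k\setminus U$ is a once-punctured torus with cusp boundary $\partial U$, and $\beta_-^k\subset T_-^k\setminus U$ (for $U$ thin, since $\beta_-^k$ is a geodesic disjoint from $\zeta^k$); hence all intersections of $\gamma_i$ with $\beta_-^k$ lie in $T_-^k\setminus U$. The set $\gamma_i\cap(T_-^k\setminus U)$ is a disjoint union of $\asymp i(\gamma_i,\zeta^k)\asymp q_{n_{k-1}}$ geodesic arcs $\Gamma$, each with endpoints on $\partial U$, and each long because $i(\gamma_i,\sigma_-^k)\asymp q_k$ while $q_{n_{k-1}}=o(q_k)$. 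Applying Lemma~\ref{lem:asymptotic} to each such arc $\Gamma$ (with short curve $\beta_-^k$ and transversal $\tau$) and summing using additivity of intersection numbers,
$$
\ell_{\rho_{t_k}}\bigl(\gamma_i\cap(T_-^k\setminus U)\bigr)\ \sim\ i(\gamma_i,\beta_-^k)\bigl(-2\log\ell_{\rho_{t_k}}(\beta_-^k)\bigr)\ +\ i(\gamma_i,\tau)\,\ell_{\rho_{t_k}}(\beta_-^k).
$$
By Lemma~\ref{lem:previous}(1), $i(\gamma_i,\beta_-^k)\asymp q_{n_k}/a_{n_k}\asymp q_k/2^k$, and by Lemma~\ref{lem:crossings}(3), $i(\gamma_i,\tau)\asymp i(\gamma_i,\sigma_-^k)\asymp q_k$. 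Thus the first term is $\asymp (q_k/2^k)\cdot k=kq_k/2^k$, while the second is $\asymp q_k\cdot 2^{-k}=q_k/2^k$, which is $o(kq_k/2^k)$. Hence $\ell_{\rho_{t_k}}(\gamma_i\cap(T_-^k\setminus U))\asymp kq_k/2^k$, as claimed.

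\emph{Main obstacle.} The only genuine care needed is the bookkeeping behind the displayed summation: one must confirm that each component $\Gamma$ of $\gamma_i\cap(T_-^k\setminus U)$ is a long geodesic arc with both endpoints on $\partial U$, so the asymptotic form of Lemma~\ref{lem:asymptotic} holds arc by arc, and that $\sum_\Gamma i(\Gamma,\beta_-^k)=i(\gamma_i,\beta_-^k)$ while $\sum_\Gamma i(\Gamma,\tau)=i(\gamma_i,\tau)+O(\#\{\Gamma\})$, the error being absorbed because $O(\#\{\Gamma\})\cdot\ell_{\rho_{t_k}}(\beta_-^k)=O(q_{n_{k-1}}2^{-k})=o(q_k/2^k)$ (here one uses that $\beta_-^k$ misses $U$, and that replacing $\tau$ by $\sigma_-^k$ changes intersection numbers only by a bounded factor). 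Once this is in place the estimate is a direct substitution of the flat and hyperbolic sizes already recorded in this section.
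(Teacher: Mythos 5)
Your proposal is correct and follows essentially the same route as the paper: apply Lemma~\ref{lem:asymptotic} inside $T_-^k$ with short curve $\beta_-^k$, use Lemma~\ref{lem:extremal} to get $\ell_{\rho_{t_k}}(\beta_-^k)\asymp 2^{-k}$ and $i(\gamma_i,\beta_-^k)\asymp q_k/2^k$, and observe that the logarithmic term $\asymp kq_k/2^k$ dominates the term $i(\gamma_i,\sigma_-^k)\ell_{\rho_{t_k}}(\beta_-^k)=O(q_k/2^k)$. The extra bookkeeping you supply (summing the arc-by-arc estimates and controlling the transversal $\tau$ versus $\sigma_-^k$) is left implicit in the paper but does not change the argument.
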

\begin{proof}
By Lemma~\ref{lem:extremal},  $$\ell_{\rho_{t_k}}(\beta_-^k)\asymp 1/2^{k+1}.$$
Moreover since $\beta_i^k$ is the previous convergent, of length approximately $q_k/2^k$, we have
$$i(\gamma_i,\beta_-^k)\asymp q_k/2^k,$$
as once we fixed the curve $\gamma_i$ at the outset of the constructions, the intersection number above became comparable to the length of $\beta_-^k$.
 Thus, the first term appearing in Lemma~\ref{lem:asymptotic} 
\begin{equation}
\label{eq:first}
-i(\gamma_i,\beta_-^k)\log \ell_{\rho_{t_k}}(\beta_-^k)\asymp \frac{kq_k}{2^k}.
\end{equation}

The other term appearing in Lemma~\ref{lem:asymptotic} is 
\begin{equation}
\label{eq:second}
i(\gamma_i,\sigma_-^k)\ell_{\rho_{t_k}}(\beta_-^k)=O(q_k/2^{k+1}),
\end{equation}  which is much smaller than the  term  (\ref{eq:first}). This proves the claim.   
\end{proof}
\begin{proof}[Proof of Lemma \ref{lem:tk}]  We have, because $T_+^k$ has bounded geometry, that $$\ell_{\rho_{t_k}}(\gamma_i\cap (T_+^k\setminus U))\asymp  i(\gamma_i,\sigma_+^k)\ell_{\rho_{t_k}}(\beta_+^k)\asymp q_k.$$  By Lemma~\ref{lem:slitsize}, the flat length of the slit is comparable to $\frac 1 {2^k}$ and so its extremal length and therefore hyperbolic length is  at least a number comparable to $\frac{1}{2^{2k}}$. 

The slit is crossed  $\frac{q_k}{2^k}$
 times so that similarly to the claim, in the collar $U$, 
 $$\ell_{\rho_{t_k}}(\gamma_i\cap U)=O(\frac{kq_k}{2^k}).$$ 
 Thus, the contribution to $\ell_{\rho_t}(\gamma_i)$ is mostly from its intersection with $T_+^k\setminus U$ and not with $U$. We conclude that the denominator in the statement of the lemma is comparable to $q_k$, while the numerator, by the first claim, is at most a decaying factor $kq_k/2^k$ of that $q_k$.
The lemma follows.
\end{proof}

Having proved the limiting condition \eqref{eqn:ergodiclimit} for sequences of the form $\{t_k\}$, we extend the possible time parameters to  general times $u\in [t_k,s_{k+1}]$ in the interval $[t_k,s_{k+1}]$. We begin by estimating a majorant for the left-hand side of \eqref{eqn:ergodiclimit} in this regime of times.

\begin{lem} For $u\in [t_k,s_{k+1}]$  
 
 $$\frac{\ell_{\rho_u}(\gamma_i\cap (T_-^k\setminus U))+\ell_{\rho_u}(\gamma_i\cap U))}{\ell_{\rho_u}(\gamma_i \cap T_+^k)}= O(\frac{1}{k}).$$ 
\end{lem}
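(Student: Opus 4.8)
The plan is to bootstrap from Lemma~\ref{lem:tk}, which is the case $u=t_k$, to the whole interval, at the price of the weaker bound $O(1/k)$. Write $u=t_k+s$ with $0\le s\le L_k:=\tfrac12(k+1)\log 2$, so $L_k\asymp k$ and $e^{2s}2^{-k}\le 2$; the surface at time $u$ is $g_s$ applied to the surface at time $t_k$. Applying $g_s$ to the flat picture recorded just before Lemma~\ref{lem:tk} — where $T_+^k$ has bounded geometry with $\sigma_+^k,\beta_+^k$ of flat length $\asymp 1$, where by \eqref{tinytorus} $T_-^k$ has area $\asymp 2^{-k}$ with $|\sigma_-^k|_{t_k}\asymp 1$ and $\beta_-^k$ of holonomy $\asymp(2^{-k},2^{-k})$, and where $|\zeta^k|_{t_k}\asymp 2^{-k}$ — one gets: $|\zeta^k|_u\asymp e^s2^{-k}\le 2^{-(k-1)/2}\to 0$; $T_+^k$ lies within $o(1)$ of the $(e^s,e^{-s})$-punctured torus, with $\sigma_+^k$ its almost-vertical curve (short once $s$ is large) and $\beta_+^k$ almost horizontal; and $T_-^k$ has $|\sigma_-^k|_u\asymp e^{-s}$ and $\beta_-^k$ of holonomy $\asymp(e^s2^{-k},e^{-s}2^{-k})$. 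Since the slit stays short and, by Lemmas~\ref{lem:crossings}(3) and \ref{lem:previous}(1), $i(\gamma_i,\sigma_\pm^k)\asymp q_k$ while $i(\gamma_i,\beta_\pm^k)\asymp q_k2^{-k}$, the arcs of $\gamma_i$ in each $T_\pm^k$ limit projectively on $\beta_\pm^k$, so Proposition~\ref{prop:augmented} (or Lemma~\ref{lem:asymptotic}, for whichever torus carries a short curve) applies to each of the three pieces of $\ell_{\rho_u}(\gamma_i)$, once one notes its hypothesis $\ell_{\rho_u}(\gamma_i\cap U)/\ell_{\rho_u}(\gamma_i\cap(X^k\setminus U))\to 0$ is a consequence of the estimates below.

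For the denominator: $T_+^k$ is $o(1)$-close to the $(e^s,e^{-s})$-punctured torus, whose horizontal geodesic has hyperbolic length $4s+O(1)\asymp\max(s,1)$ by Proposition~\ref{lem:largecylinder}; together with $i(\gamma_i,\sigma_+^k)\asymp q_k$ and Proposition~\ref{prop:augmented} (or Lemma~\ref{lem:asymptotic} with $\alpha=\sigma_+^k$, $\tau=\beta_+^k$ for large $s$, the term $i(\gamma_i,\beta_+^k)\ell_{\rho_u}(\sigma_+^k)\asymp q_k2^{-k}e^{-2s}$ being negligible), this gives $\ell_{\rho_u}(\gamma_i\cap T_+^k)\asymp q_k\max(s,1)$.

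For the numerator I would first pin down $\ell_{\rho_u}(\beta_-^k)$ and $\ell_{\rho_u}(\zeta^k)$. The flat cylinder in $T_-^k$ around $\beta_-^k$ has circumference $\asymp e^s2^{-k}$ and height $\asymp e^{-s}$, hence modulus $\asymp e^{-2s}2^k$; with the elementary bound that extremal length is at least the square of the flat length divided by the area, this gives $\Ext_{\rho_u}(\beta_-^k)\asymp e^{2s}2^{-k}$, which, being $\le 2$, yields (cf.\ \cite{Maskit} and Lemma~\ref{lem:extremal}) $\ell_{\rho_u}(\beta_-^k)=O(\min(e^{2s}2^{-k},1))$ and $\ell_{\rho_u}(\beta_-^k)\gtrsim 2^{-k}$, so $-\log\ell_{\rho_u}(\beta_-^k)=O(k)$; likewise $\ell_{\rho_u}(\zeta^k)\gtrsim|\zeta^k|_u^{\,2}\gtrsim 2^{-2k}$, so $-\log\ell_{\rho_u}(\zeta^k)=O(k)$. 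Feeding these into Proposition~\ref{prop:augmented}/Lemma~\ref{lem:asymptotic} for the $T_-^k\setminus U$ piece (with $\alpha=\beta_-^k$, $\tau=\sigma_-^k$ when $\beta_-^k$ is short) and into Corollary~\ref{twists} for the collar piece, using $i(\gamma_i,\sigma_-^k)\asymp q_k$ and $i(\gamma_i,\beta_-^k)\asymp i(\gamma_i,\zeta^k)\asymp q_{k-1}\asymp q_k2^{-k}$ (Lemmas~\ref{lem:crossings},~\ref{lem:previous}), one obtains
\[
\ell_{\rho_u}(\gamma_i\cap(T_-^k\setminus U))+\ell_{\rho_u}(\gamma_i\cap U)=O\!\big(q_k\min(e^{2s}2^{-k},1)+q_k\,k\,2^{-k}\big).
\]

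To finish I would split into two regimes. If $s\le\tfrac14 k\log 2$ then $e^{2s}2^{-k}\le 2^{-k/2}$, so the numerator is $O(q_k2^{-k/2})$ and the denominator is $\gtrsim q_k$, giving ratio $O(2^{-k/2})=o(1/k)$. If $s\ge\tfrac14 k\log 2$ then $s\asymp k$ (since $s\le L_k\asymp k$), so the denominator is $\asymp q_kk$, while $\min(e^{2s}2^{-k},1)\le 2$ and $k^22^{-k}=O(1)$ make the numerator $O(q_k)$, giving ratio $O(1/k)$. Either way the ratio is $O(1/k)$. The step requiring care — and the reason one cannot just carry the $\asymp k/2^k$ bound of Lemma~\ref{lem:tk} along the interval — is that $\ell_{\rho_u}(\beta_-^k)$ swells from $\asymp 2^{-k}$ to $O(1)$ as $s$ sweeps $[0,L_k]$, so $\ell_{\rho_u}(\gamma_i\cap T_-^k)$ is not uniformly $o(q_k)$; the two-case split isolates the regime $s\asymp k$ where it is largest, and there the denominator has itself grown from $\asymp q_k$ to $\asymp q_kk$, exactly enough to absorb it.
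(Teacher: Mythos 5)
Your argument is correct and follows essentially the same route as the paper: identify the denominator as $\asymp q_k\max(u-t_k,1)$ via the $-\log\ell_{\rho_u}(\sigma_+^k)$ contribution, bound the numerator by $O\!\big(q_k e^{2(u-t_k)}2^{-k}+q_k k 2^{-k}\big)$ using Lemma~\ref{lem:asymptotic}, Lemma~\ref{lem:extremal} and the collar estimate, and then observe the worst case occurs at the right endpoint where the numerator is $O(q_k)$ against a denominator $\asymp kq_k$. The only cosmetic difference is that you finish with a two-regime split in $s=u-t_k$ where the paper invokes monotonicity of $e^{x}/(1+x)$; these are interchangeable.
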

\begin{proof}
The denominator in the above expression is  comparable to   $$-i(\gamma_i,\sigma_+^k)\log\ell_{\rho_u}(\beta_+^k)\asymp q_k(1+\log(e^{2(u-t_k)}))\asymp q_k(1+2(u-t_k)).$$
We now direct our attention to the numerator.
Consider the term  $-\log \ell_{\rho_u}(\beta_-^k)i(\gamma_i,\beta_-^k)$ which is a summand for the first term in the numerator: See Proposition~\ref{prop:augmented}. As $u$ increases, since $\ell_{\rho_u}(\beta_-^k)$ increases, this product term decreases from its value at $t_k$. In particular, it is bounded by this original value of $O(kq_k/2^k)$.  

By Lemma~\ref{lem:extremal},   the other summand in the first term in the numerator is
$$i(\gamma_i,\sigma_-^k)\ell_{\rho_u}(\beta_-^k)=O(q_k e^{2(u-t_k)}/2^k).$$
Therefore the first term in the numerator  
  $$\ell_{\rho_u}(\gamma_i\cap (T_-^k\setminus U))=O(\max (\frac{kq_k}{2^k},\frac{e^{2(u-t_k)}q_k}{2^k})).$$ The hyperbolic length of the part of $\gamma_i$ crossing the slit remains small: at time $t_k$, the estimates of Lemma~\ref{lem:tk} show that this length is of order $\frac{kq_k}{2^k}$, but then as $u$ increases from zero, the size of the slit also grows, forcing the diameter of the collar to shrink, driving down the length of the portion of the curve $\gamma_i$ crossing the collar.
Now $0\leq u-t_k\leq  (\log 2) (k+1)/2$.
Since $\frac{e^x}{1+x}$ is increasing, the ratio appearing in the statement of the lemma increases as $u-t_k$ increases.  At time $u-t_k=(\log 2) (k+1)/2$, that ratio is  $O(1/k)$.
The lemma follows.
\end{proof}

This lemma above proves the needed estimate \eqref{eqn:ergodiclimit} for the sequences drawn from intervals $[t_k,s_{k+1}]$. Next we consider the remaining interval $[s_k, t_k]$.

\subsubsection{The interval $[s_k, t_k]$.} We begin by considering separately the case of times $\{s_k\}$, following a different analysis from that in the previous subsection.


\begin{lem} \label{lem:sk}
$$\frac{\ell_{\rho_{s_k}}(\gamma_i\cap (T_-^k\setminus U))+\ell_{\rho_{s_k}}(\gamma_i\cap U)}{\ell_{\rho_{s_k}}(\gamma_i \cap T_+^k)} \asymp\frac 1 k.$$
\end{lem}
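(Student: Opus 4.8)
The plan is to establish the stated ratio by computing, up to uniform multiplicative constants, each of the three hyperbolic lengths that appear, starting from an explicit description of the flat geometry of $g_{s_k}r_\theta(Y,\omega)$ at the single time $s_k=\log q_{k-1}+\frac{k\log 2}{2}$, and then invoking the intersection estimates of Lemmas~\ref{lem:crossings} and \ref{lem:previous} together with the hyperbolic length formulae of Lemma~\ref{lem:asymptotic}, Proposition~\ref{prop:augmented}, and the twisting analysis behind Corollary~\ref{twists}.

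The first step is to pin down the geometry at time $s_k$. Since $e^{s_k}\asymp q_{k-1}2^{k/2}$ while $e^{t_k}=\sqrt{p_k^2+q_k^2}\asymp q_k\asymp 2^{k}q_{k-1}$, we have $e^{t_k-s_k}\asymp 2^{k/2}$, so the surface at time $s_k$ is obtained from the picture at time $t_k$ --- where, by Lemma~\ref{lem:close to square}, $T_+^k$ has bounded geometry and $T_-^k$ has area $\asymp 1/a_{n_k+1}\asymp 2^{-k}$ by \eqref{tinytorus} --- by contracting all horizontal components and expanding all vertical components by the factor $2^{k/2}$. Carrying this out on the holonomy vectors recorded earlier, one finds: in the large torus $T_+^k$ the curve $\beta_+^k$ has flat length $\asymp 2^{-k/2}$ and $\sigma_+^k$ has flat length $\asymp 2^{k/2}$, so $T_+^k$ is a long thin torus of area $\asymp 1$ whose shortest curve is $\beta_+^k$; the maximal flat cylinder about $\beta_+^k$ has modulus $\asymp 2^{k}$, so $\ell_{\rho_{s_k}}(\beta_+^k)\asymp 2^{-k}$ by the comparison of extremal and hyperbolic length for short curves. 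In the small torus $T_-^k$ both $\beta_-^k$ and the complementary horizontal curve have flat length $\asymp 2^{-k/2}$ while $\sigma_-^k$ has flat length $\asymp 2^{k/2}$, so $T_-^k$ is conformally a bounded-geometry once-punctured torus (of flat injectivity radius $\asymp 2^{-k/2}$) with no short curve. Finally, by Lemma~\ref{lem:slitsize} the slit $\zeta^k$ has flat length $\asymp 2^{-k/2}$ at time $s_k$, of the same order as the flat scale of each of the two tori it bounds; consequently every annulus around $\zeta^k$ has modulus $O(1)$, so $\ell_{\rho_{s_k}}(\zeta^k)$ is bounded below and the slit neighborhood $U$ has diameter $O(1)$.

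The second step is to assemble the three lengths. For the denominator, apply Lemma~\ref{lem:asymptotic} to $\gamma_i\cap T_+^k$ with short curve $\alpha=\beta_+^k$: using $i(\gamma_i,\beta_+^k)\asymp q_{n_k}/a_{n_k}\asymp q_{k-1}$ from Lemma~\ref{lem:previous}, $i(\gamma_i,\sigma_+^k)\asymp q_k$ from Lemma~\ref{lem:crossings}, and $-2\log\ell_{\rho_{s_k}}(\beta_+^k)\asymp k$, together with $i(\gamma_i,\tau)\lesssim q_k$ for the least-length curve $\tau$ crossing $\beta_+^k$, we get
\[
\ell_{\rho_{s_k}}(\gamma_i\cap T_+^k)\asymp q_{k-1}\cdot k+q_k\cdot 2^{-k}\asymp kq_{k-1}.
\]
For the first term of the numerator, $T_-^k$ is bounded geometry with no short curve, so by Proposition~\ref{prop:augmented} (with $\beta_-^k$ playing the role of the almost-vertical side and its horizontal complement, of hyperbolic length $\asymp 1$, that of the almost-horizontal side, or equivalently by rescaling $T_-^k$ to unit size) $\ell_{\rho_{s_k}}(\gamma_i\cap(T_-^k\setminus U))\asymp i(\gamma_i,\beta_-^k)\asymp q_{k-1}$. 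For the second term, since $U$ has diameter $O(1)$ and the flat twist of $\gamma_i$ about the slit is $O(1)$ by Lemma~\ref{lem:bounded twist}, $\ell_{\rho_{s_k}}(\gamma_i\cap U)\asymp i(\gamma_i,\zeta^k)\asymp q_{n_{k-1}}\asymp q_{k-1}$ by Lemma~\ref{lem:crossings}. Hence the numerator is $\asymp q_{k-1}$ and the denominator is $\asymp kq_{k-1}$, which yields the claimed ratio $\asymp 1/k$.

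The step I expect to be the main obstacle is the assertion, in the first step, that the slit $\zeta^k$ is \emph{not} hyperbolically short at time $s_k$ even though its flat length is the tiny number $\asymp 2^{-k/2}$: the point is that this flat length is comparable to the flat scale of the tori on both sides of it, so no wide annulus can be inscribed about it. This is exactly what keeps $\ell_{\rho_{s_k}}(\gamma_i\cap U)$ at size $O(q_{k-1})$ rather than $O(kq_{k-1})$; were $\zeta^k$ hyperbolically as short as $\beta_+^k$, the numerator would also be $\asymp kq_{k-1}$ and the ratio would not tend to $0$. Making the ``every annulus about $\zeta^k$ has modulus $O(1)$'' claim precise requires knowing that the slit does not lie pathologically close to the short curve $\beta_+^k$ of $T_+^k$, which should follow from the separation built into the construction --- the curves $\sigma_j^k$, and hence the slits disjoint from them, stay a definite distance (flat size $\asymp \frac{1}{a_{n_k+1}q_{n_k}}$) from the singularities. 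A secondary bookkeeping point is justifying the use of Proposition~\ref{prop:augmented} inside $T_-^k$, where the genuinely ``almost horizontal'' closed curve is a homological combination of $\beta_-^k$ and $\sigma_-^k$ rather than a named basis curve; this can be sidestepped by comparing the hyperbolic and flat lengths of $\gamma_i\cap(T_-^k\setminus U)$ directly, using that on the thick part of $T_-^k$ the hyperbolic metric is bi-Lipschitz to $2^{k/2}$ times the flat metric.
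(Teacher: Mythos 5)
Your computation matches the paper's proof of Lemma~\ref{lem:sk} essentially step for step: the denominator is obtained from Lemma~\ref{lem:asymptotic} applied in $T_+^k$ with the short curve $\beta_+^k$ of hyperbolic length $\asymp 2^{-k}$, whose dominant term is $-2i(\gamma_i,\beta_+^k)\log\ell_{\rho_{s_k}}(\beta_+^k)\asymp kq_k/2^k\asymp kq_{k-1}$, while the numerator comes from the observation that the area-normalized $T_-^k$ lies in a compact family of tori (so $\ell_{\rho_{s_k}}(\gamma_i\cap(T_-^k\setminus U))\asymp i(\gamma_i,\beta_-^k)\asymp q_k/2^k$) together with an $O(q_{k-1})$ bound on the collar term. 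The one obstacle you flag --- that $\zeta^k$ is not hyperbolically short at time $s_k$ despite its flat length $\asymp 2^{-k/2}$ --- is resolved in the paper exactly along the lines you anticipate, by noting that the previous slit $\zeta^{k-1}$ crosses $\zeta^k$ with comparable flat length and invoking the second bullet of Lemma~\ref{lem:extremal}, which is cleaner than arguing that every annulus about $\zeta^k$ has bounded modulus.
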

The lemma will immediately follow from the next two claims, in which we separately estimate the denominator and numerator of the fraction in the lemma.
\begin{claim}${\ell_{\rho_{s_k}}(\gamma_i\cap T_+^k)}\asymp\frac{kq_k}{2^k} $
\end{claim}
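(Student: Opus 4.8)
The plan is to pin down the flat geometry of $T_+^k$ at time $s_k$, observe that there $T_+^k$ carries a short curve, and then invoke Lemma~\ref{lem:asymptotic} for that curve, reading off the needed intersection numbers from Lemmas~\ref{lem:crossings} and \ref{lem:previous}. Since $a_j=2^j$ we have $q_k/q_{k-1}=a_k+q_{k-2}/q_{k-1}\in[2^k,2^k+1]$, so $t_k-s_k=\log(q_k/q_{k-1})-\tfrac k2\log 2=\tfrac k2\log 2+O(2^{-k})$ and in particular $e^{t_k-s_k}\asymp 2^{k/2}$. As recalled above, at time $t_k$ the larger torus $T_+^k$ lies within $O(2^{-k})$ of a unit square torus, with almost vertical side $\sigma_+^k$ of length $\asymp 1$ and almost horizontal side $\beta_+^k$ of length $\asymp 1$ whose vertical component is $\asymp q_{k-1}/q_k\asymp 2^{-k}$. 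Flowing backward by $t_k-s_k$, i.e.\ applying $g_{s_k-t_k}=\mathrm{diag}\bigl(e^{-(t_k-s_k)},e^{t_k-s_k}\bigr)$, which contracts horizontal and expands vertical lengths by $\asymp 2^{k/2}$, I would conclude that at time $s_k$ the torus $T_+^k$ is, up to a relative error $O(2^{-k})\to 0$, a flat torus with near-vertical side $\sigma_+^k$ of length $\asymp 2^{k/2}$ and a side $\beta_+^k$ each of whose flat components is $\asymp 2^{-k/2}$ (so $|\beta_+^k|_{s_k}\asymp 2^{-k/2}$), of area $\asymp 1$; likewise $|\zeta^k|_{s_k}\asymp 2^{-k/2}\to 0$, so $T_+^k\setminus U$ is essentially the complement of a cusp neighbourhood in a once-punctured torus.

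Granting this picture, $\beta_+^k$ is a short curve on $T_+^k$: its component transverse to $\sigma_+^k$ is $\asymp 2^{-k/2}$ while $\sigma_+^k$ has length $\asymp 2^{k/2}$, so the flat cylinder about $\beta_+^k$ inside $T_+^k$ has modulus $\asymp 2^k$; since for short curves hyperbolic and extremal length are comparable (equivalently, by the first bullet of Lemma~\ref{lem:extremal} applied to $\beta_+^k$ in $T_+^k$ with $b\asymp 2^{k/2}$ and $\delta\asymp 2^{-k/2}$, whose computation is unchanged), this gives
\begin{equation*}
\ell_{\rho_{s_k}}(\beta_+^k)\asymp 2^{-k},\qquad\text{so}\qquad -\log\ell_{\rho_{s_k}}(\beta_+^k)\asymp k .
\end{equation*}
Because $\beta_+^k$ is tiny, any least-length curve $\tau$ on $T_+^k$ crossing $\beta_+^k$ differs from $\sigma_+^k$ by a multiple $n\beta_+^k$ with $|n|=O(2^k)$, so $i(\gamma_i,\tau)=O(q_k)$.

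At time $s_k$ the curve $\gamma_i$ is almost horizontal, so each component of $\gamma_i\cap(T_+^k\setminus U)$ is a long simple geodesic arc with endpoints on $\partial U$; summing Lemma~\ref{lem:asymptotic} (with short curve $\alpha=\beta_+^k$ and the curve $\tau$ above) over these components, and using that their intersection numbers add up to those of $\gamma_i$, I would obtain
\begin{equation*}
\ell_{\rho_{s_k}}(\gamma_i\cap T_+^k)\sim i(\gamma_i,\beta_+^k)\bigl(-2\log\ell_{\rho_{s_k}}(\beta_+^k)\bigr)+i(\gamma_i,\tau)\,\ell_{\rho_{s_k}}(\beta_+^k).
\end{equation*}
Substituting $i(\gamma_i,\sigma_+^k)\asymp q_k$ (Lemma~\ref{lem:crossings}(3)), $i(\gamma_i,\tau)=O(q_k)$, and $i(\gamma_i,\beta_+^k)\asymp q_{n_k}/a_{n_k}=q_k/2^k$ (Lemma~\ref{lem:previous}(1), with parts (2)--(4) making the two sign choices comparable), the right side is $\asymp \tfrac{q_k}{2^k}\cdot k+O(q_k\,2^{-k})\asymp \tfrac{kq_k}{2^k}$, the first term dominating by a factor $\asymp k$. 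This is the claim, for both $i=1,2$.

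The step I expect to be the main obstacle is the flat-geometry bookkeeping in the first paragraph: the $O(2^{-k})$ error in the description of $T_+^k$ at $t_k$ gets magnified by $e^{t_k-s_k}\asymp 2^{k/2}$ when we flow backward, so one must check it stays negligible against the now-small lengths $|\beta_+^k|_{s_k}\asymp 2^{-k/2}$ and $|\zeta^k|_{s_k}\asymp 2^{-k/2}$ it is to be compared with. It does, the relative error being $O(2^{-k})\to 0$, so the short-curve picture on $T_+^k$ is genuinely valid and Lemma~\ref{lem:asymptotic} applies; after that the argument is a routine combination of Lemma~\ref{lem:extremal}, Lemma~\ref{lem:asymptotic}, and the intersection-number estimates of Section~3.
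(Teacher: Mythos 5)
Your argument is correct and follows essentially the same route as the paper: identify $\beta_+^k$ as the short curve on $T_+^k$ at time $s_k$ with flat components $\asymp(2^{-k/2},2^{-k/2})$ and hyperbolic length $\asymp 2^{-k}$ via Lemma~\ref{lem:extremal}, then apply Lemma~\ref{lem:asymptotic} and observe that the term $-2\,i(\gamma_i,\beta_+^k)\log\ell_{\rho_{s_k}}(\beta_+^k)\asymp kq_k/2^k$ dominates $i(\gamma_i,\sigma_+^k)\ell_{\rho_{s_k}}(\beta_+^k)\asymp q_k/2^k$. Your extra care about the backward-flow error and about $\tau$ versus $\sigma_+^k$ is sound but does not change the argument.
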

\begin{proof}
$T_+^k$ has an almost vertical curve $\sigma_+^k$ 
with $|\sigma_+^k|_{s_k}=a_k^{1/2}=2^{k/2}$ and a short curve $\beta_+^k$  with horizontal and vertical components comparable to  $(2^{-\frac k 2},2^{-\frac k 2})$. By Lemma~\ref{lem:extremal}, the hyperbolic length $\ell_{\rho_{s_k}}(\beta_+^k)\asymp 1/a_k=2^{-k}$. 
We have again ${i(\gamma_i, \sigma_+^k)\asymp  q_{n_k}}$ and so $$i(\gamma_i,\sigma_+^k)\ell_{\rho_{s_k}}(\beta_+^k)\asymp  
q_{n_k}/2^k.$$ On the other hand $$-2i(\gamma_i,\beta_+^k)\log \ell_{\rho_{s_k}}(\beta_+^k)\asymp q_{n_k}/2^k\log 2^k\asymp q_{n_k}k/2^k,$$ and this is the dominant term in the expansion in Lemma~\ref{lem:asymptotic}. 
\end{proof}
\begin{claim}$\ell_{\rho_{s_k}}(\gamma_i\cap (T_-^k\setminus U))\asymp\frac{q_k}{2^k}.$
\end{claim}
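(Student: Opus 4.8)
The plan is to first read off the flat geometry of $T_-^k$ at time $s_k$ by flowing backward from the already‑understood picture at $t_k$, and then to estimate $\ell_{\rho_{s_k}}(\gamma_i\cap(T_-^k\setminus U))$ directly, by decomposing $\gamma_i\cap(T_-^k\setminus U)$ into essential arcs and controlling the length of each. Since $t_k-s_k=\log q_k-\log q_{k-1}-\tfrac{k\log 2}{2}$ and $q_k\asymp a_kq_{k-1}=2^kq_{k-1}$, we have $e^{t_k-s_k}\asymp 2^{k/2}$; applying $g_{-(t_k-s_k)}=\operatorname{diag}(2^{-k/2},2^{k/2})$ to the holonomy data at $t_k$ (recalled above: $\sigma_-^k$ the nearly‑vertical side of flat length $\asymp 1$, and $\beta_-^k$ of holonomy $\asymp(2^{-k},2^{-k})$) shows that at time $s_k$ the torus $T_-^k$ has $\sigma_-^k$ of flat length $\asymp 2^{k/2}$, $\beta_-^k$ of holonomy $\asymp(2^{-3k/2},2^{-k/2})$, and area $\asymp 2^{-k}$; in particular $\sigma_-^k$ and $\beta_-^k$ are nearly parallel, $\sin\angle(\sigma_-^k,\beta_-^k)\asymp \mathrm{area}/(|\sigma_-^k|\,|\beta_-^k|)\asymp 2^{-k}$. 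Passing to the reduced basis $\{e_1=\beta_-^k,\ e_2=\sigma_-^k-m^\ast\beta_-^k\}$, with $m^\ast\asymp 2^k$ the nearest integer to the ratio of vertical components, both $e_1$ and $e_2$ have flat length $\asymp 2^{-k/2}$ and $\sin\angle(e_1,e_2)\asymp 1$; hence $T_-^k$ at $s_k$ has uniformly bounded conformal geometry (it is uniformly quasiconformal to a fixed once‑punctured torus) and contains no short curve, while $\sigma_-^k$ is the ``$(m^\ast,1)$''‑curve in this basis. Note the contrast with the companion estimate for $T_+^k$: there the length came from twisting and a collar penalty near a short curve, whereas here the length in $T_-^k\setminus U$ will simply be governed by the number of arcs of $\gamma_i$ in it.

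For the estimate itself: by Lemma~\ref{lem:crossings}(5), with $n_k=k$, the curve $\gamma_i$ meets the slit $\zeta^k$ bounding $T_-^k$ about $q_{n_{k-1}}=q_{k-1}$ times, so $\gamma_i\cap(T_-^k\setminus U)$ consists of $N\asymp q_{k-1}$ essential arcs with endpoints on the $T_-^k$‑side of $\partial U$, all in the same homotopy class rel $\partial U$. Each crosses the bounded‑geometry surface $T_-^k\setminus U$ essentially, hence has hyperbolic length $\gtrsim 1$, which already gives the lower bound $\ell_{\rho_{s_k}}(\gamma_i\cap(T_-^k\setminus U))\gtrsim q_{k-1}\asymp q_k/2^k$. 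For the matching upper bound I would show each arc has hyperbolic length $O(1)$. Since the $N$ arcs are parallel, $i(\gamma_i,\sigma_-^k)=N\,i(A,\sigma_-^k)$, so by Lemma~\ref{lem:crossings}(3) each arc $A$ satisfies $i(A,\sigma_-^k)\asymp q_{n_k}/q_{n_{k-1}}\asymp 2^k$; likewise $i(\gamma_i,\beta_-^k)=N\,i(A,\beta_-^k)$ together with Lemma~\ref{lem:previous}(1) forces $i(A,\beta_-^k)=O(1)$. Now $\sigma_-^k$ is flat‑geodesically the nearly‑vertical curve of flat length $2^{k/2}$, whose lifts to the universal cover of $T_-^k$ are parallel lines at perpendicular (hence essentially horizontal) spacing $\mathrm{area}/|\sigma_-^k|\asymp 2^{-3k/2}$; since $\gamma_i$, and therefore each arc $A$, is nearly horizontal at $s_k$, it follows that $|A|_{s_k}\asymp i(A,\sigma_-^k)\cdot 2^{-3k/2}\asymp 2^{-k/2}$. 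A geodesic arc whose flat length is comparable to the flat diameter of the (bounded‑geometry) torus $T_-^k$ is combinatorially bounded — concretely, $A$ is homotopic rel $\partial U$ to $m e_1+n e_2$ with $\max(|m|,|n|)=O(1)$ — so $\ell_{\rho_{s_k}}(A)\asymp 1$; summing over the $N\asymp q_{k-1}\asymp q_k/2^k$ arcs gives $\ell_{\rho_{s_k}}(\gamma_i\cap(T_-^k\setminus U))\asymp q_k/2^k$.

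The main obstacle is precisely this last step, that each arc $A$ is combinatorially bounded. A priori an arc crossing the long curve $\sigma_-^k$ about $2^k$ times could be hyperbolically long — of length $\asymp 2^k$, as a $(2^k,1)$‑curve on a square torus would be — which would give the wrong total $q_{k-1}\cdot 2^k\asymp q_k$ rather than $q_k/2^k$. The content to be supplied carefully is that $\sigma_-^k$ is not transverse to these arcs in the expensive direction: in the reduced basis it is essentially $m^\ast$ copies of the short curve $e_1=\beta_-^k$ together with one copy of $e_2$, so the $\asymp 2^k$ intersections with $\sigma_-^k$ are forced by the arc winding the short way across a thin flat cylinder and cost nothing hyperbolically. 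Turning the flat‑length computation $|A|_{s_k}\asymp 2^{-k/2}$ into the clean bound $\max(|m|,|n|)=O(1)$ — equivalently, that the nearly‑horizontal arcs $A$ meet the nearly‑horizontal curve $e_2$ only $O(1)$ times each — requires the bookkeeping with the reduced basis, the spacing of the lifts of $\sigma_-^k$, and Lemmas~\ref{lem:crossings}(3) and \ref{lem:previous}(1) to be carried out with care.
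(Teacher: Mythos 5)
Your proposal is correct and follows essentially the same route as the paper: flow back from $t_k$ to read off the flat holonomies of $\sigma_-^k$ and $\beta_-^k$ at time $s_k$, pass to a reduced basis to see that $T_-^k$ lies in a compact family of tori, and reduce the length estimate to intersection numbers of $\gamma_i$ with that bounded-length basis, which are $\asymp q_k/2^k$. The only difference is presentational: where the paper records $i(\gamma_i,\beta_-^k)\asymp q_k/2^k$ and $i(\gamma_i,\alpha_-^k)=O(q_k/2^k)$ and concludes, you justify the same bounds arc by arc via the flat-length computation using the spacing of lifts of $\sigma_-^k$, correctly isolating the one point that needs care, namely that the $\asymp 2^k$ crossings of $\sigma_-^k$ per arc are hyperbolically cheap because $\sigma_-^k$ is the $(m^\ast,1)$-curve in the reduced basis.
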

\begin{proof}

We begin by estimating the lengths of the basis set $\beta_-^k$ and $\sigma_-^k$.  At time $t_k$, the curve $\sigma_-^k$ is almost vertical with vertical length comparable to one and horizontal length comparable to $2^{-k}$; at the same time $t_k$, the curve $\beta_-^k$ has vertical length comparable to $2^{-k}$, and horizontal length -- estimated by the area formula \eqref{tinytorus} -- comparable to $2^{-k}$.  We then flow backwards to the time $s_k$, and find that 
the  short curve $\beta_-^k$ now has vertical component comparable to $2^{-k/2}$ and horizontal component comparable to $2^{-\frac{3k}{2}}$. Similarly, the long curve $\sigma_-^k$ is almost vertical with horizontal component comparable to $1/2^{k/2}$ and vertical component  comparable to $2^{k/2}$.  The flat area of this torus is still roughly $2^{-k}$.   If we  were to  normalize $T_-^k$ so it has unit area then $\sigma_-^k$ has length compable to $2^k$ and $\beta_-^k$ has vertical component of length comparable to $1$ and horizontal component comparable to $1/2^k$. 
Because the basis element $\beta_-^k$ has vertical component of length comparable to one, we can choose a different basis for the torus with proportional horizontal and vertical lengths: in particular, we see that  $T_-^k$  lies in a compact set of tori.   This means that -- without renormalizing areas -- there is a basis consisting of an almost vertical curve $\beta_-^k$ and some other curve $\alpha_-^k$ of comparable length which is not almost vertical.  By  Lemma~\ref{lem:extremal} the extremal lengths, hence hyperbolic lengths, of the basis curves are comparable to $1$.  Because we can compute intersection numbers on the original square torus, we have $i(\gamma_i,\beta_-^k)\asymp \frac{q_k}{2^k}$ and  since $\gamma_i$ is almost horizontal, 
 we also have $i(\gamma_i,\alpha^k_-)=O(\frac{q_k}{2^{k}})$.      The claimed estimate follows.
\end{proof}
 
We also note that 
it follows from Lemma~\ref{lem:crossings} that at time $s_k$ both the slit $\zeta^k$ and previous slit $\zeta^{k-1}$ have flat length comparable to $\frac{1}{2^{k/2}}$.  

We next subdivide the interval $[s_k, t_k]$ into two subintervals: the idea is that different terms will dominate in the expansion of the fraction in \eqref{eqn:ergodiclimit} in the two different subintervals.

 Let $z_k$ satisfy $$e^{2z_k}=k\log 2-2z_k$$ and divide the time interval $[s_k,t_k]$ into $[s_k,s_k+z_k]$ and $[s_k+z_k,t_k]$. 

Notice \begin{equation}
\label{eq:z_k}
z_k \asymp \log k.
\end{equation}

\begin{lem}
On $[s_k,s_k+z_k]$, we have the estimate
$$\frac{\rho_{s_k+t}(\gamma_i\cap (T_-^k\setminus U))+\rho_{s_k+t}(\gamma_i\cap U)}{\rho_{s_k+t}(\gamma_i \cap T_+^k)}\asymp \frac{t}{k\log 2 -2t}=O(\frac{\log k}{k}).$$
\end{lem}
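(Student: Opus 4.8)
The plan is to estimate the numerator and the denominator of the displayed quotient separately, in each case combining the flat geometry of the tori $T_\pm^k$ at time $s_k+t$ (recorded just above, together with Proposition~\ref{prop:geom}, Lemma~\ref{lem:erg} and \eqref{tinytorus}) with the length formula of Lemma~\ref{lem:asymptotic} and the intersection estimates of Lemmas~\ref{lem:crossings} and \ref{lem:previous}. Throughout $t\in[0,z_k]$, and one uses freely that $e^{2z_k}=k\log 2-2z_k$ and that $z_k\asymp\log k$ (by \eqref{eq:z_k}), so that $e^{2z_k}\asymp k$.

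For the denominator: on $[s_k,s_k+z_k]$ the curve $\beta_+^k$ is the short curve of the big torus $T_+^k$; flowing $g_t$ forward from $s_k$ its flat length is $\asymp 2^{-k/2}e^t$, and since $T_+^k$ has area $\asymp 1$, the lower bound (flat length)$^2/$area and the reciprocal of the modulus of the flat cylinder about $\beta_+^k$ both give $\ell_{\rho_{s_k+t}}(\beta_+^k)\asymp 2^{-k}e^{2t}$, while the almost vertical curve $\sigma_+^k$, of flat length $\asymp 2^{k/2}e^{-t}$, is (up to bounded twist) the shortest transversal. Feeding $i(\gamma_i,\beta_+^k)\asymp q_k/2^k$ (Lemma~\ref{lem:previous}) and $i(\gamma_i,\sigma_+^k)\asymp q_k$ (Lemma~\ref{lem:crossings}) into Lemma~\ref{lem:asymptotic} yields
\[
\ell_{\rho_{s_k+t}}(\gamma_i\cap T_+^k)\ \asymp\ \frac{q_k}{2^k}\bigl(k\log 2-2t\bigr)+\frac{q_k}{2^k}\,e^{2t},
\]
and by the defining relation for $z_k$ the first summand dominates on all of $[0,z_k]$, so the denominator is $\asymp \tfrac{q_k}{2^k}(k\log2-2t)$.

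For the numerator we treat the two pieces in turn. On $T_-^k\setminus U$: by \eqref{tinytorus} and the description preceding Lemma~\ref{lem:sk}, at $s_k+t$ the penultimate-convergent curve $\beta_-^k$ is the short (almost vertical) curve of $T_-^k$, with flat length $\asymp 2^{-k/2}e^{-t}$; dividing its square by the area $\asymp 2^{-k}$ of $T_-^k$ and comparing with the flat cylinder gives $\ell_{\rho_{s_k+t}}(\beta_-^k)\asymp e^{-2t}$. The transversal $\tau$ to $\beta_-^k$ inside $T_-^k$ has $i(\gamma_i,\tau)=O(q_k/2^k)$ (computing intersection numbers on the square torus), so Lemma~\ref{lem:asymptotic} gives $\ell_{\rho_{s_k+t}}(\gamma_i\cap(T_-^k\setminus U))\asymp \tfrac{q_k}{2^k}(1+t)$ (for $t=O(1)$ this is just the bounded-geometry estimate $\asymp q_k/2^k$, since then $T_-^k$ lies in a compact family of tori). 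On the collar $U$ about $\zeta^k$: by Lemma~\ref{lem:slitsize}, tracking components along the flow one finds that along this interval the almost vertical component of $\zeta^k$ dominates, so $|\zeta^k|_{s_k+t}\asymp 2^{-k/2}e^{-t}$, whereas the previous slit $\zeta^{k-1}$, which crosses $\zeta^k$ and is not almost vertical at these times, has $|\zeta^{k-1}|_{s_k+t}\asymp 2^{-k/2}e^{t}$; the second conclusion of Lemma~\ref{lem:extremal} then gives $\ell_{\rho_{s_k+t}}(\zeta^k)\asymp 1/(2t)$ for $t\gtrsim 1$ (and $\asymp1$ for $t=O(1)$), so the collar $U$ has diameter $O(\log t)$. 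Since $i(\gamma_i,\zeta^k)\asymp q_{n_{k-1}}\asymp q_k/2^k$ and the flat twisting about $\zeta^k$ is $O(1)$ (Lemma~\ref{lem:bounded twist} and Corollary~\ref{twists}), $\ell_{\rho_{s_k+t}}(\gamma_i\cap U)=O(\tfrac{q_k}{2^k}\log t)$, which is absorbed by the $T_-^k\setminus U$ term. Hence the numerator is $\asymp \tfrac{q_k}{2^k}(1+t)$.

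Putting the two estimates together, the quotient is $\asymp \dfrac{\max(1,t)}{k\log 2-2t}$, which coincides with the asserted $\tfrac{t}{k\log2-2t}$ for $t$ bounded below, is increasing in $t$ on $[0,z_k]$, and at $t=z_k$ equals $\tfrac{1+z_k}{e^{2z_k}}$ by the defining relation once more; since $z_k\asymp\log k$ and $e^{2z_k}\asymp k$, this is $O(\log k/k)$, as claimed. The one place where the argument could fail is the collar estimate: the crude bound $\ell_{\rho_{s_k+t}}(\zeta^k)\gtrsim |\zeta^k|_{s_k+t}^2\asymp 2^{-k}e^{-2t}$ only produces a collar of diameter $O(k)$, which would inflate the numerator to $\tfrac{kq_k}{2^k}$ and leave the quotient merely $O(1)$. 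What is genuinely needed is the sharp comparison $\ell_{\rho_{s_k+t}}(\zeta^k)\asymp 1/\log\bigl(|\zeta^{k-1}|_{s_k+t}/|\zeta^k|_{s_k+t}\bigr)$ of Lemma~\ref{lem:extremal}, and supplying it requires verifying that $\zeta^{k-1}$ really does cross $\zeta^k$ and controlling, via Lemma~\ref{lem:slitsize}, the horizontal and vertical components of both slits along the flow so that the ratio of their flat lengths stays of order $e^{2t}$.
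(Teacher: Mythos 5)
Your argument is correct and follows essentially the same route as the paper: estimate the denominator via the two terms of Lemma~\ref{lem:asymptotic} for $\beta_+^k$ (with $(k\log 2-2t)q_k/2^k$ dominating $e^{2t}q_k/2^k$ precisely on $[0,z_k]$ by the definition of $z_k$), estimate the $T_-^k$ contribution as $\asymp tq_k/2^k$ from $\ell(\beta_-^k)\asymp e^{-2t}$, and control the collar term by the sharp slit-length comparison $\ell(\zeta^k)\asymp 1/(2t)$ from the second conclusion of Lemma~\ref{lem:extremal} together with bounded twisting. Your closing remark correctly isolates the one genuinely delicate point (the crude extremal-length lower bound for $\zeta^k$ would not suffice), and your $\max(1,t)$ refinement of the numerator only affects the constant regime $t=O(1)$, leaving the stated $O(\log k/k)$ bound intact.
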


\begin{proof}

We first consider the denominator.
We have by Lemma~\ref{lem:extremal} that \begin{equation}
\label{eq:dominant2}
i(\gamma_i,\sigma_+^k)\ell_{\rho_{s_k+t}}(\beta_+^k)\asymp \frac{q_ke^{2t}}{2^k}.\end{equation}
The other term in Lemma~\ref{lem:asymptotic} \begin{equation}
\label{eq:dominant1}
-i(\gamma_i,\beta_+^k)\log \ell_{\rho_{s_k+t}}(\beta_+^k)\asymp   
 -\frac{q_k}{2^k}\log (e^{2t}/2^k)= (k\log 2-2t)q_k/2^k.
\end{equation}
If $t\leq z_k$ then (\ref{eq:dominant1}) dominates (\ref{eq:dominant2}). We consider next the numerator.
On $T_-^k$, at time $s_k+t$,  for any time $t$,  the short curve $\beta_-^k$ after normalization has flat vertical length $e^{-t}$, hence hyperbolic length comparable to $e^{-2t}$ so $$-i(\gamma_i,\beta_-^k)\log \ell_{\rho_{s_k+t}}(\beta_-^k)\asymp tq_k/2^k.$$ The other term we need to consider, $$i(\gamma_i, \sigma_-^k)\ell_{\rho_{s_k+t}}(\beta_-^k) \asymp \frac{q_k}{2^k}e^{-2t}.$$ is smaller for all times $t$.

As for the hyperbolic length of the slit $\zeta^k$, we see that 
at time $s_k+t$ the flat length is comparable to $e^{-t}/2^{k/2}$ while the flat length of $\zeta^{k-1}$ is $e^t/2^{k/2}$. The second consequence of Lemma~\ref{lem:extremal}  says that the extremal hence hyperbolic length of $\zeta^k$ is comparable to $\frac{1}{2t}$ which is smaller than $e^{-2t}$.  
Thus the second term in the numerator is dominated by the first. 

The first bound in the  Lemma then follows from these estimates.  The second   bound follows from (\ref{eq:z_k}).

\end{proof}

\begin{cor}\label{cor:decrease}The hyperbolic length of $\gamma$ at $s_k$ is larger  than at $s_k+z_k$.
\end{cor}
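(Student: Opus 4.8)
\textbf{Proof of Corollary~\ref{cor:decrease}.} The plan is to localize $\gamma$ in the big torus and read the monotonicity off Lemma~\ref{lem:asymptotic}. By the lemma just proved, for every $t\in[0,z_k]$ the hyperbolic length of the part of $\gamma$ in $T_-^k\cup U$ is only an $O(\tfrac{\log k}{k})$ fraction of the part in $T_+^k$, so $\ell_{\rho_{s_k+t}}(\gamma)=\bigl(1+O(\tfrac{\log k}{k})\bigr)\ell_{\rho_{s_k+t}}(\gamma\cap T_+^k)$, and it suffices to understand $\ell_{\rho_{s_k+t}}(\gamma\cap T_+^k)$ precisely enough that this error cannot swallow the variation we find. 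Now $T_+^k$ is a punctured torus with short curve $\beta_+^k$ and shortest transverse curve $\sigma_+^k$, and by Lemma~\ref{lem:extremal} its short-curve length is $\ell_{\rho_{s_k+t}}(\beta_+^k)\asymp e^{2t}/2^k$, whence $-2\log\ell_{\rho_{s_k+t}}(\beta_+^k)=2k\log 2-4t+O(1)$. Applying Lemma~\ref{lem:asymptotic},
$$\ell_{\rho_{s_k+t}}(\gamma\cap T_+^k)\ \sim\ i(\gamma,\beta_+^k)\bigl(2k\log 2-4t\bigr)\ +\ i(\gamma,\sigma_+^k)\,\ell_{\rho_{s_k+t}}(\beta_+^k),$$
where the first summand is affine and strictly decreasing in $t$ with slope $-4\,i(\gamma,\beta_+^k)<0$ and, on $[0,z_k]$, dominates the second summand because there $e^{2t}\le e^{2z_k}=k\log 2-2z_k$ — which is exactly why the cutoff $z_k$ is chosen as it is.

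It then remains to compare the two ends of the interval. Over $[s_k,s_k+z_k]$ the leading summand falls by $4z_k\,i(\gamma,\beta_+^k)\asymp z_k\,q_{n_k}/a_{n_k}$, and I would then check that this drop outweighs the total change of everything else: the subordinate summand $i(\gamma,\sigma_+^k)\ell_{\rho_{s_k+t}}(\beta_+^k)$ (increasing, but subordinate by the previous paragraph), the $T_-^k$-part, which by Lemma~\ref{lem:asymptotic} and the estimate $\ell_{\rho_{s_k+t}}(\beta_-^k)\asymp e^{-2t}$ is $\asymp -i(\gamma,\beta_-^k)\log\ell_{\rho_{s_k+t}}(\beta_-^k)+i(\gamma,\alpha_-^k)\ell_{\rho_{s_k+t}}(\beta_-^k)$, and the collar term $i(\gamma,\zeta^k)\cdot(\text{diameter of the collar of }\zeta^k)$, where the slit $\zeta^k$ has hyperbolic length $\asymp(2t)^{-1}$ by the second part of Lemma~\ref{lem:extremal}. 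Feeding in $i(\gamma,\sigma_+^k)\asymp q_{n_k}$ and $i(\gamma,\zeta^k)\asymp q_{n_{k-1}}$ (Lemma~\ref{lem:crossings}), $i(\gamma,\beta_\pm^k)\asymp q_{n_k}/a_{n_k}$ (Lemma~\ref{lem:previous}), and the slit sizes of Lemma~\ref{lem:slitsize}, the goal is to conclude that all of these vary by $o(z_k\,q_{n_k}/a_{n_k})$ on $[s_k,s_k+z_k]$, so that $\ell_{\rho_{s_k}}(\gamma)>\ell_{\rho_{s_k+z_k}}(\gamma)$ once $k$ is large; since $z_k\to\infty$ the size of the drop is then unbounded in $k$, which is what is needed for the byproduct advertised in the introduction.

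The hard part will be precisely this last bookkeeping. The subordinate summand $i(\gamma,\sigma_+^k)\ell_{\rho_{s_k+t}}(\beta_+^k)$ is increasing and, by the very equation defining $z_k$, becomes \emph{comparable} to the leading summand exactly at $t=z_k$, so it must be carried along and estimated rather than discarded; likewise the collar of $\zeta^k$ shrinks as $t$ grows, so its contribution also moves. What has to be extracted is that, thanks jointly to the specific choice of $z_k$ and to the uniform $O(\tfrac{\log k}{k})$ bound of the previous lemma, the decrease $\asymp z_k\,i(\gamma,\beta_+^k)$ of the collar-depth term of $\gamma$ in $T_+^k$ is of strictly larger order than the combined increase of every other term on $[s_k,s_k+z_k]$; once that separation of orders is in hand, the inequality is immediate, the remaining steps being routine substitution of the flat-geometric data already recorded in Sections~3 and~4.
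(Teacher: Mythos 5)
Your overall strategy coincides with the paper's: the published proof is the single assertion that the quantity in (\ref{eq:dominant1}), namely $-i(\gamma_i,\beta_+^k)\log\ell_{\rho_{s_k+t}}(\beta_+^k)\asymp (k\log 2-2t)q_k/2^k$, is decreasing in $t$ and dominates everything else, so the length decreases. You correctly isolate that term, correctly compute that its drop over $[s_k,s_k+z_k]$ is $\asymp z_k\,q_{n_k}/a_{n_k}\asymp \log k\cdot q_k/2^k$, and — to your credit — you explicitly flag that the subordinate term (\ref{eq:dominant2}) becomes \emph{comparable} to the leading term exactly at $t=z_k$ and therefore cannot simply be discarded. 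The paper does not acknowledge this point at all.

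But your proposal does not close, and the route you announce for closing it cannot work as stated. You propose to show that every other term varies by $o(z_k\,q_{n_k}/a_{n_k})$ on the interval. For the term $i(\gamma,\sigma_+^k)\,\ell_{\rho_{s_k+t}}(\beta_+^k)$ this is false: using your own estimates $i(\gamma,\sigma_+^k)\asymp q_{n_k}$ and $\ell_{\rho_{s_k+t}}(\beta_+^k)\asymp e^{2t}/2^k$, this term grows from $\asymp q_k/2^k$ at $t=0$ to $\asymp e^{2z_k}q_k/2^k=(k\log 2-2z_k)q_k/2^k\asymp k\,q_k/2^k$ at $t=z_k$, an increase of order $k\,q_k/2^k$, which is \emph{larger}, not smaller, than the drop $\asymp \log k\cdot q_k/2^k$ of the leading term. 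Likewise the $T_-^k$ contribution $-i(\gamma,\beta_-^k)\log\ell_{\rho_{s_k+t}}(\beta_-^k)\asymp t\,q_k/2^k$ increases by $\asymp z_k q_k/2^k$, the \emph{same} order as the drop, so no separation of orders is available there either and one is reduced to comparing unspecified implied constants. Consequently the inequality $\ell_{\rho_{s_k}}(\gamma)>\ell_{\rho_{s_k+z_k}}(\gamma)$ does not follow from the bookkeeping you outline; to extract an honest decrease from these formulas one must either shorten the interval so that $e^{2t}$ stays bounded (where the affine term's decay can beat the exponential term's growth) or obtain genuinely sharper control of the constants in Lemma~\ref{lem:asymptotic} and of $i(\gamma,\beta_\pm^k)$. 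As written, the decisive step of your argument is deferred ("the goal is to conclude\dots"), and the deferred step is exactly where the difficulty lives.
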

\begin{proof}  
This is true for the quantity in (\ref{eq:dominant1}) which dominates the other terms.
\end{proof}
Finally, we consider the remaining subinterval $[s_k+z_k,t_k]$ of $[s_k,t_k]$, which is the final interval for which we have not evaluated the expression in \eqref{eqn:ergodiclimit}.

\begin{lem}
For  $t \in [s_k+z_k,t_k]$, we have 
$$\frac{\rho_{s_k+t}(\gamma_i\cap (T_-^k\setminus U))+\rho_{s_k+t}(\gamma_i\cap U))}{\rho_{s_k+t}(\gamma_i \cap T_+^k)}= O(\frac{t}{e^{2t}})=O(\frac{\log k}{k})$$
\end{lem}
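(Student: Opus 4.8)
The plan is to estimate the numerator and the denominator of the displayed ratio separately at a general time $s_k+t$ with $t\in[z_k,t_k-s_k]$, and then optimize over $t$. The point of splitting off $[s_k+z_k,t_k]$ is that it is precisely the regime in which the dominant contribution to $\ell_{\rho_{s_k+t}}(\gamma_i\cap T_+^k)$ has switched from the ``twisting/collar'' term $i(\gamma_i,\beta_+^k)(-\log\ell_\rho(\beta_+^k))$, which governed the previous subinterval, to the ``wrapping'' term $i(\gamma_i,\sigma_+^k)\ell_\rho(\beta_+^k)$. Recall $t_k-s_k\asymp\tfrac{k\log 2}{2}$, so $e^{\,t_k-s_k}\asymp 2^{k/2}$, and recall the flat data at time $s_k$ collected before Lemma~\ref{lem:sk}: on $T_+^k$ the curve $\sigma_+^k$ is almost vertical of flat length $\asymp 2^{k/2}$ and $\beta_+^k$ has components $\asymp(2^{-k/2},2^{-k/2})$; on $T_-^k$, whose flat area stays $\asymp 2^{-k}$ by Lemma~\ref{lem:erg} and \eqref{tinytorus}, the curve $\beta_-^k$ has components $\asymp(2^{-3k/2},2^{-k/2})$ and $T_-^k$ lies in a compact set of tori with a transverse curve $\alpha_-^k$ for which $i(\gamma_i,\alpha_-^k)=O(q_k/2^k)$.

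First I would handle the denominator. Flowing by $t$, the side $\sigma_+^k$ has flat length $\asymp 2^{k/2}e^{-t}$ while $\beta_+^k$ has horizontal component $\asymp 2^{-k/2}e^{t}$, so Lemma~\ref{lem:extremal} gives $\ell_{\rho_{s_k+t}}(\beta_+^k)\asymp e^{2t}/2^k$. Applying Lemma~\ref{lem:asymptotic} to the once-punctured torus $T_+^k$ with short curve $\beta_+^k$ and least-length transverse curve $\sigma_+^k$ (and Proposition~\ref{prop:augmented} on the bounded sub-range near $t_k$ where $T_+^k$ has bounded geometry), together with $i(\gamma_i,\sigma_+^k)\asymp q_k$ (Lemma~\ref{lem:crossings}(3)) and $i(\gamma_i,\beta_+^k)\asymp q_k/2^k$ (Lemma~\ref{lem:previous}(1)), gives
\[
\ell_{\rho_{s_k+t}}(\gamma_i\cap T_+^k)\asymp \tfrac{q_k}{2^k}\bigl(k\log 2-2t\bigr)+\tfrac{q_k e^{2t}}{2^k}.
\]
Since $t\ge z_k$ and $e^{2z_k}=k\log 2-2z_k$ by the definition of $z_k$, we have $e^{2t}\ge k\log 2-2t$ throughout the interval, so the denominator is $\asymp q_k e^{2t}/2^k$.

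Next the numerator. As in the proof of the preceding lemma, the area bound on $T_-^k$ makes $\beta_-^k$ the short curve of $T_-^k$ at time $s_k+t$ with $\ell_{\rho_{s_k+t}}(\beta_-^k)\asymp e^{-2t}$, while $\alpha_-^k$ (adjusted if need be by a bounded multiple of $\beta_-^k$) remains a least-length transverse curve. Lemma~\ref{lem:asymptotic} then gives
\[
\ell_{\rho_{s_k+t}}(\gamma_i\cap(T_-^k\setminus U))\asymp i(\gamma_i,\beta_-^k)\bigl(-2\log\ell_{\rho_{s_k+t}}(\beta_-^k)\bigr)+i(\gamma_i,\alpha_-^k)\,\ell_{\rho_{s_k+t}}(\beta_-^k),
\]
whose first term is $\asymp\tfrac{q_k}{2^k}t$ and whose second term is $O(\tfrac{q_k}{2^k}e^{-2t})$, smaller since $e^{-2t}\le t$; thus this quantity is $O(\tfrac{q_k}{2^k}t)$. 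For the collar piece, Lemma~\ref{lem:slitsize} gives the slit $\zeta^k$ flat length $\asymp e^{-t}/2^{k/2}$ while the crossing slit $\zeta^{k-1}$ has flat length $\asymp e^{t}/2^{k/2}=O(1)$, so the second bullet of Lemma~\ref{lem:extremal} gives $\ell_{\rho_{s_k+t}}(\zeta^k)\asymp 1/t$; then with $i(\gamma_i,\zeta^k)\asymp q_{k-1}\asymp q_k/2^k$ (Lemma~\ref{lem:crossings}(5)) and $O(1)$ flat twisting, Corollary~\ref{twists} gives $\ell_{\rho_{s_k+t}}(\gamma_i\cap U)\asymp \tfrac{q_k}{2^k}\log t=O(\tfrac{q_k}{2^k}t)$. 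Hence the numerator is $O(q_k t/2^k)$, so the displayed ratio is $O(t/e^{2t})$. Finally, since $z_k\asymp\log k\to\infty$ by \eqref{eq:z_k}, the function $t\mapsto te^{-2t}$ is decreasing on $[z_k,t_k-s_k]$, hence is maximized at $t=z_k$, where $z_k e^{-2z_k}=z_k/(k\log 2-2z_k)=O(\log k/k)$; this gives the second bound.

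I expect the real work to be in the numerator paragraph. We have essentially no control over the conformal shape of the exponentially thin torus $T_-^k$ along this interval, and the crux is that \emph{every} contribution to the length of $\gamma_i$ from inside $T_-^k$ -- the wrapping term $i(\gamma_i,\alpha_-^k)\ell_\rho(\beta_-^k)$ of Lemma~\ref{lem:asymptotic}, the crossing of the $\zeta^k$-collar, and the twisting around $\zeta^k$ -- must be shown to stay negligible against the single surviving term $i(\gamma_i,\sigma_+^k)\ell_\rho(\beta_+^k)$ from $T_+^k$. This is powered entirely by Lemma~\ref{lem:erg}: the tiny area of $T_-^k$ simultaneously forces $\ell_\rho(\beta_-^k)$ down and keeps the transverse intersection numbers there of size only $O(q_k/2^k)$. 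The delicate bookkeeping is to make the choice of least-length transverse curve $\alpha_-^k$ for Lemma~\ref{lem:asymptotic} uniform over the whole subinterval $[s_k+z_k,t_k]$, so that a single homotopy class serves throughout, and to keep careful track of which of the three $T_-^k$-terms is largest in each sub-regime of times.
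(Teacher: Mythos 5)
Your proposal is correct and follows essentially the same route as the paper: for $t\ge z_k$ the denominator is dominated by the term $i(\gamma_i,\sigma_+^k)\ell_{\rho}(\beta_+^k)\asymp q_ke^{2t}/2^k$ rather than the logarithmic term, the numerator (both the $T_-^k$ contribution and the collar contribution) stays $O(tq_k/2^k)$, and the resulting ratio $t/e^{2t}$ is maximized at $t=z_k$ where the defining relation $e^{2z_k}=k\log 2-2z_k$ together with $z_k\asymp\log k$ yields $O(\log k/k)$. You merely spell out the supporting estimates (which the paper carries over silently from the preceding lemma), and your use of $\alpha_-^k$ as the transverse curve in $T_-^k$ is a harmless, arguably cleaner, bookkeeping choice.
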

\begin{proof}
For these times   the denominator is comparable to 
(\ref{eq:dominant2}).  Since the numerator is  $O(tq_k/2^k)$,  the first bound follows. The second bound again follows from  (\ref{eq:z_k}). 
\end{proof}

We now finish the proof of  Theorem~\ref{thm:CF4}.:  We have proven the required limit  \eqref{eqn:ergodiclimit} for times in the  $[t_k,s_{k+1}]$ in Lemma~\ref{lem:sk}.   In  the interval  $[s_k,t_k]$ in this second subsection the required estimate  follows from the last two lemmas.

\end{document}